\documentclass[11pt,thmsa]{article}
\usepackage{amssymb}
\usepackage{amsfonts}
\usepackage{amsmath}
\usepackage{cite}
\usepackage[top=3.8cm,bottom=3.8cm,textwidth=16cm,centering,a4paper]{geometry}

\setcounter{MaxMatrixCols}{10}

\setlength{\textheight}{238mm}\setlength{\textwidth}{175mm}
\setlength{\voffset}{-7mm} \setlength{\hoffset}{-9mm}

\numberwithin{equation}{section}
\newtheorem{theorem}{Theorem}[section]

\newtheorem{corollary}[theorem]{Corollary}

\newtheorem{lemma}[theorem]{Lemma}

\newtheorem{proposition}[theorem]{Proposition}
\newtheorem{remark}[theorem]{Remark}

\newenvironment{proof}[1][Proof]{\noindent\textbf{#1.} }{\ \rule{0.5em}{0.5em}}

\begin{document}

\title{On the Kirchhoff type equations in $\mathbb{R}^{N}$}
\author{Juntao Sun$^{a}$\thanks{%
E-mail address: jtsun@sdut.edu.cn (J. Sun)}, Tsung-fang Wu$^{b}$\thanks{%
E-mail address: tfwu@nuk.edu.tw (T.-F. Wu)} \\
{\footnotesize $^a$\emph{School of Mathematics and Statistics, Shandong
University of Technology, Zibo, 255049, P.R. China }}\\
{\footnotesize $^b$\emph{Department of Applied Mathematics, National
University of Kaohsiung, Kaohsiung 811, Taiwan }}}
\date{}
\maketitle

\begin{abstract}
Consider a nonlinear Kirchhoff type equation as follows
\begin{equation*}
\left\{
\begin{array}{ll}
-\left( a\int_{\mathbb{R}^{N}}|\nabla u|^{2}dx+b\right) \Delta
u+u=f(x)\left\vert u\right\vert ^{p-2}u & \text{ in }\mathbb{R}^{N}, \\
u\in H^{1}(\mathbb{R}^{N}), &
\end{array}%
\right.
\end{equation*}%
where $N\geq 1,a,b>0,2<p<\min \left\{ 4,2^{\ast }\right\}$($2^{\ast }=\infty
$ for $N=1,2$ and $2^{\ast }=2N/(N-2)$ for $N\geq 3)$ and the function $f\in
C(\mathbb{R}^{N})\cap L^{\infty }(\mathbb{R}^{N})$. Distinguishing from the
existing results in the literature, we are more interested in the geometric
properties of the energy functional related to the above problem.
Furthermore, the nonexistence, existence, unique and multiplicity of
positive solutions are proved dependent on the parameter $a$ and the
dimension $N.$ In particular, we conclude that a unique positive solution
exists for $1\leq N\leq4$ while at least two positive solutions are
permitted for $N\geq5$.
\end{abstract}

\section{Introduction}

We are concerned with the following nonlinear Kirchhoff type equations:%
\begin{equation}
\left\{
\begin{array}{ll}
-\left( a\int_{\mathbb{R}^{N}}|\nabla u|^{2}dx+b\right) \Delta u+V(x)u=h(x,u)
& \text{ in }\mathbb{R}^{N}, \\
u\in H^{1}(\mathbb{R}^{N}), &
\end{array}%
\right.  \label{1-0}
\end{equation}%
where $N\geq 1,a,b>0,$ $V\in C(\mathbb{R}^{N},\mathbb{R})$ and $h\in C(%
\mathbb{R}\times \mathbb{R}^{N},\mathbb{R}).$

Kirchhoff type equations, of the form similar to Eq. $(\ref{1-0}),$ are
analogous to the stationary case of equations that arise in the study of
string or membrane vibrations, namely,
\begin{equation}
u_{tt}-\left( a\int_{\Omega }|\nabla u|^{2}dx+b\right) \Delta u=h(x,u)\text{
in }\Omega ,  \label{1-1}
\end{equation}%
where $\Omega $ is a bounded domain in $\mathbb{R}^{N}.$ As an extension of
the classical D'Alembert's wave equation, Eq. $(\ref{1-1})$ was first
presented by Kirchhoff \cite{K} in 1883 to describe the transversal
oscillations of a stretched string, particularly, taking into account the
subsequent change in string length caused by oscillations, where $u$ denotes
the displacement, $h$ is the external force and $b$ is the initial tension
while $a$ is related to the intrinsic properties of the string, such as
Young's modulus. Equations of this type are often referred to as being
nonlocal because of the presence of the integral.

After the pioneering work of Pohozaev \cite{P} and Lions \cite{L}, the
solvability of the Kirchhoff type equation $(\ref{1-1})$ has been
well-studied in general dimension by various authors, see for examples,
D'Ancona-Shibata \cite{DS}, D'Ancona-Spagnolo \cite{DS1} and Nishihara \cite%
{N}. More recently, the corresponding elliptic version like Eq. $(\ref{1-0})
$ has begun to receive much attention via variational methods. We refer the
reader to \cite{Az1,Az2,CKW,DPS,G,HL,HZ,LLS,LY,LLS1,N1,SZ,SW,SW1,TC,Y} and
the references therein.

Most of researchers have of late years focused on the existence of positive
solutions, ground states, radial solutions and semiclassical states for Eq. $%
(\ref{1-0})$ in lower dimensions, i.e., $N=1,2,3.$ The typical way to deal
with such problem is to apply the mountain-pass theorem or the Nehari
manifold method. Owing to the fourth power of the nonlocal term, one usually
assumes that the nonlinearity $h(x,u)$ is either $4$-superlinear at infinity
on $u$ in the sense that%
\begin{equation*}
\lim_{|u|\rightarrow \infty }\frac{\int_{0}^{u}h(x,s)ds}{u^{4}}=\infty \text{
uniformly in }x\in \mathbb{R}^{N},
\end{equation*}%
or satisfies the following (AR)-condition:%
\begin{equation}
\exists \mu >4\text{ such that }0<\mu \int_{0}^{u}h(x,s)ds\leq h(x,u)u\text{
for }u\neq 0.  \label{1-10}
\end{equation}%
For example, $h(x,u)=f(x)\left\vert u\right\vert ^{p-2}u$ with $4<p<2^{\ast
}(2^{\ast }=\infty $ for $N=1,2$ and $2^{\ast }=2N/(N-2)$ for $N\geq 3).$ By
so doing, one can easily verify the mountain-pass geometry and the
boundedness of (PS) sequences for the energy functional. However, there have
a large number of functions $h(x,u)$ not satisfying the above assumptions,
such as $h(x,u)=f(x)\left\vert u\right\vert ^{p-2}u(2<p<\min \left\{
4,2^{\ast }\right\}).$ For that reason, some other approaches need to be
introduced in this case.

In studying the radial solutions for a class of autonomous Schr\"{o}%
dinger-Poisson systems in $\mathbb{R}^{3},$ Ruiz \cite{R1} established a
manifold as follows%
\begin{equation*}
\overline{\mathbf{M}}=\{u\in H_{rad}^{1}(\mathbb{R}^{3})\backslash
\{0\}:2\left\langle I^{\prime }(u),u\right\rangle -P(u)=0\},
\end{equation*}%
where $I$ is the energy functional and $P(u)$ is the Pohozaev identity
corresponding to the system. It is usually called the Nehari-Pohozaev
manifold which is different from the Nehari manifold. By restricting the
energy functional to such manifold, the boundedness of (PS) sequences can be
solved effectively when the nonlinearity does not satisfy the (AR)-condition
above.

Inspired by Ruiz \cite{R1}, Li-Ye \cite{LY} applied the Nehari-Pohozaev
manifold (slightly different from $\overline{\mathbf{M}}$) to Kirchhoff type
equations in $\mathbb{R}^{3}$. By using the constraint minimization method,
together with the monotonicity trick by Jeanjean \cite{J}, they found one
ground state solution with positive energy of Eq. $(\ref{1-0})$ when $%
V(x)\leq \liminf_{|y|\rightarrow \infty }V(y)=V_{\infty }<\infty $ and $%
h(x,u)=\left\vert u\right\vert ^{p-2}u(3<p<2^{\ast })$. Later, using the
similar approach to that in \cite{LY}, Guo \cite{G} and Tang-Chen \cite{TC}
also obtained the existence of ground state solutions for Eq. $(\ref{1-0})$
with a general nonlinearity $h(x,u)\equiv h(u)$ in $\mathbb{R}^{3}$,
respectively. In addition, Ye \cite{Y} proved the existence of high energy
solutions for Eq. $(\ref{1-0})$ with $h(x,u)\equiv h(u)$ in $\mathbb{R}^{3}$
via the Nehari-Pohozaev manifold and the linking theorem. It is worthy
noting that the nonlinearity $h(u)$ given by \cite{G,TC,Y} can cover the
power functions $\left\vert u\right\vert ^{p-2}u(2<p<4).$

Azzollini \cite{Az1,Az2} investigated a class of autonomous Kirchhoff type
equations in higher dimensions $N\geq 3,$ i.e., Eq. $(\ref{1-0})$ with $%
V(x)\equiv 0$ and $h(x,u)\equiv h(u)$ satisfying the Berestycki--Lions type
conditions (see \cite{BL}). With the aid of the radial ground state solution
$\overline{u}$ to the semilinear elliptic equation $-\Delta u=h(u)$ in $%
\mathbb{R}^{N}(N\geq 3)$, the following results were obtained:\newline
$(i)$ $N=3:$ one radial ground state solution exists for all $a,b>0;$\newline
$(ii)$ $N=4:$ one radial ground state solution exists if and only if $%
a<\left( \int_{\mathbb{R}^{N}}|\nabla \overline{u}|^{2}dx\right) ^{-1};$%
\newline
$(iii)$ $N\geq 5:$ one radial solution exists if and only if
\begin{equation*}
a\leq \left( \frac{N-4}{N-2}\right) ^{\frac{N-2}{2}}\frac{2}{(N-4)b^{\frac{%
N-4}{2}}\int_{\mathbb{R}^{N}}|\nabla \overline{u}|^{2}dx}.
\end{equation*}

Motivated by these findings mentioned above, in the present paper we are
likewise interested in looking for positive solutions of Kirchhoff type
equations. The problem we consider is thus%
\begin{equation}
\left\{
\begin{array}{ll}
-\left( a\int_{\mathbb{R}^{N}}|\nabla u|^{2}dx+b\right) \Delta
u+u=f(x)\left\vert u\right\vert ^{p-2}u & \text{ in }\mathbb{R}^{N}, \\
u\in H^{1}(\mathbb{R}^{N}), &
\end{array}%
\right.  \tag{$E_{a}$}
\end{equation}%
where $N\geq 1,a,b>0,2<p<\min \left\{ 4,2^{\ast }\right\} $ and the function
$f(x)$ satisfies:

\begin{itemize}
\item[$(D1)$] $f\in C(\mathbb{R}^{N})\cap L^{\infty }(\mathbb{R}^{N})$ with $%
f_{\min }=\inf_{x\in \mathbb{R}^{N}}f\left( x\right) >0.$
\end{itemize}

Eq. $(E_{a})$ is variational, and its solutions correspond to critical
points of the energy functional $J_{a}:H^{1}(\mathbb{R}^{N})\rightarrow
\mathbb{R}$ given by
\begin{equation*}
J_{a}\left( u\right) =\frac{a}{4}\left( \int_{\mathbb{R}^{N}}|\nabla
u|^{2}dx\right) ^{2}+\frac{1}{2}\int_{\mathbb{R}^{N}}(b|\nabla
u|^{2}+u^{2})dx-\frac{1}{p}\int_{\mathbb{R}^{N}}f(x)|u|^{p}dx.
\end{equation*}%
Furthermore, one can see that $J_{a}$ is a $C^{1}$ functional with the
derivative given by%
\begin{equation*}
\left\langle J_{a}^{\prime }(u),\varphi \right\rangle =\left( a\int_{\mathbb{%
R}^{N}}|\nabla u|^{2}dx+b\right) \int_{\mathbb{R}^{N}}\nabla u\nabla \varphi
dx+\int_{\mathbb{R}^{N}}(u\varphi -f(x)|u|^{p-2}u\varphi )dx
\end{equation*}%
for all $\varphi \in H^{1}(\mathbb{R}^{N})$, where $J_{a}^{\prime }$ denotes
the Fr\'{e}chet derivative of $J_{a}.$

Distinguishing from the existing literature, this paper is devoted to study
a series of questions as follows:

\begin{itemize}
\item[$(I)$] In spite of the amount of papers dealing with Eq. $(\ref{1-0})$%
, the geometric properties of the energy functional $J_{a}$ have not been
described in detail. One objective of this study is to shed some light on
the behavior of $J_{a}$. We will study whether $J_{a}$ is bounded below or
not, depending on the parameter $a$ and the dimension $N.$

\item[$(II)$] As we can see, the Nehari-Pohozaev manifold can help to find
positive solutions with positive energy for Eq. $(\ref{1-0})$ when the
nonlinearity $h(x,u)$ does not satisfy the (AR)-condition (\ref{1-10}) (see
\cite{G,LY,TC,Y}). However, to our knowledge, such approach is only valid
for the case of $N=3$. In our study, since the nonlinearity $f(x)\left\vert
u\right\vert ^{p-2}u(2<p<\min \left\{ 4,2^{\ast }\right\} )$ does not
satisfy the (AR)-condition (\ref{1-10}) as well, we would like to know
whether there exists an approach to study the existence of positive solution
with positive energy of Eq. $(E_{a})$ in any dimensions $N\geq 1.$

\item[$(III)$] According to the geometry of the energy functional $J_{a},$
we think that $J_{a}$ should have two critical points in some dimensions $N,$
where one is a global minimizer with negative energy and the other one is a
local minimizer with positive energy. In view of this, another objective of
this study is to explore the existence of two positive solutions for Eq. $%
(E_{a})$, which seems not to be involved in the literature.
\end{itemize}

In what follows, without loss of generality, we always assume $b=1.$ For any
$u\in H^{1}(\mathbb{R}^{N})\backslash \left\{ 0\right\} ,$ we define%
\begin{equation}
\overline{A}_{f}(u)=\frac{\left( \int_{\mathbb{R}^{N}}f(x)|u|^{p}dx\right)
^{2/(p-2)}}{\left\Vert u\right\Vert _{D^{1,2}}^{4}\left\Vert u\right\Vert
_{H^{1}}^{2(4-p)/(p-2)}}\text{ for }N\geq 4  \label{15-1}
\end{equation}%
and%
\begin{equation*}
\underline{A}_{f}(u)=\frac{\left( \int_{\mathbb{R}^{N}}f_{\min
}|u|^{p}dx\right) ^{2/(p-2)}}{\left\Vert u\right\Vert
_{D^{1,2}}^{4}\left\Vert u\right\Vert _{H^{1}}^{2(4-p)/(p-2)}}\text{ for }%
N=4,
\end{equation*}%
where $\left\Vert u\right\Vert _{H^{1}}=\left( \int_{\mathbb{R}^{N}}(|\nabla
u|^{2}+u^{2})dx\right) ^{1/2}$ and $\left\Vert u\right\Vert
_{D^{1,2}}=\left( \int_{\mathbb{R}^{N}}|\nabla u|^{2}dx\right) ^{1/2}.$
Since $\overline{A}_{f}$\ and $\underline{A}_{f}$ are $0$-homogeneous, we
can denote the extremal values by%
\begin{equation*}
\overline{\mathbf{A}}_{f}=\sup_{u\in H^{1}(\mathbb{R}^{N})\backslash \left\{
0\right\} }\overline{A}_{f}(u)\text{ and }\underline{\mathbf{A}}%
_{f}=\sup_{u\in H^{1}(\mathbb{R}^{N})\backslash \left\{ 0\right\} }%
\underline{A}_{f}(u).
\end{equation*}%
Applying the Gagliardo-Nirenberg and Young inequalities gives
\begin{eqnarray*}
\overline{A}_{f}(u) &\leq &\left( \frac{f_{\max }C_{p}^{p}\left\Vert
u\right\Vert _{D^{1,2}}^{\frac{N(p-2)}{2}}\left\Vert u\right\Vert
_{L^{2}}^{p-\frac{N(p-2)}{2}}}{\left\Vert u\right\Vert
_{D^{1,2}}^{2(p-2)}\left\Vert u\right\Vert _{H^{1}}^{4-p}}\right) ^{\frac{2}{%
p-2}} \\
&\leq &\left( f_{\max }C_{p}^{p}\right) ^{\frac{2}{p-2}}\left( \frac{%
\left\Vert u\right\Vert _{D^{1,2}}^{\frac{\left( N-4\right) \left(
p-2\right) }{4-p}}\left\Vert u\right\Vert _{L^{2}}^{\frac{2p-N\left(
p-2\right) }{4-p}}}{\left\Vert u\right\Vert _{D^{1,2}}^{2}+\left\Vert
u\right\Vert _{L^{2}}^{2}}\right) ^{\frac{4-p}{p-2}} \\
&\leq &\left( f_{\max }C_{p}^{p}\right) ^{\frac{2}{p-2}}\left( \frac{\max
\left\{ (N-4)(p-2),(2p-N(p-2))\right\} }{2(4-p)}\right) ^{\frac{4-p}{p-2}}
\end{eqnarray*}%
and
\begin{equation*}
\underline{A}_{f}\left( u\right) \leq \left( f_{\min }C_{p}^{p}\right) ^{%
\frac{2}{p-2}}\left( \frac{\left\Vert u\right\Vert _{L^{2}}^{2}}{\left\Vert
u\right\Vert _{D^{1,2}}^{2}+\left\Vert u\right\Vert _{L^{2}}^{2}}\right) ^{%
\frac{4-p}{p-2}}\leq \left( f_{\min }C_{p}^{p}\right) ^{\frac{2}{p-2}},
\end{equation*}%
where $f_{\max }:=\sup_{x\in \mathbb{R}^{N}}f\left( x\right) ,\left\Vert
u\right\Vert _{L^{2}}=\left( \int_{\mathbb{R}^{N}}u^{2}dx\right) ^{1/2}$and $%
C_{p}>0$ is a sharp constant of Gagliardo-Nirenberg inequality. Thus, there
exist two positive numbers $\overline{C}_{0}(N,p,f)$ and $\underline{C}%
_{0}(p,f)$ such that%
\begin{equation*}
0<\overline{\mathbf{A}}_{f}\leq \overline{C}_{0}(N,p,f)\text{ for }N\geq 4
\end{equation*}%
and
\begin{equation*}
0<\underline{\mathbf{A}}_{f}\leq \underline{C}_{0}(p,f)\text{ for }N=4.
\end{equation*}

Let%
\begin{equation}
\overline{a}_{\ast }=\frac{2(p-2)}{4-p}\left( \frac{4-p}{p}\right) ^{2/(p-2)}%
\overline{\mathbf{A}}_{f}\text{ for }N\geq 4  \label{15-3}
\end{equation}%
and
\begin{equation*}
\underline{a}_{\ast }=\frac{2(p-2)}{4-p}\left( \frac{4-p}{p}\right)
^{2/(p-2)}\underline{\mathbf{A}}_{f}\text{ for }N=4.
\end{equation*}

We now summarize the first part of our main results as follows.

\begin{theorem}
\label{t0-1}Suppose that $2<p<\min \left\{ 4,2^{\ast }\right\} $ and
condition $(D1)$ holds. Then the following statements are true.\newline
$(i)$ If $N=1,2,3,$ then $J_{a}$ is not bounded below on $H^{1}(\mathbb{R}%
^{N})$ for all $a>0;$\newline
$(ii)$ If $N=4,$ then for each $0<a<\underline{a}_{\ast },$ $J_{a}$ is not
bounded below on $H^{1}(\mathbb{R}^{N}),$ whereas for each $a>\overline{a}%
_{\ast },$ $J_{a}$ is bounded below on $H^{1}(\mathbb{R}^{N})$ and $%
\inf_{u\in H^{1}(\mathbb{R}^{N})\backslash \{0\}}J_{a}(u)>0;$\newline
$(iii)$ If $N\geq 5,$ then $J_{a}$ is bounded below on $H^{1}(\mathbb{R}%
^{N}) $ for all $a>0.$ More precisely, for each $0<a<\overline{a}_{\ast },$
there holds $-\infty <\inf_{u\in H^{1}(\mathbb{R}^{N})\backslash
\{0\}}J_{a}(u)<0,$ whereas for each $a>\overline{a}_{\ast },$ there holds $%
\inf_{u\in H^{1}(\mathbb{R}^{N})\backslash \{0\}}J_{a}(u)>0.$
\end{theorem}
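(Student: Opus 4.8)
The plan is to analyze $J_a$ through two complementary scalings. The amplitude fibering $t\mapsto J_a(tu)$ will control the sign of the infimum and produce the threshold $\overline{a}_{\ast}$, while the spatial dilation $u\mapsto u(\lambda\,\cdot)$ will control boundedness from below; the dimension $N$ enters only through the exponents these scalings generate.

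First I would establish a fibering lemma. For fixed $u\neq0$ set $g_u(t)=J_a(tu)=\frac{a}{4}A^2t^4+\frac12(A+B)t^2-\frac{C}{p}t^p$, where $A=\|u\|_{D^{1,2}}^2$, $A+B=\|u\|_{H^1}^2$ and $C=\int_{\mathbb{R}^N}f|u|^p\,dx$. Since $2<p<4$ and $a>0$ one has $g_u(0)=0$ and $g_u(t)\to+\infty$, so $g_u$ takes a negative value iff $\min_{t>0}\bigl[\frac{a}{4}A^2t^{4-p}+\frac12(A+B)t^{2-p}\bigr]<C/p$. This one-variable minimum is explicit, and after the scale $\|u\|_{H^1}$ cancels, the condition $\inf_{t>0}g_u(t)<0$ becomes exactly $a<\frac{2(p-2)}{4-p}\bigl(\frac{4-p}{p}\bigr)^{2/(p-2)}\overline{A}_f(u)$. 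Taking the supremum over $u$ and recalling $\overline{\mathbf{A}}_f=\sup_u\overline{A}_f(u)$ shows that a point of negative energy exists iff $a<\overline{a}_{\ast}$, whereas for $a>\overline{a}_{\ast}$ one has $g_u'>0$ on $(0,\infty)$, hence $J_a(u)=g_u(1)>0$ for every $u\neq0$. The same computation with $f$ replaced by $f_{\min}$, together with the pointwise bound $J_a\le \underline{J}_a:=\frac{a}{4}\|u\|_{D^{1,2}}^4+\frac12\|u\|_{H^1}^2-\frac{f_{\min}}{p}\int|u|^p$, yields the parallel threshold $\underline{a}_{\ast}$.

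Boundedness from below is the dimension-sensitive step, and it is needed only for $a\le\overline{a}_{\ast}$ since the fibering lemma already gives $J_a>0$ when $a>\overline{a}_{\ast}$. The naive estimate $\int f|u|^p\le\overline{\mathbf{A}}_f^{(p-2)/2}\|u\|_{D^{1,2}}^{2(p-2)}\|u\|_{H^1}^{4-p}$ is useless here, because it is tight only on the ray where $\overline{A}_f$ is maximal and the infimum of the resulting two-scale function is $-\infty$ precisely when $a<\overline{a}_{\ast}$. Instead I keep the two norms separate: Gagliardo--Nirenberg gives $\int f|u|^p\le f_{\max}C_p^p\,X^{\alpha}Y^{\beta}$ with $X=\|u\|_{D^{1,2}}^2$, $Y=\|u\|_{L^2}^2$, $\alpha=N(p-2)/4$ and $\beta=p/2-\alpha$, so $J_a(u)\ge\frac{a}{4}X^2+\frac12X+\frac12Y-DX^{\alpha}Y^{\beta}$ with $D=f_{\max}C_p^p/p$. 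Splitting $X^{\alpha}Y^{\beta}$ by Young's inequality against $X^2$ and $Y$ needs $\frac{\alpha}{2}+\beta\le1$, which is exactly $N\ge4$; for $N\ge5$ it is strict, so for every $a>0$ the nonlinear term is absorbed into $\frac{a}{8}X^2+\frac14Y$ up to an additive constant and $J_a$ is bounded below. The Kirchhoff quartic is indispensable: without the $X^2$ slot the condition would read $\alpha+\beta=p/2\le1$, which fails. This proves $(iii)$. For $N=4$ the exponent condition is the equality case and Young no longer gives an $a$-independent bound, so the boundedness in $(ii)$ for $a>\overline{a}_{\ast}$ is read off directly from the fibering lemma.

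Unboundedness is produced by dilation. With $v=t\,u(\lambda\,\cdot)$ and $\lambda\to0$, both $\int|v|^p$ and $\|v\|_{L^2}^2$ scale like $\lambda^{-N}$ while the Kirchhoff quartic scales like $\lambda^{4-2N}$. For $N\le3$ one has $4-2N>-N$, so the quartic is lower order and the leading coefficient $\frac12t^2\|u\|_{L^2}^2-\frac{f_{\min}}{p}t^p\int|u|^p$ is made negative by taking $t$ large (as $p>2$); hence $J_a(v)\to-\infty$ for all $a>0$, giving $(i)$. For $N=4$ the quartic also scales like $\lambda^{-4}$, so the leading coefficient is the full fibering expression $\frac{a}{4}t^4\|u\|_{D^{1,2}}^4+\frac12t^2\|u\|_{L^2}^2-\frac{f_{\min}}{p}t^p\int|u|^p$, whose minimum over $t$ is negative exactly when $a<\underline{a}_{\ast}$; the relevant supremum equals $\underline{a}_{\ast}$ because spreading replaces $\|u\|_{H^1}$ by $\|u\|_{L^2}$ in the extremal quotient. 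This completes $(ii)$. The main obstacle is the critical dimension $N=4$, where the amplitude and dilation scalings interlock and one must track the sharp constants $\underline{a}_{\ast}$ and $\overline{a}_{\ast}$, rather than the crude Gagliardo--Nirenberg and Young bounds, to separate the unbounded regime $a<\underline{a}_{\ast}$ from the positive-energy regime $a>\overline{a}_{\ast}$.
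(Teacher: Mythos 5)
Your proposal is correct and takes essentially the same route as the paper: your fibering lemma is the paper's Lemma \ref{g4} with the same threshold $\overline{a}_{\ast}$, your Gagliardo--Nirenberg/Young absorption with the exponent condition $\tfrac{\alpha}{2}+\beta\leq 1\Leftrightarrow N\geq 4$ (strict precisely for $N\geq 5$) is the mechanism of Lemma \ref{g11}, and your dilation arguments for $N\leq 3$ and for $N=4$, $0<a<\underline{a}_{\ast}$ --- with spreading replacing $\left\Vert u\right\Vert _{H^{1}}$ by $\left\Vert u\right\Vert _{L^{2}}$ in the extremal quotient --- match the paper's proofs of parts $(i)$ and $(ii)$, including the role of the auxiliary functional $I$ there. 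The one slip is the side claim that $g_{u}^{\prime }>0$ on $(0,\infty )$ when $a>\overline{a}_{\ast}$ (monotonicity of the fibering map actually requires the strictly larger threshold $\frac{p^{2/(p-2)}}{2^{p/(p-2)}}\overline{a}_{\ast}$ of Theorem \ref{t0-2}), but the conclusion you need survives via your own criterion: for $a>\overline{a}_{\ast}$ and $u\neq 0$ one has $t^{-4}g_{u}(t)\geq \frac{a-\overline{a}_{\ast}}{4}\left\Vert u\right\Vert _{D^{1,2}}^{4}>0$ for all $t>0$, whence $J_{a}(u)=g_{u}(1)>0$ pointwise, which (as in the paper itself) is the correct reading of ``$\inf >0$'', since $J_{a}(tu)\rightarrow 0$ as $t\rightarrow 0^{+}$.
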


For brevity, we sum up the main result of Theorem \ref{t0-1} with the table
below:%
\begin{equation*}
\begin{tabular}{|c|c|c|c|c|}
\hline
& $a>0$ & $0<a<\underline{a}_{\ast }$ & $0<a<\overline{a}_{\ast }$ & $a>%
\overline{a}_{\ast }$ \\ \hline
$N=1,2,3$ & $\inf J_{a}(u)=-\infty $ & - & - & - \\ \hline
$N=4$ & - & $\inf J_{a}(u)=-\infty $ & - & $\inf J_{a}(u)>0$ \\ \hline
$N\geq 5$ & $\inf J_{a}(u)>-\infty $ & - & $\inf J_{a}(u)<0$ & $\inf
J_{a}(u)>0$ \\ \hline
\end{tabular}%
\end{equation*}

Note that $\frac{p^{2/\left( p-2\right) }}{2^{p/\left( p-2\right) }}>1,$
since $2<p<2^{\ast }$ for $N\geq 4.$ From Theorem \ref{t0-1} $(ii)-(iii)$,
one can see that $\inf_{u\in H^{1}(\mathbb{R}^{N})\backslash \{0\}}J_{a}(u)>0
$ for $a>\overline{a}_{\ast }$, if $N\geq 4$. However, we obtain the
following nonexistence result.

\begin{theorem}
\label{t0-2}Suppose that $N\geq 4$ and condition $(D1)$ holds. Then for each
$a>\frac{p^{2/\left( p-2\right) }}{2^{p/\left( p-2\right) }}\overline{a}%
_{\ast },$ Eq. $(E_{a})$ does not admit any nontrivial solutions.
\end{theorem}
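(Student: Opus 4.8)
The plan is to argue by contradiction through the Nehari-type identity. Suppose that for some $a$ the equation $(E_{a})$ admits a nontrivial solution $u$. Testing the equation against $u$ itself, that is, using $\left\langle J_{a}^{\prime }(u),u\right\rangle =0$ together with $b=1$ and $\left\Vert u\right\Vert _{H^{1}}^{2}=\left\Vert u\right\Vert _{D^{1,2}}^{2}+\left\Vert u\right\Vert _{L^{2}}^{2}$, yields
\[
a\left\Vert u\right\Vert _{D^{1,2}}^{4}+\left\Vert u\right\Vert _{H^{1}}^{2}=\int_{\mathbb{R}^{N}}f(x)|u|^{p}dx .
\]
Since $u\neq 0$ forces $\left\Vert u\right\Vert _{D^{1,2}}>0$ (a nontrivial $H^{1}$-function cannot be constant), and $(D1)$ gives $\int_{\mathbb{R}^{N}}f(x)|u|^{p}dx>0$, every quantity appearing below is strictly positive.

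Next I would recast the right-hand side through $\overline{A}_{f}$. By the definition \eqref{15-1},
\[
\int_{\mathbb{R}^{N}}f(x)|u|^{p}dx=\overline{A}_{f}(u)^{(p-2)/2}\left\Vert u\right\Vert _{D^{1,2}}^{2(p-2)}\left\Vert u\right\Vert _{H^{1}}^{4-p}\leq \overline{\mathbf{A}}_{f}^{(p-2)/2}\left\Vert u\right\Vert _{D^{1,2}}^{2(p-2)}\left\Vert u\right\Vert _{H^{1}}^{4-p}.
\]
Writing $s=\left\Vert u\right\Vert _{D^{1,2}}^{2}>0$ and $t=\left\Vert u\right\Vert _{H^{1}}^{2}>0$, the identity combined with this bound becomes $as^{2}+t\leq \overline{\mathbf{A}}_{f}^{(p-2)/2}s^{p-2}t^{(4-p)/2}$. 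The heart of the matter is to bound the left-hand side from below by the same monomial $s^{p-2}t^{(4-p)/2}$ with a sharp constant. I would apply the weighted arithmetic--geometric (Young) inequality to the two positive terms $as^{2}$ and $t$ with weights $\theta _{1}=(p-2)/2$ and $\theta _{2}=(4-p)/2$, which satisfy $\theta _{1}+\theta _{2}=1$ because $2<p<4$ and are forced by matching exponents ($s^{2\theta _{1}}=s^{p-2}$, $t^{\theta _{2}}=t^{(4-p)/2}$). This gives
\[
as^{2}+t\geq \left( \frac{as^{2}}{\theta _{1}}\right) ^{\theta _{1}}\left( \frac{t}{\theta _{2}}\right) ^{\theta _{2}}=\frac{2}{4-p}\left( \frac{(4-p)a}{p-2}\right) ^{(p-2)/2}s^{p-2}t^{(4-p)/2}.
\]

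Dividing both chains of inequalities by $s^{p-2}t^{(4-p)/2}>0$ and comparing yields $\frac{2}{4-p}\left( \frac{(4-p)a}{p-2}\right) ^{(p-2)/2}\leq \overline{\mathbf{A}}_{f}^{(p-2)/2}$, which solves to $a\leq \frac{p-2}{4-p}\left( \frac{4-p}{2}\right) ^{2/(p-2)}\overline{\mathbf{A}}_{f}$. A direct computation using \eqref{15-3} shows that the right-hand side equals $\frac{p^{2/(p-2)}}{2^{p/(p-2)}}\overline{a}_{\ast }$, since the factor $p^{2/(p-2)}$ cancels and $2^{1-p/(p-2)}=2^{-2/(p-2)}$ recombines with $(4-p)^{2/(p-2)}$ into $\left( \frac{4-p}{2}\right) ^{2/(p-2)}$. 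Taking the contrapositive proves the theorem: once $a>\frac{p^{2/(p-2)}}{2^{p/(p-2)}}\overline{a}_{\ast }$, the displayed inequality cannot hold, so no nontrivial solution exists. I expect the only delicate point to be this final constant-chasing, together with verifying that the Young step is genuinely sharp—equality occurring precisely when $as^{2}=\frac{p-2}{4-p}t$—so that the threshold is exactly the stated one and cannot be lowered; the remaining steps are routine.
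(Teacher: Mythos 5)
Your proof is correct, and it reaches the paper's threshold by a more direct route than the paper's own argument. The paper proceeds through the fibering map: it solves the degenerate system $h_{a,tu}^{\prime }(1)=h_{a,tu}^{\prime \prime }(1)=0$ to obtain, for each direction $u$, the unique parameter $a(u)=\frac{p-2}{4-p}\left( \frac{4-p}{2}\right) ^{2/(p-2)}\overline{A}_{f}(u)$ at which $h_{a,u}$ has an inflection-type critical point, then invokes the (stated but not detailed) monotonicity fact that $h_{a,u}$ is increasing with no critical point for $a>a(u)$, so that $\mathbf{M}_{a}=\emptyset $ once $a>\sup_{u}a(u)=\frac{p^{2/(p-2)}}{2^{p/(p-2)}}\overline{a}_{\ast }$. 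You instead work at the putative solution itself: the Nehari identity $as^{2}+t=\int_{\mathbb{R}^{N}}f(x)|u|^{p}dx$, the tautological bound $\overline{A}_{f}(u)\leq \overline{\mathbf{A}}_{f}$, and one exponent-matched weighted AM--GM step. The two computations are secretly identical --- your equality condition $as^{2}=\frac{p-2}{4-p}t$ is exactly the paper's degenerate point $t(u)$ after rescaling, and your constant-chasing (including $2\cdot 2^{-p/(p-2)}=2^{-2/(p-2)}$) checks out, as do the positivity preliminaries ($\left\Vert u\right\Vert _{D^{1,2}}>0$ because nonzero constants do not lie in $H^{1}(\mathbb{R}^{N})$). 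Your packaging buys several things: it replaces the paper's unproved monotonicity claim with a one-line inequality; and since you only use $\left\langle J_{a}^{\prime }(u),u\right\rangle =0$, your argument in fact shows the entire Nehari manifold $\mathbf{M}_{a}$ is empty above the threshold, not merely that no solutions exist. What the paper's fibering analysis buys in exchange is the geometric picture of $h_{a,u}$ and the interpretation of the threshold via $\mathbf{M}_{a}^{0}$, both reused elsewhere in the paper. One caveat: your closing sharpness remark (that the threshold ``cannot be lowered'') would additionally require attainment or near-attainment of the supremum $\overline{\mathbf{A}}_{f}$, which is not needed for the theorem and which you correctly present as an expectation rather than a step of the proof.
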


Next, we need the following assumption on $f.$

\begin{itemize}
\item[$\left( D2\right) $] $\lim_{\left\vert x\right\vert \rightarrow \infty
}f\left( x\right) =f_{\infty }>0\ $uniformly on$\ \mathbb{R}^{N}.$
\end{itemize}

\begin{theorem}
\label{t0-3}$(i)$ Suppose that $N\geq 5$ and $f(x)\equiv f_{\infty }>0.$
Then for each $0<a<\overline{a}_{\ast },$ Eq. $(E_{a})$ has a positive
ground state solution $v_{a}^{+}\in H^{1}(\mathbb{R}^{N})$ satisfying
\begin{equation*}
J_{a}^{\infty }(v_{a}^{+})=\inf_{u\in H^{1}(\mathbb{R}^{N})\backslash
\{0\}}J_{a}^{\infty }(u)<0,
\end{equation*}%
where $J_{a}^{\infty }=J_{a}$ with $f(x)\equiv f_{\infty }.$\newline
$(ii)$ Suppose that $N\geq 5$ and conditions $(D1)-(D2)$ hold. In addition,
we assume that

\begin{itemize}
\item[$(D3)$] $\int_{\mathbb{R}^{N}}(f(x)-f_{\infty })(v_{a}^{+})^{p}dx>0,$
where $v_{a}^{+}$ is the positive solution as described in part $(i)$.
\end{itemize}

Then for each $0<a<\overline{a}_{\ast },$ Eq. $(E_{a})$ has a positive
ground state solution $u_{a}^{+}\in H^{1}(\mathbb{R}^{N})$ satisfying
\begin{equation*}
J_{a}(u_{a}^{+})=\inf_{u\in H^{1}(\mathbb{R}^{N})\backslash \{0\}}J_{a}(u)<0.
\end{equation*}
\end{theorem}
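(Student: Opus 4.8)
The plan is to obtain both positive ground states as \emph{global minimizers} of the energy on $H^{1}(\mathbb{R}^{N})\setminus\{0\}$, exploiting that Theorem~\ref{t0-1}$(iii)$ already guarantees, for $N\geq 5$ and $0<a<\overline{a}_{\ast}$, that $m_{a}^{\infty}:=\inf_{u\neq 0}J_{a}^{\infty}(u)$ and $m_{a}:=\inf_{u\neq 0}J_{a}(u)$ are finite and strictly negative. A task common to both parts is to record coercivity of $J_{a}$ (and $J_{a}^{\infty}$) on $H^{1}(\mathbb{R}^{N})$ in this range: since $N\geq 5$ forces $2<p<2^{\ast}=\tfrac{2N}{N-2}$, the Gagliardo--Nirenberg exponent $N(p-2)/4$ carried by $\|u\|_{D^{1,2}}$ stays below $N/(N-2)\leq 5/4<2$, so the quartic term $\tfrac{a}{4}\|u\|_{D^{1,2}}^{4}$ dominates the nonlinear term for large $\|u\|_{H^{1}}$. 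Hence every minimizing sequence is bounded, and the only real difficulty is the loss of compactness of the embedding $H^{1}(\mathbb{R}^{N})\hookrightarrow L^{p}(\mathbb{R}^{N})$ on the whole space, compounded by the nonlocal term.

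For part $(i)$ the problem is autonomous, so I would first reduce to radial functions by Schwarz symmetrization. Replacing a minimizing sequence $\{u_{n}\}$ by its symmetric decreasing rearrangements $\{u_{n}^{\ast}\}$ leaves $\|u_{n}\|_{L^{2}}$ and $\int f_{\infty}|u_{n}|^{p}$ unchanged while, by the P\'{o}lya--Szeg\H{o} inequality, not increasing $\|u_{n}\|_{D^{1,2}}$ and $\|u_{n}\|_{H^{1}}$; consequently $J_{a}^{\infty}(u_{n}^{\ast})\leq J_{a}^{\infty}(u_{n})$ and $\{u_{n}^{\ast}\}$ is again minimizing and radial. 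Along a subsequence $u_{n}^{\ast}\rightharpoonup v$ in $H^{1}$, and the compact radial embedding $H^{1}_{rad}(\mathbb{R}^{N})\hookrightarrow L^{p}(\mathbb{R}^{N})$ (valid for $N\geq 2$, $2<p<2^{\ast}$) gives $u_{n}^{\ast}\to v$ in $L^{p}$. Since $m_{a}^{\infty}<0$ rules out $\int f_{\infty}|u_{n}^{\ast}|^{p}\to 0$, the limit $v$ is nonzero; weak lower semicontinuity of $u\mapsto\tfrac{a}{4}\|u\|_{D^{1,2}}^{4}$ and of $u\mapsto\tfrac12\|u\|_{H^{1}}^{2}$, together with strong $L^{p}$ convergence, then yield $J_{a}^{\infty}(v)\leq\liminf J_{a}^{\infty}(u_{n}^{\ast})=m_{a}^{\infty}$, so $v$ attains $m_{a}^{\infty}$. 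As $m_{a}^{\infty}<0=J_{a}^{\infty}(0)$, the minimizer $v$ is interior and hence a weak solution; $v=v^{\ast}\geq 0$, and elliptic regularity with the strong maximum principle upgrade this to $v_{a}^{+}:=v>0$.

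For part $(ii)$ symmetrization is unavailable because $f$ is not radial, so I would compare $J_{a}$ with the problem at infinity. Condition $(D3)$ gives $J_{a}(v_{a}^{+})=J_{a}^{\infty}(v_{a}^{+})-\tfrac1p\int(f-f_{\infty})(v_{a}^{+})^{p}\,dx<m_{a}^{\infty}$, hence the \emph{strict} inequality $m_{a}<m_{a}^{\infty}(<0)$. Take a bounded minimizing sequence $u_{n}\rightharpoonup u_{0}$ in $H^{1}$. The first step is to exclude $u_{0}=0$: if $u_{0}=0$ then $u_{n}\to 0$ in $L^{p}_{loc}$, and since $f-f_{\infty}\to 0$ at infinity one gets $\int (f-f_{\infty})|u_{n}|^{p}\,dx\to 0$, so $J_{a}(u_{n})=J_{a}^{\infty}(u_{n})+o(1)\geq m_{a}^{\infty}+o(1)$, forcing $m_{a}\geq m_{a}^{\infty}$ and contradicting $(D3)$. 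With $u_{0}\neq 0$ in hand, I would run a Brezis--Lieb splitting on $w_{n}:=u_{n}-u_{0}\rightharpoonup 0$: using $f-f_{\infty}\to 0$ to replace $f$ by $f_{\infty}$ on the $w_{n}$-part, the energy decomposes as $J_{a}(u_{n})=J_{a}(u_{0})+J_{a}^{\infty}(w_{n})+\tfrac{a}{2}\|\nabla u_{0}\|_{L^{2}}^{2}\|\nabla w_{n}\|_{L^{2}}^{2}+o(1)$. The aim is to show $w_{n}\to 0$ strongly, whence $u_{0}$ attains $m_{a}$, is an interior minimizer, and therefore yields a positive solution $u_{a}^{+}$ exactly as in part $(i)$.

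The main obstacle is precisely this last compactness step, where the nonlocal term is genuinely in the way. Unlike the local case, the quartic term is \emph{super}additive: the splitting produces the nonnegative cross term $\tfrac{a}{2}\|\nabla u_{0}\|_{L^{2}}^{2}\|\nabla w_{n}\|_{L^{2}}^{2}$, and, worse, the Kirchhoff coefficient $a\|\nabla u_{n}\|_{L^{2}}^{2}+1$ couples the retained profile $u_{0}$ and any bump of $w_{n}$ escaping to infinity, so that such a bump effectively solves an equation with the \emph{enlarged} coefficient $a(\|\nabla u_{0}\|_{L^{2}}^{2}+\lim\|\nabla w_{n}\|_{L^{2}}^{2})+1$ rather than the bare problem at infinity. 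I expect to handle this by combining a concentration--compactness (vanishing/dichotomy) analysis with the strict inequality $m_{a}<m_{a}^{\infty}$: vanishing is excluded because $m_{a}<0$, while the superadditive cross term together with $m_{a}<m_{a}^{\infty}$ must be shown to render any nontrivial escape strictly energetically unfavourable, forcing $\lim\|\nabla w_{n}\|_{L^{2}}=0$ and $\|w_{n}\|_{L^{2}}\to 0$. A second, related difficulty is that, even after $w_{n}\to 0$ is established, one must pass to the limit in the critical point condition; this again requires the strong gradient convergence $\|\nabla u_{n}\|_{L^{2}}\to\|\nabla u_{0}\|_{L^{2}}$ so that the nonlocal coefficient converges to the correct value $a\|\nabla u_{0}\|_{L^{2}}^{2}+1$ and $u_{0}$ genuinely solves $(E_{a})$.
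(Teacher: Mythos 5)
For part $(i)$ your route is correct but genuinely different from the paper's. The paper never symmetrizes: it takes a bounded minimizing sequence (Lemma \ref{g12}) made into a $(PS)$--sequence via Ekeland, restores compactness by translations through the concentration--compactness analysis of the appendix (Theorem \ref{g10}), proves $v_a^+\neq 0$ from $\langle (J_a^\infty)'(v_n),v_n\rangle=o(1)$, and then upgrades weak to strong convergence by a fibering-map comparison: assuming $\|v_a^+\|_{H^1}<\liminf\|v_n\|_{H^1}$ it produces $t_a<1$ with $(h^{\infty}_{a,v_a^+})'(t_a)=0$, shows $h^{\infty}_{a,v_n}$ is increasing on $(t_a,1)$, and derives $J_a^\infty(t_av_a^+)<\inf_{u\neq 0}J_a^\infty(u)$, a contradiction. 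Your rearrangement argument (pass to $|u_n|$, then $u_n^{\ast}$; P\'olya--Szeg\H{o} makes the sequence again minimizing; compact embedding $H^1_{rad}\hookrightarrow L^p$ for $N\geq 2$; weak lower semicontinuity of the quadratic and quartic terms; $v\neq 0$ since $m_a^\infty<0$) is shorter and perfectly valid here, at the price of exploiting autonomy --- which is all part $(i)$ needs. (One small slip: for $N\geq 5$ one has $N/(N-2)\leq 5/3$, not $\leq 5/4$; the bound $<2$ is what matters and is true, and coercivity is exactly the paper's Lemma \ref{g11}.)

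For part $(ii)$ your strict inequality $m_a<m_a^\infty$ from $(D3)$, the exclusion of $u_0=0$, and the Brezis--Lieb splitting with the cross term $\tfrac{a}{2}\|\nabla u_0\|_{L^2}^2\|\nabla w_n\|_{L^2}^2$ are all correct and parallel the paper. The genuine gap is precisely the step you defer: the mechanism you propose --- superadditive cross term plus $m_a<m_a^\infty$ renders escape unfavourable --- does not close. If a profile $w\neq 0$ escapes, your own splitting gives, in the limit, the \emph{identity} $m_a=J_a(u_0)+J_a^\infty(w)+\tfrac{a}{2}\|\nabla u_0\|_{L^2}^2\|\nabla w\|_{L^2}^2$; combined with $J_a(u_0)\geq m_a$ this yields only $J_a^\infty(w)\leq -\tfrac{a}{2}\|\nabla u_0\|_{L^2}^2\|\nabla w\|_{L^2}^2<0$, which is entirely consistent with $J_a^\infty(w)\geq m_a^\infty$ because $m_a^\infty<0$. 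At a negative level both the retained piece and the escaping piece may carry negative energy, so this accounting produces no contradiction; nor can one hope to beat the infimum by a competitor, since the split configuration $u_0+w(\cdot-x_n)$ attains $m_a$ asymptotically. (Also, your aside that ``vanishing is excluded because $m_a<0$'' has the roles reversed: in the analysis of $w_n$, vanishing is the \emph{favourable} alternative --- it forces $\|w_n\|_{H^1}\to 0$ through the splitting --- while non-vanishing is what must be defeated.)

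Moreover, on a bare minimizing sequence the escaping bump need not solve any limit equation, so the enlarged-coefficient structure you correctly anticipate is not even available to you. The paper's repair is structural: it first applies Ekeland to get a bounded $(PS)$--sequence, then invokes the global compactness result (Proposition \ref{l0}), in which every escaping profile $w^i$ is a nontrivial solution of $-(aA^2+1)\Delta u+u=f_\infty|u|^{p-2}u$ with $A^2=\|\nabla u_0\|_{L^2}^2+\sum_i\|\nabla w^i\|_{L^2}^2$; each $w^i$ therefore lies on the Nehari manifold of the frozen-coefficient functional $I_a^\infty$ and has strictly positive $I_a^\infty$--energy, and the energy identity carries the extra term $\beta+\tfrac{aA^4}{4}=I_a(u_0)+\sum_i I_a^\infty(w^i)$. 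Lemma \ref{lem1} extracts the contradiction with $\beta<\inf_{u\neq0}J_a^\infty(u)<0$ from these quantitative relations, after which the argument concludes as you intend. To repair your version you would need either to run Ekeland first and prove the analogue of Lemma \ref{lem1}, or to establish a genuine strict interaction estimate for split configurations --- neither of which follows from $m_a<m_a^\infty$ alone.
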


In order to obtain the existence of positive solution with positive energy
for Eq. $(E_{a}),$ it is necessary to introduce the filtration of the Nehari
manifold. That is,
\begin{equation*}
\mathbf{M}_{a}(c)=\{u\in \mathbf{M}_{a}:J_{a}(u)<c\}\text{ for some }c>0,
\end{equation*}%
where $\mathbf{M}_{a}=\{u\in H^{1}(\mathbb{R}^{N})\backslash
\{0\}:\left\langle J_{a}^{\prime }(u),u\right\rangle =0\}$ is the Nehari
manifold. We will show that $\mathbf{M}_{a}(c)$ can be divided into two
parts
\begin{equation*}
\mathbf{M}_{a}^{(1)}(c)=\{u\in \mathbf{M}_{a}(c):\Vert u\Vert
_{H^{1}}<C_{1}\}\ \text{and}\ \mathbf{M}_{a}^{(2)}(c)=\{u\in \mathbf{M}%
_{a}(c):\Vert u\Vert _{H^{1}}>C_{2}\},
\end{equation*}%
in which each local minimizer of the functional $J_{a}$ is a critical point
of $J_{a}$ in $H^{1}(\mathbb{R}^{N})$. Our approach is to minimize the
energy functional $J_{a}$ on $\mathbf{M}_{a}^{(1)}(c)$, where $J_{a}$ is
bounded below and the minimizing sequence is bounded. In fact, such approach
has been applied in the study of Schrodinger-Poisson systems in $\mathbb{R}%
^{3}$ by us (see \cite{SWF1,SWF2}).

We assume that $f$ satisfies the following condition:

\begin{itemize}
\item[$\left( D4\right) $] $f_{\max }=\sup_{x\in \mathbb{R}^{N}}f\left(
x\right) <\frac{f_{\infty }}{D(p)^{(p-2)/2}},$ where%
\begin{equation*}
D(p)=\left\{
\begin{array}{ll}
\left( \frac{4-p}{2}\right) ^{1/(p-2)}, & \text{ if }2<p\leq 3, \\
\frac{1}{2}, & \text{ if }3<p<\min \{4,2^{\ast }\}.%
\end{array}%
\right.
\end{equation*}
\end{itemize}

\begin{remark}
\label{r-1} By a direct calculation, we obtain that for $2<p<4,$
\begin{equation*}
\frac{1}{2}\leq D(p)<\frac{1}{\sqrt{e}}\text{ and }D(p)\left( \frac{2}{4-p}%
\right) ^{2/(p-2)}>1.
\end{equation*}
\end{remark}

Let
\begin{equation}
\Lambda _{0}=\left[ 1-D(p)\left( \frac{f_{\max }}{f_{\infty }}\right)
^{2/(p-2)}\right] \left( \frac{f_{\infty }}{S_{p}^{p}}\right) ^{2/(p-2)},
\label{1-7}
\end{equation}%
where $S_{p}$ is the best Sobolev constant for the embedding of $H^{1}(%
\mathbb{R}^{N})$ in $L^{p}(\mathbb{R}^{N}).$ In particular, if $f(x)\equiv
f_{\infty },$ then equality (\ref{1-7}) becomes
\begin{equation*}
\Lambda _{0}=(1-D(p))\left( \frac{f_{\infty }}{S_{p}^{p}}\right) ^{2/(p-2)}.
\end{equation*}%
Set%
\begin{equation*}
\Lambda =\left\{
\begin{array}{ll}
\frac{4-p}{2}\left( \frac{f_{\infty }(4-p)}{2pS_{p}^{p}}\right) ^{2/(p-2)} &
\text{ if }N=1,2,3, \\
\min \left\{ \frac{p-2}{2(4-p)}\left( \frac{4-p}{p}\right) ^{2/(p-2)}\Lambda
_{0},\overline{a}_{\ast }\right\} & \text{ if }N\geq 4.%
\end{array}%
\right.
\end{equation*}%
Let $w_{0}$ be the unique positive solution of the following Schr\"{o}dinger
equation%
\begin{equation}
\begin{array}{ll}
-\Delta u+u=f_{\infty }|u|^{p-2}u & \text{ in }\mathbb{R}^{N}.%
\end{array}
\tag*{$\left( E_{0}^{\infty }\right) $}
\end{equation}%
From \cite{K}, we see that
\begin{equation}
\left\Vert w_{0}\right\Vert _{H^{1}}^{2}=\int_{\mathbb{R}^{N}}f_{\infty
}|w_{0}|^{p}dx=\left( \frac{S_{p}^{p}}{f_{\infty }}\right) ^{2/(p-2)},
\label{1-8}
\end{equation}%
and
\begin{equation*}
J_{0}^{\infty }(w_{0})=\frac{p-2}{2p}\left( \frac{S_{p}^{p}}{f_{\infty }}%
\right) ^{2/(p-2)},
\end{equation*}%
where $J_{0}^{\infty }$ is the energy functional of equation $(E_{0}^{\infty
})$ in $H^{1}(\mathbb{R}^{N})$ in the form
\begin{equation*}
J_{0}^{\infty }(u)=\frac{1}{2}\left\Vert u\right\Vert _{H^{1}}^{2}-\frac{1}{p%
}\int_{\mathbb{R}^{N}}f_{\infty }|u|^{p}dx.
\end{equation*}

We now summarize the second part of our main results as follows.

\begin{theorem}
\label{t1}Assume that $f(x)\equiv f_{\infty }>0.$ Then the following
statements are true.\newline
$\left( i\right) $ If $N\geq 1,$ then for each $0<a<\Lambda ,$ Eq. $(E_{a})$
has at least a positive solution $v_{a}^{-}\in H^{1}(\mathbb{R}^{N})$
satisfying
\begin{equation*}
\left\Vert v_{a}^{-}\right\Vert _{H^{1}}<\left( \frac{2S_{p}^{p}}{f_{\infty
}(4-p)}\right) ^{1/(p-2)}\text{ and }J_{a}^{\infty }(v_{a}^{-})>\frac{p-2}{2p%
}\left( \frac{S_{p}^{p}}{f_{\infty }}\right) ^{2/(p-2)}>0;
\end{equation*}%
\newline
$\left( ii\right) $ If $1\leq N\leq 4,$ then for each $0<a<\Lambda ,$ Eq. $%
(E_{a})$ has a unique positive solution $v_{a}^{-}\in H^{1}(\mathbb{R}^{N}),$
which is radially symmetric;\newline
$\left( iii\right) $ If $N\geq 5,$ then for each $0<a<\Lambda ,$ Eq. $%
(E_{a}) $ has at least two positive solutions $v_{a}^{-},v_{a}^{+}\in H^{1}(%
\mathbb{R}^{N})$ satisfying%
\begin{equation*}
\left\Vert v_{a}^{-}\right\Vert _{H^{1}}<\left( \frac{2S_{p}^{p}}{f_{\infty
}(4-p)}\right) ^{1/(p-2)}<\sqrt{2}\left( \frac{2S_{p}^{p}}{f_{\infty }(4-p)}%
\right) ^{1/(p-2)}<\left\Vert v_{a}^{+}\right\Vert _{H^{1}}
\end{equation*}%
and $J_{a}^{\infty }(v_{a}^{+})<0<J_{a}^{\infty }(v_{a}^{-}).$ In
particular, $v_{a}^{+}$ is a ground state solution of Eq. $(E_{a}).$
\end{theorem}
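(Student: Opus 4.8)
Because $f\equiv f_{\infty}$ is constant, the nonlocal coefficient is frozen along any single solution, and this reduces the whole problem to a scalar equation. If $u>0$ solves $(E_{a})$, set $A:=1+a\|u\|_{D^{1,2}}^{2}>1$; then $u$ satisfies $-A\Delta u+u=f_{\infty}u^{p-1}$ in $\mathbb{R}^{N}$. By the Gidas--Ni--Nirenberg symmetry theorem and the uniqueness of the positive solution of $(E_{0}^{\infty})$, every such $u$ equals, up to translation, the rescaled ground state $u=w_{0}(\cdot/\sqrt{A})$. Writing $D:=\|w_{0}\|_{D^{1,2}}^{2}$ and using $\|w_{0}(\cdot/\sqrt A)\|_{D^{1,2}}^{2}=A^{N/2-1}D$, the definition of $A$ becomes the self-consistency relation
\begin{equation*}
g(A):=(A-1)\,A^{1-N/2}=aD,\qquad A>1 .
\end{equation*}
Thus positive solutions of $(E_{a})$ correspond bijectively (modulo translation) to roots $A>1$ of $g(A)=aD$, and everything follows from counting and locating these roots.

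The behaviour of $g$ on $(1,\infty)$ is elementary: $g'(A)=A^{-N/2}\big[(2-\tfrac{N}{2})A+(\tfrac{N}{2}-1)\big]$. For $1\le N\le 3$ the bracket is positive, so $g$ increases from $0$ to $+\infty$ and there is exactly one root for every $a>0$. For $N=4$, $g(A)=1-1/A$ increases from $0$ to $1$, giving a unique root precisely when $aD<1$, i.e. $a<\overline a_{\ast}$. For $N\ge 5$, $g$ has a single interior maximum at $A_{\max}=\frac{N-2}{N-4}$ with $g(A_{\max})=\frac{2}{N-4}\big(\frac{N-4}{N-2}\big)^{(N-2)/2}$, so $g(A)=aD$ has exactly two roots $A_{1}<A_{\max}<A_{2}$ whenever $aD<g(A_{\max})$, i.e. $a<\overline a_{\ast}$. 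Since $\Lambda\le\overline a_{\ast}$ for $N\ge 4$, the range $0<a<\Lambda$ lies inside each existence window, and this already produces the qualitative skeleton: a single positive solution when $1\le N\le 4$ (parts $(i)$--$(ii)$, with radial symmetry coming from Gidas--Ni--Nirenberg) and two positive solutions when $N\ge 5$ (part $(iii)$). For $1\le N\le 3$ this lone solution coexists with $\inf J_{a}=-\infty$ of Theorem~\ref{t0-1}, since it is a positive-energy mountain-pass point rather than a global minimiser.

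It remains to attach the quantitative data to the roots. The smaller root $A_{1}$ yields $v_{a}^{-}:=w_{0}(\cdot/\sqrt{A_{1}})$. Writing $u_{A}:=w_{0}(\cdot/\sqrt A)$, a computation of the second derivative of $J_{a}^{\infty}$ along the scaling direction $s\mapsto su_{A}$ at $u_{A}$, using the Nehari identity together with the Pohozaev identity for $w_{0}$, gives the clean formula
\begin{equation*}
\tfrac{d^{2}}{ds^{2}}J_{a}^{\infty}(su_{A})\big|_{s=1}=A^{N/2-1}\,\tfrac{2(N-p)}{N}\,D\,(A-A^{\dagger}),\qquad A^{\dagger}:=\tfrac{N}{N-p},
\end{equation*}
so for $A<A^{\dagger}$ the point $s=1$ is the local maximum of the fibre $s\mapsto J_{a}^{\infty}(su_{A})$ and the fibre is increasing on $(0,1)$. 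Once $0<a<\Lambda$ forces $A_{1}<A^{\dagger}$, I obtain the energy bound by comparison with the Schr\"odinger problem: writing $t_{0}<1$ for the Nehari scaling of $v_{a}^{-}$ in $J_{0}^{\infty}$,
\begin{equation*}
J_{a}^{\infty}(v_{a}^{-})\ge J_{a}^{\infty}(t_{0}v_{a}^{-})>J_{0}^{\infty}(t_{0}v_{a}^{-})=\max_{s>0}J_{0}^{\infty}(s\,v_{a}^{-})\ge J_{0}^{\infty}(w_{0})=\tfrac{p-2}{2p}\big(S_{p}^{p}/f_{\infty}\big)^{2/(p-2)},
\end{equation*}
which is exactly the lower bound in $(i)$; the accompanying norm bound $\|v_{a}^{-}\|_{H^{1}}<(2S_{p}^{p}/(f_{\infty}(4-p)))^{1/(p-2)}$, i.e. $v_{a}^{-}\in\mathbf{M}_{a}^{(1)}$, is the statement that $A_{1}$ is small, which again follows from $a<\Lambda$ through the increasing map $A\mapsto\|w_{0}(\cdot/\sqrt A)\|_{H^{1}}^{2}=A^{N/2-1}D+A^{N/2}\|w_{0}\|_{L^{2}}^{2}$. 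For $N\ge 5$ the larger root $A_{2}>A_{\max}>A^{\dagger}$ gives the second solution, which I identify with the negative-energy ground state $v_{a}^{+}$ supplied by Theorem~\ref{t0-3}$(i)$; then $J_{a}^{\infty}(v_{a}^{+})<0<J_{a}^{\infty}(v_{a}^{-})$ makes the two solutions distinct and pins $v_{a}^{+}$ as the ground state.

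The main obstacle is not the existence count but the sharp quantitative separation, and this is precisely what the refined threshold $\Lambda$ (through $\Lambda_{0}$ and the constant $D(p)$ of Remark~\ref{r-1}) is built to deliver. Two inequalities must be read off the definition of $\Lambda$: first, that $0<a<\Lambda$ confines the small root to $A_{1}<A^{\dagger}=\frac{N}{N-p}$, so that $v_{a}^{-}$ genuinely sits on the increasing part of its fibre and the energy comparison above is valid (note $A^{\dagger}<A_{\max}$ for every subcritical $p$, so the crude window $a<\overline a_{\ast}$ is not enough); and second, for $N\ge 5$, the gap estimate $\|v_{a}^{-}\|_{H^{1}}<\sqrt{2}\,(2S_{p}^{p}/(f_{\infty}(4-p)))^{1/(p-2)}<\|v_{a}^{+}\|_{H^{1}}$, which via $A\mapsto\|w_{0}(\cdot/\sqrt A)\|_{H^{1}}$ amounts to forcing $A_{1}$ and $A_{2}$ onto well-separated sides of the fold $A_{\max}$. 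Carrying out this constant-chasing with $\Lambda_{0}$ and $D(p)$ is the delicate step. If instead one prefers to build $v_{a}^{-}$ variationally as a minimiser of $J_{a}^{\infty}$ on $\mathbf{M}_{a}^{(1)}(c)$ rather than reading it off the reduction, the only extra difficulty is the loss of compactness from translation invariance, handled in the usual way by passing to radial functions or by excluding vanishing through a concentration--compactness argument.
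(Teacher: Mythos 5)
Your skeleton is sound and takes a genuinely different route from the paper's main construction. You obtain everything from the Azzollini-type rescaling, turning positive solutions (mod translation) into roots $A>1$ of $(A-1)A^{1-N/2}=aD$, $D=\|w_{0}\|_{D^{1,2}}^{2}$, whereas the paper builds $v_{a}^{-}$ variationally (Ekeland on the filtered Nehari manifold $\mathbf{M}_{a}^{\infty ,(1)}$, with the test-function estimate $(\ref{27})$ placing the level below the cutoff and compactness supplied by Theorem \ref{g10}) and imports $v_{a}^{+}$ from Theorem \ref{t0-3}$(i)$. The paper uses your reduction only in part $(ii)$: its function $y(K)=(K-b)^{2}/K^{N-2}$ is exactly $g^{2}$ with $b=1$, and its monotonicity for $1\leq N\leq 4$ is your root-uniqueness. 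I checked your computations: the self-consistency relation, $g^{\prime}$, the fold value $g(A_{\max })=\frac{2}{N-4}\left( \frac{N-4}{N-2}\right) ^{(N-2)/2}$, the fibre formula $h^{\prime \prime }(1)=A^{N/2-1}\frac{2(N-p)}{N}D\,(A-A^{\dagger })$ (which indeed follows from the Nehari identity plus the Pohozaev splitting of $\left\Vert w_{0}\right\Vert _{H^{1}}^{2}$ into $D$ and $\|w_{0}\|_{L^{2}}^{2}$), and $A^{\dagger }<A_{\max }$ being equivalent to $p<2^{\ast }$ are all correct; your closing energy-comparison chain is literally the paper's final display in the proof of $(i)$. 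Your route even buys exact multiplicity (exactly one, resp.\ two, solutions), which the paper does not claim.

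However, there is a genuine gap: every quantitative assertion of the theorem sits exactly in the steps you defer as ``constant-chasing.'' First, the bound $J_{a}^{\infty }(v_{a}^{-})>\frac{p-2}{2p}(S_{p}^{p}/f_{\infty })^{2/(p-2)}$ requires $A_{1}<A^{\dagger }$ whenever $N>p$ (so for all $N\geq 4$, and for $N=3$ with $2<p<3$); you assert $a<\Lambda $ forces this, but give no derivation, and it is not evident that the paper's $\Lambda $ — engineered through $\Lambda _{0}$ and $D(p)$ for the variational filtration — delivers it without a new estimate of the form $A_{1}-1\leq C\,aD$ with explicit constants. The same applies to $\left\Vert v_{a}^{-}\right\Vert _{H^{1}}<\left( 2S_{p}^{p}/(f_{\infty }(4-p))\right) ^{1/(p-2)}$ via $A\mapsto A^{N/2-1}D+A^{N/2}\|w_{0}\|_{L^{2}}^{2}$, and to the $N=4$ existence window, where your scheme needs $aD<1$ for $a<\Lambda $. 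Second, your identifications ``$aD<1$, i.e.\ $a<\overline{a}_{\ast }$'' and ``$aD<g(A_{\max })$, i.e.\ $a<\overline{a}_{\ast }$'' are unproven and almost certainly false as equivalences: $\overline{a}_{\ast }$ is the threshold where $\inf J_{a}$ changes sign, while positive-energy critical points persist beyond it — Theorem \ref{t0-2} guarantees nonexistence only for $a>\frac{p^{2/(p-2)}}{2^{p/(p-2)}}\overline{a}_{\ast }>\overline{a}_{\ast }$ — so the fold lies strictly above $\overline{a}_{\ast }$ in general. You only need the one-sided implication, which for $N\geq 5$ can be rescued from Theorem \ref{t0-3}$(i)$ (a solution exists for $a<\overline{a}_{\ast }$, hence a root exists), but that must be said. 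Finally, note the $\sqrt{2}$-separation does not require your monotone-map estimates at all: since $J_{a}^{\infty }(v_{a}^{+})<0$, the decomposition $(\ref{4-4})$ together with Lemma \ref{g1} forces $v_{a}^{+}\in \mathbf{M}_{a}^{\infty ,(2)}$, hence $\left\Vert v_{a}^{+}\right\Vert _{H^{1}}>D_{2}$. Until the $A_{1}$-estimates are actually carried out (or you hybridize, using the reduction for counting and the paper's filtration for the bounds, as the paper itself does in part $(ii)$), the proof is incomplete precisely on the theorem's quantitative claims.
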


With the aid of Theorem \ref{t1}, we obtain the following results in the
nonautnomous case.

\begin{theorem}
\label{t2}Suppose that $N\geq 1$ and conditions $(D1)-(D2),(D4)$ hold. In
addition, we assume that

\begin{itemize}
\item[$(D5)$] $\int_{\mathbb{R}^{N}}(f(x)-f_{\infty })(v_{a}^{-})^{p}dx>0,$
where $v_{a}^{-}$ is the positive solution as described in Theorem \ref{t1}.
\end{itemize}

Then for each $0<a<\Lambda ,$ Eq. $(E_{a})$ has at least a positive solution
$u_{a}^{-}\in H^{1}(\mathbb{R}^{N})$ satisfying
\begin{equation*}
\left\Vert u_{a}^{-}\right\Vert _{H^{1}}<\left( \frac{2S_{p}^{p}}{f_{\max
}\left( 4-p\right) }\right) ^{1/(p-2)}\text{ and }J_{a}(u_{a}^{-})>\frac{p-2%
}{4p}\left( \frac{S_{p}^{p}}{f_{\max }}\right) ^{2/(p-2)}.
\end{equation*}
\end{theorem}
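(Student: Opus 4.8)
The plan is to realize $u_a^-$ as a minimizer of $J_a$ over the low-norm piece of the Nehari manifold, following the filtration method sketched before the statement and used in our earlier work on Schr\"{o}dinger--Poisson systems. First, for $u\in H^{1}(\mathbb{R}^{N})\setminus\{0\}$ I would analyze the \emph{fibering map} $\phi_u(t)=J_a(tu)$, $t>0$. Computing $\phi_u'(t)$ and dividing by $t$, I would study $\psi_u(t)=at^{2}\Vert u\Vert_{D^{1,2}}^{4}+\Vert u\Vert_{H^{1}}^{2}-t^{p-2}\int_{\mathbb{R}^{N}}f|u|^{p}dx$; since $2<p<4$ this is positive at $t=0$, tends to $+\infty$, and for $0<a<\Lambda$ has exactly two zeros $t_a^-(u)<t_a^+(u)$, so that $\phi_u$ has a local maximum at $t_a^-$ and a local minimum at $t_a^+$. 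This yields the decomposition $\mathbf{M}_a=\mathbf{M}_a^{(1)}\cup\mathbf{M}_a^{(2)}$ into the low-norm part (the local maxima $t_a^-$) and the high-norm part (the local minima). Using $f(x)\le f_{\max}$ and the Sobolev inequality with constant $S_p$, I would establish the uniform bound $\Vert u\Vert_{H^{1}}<\left(\frac{2S_p^{p}}{f_{\max}(4-p)}\right)^{1/(p-2)}=:C_1$ on $\mathbf{M}_a^{(1)}$ together with the positive lower bound $J_a(u)\ge\frac{p-2}{4p}\left(\frac{S_p^{p}}{f_{\max}}\right)^{2/(p-2)}$, so that $\alpha_a:=\inf_{\mathbf{M}_a^{(1)}(c)}J_a$ is well defined and strictly positive and every minimizing sequence is automatically bounded.

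Next I would take a minimizing sequence $\{u_n\}\subset\mathbf{M}_a^{(1)}(c)$; by $(D1)$ we may assume $u_n\ge 0$, and boundedness gives $u_n\rightharpoonup u_a^-$ in $H^{1}(\mathbb{R}^{N})$ along a subsequence. The crux is to prove $u_a^-\neq 0$ and strong convergence, which is where $(D2)$, $(D4)$ and $(D5)$ enter. I would compare $\alpha_a$ with the level $m_a^{\infty}=J_a^{\infty}(v_a^-)$ of the problem at infinity: projecting the autonomous solution $v_a^-$ of Theorem \ref{t1} onto $\mathbf{M}_a^{(1)}$ (condition $(D4)$ controlling $f_{\max}/f_\infty$ guarantees the projection stays in the low-norm part) and using $(D5)$, namely $J_a(tv_a^-)-J_a^{\infty}(tv_a^-)=-\frac{t^{p}}{p}\int_{\mathbb{R}^{N}}(f-f_\infty)(v_a^-)^{p}dx<0$, I would establish the strict inequality $\alpha_a<m_a^{\infty}$. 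If $u_a^-=0$, then since $f\to f_\infty$ at infinity by $(D2)$ the functional asymptotically sees only $f_\infty$, forcing $\liminf J_a(u_n)\ge m_a^{\infty}>\alpha_a$, a contradiction; hence $u_a^-\neq 0$. A Brezis--Lieb splitting combined with the same strict gap rules out the remaining escape of mass to infinity and yields $u_n\to u_a^-$ strongly, so $\alpha_a$ is attained at $u_a^-$.

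Finally, I would verify that a minimizer of $J_a$ on $\mathbf{M}_a^{(1)}$ is a genuine critical point: because $\mathbf{M}_a^{(1)}$ consists of local maxima of the fibers, the strict second-order condition $\phi_u''(t_a^-)<0$ holds away from the degenerate set where the two Nehari branches meet, and the standard natural-constraint (Lagrange multiplier) argument gives $J_a'(u_a^-)=0$. Positivity $u_a^->0$ follows from $u_a^-\ge 0$, $u_a^-\neq 0$ and the strong maximum principle after elliptic regularity, while the stated norm and energy bounds are inherited from membership in $\mathbf{M}_a^{(1)}(c)$ and the lower bound above. The main obstacle is the compactness step: recovering strong convergence on the unbounded domain $\mathbb{R}^{N}$, for which the strict energy gap $\alpha_a<m_a^{\infty}$ supplied by $(D5)$, together with the behaviour of $f$ at infinity from $(D2)$, is indispensable.
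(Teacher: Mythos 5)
Your overall route is the paper's own: filter the Nehari manifold into a low-norm part $\mathbf{M}_a^{(1)}$ and a high-norm part $\mathbf{M}_a^{(2)}$, get the uniform bound $\Vert u\Vert_{H^1}<D_1<\left(\frac{2S_p^p}{f_{\max}(4-p)}\right)^{1/(p-2)}$ and the lower energy bound $\frac{p-2}{4p}\left(\frac{S_p^p}{f_{\max}}\right)^{2/(p-2)}$ from Lemma \ref{g1} and $(\ref{4-1})$, project the autonomous solution $v_a^-$ onto $\mathbf{M}_a^{(1)}$ and use $(D5)$ to obtain the strict gap $\alpha_a^-<\alpha_a^{\infty,-}$ (this is exactly Lemma \ref{m3}), and conclude with the natural-constraint argument of Lemma \ref{g2}. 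Two smaller imprecisions first: the claim that for $0<a<\Lambda$ the fiber function $\psi_u$ has exactly two zeros for \emph{every} $u$ is false — two zeros exist only when $\int_{\mathbb{R}^N}f(x)|u|^p dx$ is large relative to $\Vert u\Vert_{H^1}^p$ (the hypothesis of Lemma \ref{g6}, resp. Lemma \ref{g15} for $N\geq 4$); this is harmless since you only need the projection of the specific function $v_a^-$, for which the Nehari identity and $(D5)$ supply the hypothesis. Also, your splitting argument requires a genuine $(PS)$-sequence with $J_a'(u_n)\to 0$ in $H^{-1}$, which a bare minimizing sequence on the constraint does not provide; the paper inserts Proposition \ref{g8} (Ekeland plus the implicit-function deformation of Lemma \ref{g9}) precisely to produce it, and you skip this step.

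The genuine gap is in your compactness mechanism. For Kirchhoff functionals the naive Brezis--Lieb splitting you invoke is false: if $u_n=u_0+w_n$ with $w_n\rightharpoonup 0$, then $\left(\int_{\mathbb{R}^N}|\nabla u_n|^2dx\right)^2$ carries the nonvanishing cross term $2\int_{\mathbb{R}^N}|\nabla u_0|^2dx\int_{\mathbb{R}^N}|\nabla w_n|^2dx$, so $J_a(u_n)\neq J_a(u_0)+J_a^{\infty}(w_n)+o(1)$. Worse, escaping bubbles do not solve the autonomous Kirchhoff equation $(E_a^{\infty})$ at all: as Proposition \ref{l0} records, they solve the modified equation $-(aA^2+1)\Delta w+w=f_{\infty}|w|^{p-2}w$, where $A^2$ is the \emph{total} limiting gradient mass, and the energy identity takes the corrected form $\beta+\frac{aA^4}{4}=I_a(u_0)+\sum_i I_a^{\infty}(w^i)$ with the modified functionals $I_a,I_a^{\infty}$. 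Consequently you cannot compare the levels of escaping profiles with $m_a^{\infty}=\alpha_a^{\infty,-}$ by treating them as critical points of $J_a^{\infty}$, and the assertion that $u_a^-=0$ forces $\liminf J_a(u_n)\geq m_a^{\infty}$ "because the functional asymptotically sees only $f_{\infty}$" does not follow as stated — note $J_a^{\infty}$ is even unbounded below when $N\leq 4$, so no such bound can come from the functional alone without the $(PS)$/constraint structure. This is exactly why the paper routes the argument through Proposition \ref{l0} and its corollary Lemma \ref{lem2} (compactness of $(PS)_\beta$-sequences for $\beta<\alpha_a^{\infty,-}$) rather than through a direct Brezis--Lieb decomposition; to repair your sketch you must replace the splitting step by that global-compactness result (or by a concentration-compactness argument on measures as in the paper's Appendix for the autonomous case).
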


\begin{theorem}
\label{t3}Suppose that $N\geq 5$ and conditions $(D1)-(D5)$ hold. Then for
each $0<a<\Lambda ,$ Eq. $(E_{a})$ has at least two positive solutions $%
u_{a}^{-},u_{a}^{+}\in H^{1}(\mathbb{R}^{N})$ satisfying
\begin{equation*}
\left\Vert u_{a}^{-}\right\Vert _{H^{1}}<\left( \frac{2S_{p}^{p}}{f_{\max
}(4-p)}\right) ^{1/(p-2)}<\sqrt{2}\left( \frac{2S_{p}^{p}}{f_{\max }(4-p)}%
\right) ^{1/(p-2)}<\left\Vert u_{a}^{+}\right\Vert _{H^{1}}
\end{equation*}%
and
\begin{equation*}
J_{a}(u_{a}^{+})<0<\frac{p-2}{4p}\left( \frac{S_{p}^{p}}{f_{\max }}\right)
^{2/(p-2)}<J_{a}(u_{a}^{-}).
\end{equation*}%
In particular, $u_{a}^{+}$ is a ground state solution of Eq. $(E_{a}).$
\end{theorem}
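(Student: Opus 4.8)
The plan is to obtain the two solutions by assembling results already proved rather than by a new minimization. Observe first that for $N\ge 4$ the definition of $\Lambda$ forces $\Lambda\le \overline{a}_{\ast}$, so the hypothesis $0<a<\Lambda$ simultaneously lies in the range $0<a<\Lambda$ required by Theorem \ref{t2} and in the range $0<a<\overline{a}_{\ast}$ required by Theorem \ref{t0-3}$(ii)$; moreover the standing assumptions $(D1)$--$(D5)$ supply the hypotheses of both theorems. Invoking Theorem \ref{t2} (which uses $(D1),(D2),(D4),(D5)$) produces a positive solution $u_a^-$ with
\[
\|u_a^-\|_{H^1}<\left(\frac{2S_p^p}{f_{\max}(4-p)}\right)^{1/(p-2)}\quad\text{and}\quad J_a(u_a^-)>\frac{p-2}{4p}\left(\frac{S_p^p}{f_{\max}}\right)^{2/(p-2)}>0 ,
\]
while Theorem \ref{t0-3}$(ii)$ (which uses $(D1),(D2),(D3)$ and $0<a<\overline{a}_{\ast}$) produces a positive ground state $u_a^+$ with $J_a(u_a^+)=\inf_{u\in H^1(\mathbb{R}^N)\setminus\{0\}}J_a(u)<0$. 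Since $u_a^+$ realizes the global infimum of $J_a$ over all nontrivial functions and is itself a solution, it is automatically a least-energy (ground state) solution.

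Distinctness and the energy ordering are then immediate from the signs: $J_a(u_a^+)<0<\frac{p-2}{4p}(S_p^p/f_{\max})^{2/(p-2)}<J_a(u_a^-)$, so $u_a^-\neq u_a^+$. It therefore remains only to verify the norm lower bound $\|u_a^+\|_{H^1}>\sqrt2\,(2S_p^p/(f_{\max}(4-p)))^{1/(p-2)}$, which closes the stated chain of inequalities. For this I would use that $u_a^+$, as a nontrivial critical point, lies on the Nehari manifold $\mathbf{M}_a$, so $a\|u_a^+\|_{D^{1,2}}^4+\|u_a^+\|_{H^1}^2=\int_{\mathbb{R}^N}f|u_a^+|^p\,dx$. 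Substituting this identity into $J_a$ yields the reduced representation
\[
J_a(u_a^+)=-\frac{a(4-p)}{4p}\|u_a^+\|_{D^{1,2}}^4+\frac{p-2}{2p}\|u_a^+\|_{H^1}^2 ,
\]
so $J_a(u_a^+)<0$ gives $2(p-2)\|u_a^+\|_{H^1}^2<(4-p)\,a\|u_a^+\|_{D^{1,2}}^4$.

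Eliminating the nonlocal quantity $a\|u_a^+\|_{D^{1,2}}^4$ by means of the Nehari identity together with the Sobolev estimate $\int_{\mathbb{R}^N}f|u_a^+|^p\,dx\le f_{\max}S_p^{-p}\|u_a^+\|_{H^1}^p$, the two resulting $\|u_a^+\|_{H^1}^2$ terms combine (their coefficients $2(p-2)$ and $4-p$ summing to $p$), and one is left with $p\|u_a^+\|_{H^1}^2<(4-p)f_{\max}S_p^{-p}\|u_a^+\|_{H^1}^p$, hence $\|u_a^+\|_{H^1}^2>\big(pS_p^p/((4-p)f_{\max})\big)^{2/(p-2)}$. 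Finally the elementary inequality $p^{2/(p-2)}/2^{p/(p-2)}>1$, noted just before Theorem \ref{t0-2}, shows that this bound exceeds $2\big(2S_p^p/(f_{\max}(4-p))\big)^{2/(p-2)}$, which is exactly the desired lower bound for $\|u_a^+\|_{H^1}$. The argument is largely bookkeeping; the single genuinely computational step is this last norm estimate, and the main point to watch is confirming that $0<a<\Lambda$ lies in the admissible ranges of both cited theorems and that the two solutions, built by different schemes, are separated purely through their energy signs rather than by any direct comparison of the constructions.
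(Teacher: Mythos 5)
Your proposal is correct, and its skeleton coincides with the paper's: both proofs note $\Lambda\leq\overline{a}_{\ast}$ for $N\geq5$, take $u_{a}^{-}$ directly from Theorem \ref{t2} and $u_{a}^{+}$ from Theorem \ref{t0-3}$(ii)$, and separate the two solutions purely by the sign of the energy, with the ground state property of $u_{a}^{+}$ immediate from $J_{a}(u_{a}^{+})=\inf_{u\neq0}J_{a}(u)$. Where you genuinely depart from the paper is the norm lower bound for $u_{a}^{+}$. The paper gets it structurally: since $J_{a}(u_{a}^{+})<0$, the solution lies in the filtered set $\mathbf{M}_{a}\bigl(\frac{D(p)(p-2)}{2p}\bigl(\frac{2S_{p}^{p}}{f_{\infty}(4-p)}\bigr)^{2/(p-2)}\bigr)$, which by $(\ref{4-4})$ splits as $\mathbf{M}_{a}^{(1)}\cup\mathbf{M}_{a}^{(2)}$ (legitimate because $a<\Lambda\leq\frac{p-2}{2(4-p)}\bigl(\frac{4-p}{p}\bigr)^{2/(p-2)}\Lambda_{0}$); then Lemma \ref{g7} ($\mathbf{M}_{a}^{(1)}\subset\mathbf{M}_{a}^{-}$) and the positive lower bound of Lemma \ref{g1} on $\mathbf{M}_{a}^{-}$ exclude $\mathbf{M}_{a}^{(1)}$, so $u_{a}^{+}\in\mathbf{M}_{a}^{(2)}$ and $\Vert u_{a}^{+}\Vert_{H^{1}}>D_{2}>\sqrt{2}\bigl(2S_{p}^{p}/(f_{\infty}(4-p))\bigr)^{1/(p-2)}\geq\sqrt{2}\bigl(2S_{p}^{p}/(f_{\max}(4-p))\bigr)^{1/(p-2)}$. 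Your route replaces all of this with a direct computation, and the numerics check out: on $\mathbf{M}_{a}$ one indeed has $J_{a}(u)=\frac{p-2}{2p}\Vert u\Vert_{H^{1}}^{2}-\frac{a(4-p)}{4p}\Vert u\Vert_{D^{1,2}}^{4}$, so $J_{a}(u_{a}^{+})<0$ gives $2(p-2)\Vert u_{a}^{+}\Vert_{H^{1}}^{2}<a(4-p)\Vert u_{a}^{+}\Vert_{D^{1,2}}^{4}$; eliminating $a\Vert u_{a}^{+}\Vert_{D^{1,2}}^{4}$ via the Nehari identity and the Sobolev bound, with $2(p-2)+(4-p)=p$ as you say, yields $\Vert u_{a}^{+}\Vert_{H^{1}}>\bigl(pS_{p}^{p}/((4-p)f_{\max})\bigr)^{1/(p-2)}$, and the ratio of this to the target $\sqrt{2}\bigl(2S_{p}^{p}/(f_{\max}(4-p))\bigr)^{1/(p-2)}$ is exactly $\bigl(p^{2}/2^{p}\bigr)^{1/(2(p-2))}>1$ by the remark preceding Theorem \ref{t0-2} (valid since $2<p<4$). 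The trade-off: the paper's argument additionally locates $u_{a}^{+}$ in $\mathbf{M}_{a}^{(2)}\subset\mathbf{M}_{a}^{+}$ (structural information reused elsewhere) and delivers the slightly stronger bound with $f_{\infty}$ in place of $f_{\max}$, while your argument is more elementary and self-contained, needing only the Nehari constraint, one Sobolev estimate, and an elementary inequality, with no appeal to the decomposition $(\ref{4-4})$ or Lemmas \ref{g7} and \ref{g1}.
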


\begin{theorem}
\label{t4} Let $u_{a}^{-}$ be the positive solution of Eq. $(E_{a})$ as
described in Theorem \ref{t1} or \ref{t2}. Then we have the following
conclusions:\newline
$\left( i\right) $ If $N=1$ and $f(x)$ is weakly differentiable satisfying
\begin{equation*}
(p-1)(p-2)f\left( x\right) +2\langle \nabla f(x),x\rangle \geq 0,
\end{equation*}%
then $u_{a}^{-}$ is a positive ground state solution of Eq. $(E_{a}).$%
\newline
$\left( ii\right) $ If $N=2$ and $f(x)$ is weakly differentiable satisfying%
\begin{equation*}
(p-2)f(x)+\langle \nabla f(x),x\rangle \geq 0,
\end{equation*}%
then $u_{a}^{-}$ is a positive ground state solution of Eq. $(E_{a}).$
\end{theorem}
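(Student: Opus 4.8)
The plan is to combine a Pohozaev identity with the Nehari identity so as to express the energy of an arbitrary nontrivial solution $w$ of $(E_a)$ as an explicit function of the single quantity $A:=\int_{\mathbb R^N}|\nabla w|^2\,dx$, and then to run a monotonicity-and-contradiction argument showing that no solution can have energy strictly below that of $u_a^-$.

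First I would establish the Pohozaev identity for a solution $w$. Because $2<p<\min\{4,2^\ast\}$ and $N\in\{1,2\}$, elliptic regularity and decay make $w$ regular enough that multiplying $(E_a)$ by $x\cdot\nabla w$ and integrating by parts is justified; the nonlocal coefficient $a\|w\|_{D^{1,2}}^2+1$ is a constant for the fixed solution and factors out of the computation. Setting $A=\|w\|_{D^{1,2}}^2$, $B=\|w\|_{L^2}^2$, $C=\int_{\mathbb R^N}f|w|^p$ and $D=\int_{\mathbb R^N}\langle\nabla f,x\rangle|w|^p$, this produces
\[
\frac{N-2}{2}\,(aA^2+A)+\frac N2\,B=\frac Np\,C+\frac1p\,D .
\]
Eliminating $B$ and $C$ between this relation, the Nehari identity $aA^2+A+B=C$, and the energy $J_a(w)=\frac a4A^2+\frac12(A+B)-\frac1pC$ then yields, for every nontrivial solution,
\[
J_a(w)=\frac{(4-N)a}{4N}\,A^2+\frac1N\,A+\frac1{pN}\,D ,
\]
in which the coefficient $\tfrac{(4-N)a}{4N}$ is positive for $N=1,2$. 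This algebraic reduction is the computational heart of the proof.

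Consider first the autonomous case $f\equiv f_\infty$ (the solution of Theorem \ref{t1}), where $D=0$. Then $J_a(w)=g_N(A)$ with $g_N$ strictly increasing, and the same elimination shows that $\|w\|_{H^1}^2$ is also a strictly increasing function of $A$ along solutions. If some solution $w$ satisfied $J_a(w)<J_a(u_a^-)$, monotonicity of $g_N$ would force $A(w)<A(u_a^-)$, hence $\|w\|_{H^1}<\|u_a^-\|_{H^1}$; then $w$ would lie in the small-norm component $\mathbf{M}_a^{(1)}(c)$ on which $u_a^-$ is by construction the minimizer of $J_a$, giving $J_a(u_a^-)\le J_a(w)$ — a contradiction. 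Hence $u_a^-$ is a ground state.

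The remaining, and genuinely harder, case is the nonautonomous one (the solution of Theorem \ref{t2}), where $\tfrac1{pN}D$ need not be nonnegative, so $J_a(w)$ is no longer a function of $A$ alone. This is precisely where hypotheses $(i)$--$(ii)$ enter: integrating the stated pointwise inequality against $|w|^p\ge0$ gives $(p-1)(p-2)C+2D\ge0$ when $N=1$ and $(p-2)C+D\ge0$ when $N=2$, and inserting the Pohozaev--Nehari relation $(p-2)C=2p(aA^2+A)+2D$ (for $N=1$) respectively $(p-2)C=p(aA^2+A)+D$ (for $N=2$) bounds $D$ from below by a negative multiple of $aA^2+A$; this forces $J_a(w)$ to exceed a positive multiple of $A$, in particular $J_a(w)>0$. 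The main obstacle I anticipate is upgrading this one-sided control of $D$ into the \emph{uniform} comparison $J_a(w)\ge J_a(u_a^-)$: unlike the autonomous case one cannot deduce $A(w)<A(u_a^-)$ from $J_a(w)<J_a(u_a^-)$ directly, so I would combine the lower bound with the norm gap separating $\mathbf{M}_a^{(1)}(c)$ from the rest of $\mathbf{M}_a$, the minimality of $u_a^-$ over $\mathbf{M}_a^{(1)}(c)$, and, if needed, condition $(D5)$ relating $J_a$ to the autonomous functional $J_a^\infty$, in order to exclude any solution sitting strictly below $u_a^-$.
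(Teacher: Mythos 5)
Your Nehari--Pohozaev elimination is exactly the computational engine of the paper's own proof (Lemmas \ref{g14} and \ref{g13}), and your treatment of the autonomous case is correct, though more roundabout than necessary. The genuine gap is the nonautonomous case, which is the real content of Theorem \ref{t4} (for constant $f$ the sign hypotheses hold trivially), and the plan you sketch cannot close it as stated: the lower bound $J_{a}(w)\geq \frac{(4-p)a}{4p}A^{2}+\frac{1}{p}A>0$ gives no comparison with $J_{a}(u_{a}^{-})=\alpha _{a}^{-}$, and if a nontrivial solution $w$ had $J_{a}(w)<\alpha _{a}^{-}$, then by $(\ref{4-4})$ it would lie in $\mathbf{M}_{a}^{(1)}\cup \mathbf{M}_{a}^{(2)}$; minimality of $u_{a}^{-}$ over $\mathbf{M}_{a}^{(1)}$ excludes only the first alternative, while nothing in your norm-gap/$(D5)$ plan excludes $w\in \mathbf{M}_{a}^{(2)}\subset \mathbf{M}_{a}^{+}$. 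The missing idea is to aim the same two identities at the fibering-map curvature rather than at the energy: by the Nehari identity and $(\ref{2-2})$ one has $h_{a,w}^{\prime \prime }(1)=2aA^{2}-(p-2)C$, and for $N=1$ your own derived inequality $(p-2)C\geq 2(aA^{2}+A)$ is literally the statement $h_{a,w}^{\prime \prime }(1)\leq -2A<0$. Hence every nontrivial solution lies in $\mathbf{M}_{a}^{-}$, and since $J_{a}(u_{a}^{-})=\alpha _{a}^{-}=\inf_{u\in \mathbf{M}_{a}^{-}}J_{a}(u)$ for the solutions of both Theorems \ref{t1} and \ref{t2}, the ground-state property follows in one line; this is precisely Lemmas \ref{g14}--\ref{g13} together with the paper's short proof of Theorem \ref{t4}. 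You simply directed the estimate at the wrong functional.

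A second, subtler point concerns $N=2$. Your Pohozaev identity $pB=2C+D$ is the standard, correct one (with constant $f$ it reduces to $\int_{\mathbb{R}^{2}}u^{2}dx=\frac{2}{p}\int_{\mathbb{R}^{2}}f_{\infty }|u|^{p}dx$, as it must), whereas the identity displayed in the paper's Lemma \ref{g13} carries the coefficient $\frac{2}{p}$ where $\frac{p}{2}$ should stand. With the correct identity one computes $h_{a,w}^{\prime \prime }(1)=-2A-\frac{1}{p}\left[ (p-2)^{2}C+2D\right] $, so negativity of the curvature requires the pointwise condition $(p-2)^{2}f+2\langle \nabla f,x\rangle \geq 0$, which is strictly stronger than the stated hypothesis $(p-2)f+\langle \nabla f,x\rangle \geq 0$: the latter yields only $h_{a,w}^{\prime \prime }(1)\leq -2A+\frac{(p-2)(4-p)}{p}C$, of indeterminate sign, and likewise your relation $(p-2)C=p(aA^{2}+A)+D$ plus the hypothesis gives only $h_{a,w}^{\prime \prime }(1)\leq \frac{(4-p)a}{2}A^{2}-\frac{p}{2}A$. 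So in dimension two neither your route nor the paper's printed computation is watertight as it stands; if you complete the argument via the curvature sign, expect to strengthen the hypothesis (or supply an additional estimate) for $N=2$, whereas for $N=1$ the repair described above is complete.
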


The following table sums up the above main results:

\begin{equation*}
\begin{tabular}{|c|c|c|}
\hline
& $a$ small & $a$ large \\ \hline
$N=1,2,3$ & One solution & - \\ \hline
$N=4$ & One solution & No solution \\ \hline
$N\geq 5$ & Two solutions & No solution \\ \hline
\end{tabular}%
\end{equation*}

In the above table, "Two solutions" (respectively "One solution") means that
there exist at least two (respectively one) positive solutions. On the other
hand, "No solution" means that there are no nontrivial solutions.

Recently, Azzollini \cite{Az1,Az2} has proved that Eq. $(E_{a})$ with $%
f(x)\equiv f_{\infty }$ admits a ground state solution with positive energy
for all $a>0$ when $N=3$ and for $a>0$ sufficiently small when $N=4.$ In the
following, we shall further describe some characteristics of such solution
depending on $a$ and $f_{\infty }$, which are not concerned in \cite{Az1,Az2}%
.

Define the fibering map $h_{a,u}:t\rightarrow J_{a}\left( tu\right) $ as%
\begin{equation*}
h_{a,u}\left( t\right) =\frac{t^{2}}{2}\left\Vert u\right\Vert _{H^{1}}^{2}+%
\frac{at^{4}}{4}\left( \int_{\mathbb{R}^{N}}\left\vert \nabla u\right\vert
^{2}dx\right) ^{2}-\frac{t^{p}}{p}\int_{\mathbb{R}^{N}}f(x)\left\vert
u\right\vert ^{p}dx\text{ for }t>0.
\end{equation*}%
About its theory and application, we refer the reader to \cite{BZ,DP}. Note
that for $u\in H^{1}(\mathbb{R}^{N})\backslash \left\{ 0\right\} $ and $t>0,$
$h_{a,u}^{\prime }\left( t\right) =0$ holds if and only if $tu\in \mathbf{M}%
_{a}$. In particular, $h_{a,u}^{\prime }\left( 1\right) =0$ holds if and
only if $u\in \mathbf{M}_{a}.$ It is natural to split $\mathbf{M}_{a}$ into
three parts corresponding to the local minima, local maxima and points of
inflection. As a consequence, following \cite{T}, we can define
\begin{eqnarray*}
\mathbf{M}_{a}^{+} &=&\{u\in \mathbf{M}_{a}:h_{a,u}^{\prime \prime }\left(
1\right) >0\}, \\
\mathbf{M}_{a}^{0} &=&\{u\in \mathbf{M}_{a}:h_{a,u}^{\prime \prime }\left(
1\right) =0\}, \\
\mathbf{M}_{a}^{-} &=&\{u\in \mathbf{M}_{a}:h_{a,u}^{\prime \prime }\left(
1\right) <0\}.
\end{eqnarray*}

For $2<p<4,$ we set%
\begin{eqnarray}
A_{0} &=&\frac{3\left( p-1\right) (-p^{2}+2p+12)}{p^{2}(p-2)}\left( \frac{%
S_{p}^{p}}{f_{\infty }}\right) ^{2/(p-2)}>0,  \label{1-3} \\
\overline{A}_{0} &=&\frac{p^{2}}{16}\left( \frac{f_{\infty }}{S_{p}^{p}}%
\right) ^{2/(p-2)}>0,  \label{1-4} \\
A_{0}^{\ast } &=&\frac{p-2}{2}\left( \frac{4-p}{p}\right) ^{\left(
4-p\right) /(p-2)}(f_{\infty }C_{p}^{p})^{2/(p-2)}>0.  \label{1-5}
\end{eqnarray}

It is clearly that $\overline{A}_{0}>\Lambda .$ We now state the last part
of our main results as follows.

\begin{theorem}
\label{t5}Let $u_{0}$ be a nontrivial solution of Eq. $(E_{a})$ with $%
f(x)\equiv f_{\infty }$. Then the following statements are true.\newline
$(i)$ When $N=3,$ for each $a>0$ with $\sqrt{a^{2}+4}+\frac{2}{a}\geq A_{0},$
there holds $u_{0}\in \mathbf{M}_{a}^{-}.$ In particular, $v_{a}^{-}$ is a
ground state solution as in Theorem \ref{t1} $\left( i\right) $ for $N=3.$%
\newline
$(ii)$ When $N=4,$ for each $0<a\leq \overline{A}_{0},$ there holds $%
u_{0}\in \mathbf{M}_{a}^{-},$ whereas for each $a>A_{0}^{\ast },$ there
holds $u_{0}\in \mathbf{M}_{a}^{+}.$ In particular, $v_{a}^{-}$ is a ground
state solution as in Theorem \ref{t1} $\left( i\right) $ for $N=4.$
\end{theorem}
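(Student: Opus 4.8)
The plan is to reduce membership of $u_{0}$ in $\mathbf{M}_{a}^{\pm}$ to the sign of a single scalar built from $\|u_{0}\|_{D^{1,2}}^{2}$, and then to locate that scalar in terms of $a$ by combining the Nehari and Pohozaev identities with the explicit data of the limiting problem $\left(E_{0}^{\infty}\right)$.

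First I would record that any nontrivial solution $u_{0}$ lies on $\mathbf{M}_{a}$, so $h_{a,u_{0}}^{\prime}(1)=0$. Differentiating the fibering map once more and eliminating $f_{\infty}\int_{\mathbb{R}^{N}}|u_{0}|^{p}dx$ by means of $h_{a,u_{0}}^{\prime}(1)=0$ gives
\[
h_{a,u_{0}}^{\prime\prime}(1)=-(p-2)\|u_{0}\|_{H^{1}}^{2}+(4-p)a\|u_{0}\|_{D^{1,2}}^{4}.
\]
Thus $u_{0}\in\mathbf{M}_{a}^{-}$ (resp. $\mathbf{M}_{a}^{+}$) exactly when this quantity is negative (resp. positive), so the whole theorem becomes a sign question.

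The next, decisive step is the Pohozaev identity. Writing $M:=a\|u_{0}\|_{D^{1,2}}^{2}+1$, the solution satisfies $-M\Delta u_{0}+u_{0}=f_{\infty}|u_{0}|^{p-2}u_{0}$ with $M$ constant, so differentiating $\lambda\mapsto J_{a}^{\infty}\bigl(u_{0}(\cdot/\lambda)\bigr)$ at $\lambda=1$ (the nonlocal term $\tfrac{a}{4}\|u_{0}\|_{D^{1,2}}^{4}$ scales like $\lambda^{2(N-2)}$ and combines with the Dirichlet term) yields
\[
\frac{N-2}{2}M\|u_{0}\|_{D^{1,2}}^{2}+\frac{N}{2}\|u_{0}\|_{L^{2}}^{2}=\frac{N}{p}f_{\infty}\int_{\mathbb{R}^{N}}|u_{0}|^{p}dx.
\]
Combining this with the Nehari identity $M\|u_{0}\|_{D^{1,2}}^{2}+\|u_{0}\|_{L^{2}}^{2}=f_{\infty}\int_{\mathbb{R}^{N}}|u_{0}|^{p}dx$ eliminates the $L^{p}$-term and produces the clean relation $\|u_{0}\|_{L^{2}}^{2}=\frac{2N+2p-Np}{N(p-2)}\,M\,\|u_{0}\|_{D^{1,2}}^{2}$. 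Substituting this back, together with $a\|u_{0}\|_{D^{1,2}}^{2}=M-1$, collapses the second derivative to
\[
h_{a,u_{0}}^{\prime\prime}(1)=2\|u_{0}\|_{D^{1,2}}^{2}\left(\frac{(N-p)M}{N}-1\right),
\]
so the sign is precisely the sign of $(N-p)M-N$, and the theorem is equivalent to locating $M$ relative to $\tfrac{N}{N-p}$. For $N=3$ the $\mathbf{M}_{a}^{-}$ conclusion means $M<\tfrac{3}{3-p}$ (which is automatic once $3\le p<4$, since then $(3-p)M\le0$), while for $N=4$ it reads $M\lessgtr\tfrac{4}{4-p}$. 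To turn a constraint on $M$ into the stated constraints on $a$, I would use the scaling $u_{0}(x)=v(x/\sqrt{M})$ with $v$ solving $\left(E_{0}^{\infty}\right)$, which converts $M=a\|u_{0}\|_{D^{1,2}}^{2}+1$ into the scalar equation $M=aM^{N/2-1}\|\nabla v\|_{L^{2}}^{2}+1$. For $N=4$ this is linear, $M=(1-a\|\nabla v\|_{L^{2}}^{2})^{-1}$, so $M$ increases from $1$ and crosses $\tfrac{4}{4-p}$; inserting the ground-state value $\|\nabla w_{0}\|_{L^{2}}^{2}$ from $(\ref{1-8})$ produces the thresholds $\overline{A}_{0}$ (below which $M<\tfrac{4}{4-p}$, hence $\mathbf{M}_{a}^{-}$) and $A_{0}^{\ast}$ (above which $M>\tfrac{4}{4-p}$, hence $\mathbf{M}_{a}^{+}$). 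For $N=3$ the equation $M-1=aM^{1/2}\|\nabla v\|_{L^{2}}^{2}$ is quadratic in $t=\sqrt{M}$, and solving $t^{2}-a\|\nabla v\|_{L^{2}}^{2}\,t-1=0$ is exactly where the combination $\sqrt{a^{2}+4}+\tfrac{2}{a}\ge A_{0}$ originates. The "ground state" clauses then follow by feeding this classification into Theorem \ref{t1}: the positive solution $v_{a}^{-}$ lies in $\mathbf{M}_{a}^{-}$, and since every nontrivial solution also lies in $\mathbf{M}_{a}^{-}$ (so none sits in $\mathbf{M}_{a}^{+}$), the uniqueness in Theorem \ref{t1}$(ii)$ for $1\le N\le4$ pins $v_{a}^{-}$ down as the ground state.

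The genuinely delicate part is the two-sided control of $M$ (equivalently of $\|u_{0}\|_{D^{1,2}}^{2}$) in terms of $a$. The lower bound needed for the $\mathbf{M}_{a}^{+}$ conclusions is cheap: it follows from the Sobolev/Gagliardo--Nirenberg inequality applied to the Nehari identity, which is exactly what the constants $\overline{\mathbf{A}}_{f}$, $A_{0}^{\ast}$ (and $C_{p}$, $S_{p}$) package. The upper bound needed for the $\mathbf{M}_{a}^{-}$ conclusions is harder, because Sobolev controls $\|u_{0}\|_{D^{1,2}}^{2}$ only from below; here one must exploit the finer structure through the scaling reduction to $\left(E_{0}^{\infty}\right)$ and the explicit identities $(\ref{1-8})$ for $w_{0}$, and then check that the resulting algebraic inequality in $a$ is implied by $\sqrt{a^{2}+4}+\tfrac{2}{a}\ge A_{0}$ when $N=3$ and by $0<a\le\overline{A}_{0}$ when $N=4$. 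Matching these constants exactly, rather than only up to a conservative factor, is the step I expect to demand the most careful computation.
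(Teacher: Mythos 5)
Your proposal follows essentially the same route as the paper's proof, which is split into Lemmas \ref{g3}, \ref{g5} and \ref{g16}: combine the Nehari identity with the Pohozaev identity, use the correspondence between $u_{0}$ and the unique positive solution $w_{0}$ of $\left( E_{0}^{\infty }\right) $ (the paper cites Azzollini, you rederive it via the substitution $u_{0}(x)=v(x/\sqrt{M})$ and a consistency equation), and feed in the explicit norms $(\ref{1-8})$. Your central reduction is correct --- I checked the algebra: writing $M=a\left\Vert u_{0}\right\Vert _{D^{1,2}}^{2}+1$, eliminating the $L^{p}$-term between the two identities gives $\left\Vert u_{0}\right\Vert _{L^{2}}^{2}=\frac{2N+2p-Np}{N(p-2)}M\left\Vert u_{0}\right\Vert _{D^{1,2}}^{2}$ and then
\begin{equation*}
h_{a,u_{0}}^{\prime \prime }(1)=2\left\Vert u_{0}\right\Vert
_{D^{1,2}}^{2}\left( \frac{(N-p)M}{N}-1\right) ,
\end{equation*}
which neatly compresses the paper's three case-by-case computations into one scalar criterion; your quadratic $t^{2}-a\left\Vert \nabla v\right\Vert _{L^{2}}^{2}t-1=0$ for $N=3$ is in fact the careful version of the paper's formula $t_{a}=\frac{-a+\sqrt{a^{2}+4}}{2}$, which as printed omits the factor $\left\Vert \nabla w_{0}\right\Vert _{L^{2}}^{2}$, so the final constant-matching that you flag as delicate is indeed where care is needed.

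Two points deserve correction or emphasis. First, the two $N=4$ thresholds are not a single crossing of $M$ past $\frac{4}{4-p}$, as your third paragraph suggests: the paper proves the $\mathbf{M}_{a}^{+}$ half (Lemma \ref{g16}) scaling-free, by Gagliardo--Nirenberg plus Young applied to the Nehari and Pohozaev identities, precisely so that it holds for \emph{every} nontrivial solution without the identification $v=w_{0}$ --- that identification rests on Kwong's uniqueness theorem, which applies to positive solutions, whereas the theorem is stated for an arbitrary nontrivial $u_{0}$; this is why $A_{0}^{\ast }$ involves the Gagliardo--Nirenberg constant $C_{p}$ while $\overline{A}_{0}$ involves $S_{p}$, with genuine slack between them. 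You do acknowledge the Sobolev/GN route for $A_{0}^{\ast }$ elsewhere in your sketch, and you should keep it for that half rather than the scaling equation (the same positivity caveat silently affects the paper's own use of Azzollini, and affects your $\mathbf{M}_{a}^{-}$ halves too). Second, the ground-state clause should not be concluded from the uniqueness in Theorem \ref{t1}$(ii)$: uniqueness among \emph{positive} solutions says nothing about sign-changing competitors, which a ground-state claim must dominate. The correct inference, for which you already have all the pieces, is that $v_{a}^{-}$ attains $\alpha _{a}^{\infty ,-}=\inf_{u\in \mathbf{M}_{a}^{\infty ,-}}J_{a}^{\infty }(u)$ by the construction in Theorem \ref{t1}$(i)$, and your classification places every nontrivial solution in $\mathbf{M}_{a}^{\infty ,-}$, whence $J_{a}^{\infty }(v_{a}^{-})\leq J_{a}^{\infty }(u_{0})$ for all nontrivial $u_{0}$. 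With these repairs the plan is sound, and the deferred constant verification is mechanical sufficiency-checking of the same kind the paper performs.
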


\begin{remark}
$(i)$ Note that $\inf_{a>0}\left( \sqrt{a^{2}+4}+\frac{2}{a}\right) >0.$
Then when%
\begin{equation*}
f_{\infty }\geq \left[ \frac{3\left( p-1\right) (-p^{2}+2p+12)}{%
p^{2}(p-2)\inf_{a>0}\left( \sqrt{a^{2}+4}+\frac{2}{a}\right) }\right]
^{\left( p-2\right) /2}S_{p}^{p},
\end{equation*}%
there holds $\sqrt{a^{2}+4}+\frac{2}{a}\geq A_{0}$ for all $a>0.$ This shows
that $u_{0}\in \mathbf{M}_{a}^{-}$ for all $a>0.$\newline
$(ii)$ If there is a number $a_{0}>0$ such that $\sqrt{a_{0}^{2}+4}+\frac{2}{%
a_{0}}<A_{0},$ then there exist positive numbers $a_{1},a_{2}$ such that $%
\sqrt{a^{2}+4}+\frac{2}{a}\geq A_{0}$ for all $a\in (0,a_{1}]\cup \lbrack
a_{2},\infty )$ and $\sqrt{a^{2}+4}+\frac{2}{a}<A_{0}$ for all $a\in
(a_{1},a_{2}).$
\end{remark}

The structure of this paper is as follows. After briefly introducing some
technical lemmas in Section 2, we prove Theorems \ref{t0-1}--\ref{t0-3} in
Section 3, and demonstrate proofs of Theorems \ref{t1} and \ref{t2} in
Sections 4 and 5, respectively. Section 6 is dedicated to the proof of
Theorems \ref{t4} and \ref{t5}.

\section{Preliminaries}

Firstly, we consider the boundedness below for energy functional $J_{a}$ on $%
H^{1}(\mathbb{R}^{N})$ for $N\geq 4.$ For $u\in H^{1}(\mathbb{R}%
^{N})\backslash \{0\},$ we define
\begin{equation}
T_{f}(u)=\left( \frac{\left\Vert u\right\Vert _{H^{1}}^{2}}{\int_{\mathbb{R}%
^{N}}f(x)|u|^{p}dx}\right) ^{1/(p-2)}.  \label{2-0}
\end{equation}

\begin{lemma}
\label{g4}Suppose that $N\geq 4$ and condition $(D1)$ holds. Then the
following results are true.\newline
$\left( i\right) $ For each $0<a<\overline{a}_{\ast }$ and $u\in H^{1}(%
\mathbb{R}^{N})\backslash \{0\},$ there exists a constant $\widehat{t}%
_{a}^{(0)}>\left( \frac{p}{4-p}\right) ^{1/(p-2)}T_{f}(u)$ such that%
\begin{equation}
\inf_{t\geq 0}J_{a}(tu)=\inf_{\left( \frac{p}{4-p}\right)
^{1/(p-2)}T_{f}(u)<t<\widehat{t}_{a}^{(0)}}J_{a}(tu)<0.  \label{eqq17}
\end{equation}%
$\left( ii\right) $ For each $a\geq \overline{a}_{\ast }$ and $u\in H^{1}(%
\mathbb{R}^{N})\backslash \left\{ 0\right\} ,$ there holds $J_{a}\left(
tu\right) \geq 0$ for all $t>0.$
\end{lemma}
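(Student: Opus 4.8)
The plan is to reduce everything to the one–variable fibering map $h_{a,u}(t)=J_a(tu)$ and analyze it for a fixed $u\in H^1(\mathbb{R}^N)\setminus\{0\}$. Writing $\alpha=\|u\|_{H^1}^2$, $\beta=\|u\|_{D^{1,2}}^4$ and $\gamma=\int_{\mathbb{R}^N}f(x)|u|^p\,dx$ (all strictly positive, since a nonzero $H^1$ function on $\mathbb{R}^N$ has $\|u\|_{D^{1,2}}>0$ and $(D1)$ forces $\gamma>0$), we have
\[
h_{a,u}(t)=\frac{\alpha}{2}t^2+\frac{a\beta}{4}t^4-\frac{\gamma}{p}t^p,\qquad t\ge0 .
\]
Because $2<p<4$, $h_{a,u}(0)=0$ and $h_{a,u}(t)\to+\infty$ as $t\to\infty$, so $\inf_{t\ge0}h_{a,u}$ is attained, either at $t=0$ or at an interior critical point. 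First I would compute $h_{a,u}'(t)=t\,\psi(t)$ with $\psi(t)=\alpha+a\beta t^2-\gamma t^{p-2}$, and note that $\psi$ has a single interior minimum (since $\psi'(t)=2a\beta t-(p-2)\gamma t^{p-3}$ vanishes exactly once on $(0,\infty)$), with $\psi(0^+)=\alpha>0$ and $\psi(+\infty)=+\infty$; hence $\psi$ has at most two zeros $t_1<t_2$. Substituting $\gamma t^{p-2}=\alpha+a\beta t^2$ at a critical point gives $h_{a,u}''(t)=(2-p)\alpha+(4-p)a\beta t^2$, so $t_1$ is a local maximum and $t_2$ a local minimum, and $h_{a,u}$ increases on $(0,t_1)$, decreases on $(t_1,t_2)$, and increases on $(t_2,\infty)$.

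The crux is to locate this profile relative to the distinguished point $t_0:=\left(\frac{p}{4-p}\right)^{1/(p-2)}T_f(u)$, where $T_f(u)^{p-2}=\alpha/\gamma$ by definition, so that $\gamma t_0^{p-2}=\frac{p}{4-p}\alpha$. Substituting this into $\psi$ yields the clean identity
\[
\psi(t_0)=\alpha+a\beta t_0^2-\frac{p}{4-p}\alpha=a\beta t_0^2-\frac{2(p-2)}{4-p}\alpha ,
\]
so $h_{a,u}'(t_0)=t_0\psi(t_0)<0$ iff $a\beta t_0^2<\frac{2(p-2)}{4-p}\alpha$. Rewriting $t_0^2=\left(\frac{p}{4-p}\,\frac{\alpha}{\gamma}\right)^{2/(p-2)}$ and collecting the powers of $\alpha,\beta,\gamma$, I expect this to reduce exactly to
\[
a<\frac{2(p-2)}{4-p}\left(\frac{4-p}{p}\right)^{2/(p-2)}\overline{A}_f(u),
\]
which is the per–direction threshold; recalling $\overline{a}_*=\frac{2(p-2)}{4-p}\left(\frac{4-p}{p}\right)^{2/(p-2)}\overline{\mathbf{A}}_f$ with $\overline{\mathbf{A}}_f=\sup_u\overline{A}_f(u)$ is precisely where $\overline{a}_*$ enters. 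This single computation drives both parts: once $h_{a,u}'(t_0)<0$ we get $t_1<t_0<t_2$, so the minimizer obeys $t_2>t_0=\left(\frac{p}{4-p}\right)^{1/(p-2)}T_f(u)$, and I would then take any $\widehat{t}_a^{(0)}>t_2$ — for instance the larger root of $h_{a,u}$, or the explicit bound $t_2<(\gamma/a\beta)^{1/(4-p)}$ forced by $\psi(t_2)=0$ — to produce the interval in $(\ref{eqq17})$.

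For part $(i)$ it remains to see the interior minimum value is negative. Dividing by $t^2$, $h_{a,u}(t)<0$ is equivalent to $\frac{\gamma}{p}t^{p-2}-\frac{a\beta}{4}t^2>\frac{\alpha}{2}$; the left side is maximized at an explicit interior point, and a short computation shows its maximum exceeds $\alpha/2$ under the same smallness condition on $a$, so the dip on $(t_1,t_2)$ really crosses zero and $h_{a,u}(t_2)=\inf_{t\ge0}h_{a,u}(t)<0$, giving $(\ref{eqq17})$. For part $(ii)$, when $a\ge\overline{a}_*$ the reverse inequality $\frac{\gamma}{p}t^{p-2}-\frac{a\beta}{4}t^2\le\frac{\alpha}{2}$ holds for all $t$ (same maximization, now using $a\ge\overline{a}_*\ge\frac{2(p-2)}{4-p}(\frac{4-p}{p})^{2/(p-2)}\overline{A}_f(u)$ since $\overline{\mathbf{A}}_f\ge\overline{A}_f(u)$), whence $h_{a,u}(t)\ge0$ for every $t>0$. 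The main obstacle is purely computational: carrying out the Gagliardo--Nirenberg/homogeneity bookkeeping so that the thresholds produced by $\psi(t_0)$ and by the maximization of $\frac{\gamma}{p}t^{p-2}-\frac{a\beta}{4}t^2$ both match the constant defining $\overline{a}_*$ exactly, while keeping careful track that the comparison is against $\overline{A}_f(u)$ before one passes to the supremum $\overline{\mathbf{A}}_f$.
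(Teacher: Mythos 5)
Your one--variable analysis is sound and, up to normalization, identical to the paper's own proof: the paper factors $h_{a,u}(t)=t^{4}\bigl(g(t)+\frac{a}{4}\|u\|_{D^{1,2}}^{4}\bigr)$ and minimizes $g$, which is exactly your maximization of $\frac{\gamma}{p}t^{p-2}-\frac{a\beta}{4}t^{2}$ after dividing $h_{a,u}$ by $t^{2}$ instead. Your identity $\psi(t_{0})=a\beta t_{0}^{2}-\frac{2(p-2)}{4-p}\alpha$, the second--derivative formula $(2-p)\alpha+(4-p)a\beta t^{2}$ at critical points, and the coincidence of the derivative--sign threshold with the minimum--value threshold at $a_{0}(u)=\frac{2(p-2)}{4-p}\left(\frac{4-p}{p}\right)^{2/(p-2)}\overline{A}_{f}(u)$ all check out (this is the paper's tangency computation in part $(ii)$). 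Part $(ii)$ of your argument is complete as written, because there the supremum passes the right way: $a\geq\overline{a}_{\ast}\geq a_{0}(u)$ for every $u$.

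The genuine gap is the quantifier in part $(i)$. The smallness condition your computation actually requires is the per--direction one, $a<a_{0}(u)$, and the hypothesis $a<\overline{a}_{\ast}=\sup_{u}a_{0}(u)$ does not imply this for an arbitrary fixed $u$: whenever $a_{0}(u)\leq a<\overline{a}_{\ast}$, your own part $(ii)$ argument applied to that $u$ gives $J_{a}(tu)\geq 0$ for all $t>0$, so the claimed negative infimum along that direction is false. This is really a defect in the lemma's wording which the paper's proof silently repairs: right after invoking $0<a<\overline{a}_{\ast}$ it says ``then there exists $u\in H^{1}(\mathbb{R}^{N})\setminus\{0\}$ such that'' $a_{0}(u)>a$ --- i.e.\ the conclusion of $(i)$ is existential in $u$, and that existential version is all that is used downstream (Lemma~\ref{g11} and Theorem~\ref{t0-1}$(iii)$ only need one direction with $\inf_{t\geq 0}J_{a}(tu)<0$). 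To close your proof you must do the same: for $a<\overline{a}_{\ast}$ choose $u$ with $\overline{A}_{f}(u)$ close enough to $\overline{\mathbf{A}}_{f}$ that $a<a_{0}(u)$, and run your computation for that $u$ (or restate $(i)$ with hypothesis $a<a_{0}(u)$). Your closing remark that the only remaining obstacle is ``purely computational'' is therefore not quite right: the bookkeeping is fine and matches the paper's constants exactly, but the passage from the supremum $\overline{\mathbf{A}}_{f}$ back to a single direction is a logical step, not an identity, and in part $(i)$ it only goes through after selecting $u$.
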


\begin{proof}
$\left( i\right) $ For $u\in H^{1}(\mathbb{R}^{N})\backslash \{0\}$ and $t>0$%
, it has
\begin{eqnarray*}
h_{a,u}(t) &=&J_{a}(tu)=\frac{t^{2}}{2}\left\Vert u\right\Vert _{H^{1}}^{2}+%
\frac{at^{4}}{4}\left( \int_{\mathbb{R}^{N}}|\nabla u|^{2}dx\right) ^{2}-%
\frac{t^{p}}{p}\int_{\mathbb{R}^{N}}f(x)|u|^{p}dx \\
&=&t^{4}\left[ g(t)+\frac{a}{4}\left( \int_{\mathbb{R}^{N}}|\nabla
u|^{2}dx\right) ^{2}\right] ,
\end{eqnarray*}%
where
\begin{equation*}
g(t)=\frac{t^{-2}}{2}\left\Vert u\right\Vert _{H^{1}}^{2}-\frac{t^{p-4}}{p}%
\int_{\mathbb{R}^{N}}f(x)|u|^{p}dx.
\end{equation*}%
Clearly, $J_{a}(tu)=0$ if and only if
\begin{equation*}
g(t)+\frac{a}{4}\left( \int_{\mathbb{R}^{N}}|\nabla u|^{2}dx\right) ^{2}=0.
\end{equation*}%
It is easy to see that
\begin{equation*}
g(\hat{t}_{a})=0,\ \lim_{t\rightarrow 0^{+}}g(t)=\infty \ \text{and }%
\lim_{t\rightarrow \infty }g(t)=0,
\end{equation*}%
where $\hat{t}_{a}=\left( \frac{p}{2}\right) ^{1/(p-2)}T_{f}(u).$ By
calculating the derivative of $g(t)$, we obtain
\begin{equation*}
g^{\prime }(t)=t^{-3}\left[ -\left\Vert u\right\Vert _{H^{1}}^{2}+\frac{%
(4-p)t^{p-2}}{p}\int_{\mathbb{R}^{N}}f(x)|u|^{p}dx\right] ,
\end{equation*}%
which implies that $g(t)$ is decreasing when $0<t<\left( \frac{p}{4-p}%
\right) ^{1/(p-2)}T_{f}(u)$ and is increasing when $t>\left( \frac{p}{4-p}%
\right) ^{1/(p-2)}T_{f}(u).$ This indicates that%
\begin{eqnarray}
\inf_{t>0}g(t) &=&g\left( \left( \frac{p}{4-p}\right)
^{1/(p-2)}T_{f}(u)\right)  \notag \\
&=&-\frac{p-2}{2(4-p)}\left( \frac{p\left\Vert u\right\Vert _{H^{1}}^{2}}{%
(4-p)\int_{\mathbb{R}^{N}}f(x)|u|^{p}dx}\right) ^{-2/(p-2)}\Vert u\Vert
_{H^{1}}^{2}.  \label{3-5}
\end{eqnarray}%
Note that
\begin{equation*}
0<a<\frac{2(p-2)(4-p)^{(4-p)/(p-2)}}{p^{2/(p-2)}}\overline{\mathbf{A}}_{f}.
\end{equation*}%
Then there exists $u\in H^{1}(\mathbb{R}^{N})\backslash \{0\}$ such that%
\begin{equation*}
\frac{2(p-2)(4-p)^{(4-p)/(p-2)}\left( \int_{\mathbb{R}^{N}}f(x)|u|^{p}dx%
\right) ^{2/(p-2)}}{p^{2/(p-2)}\left\Vert u\right\Vert
_{H^{1}}^{2(4-p)/(p-2)}\left\Vert u\right\Vert _{D^{1,2}}^{4}}>a.
\end{equation*}%
Using the above inequality, together with $(\ref{3-5}),$ leads to%
\begin{equation*}
\inf_{t>0}g(t)<-\frac{a}{4}\left\Vert u\right\Vert _{D^{1,2}}^{4}.
\end{equation*}%
This implies that there exist two numbers $\widehat{t}_{a}^{(0)}$ and $%
\widehat{t}_{a}^{(1)}$ satisfying
\begin{equation*}
0<\widehat{t}_{a}^{(1)}<\left( \frac{p}{4-p}\right) ^{1/(p-2)}T_{f}(u)<%
\widehat{t}_{a}^{(0)}
\end{equation*}%
such that
\begin{equation*}
g\left( \widehat{t}_{a}^{(j)}\right) +\frac{a}{4}\left\Vert u\right\Vert
_{D^{1,2}}^{4}=0\text{ for }j=0,1.
\end{equation*}%
That is,
\begin{equation*}
J_{a}\left( \widehat{t}_{a}^{(j)}u\right) =0\text{ for }j=0,1.
\end{equation*}%
Thus,
\begin{eqnarray*}
J_{a}\left( \left( \frac{p}{4-p}\right) ^{1/(p-2)}T_{f}(u)u\right)&=&\left[
\left( \frac{p}{4-p}\right) ^{1/(p-2)}T_{f}(u)\right]^{4}\left[ g\left(
\left( \frac{p}{4-p}\right) ^{1/(p-2)}T_{f}(u)\right) +\frac{a}{4}\left\Vert
u\right\Vert _{D^{1,2}}^{4}\right] \\
&<&0,
\end{eqnarray*}%
and so%
\begin{equation*}
\inf_{t\geq 0}J_{a}(tu)<0.
\end{equation*}%
Note that%
\begin{equation*}
h_{a,u}^{\prime }(t)=4t^{3}\left( g(t)+\frac{a}{4}\left\Vert u\right\Vert
_{D^{1,2}}^{4}\right) +t^{4}g^{\prime }(t).
\end{equation*}%
Then we have
\begin{equation*}
h_{a,u}^{\prime }(t)<0\text{ for all }t\in \left( \widehat{t}%
_{a}^{(1)},\left( \frac{p}{4-p}\right) ^{1/(p-2)}T_{f}(u)\right]
\end{equation*}%
and%
\begin{equation*}
h_{a,u}^{\prime }\left( \widehat{t}_{a}^{(0)}\right) >0.
\end{equation*}%
Consequently, we arrive at inequality (\ref{eqq17}).\newline
$\left( ii\right) $ For each $u\in H^{1}(\mathbb{R}^{N})\backslash \{0\},$
we can find a unique $t_{0}:=t_{0}(u)>0$ such that $h_{a,u}(t_{0})=0$ and $%
h_{a,u}^{\prime }(t_{0})=0.$ In fact, we only need to solve the system with
respect to the variables $t,a$%
\begin{equation*}
\left\{
\begin{array}{c}
h_{a,u}(t)=t^{2}\left( \frac{1}{2}\left\Vert u\right\Vert _{H^{1}}^{2}+\frac{%
at^{2}}{4}\left\Vert u\right\Vert _{D^{1,2}}^{4}-\frac{t^{p-2}}{p}\int_{%
\mathbb{R}^{N}}f(x)|u|^{p}dx\right) =0, \\
h_{a,u}^{\prime }(t)=t\left( \left\Vert u\right\Vert
_{H^{1}}^{2}+at^{2}\left\Vert u\right\Vert _{D^{1,2}}^{4}-t^{p-2}\int_{%
\mathbb{R}^{N}}f(x)|u|^{p}dx\right) =0.%
\end{array}%
\right.
\end{equation*}%
A direct calculation shows that%
\begin{equation*}
t_{0}(u)=\left( \frac{2(p-2)\int_{\mathbb{R}^{N}}f(x)|u|^{p}dx}{ap\left\Vert
u\right\Vert _{D^{1,2}}^{4}}\right) ^{1/(4-p)},
\end{equation*}%
and accordingly,
\begin{eqnarray*}
a_{0}(u) &=&\frac{2(p-2)(4-p)^{(4-p)/(p-2)}\left( \int_{\mathbb{R}%
^{N}}f(x)|u|^{p}dx\right) ^{2/(p-2)}}{p^{2/(p-2)}\left\Vert u\right\Vert
_{D^{1,2}}^{4}\left\Vert u\right\Vert _{H^{1}}^{2(4-p)/(p-2)}} \\
&=&\frac{2(p-2)(4-p)^{(4-p)/(p-2)}}{p^{2/(p-2)}}\overline{A}_{f}(u).
\end{eqnarray*}%
Since
\begin{equation*}
\overline{a}_{\ast }=\frac{2(p-2)(4-p)^{(4-p)/(p-2)}}{p^{2/(p-2)}}\sup_{u\in
H^{1}(\mathbb{R}^{N})\backslash \{0\}}\overline{A}_{f}(u)=\frac{%
2(p-2)(4-p)^{(4-p)/(p-2)}}{p^{2/(p-2)}}\overline{\mathbf{A}}_{f},
\end{equation*}%
we have for each $a\geq \overline{a}_{\ast }$ and $u\in H^{1}(\mathbb{R}%
^{N})\backslash \{0\},$
\begin{equation*}
h_{a,u}(t)=J_{a}(tu)\geq 0\text{ for all }t>0.
\end{equation*}%
This completes the proof.
\end{proof}

\begin{corollary}
\label{g17}Suppose that $N=4$ and condition $(D1)$ holds. Then for all $%
a\geq \overline{a}_{\ast }$, $J_{a}$ is bounded below on $H^{1}(\mathbb{R}%
^{N})$ and $\inf_{u\in H^{1}(\mathbb{R}^{N})\backslash \{0\}}J_{a}(u)\geq 0.$
\end{corollary}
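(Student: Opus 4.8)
The plan is to read the corollary off directly from Lemma \ref{g4}$(ii)$, noting that the case $N=4$ falls under the hypothesis $N\geq 4$ of that lemma. First I would fix $a\geq \overline{a}_{\ast}$ and recall that Lemma \ref{g4}$(ii)$ guarantees $J_{a}(tu)\geq 0$ for every $u\in H^{1}(\mathbb{R}^{N})\backslash\{0\}$ (with $N=4$) and every $t>0$. The only observation needed is that each nonzero $v\in H^{1}(\mathbb{R}^{N})$ lies on the ray $\{tu:t>0\}$ with $u=v$ at the parameter value $t=1$; hence evaluating the lemma's conclusion at $t=1$ yields $J_{a}(v)=J_{a}(1\cdot v)\geq 0$.

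Since $v$ was an arbitrary nonzero element, this immediately gives $\inf_{u\in H^{1}(\mathbb{R}^{N})\backslash\{0\}}J_{a}(u)\geq 0$. For the boundedness below on all of $H^{1}(\mathbb{R}^{N})$, I would simply note that $J_{a}(0)=0$, so combining with the previous bound we get $J_{a}(u)\geq 0$ for every $u\in H^{1}(\mathbb{R}^{N})$; in particular $0$ is a lower bound and $J_{a}$ is bounded below.

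The substantive content has in fact already been carried out inside the proof of Lemma \ref{g4}$(ii)$: there one solves the system $h_{a,u}(t)=h_{a,u}^{\prime}(t)=0$ to produce the threshold $a_{0}(u)=\frac{2(p-2)(4-p)^{(4-p)/(p-2)}}{p^{2/(p-2)}}\overline{A}_{f}(u)$, and observes that $\overline{a}_{\ast}=\sup_{u}a_{0}(u)$, so that $a\geq \overline{a}_{\ast}$ forces the fibering map $h_{a,u}$ to remain nonnegative on the whole ray. Consequently there is essentially no further obstacle at this stage; the only points to get right are that the specialization to $N=4$ is legitimate (it is, since $4\geq 4$) and that evaluating at $t=1$ recovers an arbitrary nonzero function, so that the pointwise ray estimate of Lemma \ref{g4}$(ii)$ upgrades to the global lower bound asserted by the corollary.
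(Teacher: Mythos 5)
Your proof is correct and is precisely the paper's (implicit) argument: Corollary \ref{g17} is stated in the paper as an immediate consequence of Lemma \ref{g4}$(ii)$, obtained exactly as you do by specializing $N=4$ and evaluating the ray estimate $J_{a}(tu)\geq 0$ at $t=1$, with $J_{a}(0)=0$ handling the trivial element. Nothing further is needed.
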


\begin{lemma}
\label{g11}Suppose that $N\geq 5$ and condition $(D1)$ holds. Then for all $%
a>0,$ $J_{a}$ is bounded below on $H^{1}(\mathbb{R}^{N})$ and there exist
numbers $\widehat{r},\widehat{R}_{a}>0$ such that%
\begin{equation*}
J_{a}(u)>0\text{ for all }u\in H^{1}(\mathbb{R}^{N})\text{ with }%
0<\left\Vert u\right\Vert _{H^{1}}<\widehat{r}\text{ or }\left\Vert
u\right\Vert _{H^{1}}\geq \widehat{R}_{a}.
\end{equation*}%
Furthermore, for each $0<a<\overline{a}_{\ast },$ there holds%
\begin{equation*}
-\infty <\inf_{\widehat{r}<\left\Vert u\right\Vert _{H^{1}}<\widehat{R}%
_{a}}J_{a}(u)=\inf_{u\in H^{1}(\mathbb{R}^{N})\backslash \{0\}}J_{a}(u)<0.
\end{equation*}
\end{lemma}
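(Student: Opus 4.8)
The plan is to distill from the sharp Gagliardo--Nirenberg inequality a single coercive lower bound for $J_{a}$ that holds for every $a>0$, and to read all three assertions off from it (invoking Lemma~\ref{g4}$(i)$ only for the last clause). First I would insert
\[
\int_{\mathbb{R}^{N}}f(x)|u|^{p}dx\le f_{\max}C_{p}^{p}\,\Vert u\Vert _{D^{1,2}}^{N(p-2)/2}\,\Vert u\Vert _{L^{2}}^{\,p-N(p-2)/2}
\]
into $J_{a}$. Writing $\sigma=\Vert u\Vert _{D^{1,2}}^{2}$ and $\tau=\Vert u\Vert _{L^{2}}^{2}$, the nonlinear term is dominated by a constant multiple of $\sigma^{\alpha}\tau^{\beta}$ with $\alpha=\tfrac{N(p-2)}{4}$ and $\beta=\tfrac{p}{2}-\tfrac{N(p-2)}{4}$; here $\beta>0$ precisely because $p<2^{\ast}$, and $\alpha+\beta=\tfrac{p}{2}$. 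The decisive elementary computation is
\[
\frac{\alpha}{2}+\beta=\frac{p}{2}-\frac{N(p-2)}{8}<1\iff p(4-N)<8-2N\iff p>2\quad(N\ge 5),
\]
so $\sigma^{\alpha}\tau^{\beta}$ is strictly subcritical relative to the coercive pair $(\sigma^{2},\tau)$ supplied by the quartic gradient term $\tfrac{a}{4}\sigma^{2}$ and the mass term $\tfrac12\tau$. (For $N=4$ this quantity equals $1$ exactly, which is why four is the borderline dimension.)

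Since $\tfrac{\alpha}{2}+\beta<1$, and moreover $\alpha<2$ and $\beta<1$ for $N\ge5$, $p>2$, Young's inequality with three exponents $2/\alpha,\ 1/\beta,\ 1/(1-\tfrac{\alpha}{2}-\beta)$ and a scaling parameter lets me absorb the nonlinear contribution:
\[
\frac{f_{\max}C_{p}^{p}}{p}\,\sigma^{\alpha}\tau^{\beta}\le \frac{a}{8}\sigma^{2}+\frac14\tau+C(a,N,p,f).
\]
Combining this with $\tfrac{a}{8}\sigma^{2}\ge \tfrac14\sigma-\tfrac{1}{8a}$ (complete the square) yields the coercive estimate
\[
J_{a}(u)\ge \frac14\Vert u\Vert _{H^{1}}^{2}-C'(a),\qquad C'(a)=C+\tfrac{1}{8a}.
\]
This already proves that $J_{a}$ is bounded below for all $a>0$, and it furnishes $\widehat{R}_{a}$: taking $\widehat{R}_{a}=2\sqrt{C'(a)}$ forces $J_{a}(u)>0$ whenever $\Vert u\Vert _{H^{1}}\ge \widehat{R}_{a}$.

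For the small-norm regime I discard the nonnegative quartic and mass contributions and use the Sobolev embedding $\int f|u|^{p}\le f_{\max}S_{p}^{-p}\Vert u\Vert _{H^{1}}^{p}$, giving $J_{a}(u)\ge \Vert u\Vert _{H^{1}}^{2}\big(\tfrac12-\tfrac{f_{\max}}{pS_{p}^{p}}\Vert u\Vert _{H^{1}}^{p-2}\big)$, which is positive on $0<\Vert u\Vert _{H^{1}}<\widehat{r}$ for $\widehat{r}$ depending only on $p$ and $f_{\max}$ (hence independent of $a$, as the statement requires). For the final clause I fix $0<a<\overline{a}_{\ast}$: by Lemma~\ref{g4}$(i)$ there is a $u\in H^{1}(\mathbb{R}^{N})\setminus\{0\}$ and a $t>0$ with $J_{a}(tu)<0$, so $\inf_{H^{1}\setminus\{0\}}J_{a}<0$, and the coercive bound gives $-\infty<\inf_{H^{1}\setminus\{0\}}J_{a}<0$. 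Since $J_{a}\ge 0$ on the closed ball $\{\Vert u\Vert _{H^{1}}\le\widehat{r}\}$ and on $\{\Vert u\Vert _{H^{1}}\ge\widehat{R}_{a}\}$, any minimizing sequence for this negative infimum must eventually lie in the annulus $\widehat{r}<\Vert u\Vert _{H^{1}}<\widehat{R}_{a}$, so the infimum over the annulus coincides with the global one, completing the chain of (in)equalities.

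The step I expect to be the main obstacle is the first one: verifying that the Gagliardo--Nirenberg monomial is strictly subcritical with respect to the available coercive terms, i.e. the inequality $\tfrac{\alpha}{2}+\beta<1$ together with $\alpha<2$ and $\beta<1$. This is exactly where the hypothesis $N\ge5$ (rather than $N=4$) is used, and once the resulting bound $J_{a}(u)\ge \tfrac14\Vert u\Vert _{H^{1}}^{2}-C'(a)$ is in hand, every remaining assertion is routine.
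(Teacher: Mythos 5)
Your proposal is correct, and while it rests on the same underlying mechanism as the paper's proof (Gagliardo--Nirenberg plus Young, with the quartic gradient term supplying coercivity exactly when $N\geq 5$), the decomposition is genuinely different and in fact cleaner. The paper applies a two-exponent Young inequality that leaves a residual $\Vert u\Vert _{D^{1,2}}^{2^{\ast }}$ term, exploits $2^{\ast }<4$ (which is equivalent to your condition $\tfrac{\alpha }{2}+\beta <1$, i.e.\ to $N\geq 5$) to dominate it by $\tfrac{a}{4}\Vert u\Vert _{D^{1,2}}^{4}$, and thereby first produces a radius $R_{a}$ measured in the $D^{1,2}$-norm; converting this into the $H^{1}$-radius $\widehat{R}_{a}$ then costs the paper a further case analysis ($\Vert u\Vert _{D^{1,2}}$ large versus $\Vert u\Vert _{L^{2}}$ large). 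Your one-shot three-exponent Young absorption $\frac{f_{\max }C_{p}^{p}}{p}\sigma ^{\alpha }\tau ^{\beta }\leq \frac{a}{8}\sigma ^{2}+\frac{1}{4}\tau +C(a,N,p,f)$ is legitimate since $\alpha <2$, $\beta <1$ and $\tfrac{\alpha }{2}+\beta <1$ (note that $\alpha <2$ uses the standing hypothesis $p<2^{\ast }$, not merely $p>2$ as your phrasing suggests), and it skips the case analysis entirely, delivering the stronger global estimate $J_{a}(u)\geq \tfrac{1}{4}\Vert u\Vert _{H^{1}}^{2}-C^{\prime }(a)$: genuine $H^{1}$-coercivity, which incidentally also yields at once the boundedness of minimizing sequences that the paper establishes separately in Lemma \ref{g12}. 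The small-norm estimate and the appeal to Lemma \ref{g4}$(i)$ for the negativity of the infimum coincide with the paper's. Two cosmetic points: with $\widehat{R}_{a}=2\sqrt{C^{\prime }(a)}$ you only get $J_{a}\geq 0$ on the sphere $\Vert u\Vert _{H^{1}}=\widehat{R}_{a}$, so either enlarge $\widehat{R}_{a}$ slightly or observe that you discarded the strictly positive quantity $\tfrac{a}{8}\sigma ^{2}+\tfrac{1}{4}\sigma $ (and $\sigma >0$ for every $u\neq 0$ in $H^{1}(\mathbb{R}^{N})$, since nonzero constants are not in $L^{2}(\mathbb{R}^{N})$), which restores the strict inequality; and the complete-the-square step is superfluous, because the term $\tfrac{1}{2}\sigma $ already present in $\Vert u\Vert _{H^{1}}^{2}$ supplies $\tfrac{1}{4}\sigma $ directly.
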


\begin{proof}
Applying the Gagliardo-Nirenberg and Young inequalities leads to
\begin{eqnarray*}
J_{a}\left( u\right) &\geq &\frac{a}{4}\left\Vert u\right\Vert
_{D^{1,2}}^{4}+\frac{1}{2}\left\Vert u\right\Vert _{H^{1}}^{2}-\frac{f_{\max
}C_{p}^{p}}{p}\left\Vert u\right\Vert _{D^{1,2}}^{\alpha p}\left\Vert
u\right\Vert _{L^{2}}^{(1-\alpha )p} \\
&\geq &\frac{a}{4}\left\Vert u\right\Vert _{D^{1,2}}^{4}+\frac{1}{2}%
\left\Vert u\right\Vert _{H^{1}}^{2}-\frac{2^{\ast }}{\alpha p^{2}}\left(
f_{\max }C_{p}^{p}\beta ^{-\frac{(1-\alpha )p}{2}}\right) ^{\frac{2^{\ast }}{%
\alpha p}}\left\Vert u\right\Vert _{D^{1,2}}^{2^{\ast }}-\frac{\beta }{%
(1-\alpha )p^{2}}\left\Vert u\right\Vert _{L^{2}}^{2} \\
&\geq &\left\Vert u\right\Vert _{D^{1,2}}^{2^{\ast }}\left[ \frac{a}{4}%
\left\Vert u\right\Vert _{D^{1,2}}^{4-2^{\ast }}-\frac{2^{\ast }}{\alpha
p^{2}}\left( f_{\max }C_{p}^{p}\beta ^{-\frac{(1-\alpha )p}{2}}\right) ^{%
\frac{2^{\ast }}{\alpha p}}\right] \\
&\geq &-D_{0}\text{ for some }D_{0}>0,
\end{eqnarray*}%
where $\alpha =\frac{2^{\ast }(p-2)}{p(2^{\ast }-2)}$ and $0<\beta <\frac{%
p(2^{\ast }-p)}{2^{\ast }-2}.$ This implies that $J_{a}(u)$ is bounded below
on $H^{1}(\mathbb{R}^{N})$ for all $a>0.$ Moreover, for each $a>0,$ there
exists%
\begin{equation*}
R_{a}:=\left[ \frac{42^{\ast }}{a\alpha p^{2}}\left( f_{\max }C_{p}^{p}\beta
^{-\frac{(1-\alpha )p}{2}}\right) ^{\frac{2^{\ast }}{\alpha p}}\right]
^{1/(4-2^{\ast })}>0
\end{equation*}%
such that%
\begin{equation*}
J_{a}(u)>0\text{ for all }u\in H^{1}(\mathbb{R}^{N})\text{ with }\left\Vert
u\right\Vert _{D^{1,2}}>R_{a}.
\end{equation*}

Let
\begin{equation*}
\widehat{R}_{a}=\left[ R_{a}+\left( \frac{1}{2}-\frac{1}{(1-\alpha )p}%
\right) ^{-1}\frac{2^{\ast }}{\alpha p}R_{a}^{2^{\ast }}\right] ^{1/2}.
\end{equation*}%
We now prove that
\begin{equation*}
J_{a}(u)>0\text{ for all }u\in H^{1}(\mathbb{R}^{N})\text{ with }\left\Vert
u\right\Vert _{H^{1}}>\widehat{R}_{a}.
\end{equation*}%
Let $u\in H^{1}(\mathbb{R}^{N})$ with $\left\Vert u\right\Vert _{H^{1}}\geq
\widehat{R}_{a}.$ If $\left\Vert u\right\Vert _{D^{1,2}}>R_{a},$ then the
result is done clearly. If $\left\Vert u\right\Vert _{D^{1,2}}<R_{a},$ then
it is enough to show that $J_{a}(u)\geq 0$ when
\begin{equation*}
\int_{\mathbb{R}^{N}}u^{2}dx>\left( \frac{1}{2}-\frac{\beta }{\left(
1-\alpha \right) p^{2}}\right) ^{-1}\frac{2^{\ast }}{\alpha p}\left( f_{\max
}C_{p}^{p}\beta ^{-\frac{(1-\alpha )p}{2}}\right) ^{\frac{2^{\ast }}{\alpha p%
}}R_{a}^{2^{\ast }}.
\end{equation*}%
Indeed, note that%
\begin{equation}
\frac{f_{\max }}{p}\int_{\mathbb{R}^{N}}|u|^{p}dx\leq \frac{2^{\ast }}{%
\alpha p^{2}}\left( f_{\max }C_{p}^{p}\beta ^{-\frac{(1-\alpha )p}{2}%
}\right) ^{\frac{2^{\ast }}{\alpha p}}R_{a}^{2^{\ast }}+\frac{\beta }{%
(1-\alpha )p^{2}}\int_{\mathbb{R}^{N}}u^{2}dx.  \notag
\end{equation}%
Then we have%
\begin{eqnarray*}
J_{a}(u) &\geq &\frac{a}{4}\left\Vert u\right\Vert _{D^{1,2}}^{4}+\frac{1}{2}%
\left\Vert u\right\Vert _{H^{1}}^{2}-\frac{f_{\max }}{p}\int_{\mathbb{R}%
^{N}}|u|^{p}dx \\
&>&\left( \frac{1}{2}-\frac{\beta }{\left( 1-\alpha \right) p^{2}}\right)
\int_{\mathbb{R}^{N}}u^{2}dx-\frac{2^{\ast }}{\alpha p^{2}}\left( f_{\max
}C_{p}^{p}\beta ^{-\frac{(1-\alpha )p}{2}}\right) ^{\frac{2^{\ast }}{\alpha p%
}}R_{a}^{2^{\ast }} \\
&>&0.
\end{eqnarray*}%
Thus, we obtain that there exists a positive number $\widehat{R}_{a}>R_{a}$
such that%
\begin{equation*}
J_{a}(u)>0\text{ for all }u\in H^{1}(\mathbb{R}^{N})\text{ with }\left\Vert
u\right\Vert _{H^{1}}\geq \widehat{R}_{a}.
\end{equation*}

Moreover, using the Sobolev inequality gives%
\begin{equation*}
J_{a}(u)\geq \left\Vert u\right\Vert _{H^{1}}^{2}\left( \frac{1}{2}-\frac{%
f_{\max }}{pS_{p}^{p}}\left\Vert u\right\Vert _{H^{1}}^{p-2}\right) ,
\end{equation*}%
which implies that there exists a number%
\begin{equation*}
\widehat{r}:=\left( \frac{pS_{p}^{p}}{2f_{\max }}\right) ^{1/(p-2)}>0
\end{equation*}%
such that%
\begin{equation*}
J_{a}(u)>0\text{ for all }u\in H^{1}(\mathbb{R}^{N})\text{ with }%
0<\left\Vert u\right\Vert _{H^{1}}<\widehat{r}.
\end{equation*}%
Hence, we have%
\begin{equation*}
J_{a}(u)>0\text{ for all }u\in H^{1}(\mathbb{R}^{N})\text{ with }%
0<\left\Vert u\right\Vert _{H^{1}}<\widehat{r}\text{ or }\left\Vert
u\right\Vert _{H^{1}}>\widehat{R}_{a}.
\end{equation*}

It follows from Lemma \ref{g4}$(i)$ that for each $0<a<\overline{a}_{\ast },$%
\begin{equation*}
-\infty <\inf_{\widehat{r}\leq \left\Vert u\right\Vert _{H^{1}}\leq \widehat{%
R}_{a}}J_{a}(u)=\inf_{u\in H^{1}(\mathbb{R}^{N})\backslash \{0\}}J_{a}(u)<0.
\end{equation*}%
Consequently, this completes the proof.
\end{proof}

As pointed out in the section of Introduction, the Nehari manifold $\mathbf{M%
}_{a}$ given by%
\begin{equation*}
\mathbf{M}_{a}=\{u\in H^{1}(\mathbb{R}^{N})\backslash \{0\}:\left\langle
J_{a}^{\prime }(u),u\right\rangle =0\},
\end{equation*}%
can be decomposed into three parts, i.e.,
\begin{equation*}
\mathbf{M}_{a}=\mathbf{M}_{a}^{-}\cup \mathbf{M}_{a}^{0}\cup \mathbf{M}%
_{a}^{+}.
\end{equation*}%
Then we have the following result.

\begin{lemma}
\label{g2}Assume that $u_{0}$ is a local minimizer for $J_{a}$ on $\mathbf{M}%
_{a}$ and $u_{0}\notin \mathbf{M}_{a}^{0}.$ Then $J_{a}^{\prime }(u_{0})=0$
in $H^{-1}(\mathbb{R}^{N}).$
\end{lemma}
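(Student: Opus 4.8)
The plan is to run the standard Lagrange multiplier argument for the Nehari constraint, using the fibering map $h_{a,u}$ to control the degeneracy. First I would introduce the $C^{1}$ constraint functional $\Phi_{a}(u)=\langle J_{a}^{\prime }(u),u\rangle =a\left\Vert u\right\Vert _{D^{1,2}}^{4}+\left\Vert u\right\Vert _{H^{1}}^{2}-\int_{\mathbb{R}^{N}}f(x)|u|^{p}dx$, so that $\mathbf{M}_{a}=\{u\in H^{1}(\mathbb{R}^{N})\backslash \{0\}:\Phi_{a}(u)=0\}$. Since $u_{0}$ is a local minimizer of $J_{a}$ subject to this constraint, the goal is to produce a multiplier $\lambda \in \mathbb{R}$ with $J_{a}^{\prime }(u_{0})=\lambda \Phi_{a}^{\prime }(u_{0})$ and then to force $\lambda =0$.

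The key preliminary step is an identity connecting the derivative of the constraint to the second derivative of the fibering map. A direct computation gives $\langle \Phi_{a}^{\prime }(u),u\rangle =4a\left\Vert u\right\Vert _{D^{1,2}}^{4}+2\left\Vert u\right\Vert _{H^{1}}^{2}-p\int_{\mathbb{R}^{N}}f(x)|u|^{p}dx$, whereas differentiating $h_{a,u}$ twice yields $h_{a,u}^{\prime \prime }(1)=3a\left\Vert u\right\Vert _{D^{1,2}}^{4}+\left\Vert u\right\Vert _{H^{1}}^{2}-(p-1)\int_{\mathbb{R}^{N}}f(x)|u|^{p}dx$. Subtracting, one obtains the clean relation $\langle \Phi_{a}^{\prime }(u),u\rangle =h_{a,u}^{\prime \prime }(1)+\Phi_{a}(u)$ valid for all $u$. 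Evaluating at $u_{0}\in \mathbf{M}_{a}$, where $\Phi_{a}(u_{0})=0$, collapses this to $\langle \Phi_{a}^{\prime }(u_{0}),u_{0}\rangle =h_{a,u_{0}}^{\prime \prime }(1)$. Since $u_{0}\notin \mathbf{M}_{a}^{0}$ means exactly $h_{a,u_{0}}^{\prime \prime }(1)\neq 0$, this shows $\langle \Phi_{a}^{\prime }(u_{0}),u_{0}\rangle \neq 0$; in particular $\Phi_{a}^{\prime }(u_{0})\neq 0$, so $u_{0}$ is a regular point of $\Phi_{a}$ and $\mathbf{M}_{a}$ is a $C^{1}$ manifold near $u_{0}$. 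The Lagrange multiplier rule then applies and furnishes the desired $\lambda$.

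To conclude, I would pair the identity $J_{a}^{\prime }(u_{0})=\lambda \Phi_{a}^{\prime }(u_{0})$ with $u_{0}$ itself. The left-hand side is $\langle J_{a}^{\prime }(u_{0}),u_{0}\rangle =\Phi_{a}(u_{0})=0$ by the Nehari condition, while the right-hand side equals $\lambda \langle \Phi_{a}^{\prime }(u_{0}),u_{0}\rangle =\lambda \, h_{a,u_{0}}^{\prime \prime }(1)$. As $h_{a,u_{0}}^{\prime \prime }(1)\neq 0$, we must have $\lambda =0$, and therefore $J_{a}^{\prime }(u_{0})=0$ in $H^{-1}(\mathbb{R}^{N})$.

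The main obstacle, and the only genuine use of the hypothesis $u_{0}\notin \mathbf{M}_{a}^{0}$, is the nondegeneracy step: one must transfer the condition $h_{a,u_{0}}^{\prime \prime }(1)\neq 0$ into the nonvanishing of the pairing $\langle \Phi_{a}^{\prime }(u_{0}),u_{0}\rangle$. This single fact does double duty, guaranteeing both that the constraint is regular (so the multiplier rule is available) and that the multiplier is forced to vanish. Were the pairing zero—precisely the excluded case $u_{0}\in \mathbf{M}_{a}^{0}$—the argument would fail on both accounts, which is why the hypothesis is essential.
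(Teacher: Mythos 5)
Your argument is correct, and it is precisely the standard Lagrange-multiplier argument of Brown--Zhang \cite[Theorem 2.3]{BZ}, which is exactly what the paper invokes (the authors omit the proof and cite that result). The identity $\langle \Phi_{a}^{\prime }(u_{0}),u_{0}\rangle =h_{a,u_{0}}^{\prime \prime }(1)$ on $\mathbf{M}_{a}$ and the double use of $h_{a,u_{0}}^{\prime \prime }(1)\neq 0$ (regularity of the constraint plus vanishing of the multiplier) are computed correctly and match the intended proof.
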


\begin{proof}
The proof of Lemma \ref{g2} is essentially same as that in Brown-Zhang \cite[%
Theorem 2.3]{BZ}, so we omit it here.
\end{proof}

Note that $u\in \mathbf{M}_{a}$ if and only if $\left\Vert u\right\Vert
_{H^{1}}^{2}+a\left( \int_{\mathbb{R}^{N}}|\nabla u|^{2}dx\right) ^{2}-\int_{%
\mathbb{R}^{N}}f(x)|u|^{p}dx=0.$ By the Sobolev inequality one has%
\begin{equation*}
\left\Vert u\right\Vert _{H^{1}}^{2}\leq \left\Vert u\right\Vert
_{H^{1}}^{2}+a\left( \int_{\mathbb{R}^{N}}|\nabla u|^{2}dx\right) ^{2}\leq
S_{p}^{-p}f_{\max }\left\Vert u\right\Vert _{H^{1}}^{p}\text{ for all }u\in
\mathbf{M}_{a},
\end{equation*}%
which leads to
\begin{equation}
\int_{\mathbb{R}^{N}}f(x)|u|^{p}dx\geq \left\Vert u\right\Vert
_{H^{1}}^{2}\geq \left( \frac{S_{p}^{p}}{f_{\max }}\right) ^{2/(p-2)}>0\text{
for all }u\in \mathbf{M}_{a}.  \label{2}
\end{equation}%
Moreover, for all $u\in \mathbf{M}_{a},$ we have
\begin{eqnarray}
h_{a,u}^{\prime \prime }(1) &=&\left\Vert u\right\Vert _{H^{1}}^{2}+3a\left(
\int_{\mathbb{R}^{N}}|\nabla u|^{2}dx\right) ^{2}-(p-1)\int_{\mathbb{R}%
^{N}}f(x)|u|^{p}dx  \notag \\
&=&-(p-2)\left\Vert u\right\Vert _{H^{1}}^{2}+a(4-p)\left( \int_{\mathbb{R}%
^{N}}|\nabla u|^{2}dx\right) ^{2}  \notag \\
&=&-2\left\Vert u\right\Vert _{H^{1}}^{2}+(4-p)\int_{\mathbb{R}%
^{N}}f(x)|u|^{p}dx.  \label{2-2}
\end{eqnarray}%
Thus, using $\left( \ref{2}\right) $ and $(\ref{2-2})$ gives
\begin{equation*}
J_{a}(u)=\frac{1}{4}\left\Vert u\right\Vert _{H^{1}}^{2}-\frac{(4-p)}{4p}%
\int_{\mathbb{R}^{N}}f(x)|u|^{p}dx>\frac{p-2}{4p}\left( \frac{S_{p}^{p}}{%
f_{\max }}\right) ^{2/(p-2)}\text{ for all }u\in \mathbf{M}_{a}^{-}
\end{equation*}%
We need the following conclusion.

\begin{lemma}
\label{g1}Suppose that $N\geq 1$ and $2<p<\min \{4,2^{\ast }\}.$ Then $J_{a}$
is coercive and bounded below on $\mathbf{M}_{a}^{-}.$ Furthermore, there
holds
\begin{equation*}
J_{a}(u)>\frac{p-2}{4p}\left( \frac{S_{p}^{p}}{f_{\max }}\right) ^{2/(p-2)}%
\text{ for all }u\in \mathbf{M}_{a}^{-}.
\end{equation*}
\end{lemma}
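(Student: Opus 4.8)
The plan is to reduce the entire statement to elementary algebra by exploiting the two identities already recorded for the Nehari manifold just before the lemma, namely the simplified form of $J_a$ on $\mathbf{M}_a$ and the expression (\ref{2-2}) for $h_{a,u}^{\prime\prime}(1)$. The analytic content is therefore entirely contained in results stated above; what remains is bookkeeping with the constraint.

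First I would use the Nehari constraint, $\Vert u\Vert_{H^1}^2 + a(\int_{\mathbb{R}^N}|\nabla u|^2 dx)^2 = \int_{\mathbb{R}^N}f(x)|u|^p dx$, to eliminate the nonlocal term from the definition of $J_a$. Substituting $a(\int_{\mathbb{R}^N}|\nabla u|^2 dx)^2 = \int f(x)|u|^p dx - \Vert u\Vert_{H^1}^2$ into $J_a$ produces the representation
\[ J_a(u) = \frac{1}{4}\Vert u\Vert_{H^1}^2 - \frac{4-p}{4p}\int_{\mathbb{R}^N}f(x)|u|^p dx \quad \text{for all } u \in \mathbf{M}_a, \]
which is exactly the identity displayed immediately before the lemma. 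The purpose of this step is to isolate a clean expression in which the sign and size of $J_a$ are controlled through $\int f(x)|u|^p dx$ together with $\Vert u\Vert_{H^1}^2$ alone.

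Next I would bring in the defining inequality of $\mathbf{M}_a^-$. For $u \in \mathbf{M}_a^-$ one has $h_{a,u}^{\prime\prime}(1) < 0$, and reading (\ref{2-2}) in the form $h_{a,u}^{\prime\prime}(1) = -2\Vert u\Vert_{H^1}^2 + (4-p)\int f(x)|u|^p dx$ converts this into $(4-p)\int_{\mathbb{R}^N}f(x)|u|^p dx < 2\Vert u\Vert_{H^1}^2$, that is, $\frac{4-p}{4p}\int f(x)|u|^p dx < \frac{1}{2p}\Vert u\Vert_{H^1}^2$. Inserting this into the representation of $J_a$ gives
\[ J_a(u) > \frac{1}{4}\Vert u\Vert_{H^1}^2 - \frac{1}{2p}\Vert u\Vert_{H^1}^2 = \frac{p-2}{4p}\Vert u\Vert_{H^1}^2. \]
Since $p>2$, the coefficient $\frac{p-2}{4p}$ is strictly positive, so this single inequality simultaneously delivers coercivity on $\mathbf{M}_a^-$ (the energy grows at least like a positive multiple of $\Vert u\Vert_{H^1}^2$) and a strictly positive lower bound. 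To upgrade the bound to the stated explicit constant I would finally invoke (\ref{2}), which guarantees $\Vert u\Vert_{H^1}^2 \geq (S_p^p/f_{\max})^{2/(p-2)}$ for every $u \in \mathbf{M}_a$, and combine it with the last inequality to conclude $J_a(u) > \frac{p-2}{4p}(S_p^p/f_{\max})^{2/(p-2)}$.

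I do not expect a genuine obstacle, since all the hard inequalities (the Sobolev estimate behind (\ref{2}) and the manifold splitting behind (\ref{2-2})) are already established; the argument is purely algebraic. The only point demanding care is consistent handling of the constraint when eliminating the nonlocal term, so that the local contributions combine into the coefficient $\frac{1}{4}-\frac{1}{2p}=\frac{p-2}{4p}$ with the correct \emph{positive} sign. It is precisely this positivity that makes coercivity valid on the manifold, despite the constraint coupling the local and nonlocal energies.
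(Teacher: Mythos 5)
Your proposal is correct and follows essentially the same route as the paper, which establishes the lemma in the text immediately preceding it: the Nehari constraint reduces $J_a$ on $\mathbf{M}_a$ to $\frac{1}{4}\Vert u\Vert_{H^1}^{2}-\frac{4-p}{4p}\int_{\mathbb{R}^{N}}f(x)|u|^{p}dx$, the condition $h_{a,u}^{\prime\prime}(1)<0$ from $(\ref{2-2})$ yields $J_a(u)>\frac{p-2}{4p}\Vert u\Vert_{H^1}^{2}$ (giving coercivity), and the Sobolev bound $(\ref{2})$ then delivers the explicit constant. No gaps; your bookkeeping of the coefficient $\frac{1}{4}-\frac{1}{2p}=\frac{p-2}{4p}$ and the use of $4-p>0$ are exactly as in the paper.
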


\begin{lemma}
\label{g6}Suppose that $N=1,2,3$ and condition $(D1)$ holds$.$ Then for each
$a>0$ and $u\in H^{1}(\mathbb{R}^{N})\backslash \{0\}$ satisfying
\begin{equation*}
\int_{\mathbb{R}^{N}}f(x)|u|^{p}dx>\frac{p}{4-p}\left( \frac{2a(4-p)}{p-2}%
\right) ^{\left( p-2\right) /2}\left\Vert u\right\Vert _{H^{1}}^{p},
\end{equation*}%
then there exist two numbers $t_{a}^{+}$ and $t_{a}^{-}$ satisfying
\begin{equation*}
T_{f}(u)<t_{a}^{-}<\sqrt{D(p)}\left( \frac{2}{4-p}\right)
^{1/(p-2)}T_{f}(u)<\left( \frac{2}{4-p}\right) ^{1/(p-2)}T_{f}(u)<t_{a}^{+}
\end{equation*}%
such that $t_{a}^{\pm }u\in \mathbf{M}_{a}^{\pm }$ and $J_{a}(t_{a}^{-}u)=%
\sup_{0\leq t\leq t_{a}^{+}}J_{a}(tu),$ and
\begin{equation*}
J_{a}(t_{a}^{+}u)=\inf_{t\geq t_{a}^{-}}J_{a}(tu)=\inf_{t\geq 0}J_{a}(tu)<0,
\end{equation*}%
where $T_{f}(u)$ is defined as $(\ref{2-0}).$
\end{lemma}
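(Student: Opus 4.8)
The plan is to analyze the fibering map $h_{a,u}(t)=J_a(tu)$ via its derivative and to locate its positive critical points, which are exactly the $t$ with $tu\in\mathbf{M}_a$. Writing $h_{a,u}'(t)=t\,\psi(t)$ with $\psi(t)=\|u\|_{H^1}^2+at^2\|u\|_{D^{1,2}}^4-t^{p-2}\int_{\mathbb{R}^N}f(x)|u|^pdx$, I would substitute $t=s\,T_f(u)$ and use the identity $T_f(u)^{p-2}\int_{\mathbb{R}^N}f(x)|u|^pdx=\|u\|_{H^1}^2$ to reduce $\psi(t)=0$ to the scalar equation $\eta(s)=\kappa$, where $\eta(s)=(s^{p-2}-1)/s^2$ and $\kappa=a\|u\|_{D^{1,2}}^4\,T_f(u)^2/\|u\|_{H^1}^2>0$. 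This separates the universal geometry (the function $\eta$) from the data of $u$ (the scalar $\kappa$).

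First I would study $\eta$ on $(0,\infty)$: it satisfies $\eta\le0$ on $(0,1]$, $\eta\to-\infty$ as $s\to0^+$, $\eta\to0^+$ as $s\to\infty$, and has a unique critical point, a global maximum, at $s_\ast:=(2/(4-p))^{1/(p-2)}$ with $\eta(s_\ast)=\frac{p-2}{4-p}(\frac{4-p}{2})^{2/(p-2)}$. Hence $\eta(s)=\kappa$ has two roots $1<s_a^-<s_\ast<s_a^+$ precisely when $0<\kappa<\eta(s_\ast)$. Using $\|u\|_{D^{1,2}}\le\|u\|_{H^1}$ (the only estimate available for $N=1,2,3$, since $\overline{A}_f$ is unbounded there) together with the hypothesis, I would bound $\kappa\le a\|u\|_{H^1}^2T_f(u)^2<\kappa_0:=(\frac{4-p}{p})^{2/(p-2)}\frac{p-2}{2(4-p)}$, and the short computation $\kappa_0/\eta(s_\ast)=\frac12(2/p)^{2/(p-2)}<1$ shows $\kappa<\kappa_0<\eta(s_\ast)$, so two roots exist. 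Setting $t_a^\pm=s_a^\pm T_f(u)$ yields $T_f(u)<t_a^-<s_\ast T_f(u)<t_a^+$. A sign analysis of $\psi$ across the roots shows $h_{a,u}$ increases on $(0,t_a^-)$, decreases on $(t_a^-,t_a^+)$, increases on $(t_a^+,\infty)$; since at a critical point $h_{a,u}''(t)=t\,\psi'(t)$ with $\psi'(t_a^-)<0<\psi'(t_a^+)$, homogeneity gives $t_a^-u\in\mathbf{M}_a^-$ and $t_a^+u\in\mathbf{M}_a^+$.

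The delicate step, and the one I expect to be the main obstacle, is the sharp upper bound $t_a^-<\sqrt{D(p)}\,s_\ast T_f(u)$. By Remark \ref{r-1} one has $\sqrt{D(p)}\,s_\ast\in(1,s_\ast)$, i.e. the test point lies on the increasing branch of $\eta$; since $s_a^-$ is the unique root of $\eta(s)=\kappa$ there, it suffices to prove $\eta(\sqrt{D(p)}\,s_\ast)\ge\kappa_0\ (>\kappa)$. Evaluating the left side gives $\eta(\sqrt{D(p)}\,s_\ast)=\big(D(p)^{(p-2)/2}\tfrac{2}{4-p}-1\big)\big/\big(D(p)(\tfrac{2}{4-p})^{2/(p-2)}\big)$, and the inequality $\eta(\sqrt{D(p)}\,s_\ast)\ge\kappa_0$ is exactly what forces the piecewise definition of $D(p)$: I would verify it separately on $2<p\le3$, where $D(p)=(\frac{4-p}{2})^{1/(p-2)}$ collapses the numerator to $(\frac{2}{4-p})^{1/2}-1$, and on $3<p<\min\{4,2^\ast\}$, where $D(p)=\tfrac12$. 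This is the only genuinely computational part, and making the constant work on both ranges is where the argument is most fragile; it also explains why the hypothesis carries the coefficient $\frac{p}{4-p}(\frac{2a(4-p)}{p-2})^{(p-2)/2}$, which is stronger than the bare threshold needed merely for two roots to exist.

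Finally I would read off the variational identities from the established shape of $h_{a,u}$: the local maximum at $t_a^-$ gives $J_a(t_a^-u)=\sup_{0\le t\le t_a^+}J_a(tu)$, while the local minimum at $t_a^+$ gives $\inf_{t\ge t_a^-}J_a(tu)=J_a(t_a^+u)$. To upgrade the latter to $\inf_{t\ge0}J_a(tu)<0$, I would note that $h_{a,u}>0$ on $(0,t_a^-)$ and $h_{a,u}(0)=0$, so the global infimum over $t\ge0$ is still attained at $t_a^+$; it then remains to show $J_a(t_a^+u)<0$. For this I would reuse the decomposition $h_{a,u}(t)=t^4\big(g(t)+\tfrac a4\|u\|_{D^{1,2}}^4\big)$ and the minimum value of $g$ recorded in \eqref{3-5} from Lemma \ref{g4}; a direct estimate shows the present hypothesis on $\int_{\mathbb{R}^N}f(x)|u|^pdx$ is strong enough to force $\min_{t>0}g(t)<-\tfrac a4\|u\|_{D^{1,2}}^4$, so $h_{a,u}$ takes negative values and therefore $\inf_{t\ge0}J_a(tu)=J_a(t_a^+u)<0$, completing the proof.
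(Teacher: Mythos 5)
Your proposal is correct and follows essentially the paper's own route: your substitution $t=sT_{f}(u)$ turns the paper's equation $m(t)+a\left\Vert u\right\Vert _{D^{1,2}}^{4}=0$ into $\eta (s)=\kappa $, your test point $\sqrt{D(p)}\left( \frac{2}{4-p}\right) ^{1/(p-2)}$ together with the inequality $\eta \bigl(\sqrt{D(p)}\,s_{\ast }\bigr)\geq \kappa _{0}>\kappa $ is exactly the paper's $(\ref{2-3})$ and $(\ref{2-1})$ (including the bound $\left\Vert u\right\Vert _{D^{1,2}}\leq \left\Vert u\right\Vert _{H^{1}}$ and the fact that the hypothesis gives the stronger threshold $\kappa _{0}<\eta (s_{\ast })$), and your closing negativity argument via the decomposition $h_{a,u}(t)=t^{4}\bigl(g(t)+\tfrac{a}{4}\left\Vert u\right\Vert _{D^{1,2}}^{4}\bigr)$ and $(\ref{3-5})$ is precisely the paper's appeal to Lemma \ref{g4}$(i)$. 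The only cosmetic difference is that the paper analyzes $m(t)$ directly while you pass to the dimensionless $\eta $, which changes nothing of substance.
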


\begin{proof}
Let
\begin{equation*}
m(t)=t^{-2}\left\Vert u\right\Vert _{H^{1}}^{2}-t^{p-4}\int_{\mathbb{R}%
^{N}}f(x)|u|^{p}dx\text{ for }t>0.
\end{equation*}%
Clearly, $tu\in \mathbf{M}_{a}$ if and only if $m(t)+a\left( \int_{\mathbb{R}%
^{N}}|\nabla u|^{2}dx\right) ^{2}=0.$ A straightforward evaluation gives
\begin{equation*}
m(T_{f}(u))=0,\ \lim_{t\rightarrow 0^{+}}m(t)=\infty \text{ and }%
\lim_{t\rightarrow \infty }m(t)=0.
\end{equation*}%
Since $2<p<4$ and
\begin{equation*}
m^{\prime }\left( t\right) =t^{-3}\left( -2\left\Vert u\right\Vert
_{H^{1}}^{2}+(4-p)t^{p-2}\int_{\mathbb{R}^{N}}f(x)|u|^{p}dx\right) ,
\end{equation*}%
we find that $m(t)$ is decreasing when $0<t<\left( \frac{2}{4-p}\right)
^{1/(p-2)}T_{f}(u)$ and is increasing when $t>\left( \frac{2}{4-p}\right)
^{1/(p-2)}T_{f}(u).$ This indicates that
\begin{equation}
\inf_{t>0}m(t)=m\left( \left( \frac{2}{4-p}\right) ^{1/(p-2)}T_{f}(u)\right)
.  \label{2-4}
\end{equation}%
For each $a>0$ and $u\in H^{1}(\mathbb{R}^{N})\backslash \left\{ 0\right\} $
satisfying
\begin{equation*}
\int_{\mathbb{R}^{N}}f(x)|u|^{p}dx>\frac{p}{4-p}\left( \frac{2a(4-p)}{p-2}%
\right) ^{(p-2)/2}\left\Vert u\right\Vert _{H^{1}}^{p},
\end{equation*}%
we can conclude that
\begin{equation*}
m\left( \left( \frac{2}{4-p}\right) ^{1/(p-2)}T_{f}(u)\right) <-2a\left(
\frac{p}{2}\right) ^{2/(p-2)}\left\Vert u\right\Vert
_{H^{1}}^{4}<-a\left\Vert u\right\Vert _{D^{1,2}}^{4},
\end{equation*}%
where we have used the fact of $\left( \frac{p}{2}\right) ^{2/(p-2)}>1$.
Moreover, by Remark $\ref{r-1}$ we have
\begin{equation}
T_{f}(u)<\sqrt{D(p)}\left( \frac{2}{4-p}\right) ^{1/(p-2)}T_{f}(u)<\left(
\frac{2}{4-p}\right) ^{1/(p-2)}T_{f}(u),  \label{2-3}
\end{equation}%
and a direct calculation shows that
\begin{equation}
\frac{\left( \frac{2}{4-p}\right) D(p)^{(p-2)/2}-1}{D(p)\left( \frac{2}{4-p}%
\right) ^{2/(p-2)}}>\frac{p-2}{2(4-p)}\left( \frac{4-p}{p}\right) ^{2/(p-2)}.
\label{2-1}
\end{equation}%
It follows from $(\ref{2-4})-(\ref{2-1})$ that%
\begin{equation*}
m\left( \sqrt{D(p)}\left( \frac{2}{4-p}\right) ^{1/(p-2)}T_{f}(u)\right)
<-a\left\Vert u\right\Vert _{D^{1,2}}^{4}.
\end{equation*}%
Thus, there exist two numbers $t_{a}^{+},t_{a}^{-}>0$ which satisfy
\begin{equation*}
T_{f}(u)<t_{a}^{-}<\sqrt{D(p)}\left( \frac{2}{4-p}\right)
^{1/(p-2)}T_{f}(u)<\left( \frac{2}{4-p}\right) ^{1/(p-2)}T_{f}(u)<t_{a}^{+}
\end{equation*}%
such that
\begin{equation*}
m(t_{a}^{\pm })+a\left\Vert u\right\Vert _{D^{1,2}}^{4}=0,
\end{equation*}%
leading to $t_{a}^{\pm }u\in \mathbf{M}_{a}.$ By calculating the second
order derivatives, we find
\begin{eqnarray*}
h_{a,t_{a}^{-}u}^{\prime \prime }(1) &=&-2\left\Vert t_{a}^{-}u\right\Vert
_{H^{1}}^{2}+(4-p)\int_{\mathbb{R}^{N}}f(x)|t_{a}^{-}u|^{p}dx \\
&=&(t_{a}^{-})^{5}m^{\prime }(t_{a}^{-})<0,
\end{eqnarray*}%
and
\begin{eqnarray*}
h_{a,t_{a}^{+}u}^{\prime \prime }(1) &=&-2\left\Vert t_{a}^{+}u\right\Vert
_{H^{1}}^{2}+(4-p)\int_{\mathbb{R}^{N}}f(x)|t_{a}^{+}u|^{p}dx \\
&=&(t_{a}^{-})^{5}m^{\prime }(t_{a}^{+})>0.
\end{eqnarray*}%
These imply that $t_{a}^{\pm }u\in \mathbf{M}_{a}^{\pm }.$ Note that
\begin{equation*}
h_{a,u}^{\prime }(t)=t^{3}\left( m(t)+a\left( \int_{\mathbb{R}^{N}}|\nabla
u|^{2}dx\right) ^{2}\right) .
\end{equation*}%
Then one can see that $h_{a,u}^{\prime }(t)>0$ for all $t\in
(0,t_{a}^{-})\cup (t_{a}^{+},\infty )$ and $h_{a,u}^{\prime }(t)<0$ for all $%
t\in (t_{a}^{-},t_{a}^{+})$. It leads to
\begin{equation*}
J_{a}(t_{a}^{-}u)=\sup_{0\leq t\leq t_{a}^{+}}J_{a}(tu)\text{ and }%
J_{a}(t_{a}^{+}u)=\inf_{t\geq t_{a}^{-}}J_{a}(tu),
\end{equation*}%
and so $J_{a}\left( t_{a}^{+}u\right) <J_{a}(t_{a}^{-}u).$ Similar to the
argument of Lemma \ref{g4}$(i),$ we have
\begin{equation*}
J_{a}(t_{a}^{+}u)=\inf_{t\geq 0}J_{a}(tu)<0.
\end{equation*}%
This completes the proof.
\end{proof}

\begin{lemma}
\label{g15}Suppose that $N\geq 4$ and condition $(D1)$ holds. Then for each $%
0<a<\overline{a}_{\ast },$ there exist two numbers $t_{a}^{+}$ and $%
t_{a}^{-} $ satisfying
\begin{equation*}
T_{f}(u)<t_{a}^{-}<\sqrt{D(p)}\left( \frac{2}{4-p}\right)
^{1/(p-2)}T_{f}(u)<\left( \frac{2}{4-p}\right) ^{1/(p-2)}T_{f}(u)<t_{a}^{+}
\end{equation*}%
such that $t_{a}^{\pm }u\in \mathbf{M}_{a}^{\pm }$ and $J_{a}(t_{a}^{-}u)=%
\sup_{0\leq t\leq t_{a}^{+}}J_{a}(tu)$ and $J_{a}(t_{a}^{+}u)=\inf_{t\geq
t_{a}^{-}}J_{a}(tu)=\inf_{t\geq 0}J_{a}(tu)<0.$
\end{lemma}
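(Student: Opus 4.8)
The plan is to run the same fibering-map analysis as in Lemma \ref{g6}; the only genuinely new point is that the pointwise hypothesis on $u$ used there must now be manufactured out of the single parameter condition $0<a<\overline{a}_{\ast}$. So I would again set
\[
m(t)=t^{-2}\left\Vert u\right\Vert _{H^{1}}^{2}-t^{p-4}\int_{\mathbb{R}^{N}}f(x)|u|^{p}dx,
\]
recall that $tu\in \mathbf{M}_{a}$ if and only if $m(t)+a\left\Vert u\right\Vert _{D^{1,2}}^{4}=0$, since $h_{a,u}^{\prime }(t)=t^{3}(m(t)+a\left\Vert u\right\Vert _{D^{1,2}}^{4})$, and record as before that $m(T_{f}(u))=0$, $m(t)\rightarrow \infty $ as $t\rightarrow 0^{+}$, $m(t)\rightarrow 0$ as $t\rightarrow \infty $, with $m$ strictly decreasing on $(0,t^{\ast })$ and strictly increasing on $(t^{\ast },\infty )$, where $t^{\ast }=\left( \frac{2}{4-p}\right) ^{1/(p-2)}T_{f}(u)$ is the unique minimizer of $m$.

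The crux is to choose $u$ so that the horizontal line $-a\left\Vert u\right\Vert _{D^{1,2}}^{4}$ lies strictly above $\inf_{t>0}m(t)=m(t^{\ast })$. Here I would exploit the definition $\overline{a}_{\ast }=\frac{2(p-2)(4-p)^{(4-p)/(p-2)}}{p^{2/(p-2)}}\overline{\mathbf{A}}_{f}$ together with $\overline{\mathbf{A}}_{f}=\sup_{u}\overline{A}_{f}(u)$: given $0<a<\overline{a}_{\ast }$, pick $u\in H^{1}(\mathbb{R}^{N})\backslash \{0\}$ with $\overline{A}_{f}(u)$ close enough to $\overline{\mathbf{A}}_{f}$. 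A short computation shows that $m(t^{\ast })=-\frac{p-2}{2}\left( \frac{4-p}{2}\right) ^{(4-p)/(p-2)}\left( \int_{\mathbb{R}^{N}}f(x)|u|^{p}dx\right) ^{2/(p-2)}\left\Vert u\right\Vert _{H^{1}}^{-2(4-p)/(p-2)}$, i.e.\ a negative multiple of $\overline{A}_{f}(u)\left\Vert u\right\Vert _{D^{1,2}}^{4}$, so this choice forces $m(t^{\ast })<-a\left\Vert u\right\Vert _{D^{1,2}}^{4}$. Consequently $m(t)+a\left\Vert u\right\Vert _{D^{1,2}}^{4}=0$ has exactly two roots $t_{a}^{-}<t^{\ast }<t_{a}^{+}$, and since $m(T_{f}(u))=0>-a\left\Vert u\right\Vert _{D^{1,2}}^{4}$ with $T_{f}(u)<t^{\ast }$, the monotonicity of $m$ on $(0,t^{\ast })$ already gives $T_{f}(u)<t_{a}^{-}$.

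With the two roots in hand the rest parallels Lemma \ref{g6}. Arguing as there, $h_{a,t_{a}^{\pm }u}^{\prime \prime }(1)=(t_{a}^{\pm })^{5}m^{\prime }(t_{a}^{\pm })$, and $m^{\prime }(t_{a}^{-})<0<m^{\prime }(t_{a}^{+})$ gives $t_{a}^{\pm }u\in \mathbf{M}_{a}^{\pm }$. Because $h_{a,u}^{\prime }(t)=t^{3}(m(t)+a\left\Vert u\right\Vert _{D^{1,2}}^{4})$ is positive on $(0,t_{a}^{-})\cup (t_{a}^{+},\infty )$ and negative on $(t_{a}^{-},t_{a}^{+})$, I obtain $J_{a}(t_{a}^{-}u)=\sup_{0\leq t\leq t_{a}^{+}}J_{a}(tu)$ and $J_{a}(t_{a}^{+}u)=\inf_{t\geq t_{a}^{-}}J_{a}(tu)$; the identity $\inf_{t\geq t_{a}^{-}}J_{a}(tu)=\inf_{t\geq 0}J_{a}(tu)<0$ then follows exactly as in Lemma \ref{g4}$(i)$, the point being that the same choice of $u$ (which satisfies $a<\overline{a}_{\ast }$) makes the fibering map $h_{a,u}$ dip below zero.

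The step I expect to be the main obstacle is the sharp localization $t_{a}^{-}<\sqrt{D(p)}\left( \frac{2}{4-p}\right) ^{1/(p-2)}T_{f}(u)$, which is strictly stronger than $t_{a}^{-}<t^{\ast }$. To secure it I must upgrade the estimate above to $m\big(\sqrt{D(p)}\,t^{\ast }\big)<-a\left\Vert u\right\Vert _{D^{1,2}}^{4}$. Writing $m(\sqrt{D(p)}\,t^{\ast })$ as an explicit positive multiple of $m(t^{\ast })$, using Remark \ref{r-1} to guarantee that $\sqrt{D(p)}\,t^{\ast }$ lies strictly between $T_{f}(u)$ and $t^{\ast }$ (cf.\ (\ref{2-3})), and invoking the elementary inequality (\ref{2-1}), this reduces to a single $p$-dependent numerical estimate, just as in Lemma \ref{g6}. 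The delicate aspect is that, in contrast with the dimensions $N=1,2,3$ where the pointwise inequality can be imposed on $u$ outright, here I control $m(t^{\ast })$ only through $\overline{A}_{f}(u)$ and the threshold $\overline{a}_{\ast }$, so I must check that the margin gained from $a<\overline{a}_{\ast }$ is large enough to absorb the extra factor incurred in passing from $t^{\ast }$ to $\sqrt{D(p)}\,t^{\ast }$.
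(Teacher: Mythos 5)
Your fibering-map mechanics are exactly what the paper has in mind: its entire proof of Lemma \ref{g15} is the remark that the argument is analogous to Lemmas \ref{g4} and \ref{g6}, and indeed the dimension never enters that computation, so the roots, the signs $h_{a,t_a^{\pm}u}''(1)=(t_a^{\pm})^5 m'(t_a^{\pm})$, and the sup/inf identification all go through as you describe. The genuine gap is in the step you yourself flagged, and your existential reading of the statement (manufacturing $u$ with $\overline{A}_f(u)$ close to $\overline{\mathbf{A}}_f$) makes it unfixable. Since $m$ is strictly decreasing on $(0,t^{\ast})$ with $t^{\ast}=\left(\frac{2}{4-p}\right)^{1/(p-2)}T_f(u)$ and $m(t_a^-)=-a\Vert u\Vert_{D^{1,2}}^4$, the localization $t_a^-<\sqrt{D(p)}\,t^{\ast}$ is \emph{equivalent} to $m(\sqrt{D(p)}\,t^{\ast})<-a\Vert u\Vert_{D^{1,2}}^4$, which a short computation converts to $a<c_{\mathrm{loc}}\,\overline{A}_f(u)$ with $c_{\mathrm{loc}}=\frac{\frac{2}{4-p}D(p)^{(p-2)/2}-1}{D(p)\left(\frac{2}{4-p}\right)^{2/(p-2)}}$. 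So the best attainable over all choices of $u$ is $a<c_{\mathrm{loc}}\overline{\mathbf{A}}_f$, and this does \emph{not} exhaust $(0,\overline{a}_{\ast})$: at $p=3$ one gets $c_{\mathrm{loc}}=\frac{\sqrt{2}-1}{2}\approx 0.207$, while $\overline{a}_{\ast}=2\left(\frac{1}{3}\right)^{2}\overline{\mathbf{A}}_f\approx 0.222\,\overline{\mathbf{A}}_f$ (similarly at $p=3.5$), so for $a$ in the window $\left(c_{\mathrm{loc}}\overline{\mathbf{A}}_f,\overline{a}_{\ast}\right)$ no $u$ whatsoever satisfies the stated localization --- as $a$ approaches the Nehari threshold of a near-optimal $u$ the two roots merge at $t^{\ast}>\sqrt{D(p)}\,t^{\ast}$. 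Inequality (\ref{2-1}) only guarantees $c_{\mathrm{loc}}>\frac{p-2}{2(4-p)}\left(\frac{4-p}{p}\right)^{2/(p-2)}$, i.e.\ one quarter of the $\overline{a}_{\ast}$-coefficient; that factor-four margin is what Lemma \ref{g6} consumes, and it is not a margin up to $\overline{a}_{\ast}$.

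What makes the lemma usable, and what the paper actually relies on, is a different quantifier structure: Lemma \ref{g15} is applied to \emph{given} functions ($w_0$ and $v_a^-$ in the proof of Theorem \ref{t1}, the truncations $h_n$ in the Appendix), for which the pointwise hypothesis of Lemma \ref{g6}, namely $\int_{\mathbb{R}^N}f(x)|u|^p dx>\frac{p}{4-p}\left(\frac{2a(4-p)}{p-2}\right)^{(p-2)/2}\Vert u\Vert_{H^1}^p$, is verified directly (this is where the restriction $a<\Lambda\leq\overline{a}_{\ast}$ earns its keep). That hypothesis yields $m(t^{\ast})<-2a\left(\frac{p}{2}\right)^{2/(p-2)}\Vert u\Vert_{H^1}^4$, and then (\ref{2-3}) and (\ref{2-1}) give $m(\sqrt{D(p)}\,t^{\ast})<-a\Vert u\Vert_{D^{1,2}}^4$ verbatim as in Lemma \ref{g6}; note the hypothesis already forces $a\leq\frac{1}{4}a_0(u)\Vert u\Vert_{D^{1,2}}^4/\Vert u\Vert_{H^1}^4<\overline{a}_{\ast}$, which is why the extra margin exists there. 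So the repair is not to optimize $\overline{A}_f(u)$ but to carry the pointwise condition on $u$ into the statement (the paper suppresses it, leaving $u$ unquantified); with that condition restored, your argument closes without any new estimate.
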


\begin{proof}
The proof is analogous to those of Lemmas \ref{g4} and \ref{g6}, so we omit
it here.
\end{proof}

Now, we follow a part of idea in \cite{SWF1}, for any $u\in \mathbf{M}_{a}$
with $J_{a}(u)<\frac{D(p)(p-2)}{2p}\left( \frac{2S_{p}^{p}}{f_{\infty }(4-p)}%
\right) ^{2/(p-2)},$ deduce that
\begin{eqnarray*}
\frac{D\left( p\right) (p-2)}{2p}\left( \frac{2S_{p}^{p}}{f_{\infty }\left(
4-p\right) }\right) ^{2/(p-2)} &>&J_{a}(u) \\
&\geq &\frac{p-2}{2p}\left\Vert u\right\Vert _{H^{1}}^{2}-\frac{a(4-p)}{4p}%
\left\Vert u\right\Vert _{H^{1}}^{4},
\end{eqnarray*}%
which implies that for $0<a<\frac{p-2}{2(4-p)}\left( \frac{4-p}{p}\right)
^{2/(p-2)}\Lambda _{0},$ there exist two positive numbers $D_{1}$ and $D_{2}$
satisfying%
\begin{equation*}
\sqrt{D(p)}\left( \frac{2S_{p}^{p}}{f_{\infty }(4-p)}\right)
^{1/(p-2)}<D_{1}<\left( \frac{2S_{p}^{p}}{f_{\max }(4-p)}\right) ^{1/(p-2)}<%
\sqrt{2}\left( \frac{2S_{p}^{p}}{f_{\infty }(4-p)}\right) ^{1/(p-2)}<D_{2}
\end{equation*}%
such that%
\begin{equation*}
\left\Vert u\right\Vert _{H^{1}}<D_{1}\text{ or }\left\Vert u\right\Vert
_{H^{1}}>D_{2}.
\end{equation*}%
Thus, we obtain that
\begin{eqnarray}
&&\mathbf{M}_{a}\left( \frac{D(p)(p-2)}{2p}\left( \frac{2S_{p}^{p}}{%
f_{\infty }(4-p)}\right) ^{\frac{2}{p-2}}\right)  \notag \\
&=&\left\{ u\in \mathbf{M}_{a}:J_{a}(u)<\frac{D(p)(p-2)}{2p}\left( \frac{%
2S_{p}^{p}}{f_{\infty }(4-p)}\right) ^{\frac{2}{p-2}}\right\}  \notag \\
&=&\mathbf{M}_{a}^{(1)}\cup \mathbf{M}_{a}^{(2)},  \label{4-4}
\end{eqnarray}%
where
\begin{equation*}
\mathbf{M}_{a}^{(1)}:=\left\{ u\in \mathbf{M}_{a}\left( \frac{D(p)(p-2)}{2p}%
\left( \frac{2S_{p}^{p}}{f_{\infty }(4-p)}\right) ^{2/(p-2)}\right)
:\left\Vert u\right\Vert _{H^{1}}<D_{1}\right\}
\end{equation*}%
and
\begin{equation*}
\mathbf{M}_{a}^{(2)}:=\left\{ u\in \mathbf{M}_{a}\left( \frac{D(p)(p-2)}{2p}%
\left( \frac{2S_{p}^{p}}{f_{\infty }(4-p)}\right) ^{2/(p-2)}\right)
:\left\Vert u\right\Vert _{H^{1}}>D_{2}\right\} .
\end{equation*}%
We further have
\begin{equation}
\left\Vert u\right\Vert _{H^{1}}<D_{1}<\left( \frac{2S_{p}^{p}}{f_{\max
}(4-p)}\right) ^{\frac{1}{p-2}}\text{ for all }u\in \mathbf{M}_{a}^{(1)}
\label{4-1}
\end{equation}%
and%
\begin{equation}
\left\Vert u\right\Vert _{H^{1}}>D_{2}>\sqrt{2}\left( \frac{2S_{p}^{p}}{%
f_{\infty }(4-p)}\right) ^{\frac{1}{p-2}}\text{ for all }u\in \mathbf{M}%
_{a}^{(2)}.  \label{4-2}
\end{equation}%
Using the Sobolev inequality, $(\ref{2-2})$ and $(\ref{4-1})$ gives
\begin{equation*}
h_{a,u}^{\prime \prime }(1)\leq -2\left\Vert u\right\Vert
_{H^{1}}^{2}+(4-p)S_{p}^{-p}f_{\max }\left\Vert u\right\Vert _{H^{1}}^{p}<0%
\text{ for all }u\in \mathbf{M}_{a}^{(1)}.
\end{equation*}%
By $(\ref{4-2}),$ we derive that
\begin{eqnarray*}
\frac{1}{4}\left\Vert u\right\Vert _{H^{1}}^{2}-\frac{(4-p)}{4p}\int_{%
\mathbb{R}^{N}}f(x)|u|^{p}dx &=&J_{a}(u)<\frac{D(p)(p-2)}{2p}\left( \frac{%
2S_{p}^{p}}{f_{\infty }(4-p)}\right) ^{2/(p-2)} \\
&<&\frac{p-2}{2p}\left( \frac{2S_{p}^{p}}{f_{\infty }(4-p)}\right) ^{2/(p-2)}
\\
&<&\frac{p-2}{4p}\left\Vert u\right\Vert _{H^{1}}^{2}\text{ for all }u\in
\mathbf{M}_{a}^{(2)},
\end{eqnarray*}%
which implies that
\begin{equation*}
2\left\Vert u\right\Vert _{H^{1}}^{2}<(4-p)\int_{\mathbb{R}^{N}}f(x)|u|^{p}dx%
\text{ for all }u\in \mathbf{M}_{a}^{(2)}.
\end{equation*}%
Using the above inequality, together with $(\ref{2-2})$, yields
\begin{equation*}
h_{a,u}^{\prime \prime }\left( 1\right) =-2\left\Vert u\right\Vert
_{H^{1}}^{2}+\left( 4-p\right) \int_{\mathbb{R}^{N}}f(x)\left\vert
u\right\vert ^{p}dx>0\text{ for all }u\in \mathbf{M}_{a}^{(2)}.
\end{equation*}%
Hence, we have the following result.

\begin{lemma}
\label{g7}For $N\geq 1$ and $0<a<\frac{p-2}{2(4-p)}\left( \frac{4-p}{p}%
\right) ^{2/(p-2)}\Lambda _{0},$ we have $\mathbf{M}_{a}^{(1)}\subset
\mathbf{M}_{a}^{-}$ and $\mathbf{M}_{a}^{(2)}\subset \mathbf{M}_{a}^{+}$
both are $C^{1}$ sub-manifolds. Furthermore, each local minimizer of the
functional $J_{a}$ in the sub-manifolds $\mathbf{M}_{a}^{(1)}$ and $\mathbf{M%
}_{a}^{(2)}$ is a critical point of $J_{a}$ in $H^{1}(\mathbb{R}^{N}).$
\end{lemma}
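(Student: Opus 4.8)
The plan is to reduce the whole statement to the constraint--function formalism combined with Lemma~\ref{g2}. Introduce $\psi_a(u):=\langle J_a'(u),u\rangle=\|u\|_{H^1}^2+a\|u\|_{D^{1,2}}^4-\int_{\mathbb{R}^N}f(x)|u|^pdx$, a $C^1$ functional on $H^1(\mathbb{R}^N)\setminus\{0\}$, so that $\mathbf{M}_a=\psi_a^{-1}(0)\setminus\{0\}$. The computation I would record first is the identity $\langle\psi_a'(u),u\rangle=h_{a,u}''(1)$, valid for every $u\in\mathbf{M}_a$: differentiating $\psi_a$ gives $\langle\psi_a'(u),u\rangle=2\|u\|_{H^1}^2+4a\|u\|_{D^{1,2}}^4-p\int_{\mathbb{R}^N}f(x)|u|^pdx$, and eliminating $\int_{\mathbb{R}^N}f(x)|u|^pdx$ by the defining relation of $\mathbf{M}_a$ reproduces exactly the expression for $h_{a,u}''(1)$ in $(\ref{2-2})$.

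The inclusions require no new work: the computation carried out just before the statement already yields $h_{a,u}''(1)<0$ for $u\in\mathbf{M}_a^{(1)}$ (via $(\ref{4-1})$ and the Sobolev inequality) and $h_{a,u}''(1)>0$ for $u\in\mathbf{M}_a^{(2)}$ (via $(\ref{4-2})$), which is precisely $\mathbf{M}_a^{(1)}\subset\mathbf{M}_a^-$ and $\mathbf{M}_a^{(2)}\subset\mathbf{M}_a^+$.

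For the manifold structure I would show that every point of $\mathbf{M}_a^-\cup\mathbf{M}_a^+$ is a regular point of $\psi_a$. Indeed, at such a $u$ the identity above gives $\langle\psi_a'(u),u\rangle=h_{a,u}''(1)\neq0$, hence $\psi_a'(u)\neq0$ in $H^{-1}(\mathbb{R}^N)$, and the implicit function theorem makes $\mathbf{M}_a$ a $C^1$ manifold in a neighborhood of $u$. Because $\mathbf{M}_a^{(1)}$ and $\mathbf{M}_a^{(2)}$ are cut out of $\mathbf{M}_a$ by the open conditions $\|u\|_{H^1}<D_1$ and $\|u\|_{H^1}>D_2$ together with the upper energy bound defining the filtration, they are open subsets of $\mathbf{M}_a$, hence open subsets of $\mathbf{M}_a^-$ and $\mathbf{M}_a^+$ respectively, and therefore $C^1$ sub-manifolds.

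The remaining clause follows from Lemma~\ref{g2}: if $u_0$ is a local minimizer of $J_a$ on the open set $\mathbf{M}_a^{(1)}$ (or $\mathbf{M}_a^{(2)}$), it is a local minimizer of $J_a$ on all of $\mathbf{M}_a$, and since $u_0\in\mathbf{M}_a^-\cup\mathbf{M}_a^+$ we have $u_0\notin\mathbf{M}_a^0$; Lemma~\ref{g2} then gives $J_a'(u_0)=0$ in $H^{-1}(\mathbb{R}^N)$. I expect the only delicate point to be the regularity identity $\langle\psi_a'(u),u\rangle=h_{a,u}''(1)$ together with confirming that $\mathbf{M}_a^{(1)}$ and $\mathbf{M}_a^{(2)}$ are genuinely open in $\mathbf{M}_a$; once these are in hand, the manifold property and the critical--point property are routine consequences of the implicit function theorem and Lemma~\ref{g2}.
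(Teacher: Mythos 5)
Your proposal is correct and takes essentially the same route as the paper: the inclusions $\mathbf{M}_{a}^{(1)}\subset \mathbf{M}_{a}^{-}$ and $\mathbf{M}_{a}^{(2)}\subset \mathbf{M}_{a}^{+}$ are precisely the computation the paper carries out just before stating the lemma, using $(\ref{4-1})$, $(\ref{4-2})$ and $(\ref{2-2})$, while the $C^{1}$-submanifold and critical-point clauses are left by the paper to the standard argument you make explicit, namely the identity $\left\langle \psi _{a}^{\prime }(u),u\right\rangle =h_{a,u}^{\prime \prime }(1)\neq 0$ on $\mathbf{M}_{a}^{-}\cup \mathbf{M}_{a}^{+}$, openness of the two pieces in $\mathbf{M}_{a}$, and Lemma \ref{g2} in the spirit of Brown--Zhang. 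Your write-up merely fills in these routine details correctly, so there is nothing to amend.
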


At the end of this section, similar to \cite[Lemma 3.4]{LY}, we introduce a
global compactness result, which is applicable to Kirchhoff type equations.

\begin{proposition}
\label{l0}Suppose that $N\geq 1$ and conditions $(D1)-(D2)$ hold. Let $%
\{u_{n}\}$ be a bounded $(PS)_{\beta }$--sequence in $H^{1}(\mathbb{R}^{N})$
for $J_{a}.$ There exist $u_{0}\in H^{1}(\mathbb{R}^{N})$ and $A\in
\mathbb{R}
$ such that $I_{a}^{\prime }(u)=0,$ where%
\begin{equation*}
I_{a}(u)=\frac{(1+aA^{2})}{2}\int_{\mathbb{R}^{N}}|\nabla u|^{2}dx+\frac{1}{2%
}\int_{\mathbb{R}^{N}}u^{2}dx-\frac{1}{p}\int_{\mathbb{R}^{N}}f(x)|u|^{p}dx,
\end{equation*}%
and either\newline
$(i)$ $u_{n}\rightarrow u_{0}$ in $H^{1}(\mathbb{R}^{N}),$or\newline
$(ii)$ there exists a number $m\in \mathbb{N}$ and $\{x_{n}^{i}\}_{n=1}^{%
\infty }\subset \mathbb{R}^{N}$ with $|x_{n}^{i}|\rightarrow \infty $ as $%
n\rightarrow \infty $ for each $1\leq i\leq m,$ nontrivial solutions $%
w^{1},w^{2},...,w^{m}\in H^{1}(\mathbb{R}^{N})$ of the following equation%
\begin{equation*}
-(aA^{2}+1)\Delta u+u=f_{\infty }|u|^{p-2}u
\end{equation*}%
such that%
\begin{equation*}
\beta +\frac{aA^{4}}{4}=I_{a}(u_{0})+\underset{i=1}{\overset{m}{\sum }}%
I_{a}^{\infty }(w^{i}),
\end{equation*}%
and%
\begin{equation*}
u_{n}=u_{0}+\underset{i=1}{\overset{m}{\sum }}w^{i}(\cdot -x_{n}^{i})+o(1)%
\text{ strongly in }H^{1}(\mathbb{R}^{N}),
\end{equation*}%
and%
\begin{equation*}
A^{2}=\int_{\mathbb{R}^{N}}|\nabla u_{0}|^{2}dx+\underset{i=1}{\overset{m}{%
\sum }}\int_{\mathbb{R}^{N}}|\nabla w^{i}|^{2}dx.
\end{equation*}
\end{proposition}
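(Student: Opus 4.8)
The plan is to reduce the nonlocal Kirchhoff problem to a \emph{local} one by freezing the Dirichlet norm along the sequence, then run a standard Struwe--type profile decomposition for the resulting local functional, and finally carry the nonlocal term through the energy bookkeeping to produce the correction $\frac{aA^4}{4}$.

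First I would pass to a subsequence so that $u_n\rightharpoonup u_0$ weakly in $H^1(\mathbb{R}^N)$, $u_n\to u_0$ in $L^p_{loc}$ and a.e., and $\int_{\mathbb{R}^N}|\nabla u_n|^2dx\to A^2$ for some $A\geq 0$ (all possible since $\{u_n\}$ is bounded). The key observation is the splitting
\[
\langle J_a^{\prime }(u_n),\varphi \rangle =\langle I_a^{\prime }(u_n),\varphi \rangle +a\Big(\int_{\mathbb{R}^N}|\nabla u_n|^2dx-A^2\Big)\int_{\mathbb{R}^N}\nabla u_n\nabla \varphi \, dx ,
\]
in which the last term is $o(1)\Vert \varphi \Vert _{H^1}$ uniformly, because $\int |\nabla u_n|^2\to A^2$ and $\{u_n\}$ is bounded. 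Hence $\{u_n\}$ is a bounded $(PS)$ sequence for the local functional $I_a$, and passing to the weak limit in $\langle I_a^{\prime }(u_n),\varphi \rangle \to 0$ for $\varphi \in C_c^\infty$ yields $I_a^{\prime }(u_0)=0$.

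Next, setting $v_n=u_n-u_0$, I would use the Brezis--Lieb lemma to split the $L^2$, $D^{1,2}$ and $\int f|\cdot|^p$ quantities, and show that $\{v_n\}$ is a $(PS)$ sequence for the functional at infinity $I_a^{\infty}$ (same coefficient $aA^2+1$, weight $f_\infty$); replacing $f$ by $f_\infty$ is legitimate since $v_n\rightharpoonup 0$ forces any surviving mass to escape to infinity, where $(D2)$ gives $f\to f_\infty$. Then the vanishing--dichotomy of Lions applies: if $v_n$ vanishes, then $v_n\to 0$ in $L^p$ and the $(PS)$ property upgrades this to $v_n\to 0$ in $H^1$, giving case $(i)$; otherwise there exist translations $x_n^1$ with $|x_n^1|\to\infty$ and $v_n(\cdot +x_n^1)\rightharpoonup w^1\neq 0$ solving $-(aA^2+1)\Delta w^1+w^1=f_\infty |w^1|^{p-2}w^1$. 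Subtracting $w^1(\cdot -x_n^1)$ and iterating produces $w^1,\dots ,w^m$; since every nontrivial solution of the limit equation has $H^1$--norm bounded below by a fixed positive constant (as in $(\ref{2})$) while $\Vert v_n\Vert _{H^1}$ remains bounded, the iteration terminates after finitely many steps.

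Finally I would assemble the stated identities: the iterated decomposition gives $u_n=u_0+\sum_{i=1}^m w^i(\cdot -x_n^i)+o(1)$ in $H^1$ and $A^2=\int |\nabla u_0|^2+\sum_i\int |\nabla w^i|^2$. The energy identity is the only place where the nonlocal term genuinely intervenes, and this is where I expect the main obstacle. Since $\frac{a}{4}\big(\int |\nabla u_n|^2\big)^2\to \frac{a}{4}A^4$ while the frozen functionals $I_a,I_a^{\infty}$ charge the gradient through the coefficient $\frac{1+aA^2}{2}$, comparing $\beta =\lim J_a(u_n)$ with $I_a(u_0)+\sum_i I_a^{\infty}(w^i)$ via the norm splittings produces exactly the discrepancy $\frac{a}{2}A^4-\frac{a}{4}A^4=\frac{a}{4}A^4$, that is, $\beta +\frac{aA^4}{4}=I_a(u_0)+\sum_{i=1}^m I_a^{\infty}(w^i)$. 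Tracking this nonlocal correction consistently through each iteration, together with verifying that the translated remainders remain $(PS)$ sequences for the frozen limit functional, is the delicate part; the remaining steps are the routine profile decomposition adapted from \cite[Lemma 3.4]{LY}.
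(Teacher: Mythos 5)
Your proposal is correct and follows essentially the same route as the paper, which states Proposition \ref{l0} without a detailed proof by referring to \cite[Lemma 3.4]{LY}: there, too, one freezes the nonlocal coefficient via $A^{2}=\lim_{n\rightarrow \infty }\int_{\mathbb{R}^{N}}|\nabla u_{n}|^{2}dx$, observes that $\{u_{n}\}$ is then a bounded $(PS)$ sequence for the local functional $I_{a}$ at the shifted level $\beta +\frac{aA^{4}}{4}$, and runs the standard Benci--Cerami/Struwe profile decomposition with limit functional $I_{a}^{\infty }$. Your accounting of the energy correction $\frac{a}{2}A^{4}-\frac{a}{4}A^{4}=\frac{a}{4}A^{4}$ and of the termination of the iteration via the uniform lower bound on nontrivial solutions of the limit equation is exactly the intended argument.
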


\section{Proofs of Theorems \protect\ref{t0-1}, \protect\ref{t0-2} and
\protect\ref{t0-3}}

\textbf{At the beginning of this section, we prove Theorem \ref{t0-1}: }$(i)$
For $0<k<\frac{4-N}{4-p}$ and $u\in H^{1}(\mathbb{R}^{N})\backslash \{0\},$
let $v_{t}(x)=t^{k}u(t^{-1}x).$ Then we have\newline
$(i-A)$ $\int_{\mathbb{R}^{N}}|\nabla v_{t}(x)|^{2}dx=t^{2k-2}\int_{\mathbb{R%
}^{N}}|\nabla u(t^{-1}x)|^{2}dx=t^{2k-2+N}\int_{\mathbb{R}^{N}}|\nabla
u(y)|^{2}dy;$\newline
$(i-B)$ $\int_{\mathbb{R}^{N}}|v_{t}(x)|^{2}dx=t^{2k}\int_{\mathbb{R}%
^{N}}|u(tx)|^{2}dx=t^{2k+N}\int_{\mathbb{R}^{N}}|u(y)|^{2}dy;$\newline
$(i-C)$ $\int_{\mathbb{R}^{N}}|v_{t}(x)|^{p}dx=t^{pk}\int_{\mathbb{R}%
^{N}}|u(tx)|^{p}dx=t^{pk+N}\int_{\mathbb{R}^{N}}|u(y)|^{p}dy.$\newline
Using the above conclusions gives%
\begin{eqnarray*}
J_{a}(v_{t}(x)) &=&\frac{a}{4}\left( \int_{\mathbb{R}^{N}}|\nabla
v_{t}(x)|^{2}dx\right) ^{2}+\frac{1}{2}\left( \int_{\mathbb{R}^{N}}|\nabla
v_{t}(x)|^{2}dx+\int_{\mathbb{R}^{N}}|v_{t}(x)|^{2}dx\right) \\
&&-\frac{1}{p}\int_{\mathbb{R}^{N}}f(x)|v_{t}(x)|^{p}dx \\
&\leq &\frac{a}{4}t^{4k-4+2N}\left\Vert u\right\Vert _{D^{1,2}}^{4}+\frac{1}{%
2}t^{2k-2+N}\left\Vert u\right\Vert _{D^{1,2}}^{2}+\frac{1}{2}t^{2k+N}\int_{%
\mathbb{R}^{N}}|u|^{2}dx \\
&&-\frac{f_{\min }}{p}t^{pk+N}\int_{\mathbb{R}^{N}}|u|^{p}dx\rightarrow
-\infty \text{ as }t\rightarrow \infty ,
\end{eqnarray*}%
since $pk+N>4k-4+2N.$ This implies that $J_{a}$ is not bounded below on $%
H^{1}(\mathbb{R}^{N})\ $for $N=1,2,3.$\newline
$(ii)$ It follows from Corollary \ref{g17} that for each $a>\overline{a}%
_{\ast },$ the energy functional $J_{a}$ is bounded below on $H^{1}(\mathbb{R%
}^{4})$ and $\inf_{u\in H^{1}(\mathbb{R}^{4})\backslash \{0\}}J_{a}(u)>0.$
Next, we claim that for each $0<a<\underline{a}_{\ast },$ $J_{a}$ is not
bounded below on $H^{1}(\mathbb{R}^{4}),$ i.e., $\inf_{u\in H^{1}(\mathbb{R}%
^{4})\backslash \{0\}}J_{a}(u)=-\infty .$ Let%
\begin{equation*}
I(u)=\frac{a}{4}\left\Vert u\right\Vert _{D^{1,2}}^{4}+\frac{1}{2}\int_{%
\mathbb{R}^{4}}u^{2}dx-\frac{f_{\min }}{p}\int_{\mathbb{R}^{4}}|u|^{p}dx%
\text{ for }u\in H^{1}(\mathbb{R}^{N}).
\end{equation*}%
Then for $s>0,$ we have%
\begin{equation*}
I(su)=s^{4}\left( \frac{a}{4}\left\Vert u\right\Vert _{D^{1,2}}^{4}+%
\overline{g}(s)\right) ,
\end{equation*}%
where
\begin{equation*}
\overline{g}(s)=\frac{s^{-2}}{2}\int_{\mathbb{R}^{4}}u^{2}dx-\frac{f_{\min
}s^{p-4}}{p}\int_{\mathbb{R}^{4}}|u|^{p}dx.
\end{equation*}%
Clearly, $I(su)=0$ if and only if $\overline{g}(s)+\frac{a}{4}\left\Vert
u\right\Vert _{D^{1,2}}^{4}=0.$ It is not difficult to observe that $%
\overline{g}\left( s_{a}\right) =0,\ \lim_{s\rightarrow 0^{+}}\overline{g}%
(s)=\infty \ $and$\ \lim_{s\rightarrow \infty }\overline{g}(s)=0,$ where
\begin{equation*}
s_{a}=\left( \frac{p\int_{\mathbb{R}^{4}}u^{2}dx}{2f_{\min }\int_{\mathbb{R}%
^{4}}\left\vert u\right\vert ^{p}dx}\right) ^{1/(p-2)}>0.
\end{equation*}%
Considering the derivative of $\overline{g}(s)$, we find
\begin{equation*}
\overline{g}^{\prime }(s)=s^{-3}\left[ \frac{s^{p-2}f_{\min }(4-p)}{p}\int_{%
\mathbb{R}^{4}}|u|^{p}dx-\int_{\mathbb{R}^{4}}u^{2}dx\right] ,
\end{equation*}%
which implies that $\overline{g}(s)$ is decreasing when $0<t<\left( \frac{%
p\int_{\mathbb{R}^{4}}u^{2}dx}{(4-p)f_{\min }\int_{\mathbb{R}^{4}}|u|^{p}dx}%
\right) ^{1/(p-2)}$ and is increasing when $t>\left( \frac{p\int_{\mathbb{R}%
^{4}}u^{2}dx}{(4-p)f_{\min }\int_{\mathbb{R}^{4}}|u|^{p}dx}\right)
^{1/(p-2)},$ and so%
\begin{eqnarray}
\inf_{s>0}\overline{g}(t) &=&\overline{g}\left( \left( \frac{p\int_{\mathbb{R%
}^{4}}u^{2}dx}{(4-p)f_{\min }\int_{\mathbb{R}^{4}}|u|^{p}dx}\right)
^{1/(p-2)}\right)  \notag \\
&=&-\frac{p-2}{2}\left( \frac{f_{\min }\int_{\mathbb{R}^{4}}|u|^{p}dx}{p}%
\right) ^{2/(p-2)}\left( \frac{4-p}{\int_{\mathbb{R}^{4}}u^{2}dx}\right)
^{\left( 4-p\right) /(p-2)}<0.  \label{10-0}
\end{eqnarray}%
Since $0<a<\underline{a}_{\ast },$ there exists $u\in H^{1}(\mathbb{R}%
^{4})\backslash \{0\}$ such that%
\begin{equation*}
a<2(p-2)\left( \frac{4-p}{\int_{\mathbb{R}^{4}}u^{2}dx}\right) ^{\left(
4-p\right) /(p-2)}\left( \frac{f_{\min }\int_{\mathbb{R}^{4}}|u|^{p}dx}{p}%
\right) ^{2/(p-2)}\left\Vert u\right\Vert _{D^{1,2}}^{-4}\leq \underline{a}%
_{\ast }.
\end{equation*}%
Using the above inequality, together with $(\ref{10-0}),$ leads to $%
\inf_{s>0}\overline{g}(s)<-\frac{a}{4}\left\Vert u\right\Vert
_{D^{1,2}}^{4}. $ Set%
\begin{equation*}
s_{0}(u)=\left( \frac{p\int_{\mathbb{R}^{4}}u^{2}dx}{(4-p)f_{\min }\int_{%
\mathbb{R}^{4}}|u|^{p}dx}\right) ^{1/(p-2)}.
\end{equation*}%
Then we obtain
\begin{equation}
I(s_{0}(u)u)=s_{0}^{4}(u)\left[ \overline{g}(s_{0}(u))+\frac{a}{4}\left\Vert
u\right\Vert _{D^{1,2}}^{4}\right] <0.  \label{10-11}
\end{equation}%
Let $u_{0}=s_{0}\left( u\right) u$ and $v_{t}(x)=u_{0}(t^{-1}x).$ Then we
have\newline
$(ii-A)$ $\int_{\mathbb{R}^{4}}|\nabla v_{t}(x)|^{2}dx=t^{2}\int_{\mathbb{R}%
^{4}}|\nabla u_{0}(x)|^{2}dx;$\newline
$(ii-B)$ $\int_{\mathbb{R}^{4}}|v_{t}(x)|^{2}dx=t^{4}\int_{\mathbb{R}%
^{4}}|u_{0}(x)|^{2}dx;$\newline
$(ii-C)$ $\int_{\mathbb{R}^{4}}|v_{t}(x)|^{p}dx=t^{4}\int_{\mathbb{R}%
^{4}}|u_{0}(x)|^{p}dx.$\newline
Combining the above conclusions with $(\ref{10-11})$ gives%
\begin{eqnarray*}
J_{a}(v_{t}(x)) &\leq &t^{4}\left( \frac{a}{4}\left\Vert u_{0}\right\Vert
_{D^{1,2}}^{4}+\frac{1}{2}\int_{\mathbb{R}^{N}}u_{0}^{2}dx-\frac{f_{\min }}{p%
}\int_{\mathbb{R}^{4}}|u_{0}|^{p}dx\right) +\frac{t^{2}}{2}\left\Vert
u_{0}\right\Vert _{D^{1,2}}^{2} \\
&=&t^{4}I(u_{0})+\frac{t^{2}}{2}\left\Vert u_{0}\right\Vert _{D^{1,2}}^{2} \\
&\rightarrow &-\infty \text{ as }t\rightarrow \infty ,
\end{eqnarray*}%
which implies that for each $0<a<\underline{a}_{\ast },$ $J_{a}$ is not
bounded below on $H^{1}(\mathbb{R}^{4}),$ i.e., $\inf_{u\in H^{1}(\mathbb{R}%
^{4})\backslash \{0\}}J_{a}(u)=-\infty .$\newline
$\left( iii\right) $ By Lemmas \ref{g4} and \ref{g11}, we can arrive at the
conclusion.

\textbf{Next, we are ready to prove Theorem \ref{t0-2}: }For $u\in H^{1}(%
\mathbb{R}^{N})\backslash \left\{ 0\right\} ,$ we know that $tu\in \mathbf{M}%
_{a}^{0}$ if and only if $h_{a,tu}^{\prime }\left( 1\right)
=h_{a,tu}^{\prime \prime }\left( 1\right) =0,$ i.e., the following system of
equations is satisfied:%
\begin{equation}
\left\{
\begin{array}{c}
t\left\Vert u\right\Vert _{H^{1}}^{2}+at^{3}\left( \int_{\mathbb{R}%
^{N}}\left\vert \nabla u\right\vert ^{2}dx\right) ^{2}-t^{p-1}\int_{\mathbb{R%
}^{N}}f(x)\left\vert u\right\vert ^{p}dx=0, \\
\left\Vert u\right\Vert _{H^{1}}^{2}+3at^{2}\left( \int_{\mathbb{R}%
^{N}}\left\vert \nabla u\right\vert ^{2}dx\right) ^{2}-(p-1)t^{p-2}\int_{%
\mathbb{R}^{N}}f(x)\left\vert u\right\vert ^{p}dx=0.%
\end{array}%
\right.  \label{15-2}
\end{equation}%
By solving the system $(\ref{15-2})$ with respect to the variables $t$ and $%
a,$ we have%
\begin{equation*}
t(u)=\left( \frac{2\left\Vert u\right\Vert _{H^{1}}^{2}}{(4-p)\int_{\mathbb{R%
}^{N}}f(x)\left\vert u\right\vert ^{p}dx}\right) ^{1/(p-2)}
\end{equation*}%
and%
\begin{equation*}
a(u)=\frac{p-2}{4-p}\left( \frac{4-p}{2}\right) ^{2/(p-2)}\overline{A}%
_{f}(u),
\end{equation*}%
where $\overline{A}_{f}(u)$ is as $(\ref{15-1}).$ We conclude that $a(u)$ is
the unique parameter $a>0$ for which the fibering map $h_{a,u}$ has a
critical point with second derivative zero at $t(u)$. Moreover, if $a>a(u)$,
then $h_{a,u}$ is increasing on $(0,\infty )$ and has no critical point.
Note that $\sup_{u\in H^{1}(\mathbb{R}^{N})\backslash \left\{ 0\right\}
}a(u)=\frac{p^{2/\left( p-2\right) }}{2^{p/\left( p-2\right) }}\overline{a}%
_{\ast }$ by (\ref{15-3}). Hence, the energy functional $J_{a}$ has no any
nontrivial critical points for $a>\frac{p^{2/\left( p-2\right) }}{%
2^{p/\left( p-2\right) }}\overline{a}_{\ast }.$ Consequently, we complete
the proof.

To prove that Theorem \ref{t0-3}\textbf{, }we need the following result.

\begin{lemma}
\label{g12}Suppose that $N\geq 5$ and condition $(D1)$ holds. Let $0<a<%
\overline{a}_{\ast }$ Then every minimizing sequence for $J_{a}$ in $H^{1}(%
\mathbb{R}^{N})$ is bounded.
\end{lemma}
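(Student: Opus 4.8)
The plan is to deduce boundedness directly from the fine description of the sublevel geometry of $J_{a}$ already obtained in Lemma \ref{g11}, rather than from any naive coercivity of $J_{a}$. For $N\geq 5$ one has $2<p<2^{\ast }<4$, so the leading quartic term $\frac{a}{4}\Vert u\Vert _{D^{1,2}}^{4}$ controls growth only in the $D^{1,2}$-direction; a function can have large $\Vert u\Vert _{H^{1}}$ while $\Vert u\Vert _{D^{1,2}}$ stays small (mass carried by the $L^{2}$ part with little gradient), and on such functions $J_{a}$ need not be large. This is precisely the difficulty that Lemma \ref{g11} resolves, so the whole argument will rest on it.

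First I would record the two facts supplied by Lemma \ref{g11} for $0<a<\overline{a}_{\ast }$: (a) there is a radius $\widehat{R}_{a}>0$ with $J_{a}(u)>0$ whenever $\Vert u\Vert _{H^{1}}\geq \widehat{R}_{a}$; and (b) the global infimum is finite and strictly negative, $-\infty <\inf_{u\in H^{1}(\mathbb{R}^{N})\backslash \{0\}}J_{a}(u)<0$. Since $J_{a}(0)=0$, the infimum over all of $H^{1}(\mathbb{R}^{N})$ coincides with this negative value; denote it $c_{a}<0$.

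Next, let $\{u_{n}\}$ be any minimizing sequence, so that $J_{a}(u_{n})\to c_{a}<0$. Then there is $n_{0}$ with $J_{a}(u_{n})<0$ for all $n\geq n_{0}$. By fact (a), each such $u_{n}$ must satisfy $\Vert u_{n}\Vert _{H^{1}}<\widehat{R}_{a}$, because on the complementary region $\{\Vert u\Vert _{H^{1}}\geq \widehat{R}_{a}\}$ the functional is positive. Hence $\sup_{n\geq n_{0}}\Vert u_{n}\Vert _{H^{1}}\leq \widehat{R}_{a}$, and after absorbing the finitely many initial terms the entire sequence is bounded in $H^{1}(\mathbb{R}^{N})$.

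I do not expect a genuine obstacle here, since all the analytic work, namely the Gagliardo--Nirenberg and Young splitting into the cases $\Vert u\Vert _{D^{1,2}}>R_{a}$ and $\Vert u\Vert _{D^{1,2}}<R_{a}$ that produces the threshold $\widehat{R}_{a}$, is already carried out in Lemma \ref{g11}. The only point requiring care is conceptual: one must use the strict negativity of the infimum in fact (b) to force the minimizing sequence into the bounded well $\{\Vert u\Vert _{H^{1}}<\widehat{R}_{a}\}$, rather than trying to bound $\Vert u_{n}\Vert _{H^{1}}$ from the value $J_{a}(u_{n})$ alone, which is impossible because $J_{a}$ is not coercive on $H^{1}(\mathbb{R}^{N})$.
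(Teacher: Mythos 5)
Your proof is correct and is essentially the paper's own argument: Lemma \ref{g11} supplies both the strictly negative infimum and the positivity of $J_{a}$ on $\{\left\Vert u\right\Vert _{H^{1}}\geq \widehat{R}_{a}\}$, so once $J_{a}(u_{n})<0$ the minimizing sequence is trapped in the ball of radius $\widehat{R}_{a}$. (One aside: your closing claim that $J_{a}$ is not coercive on $H^{1}(\mathbb{R}^{N})$ for $N\geq 5$ is actually false --- the Gagliardo--Nirenberg/Young estimate in the proof of Lemma \ref{g11} yields a positive coefficient $\tfrac{1}{2}-\tfrac{\beta }{(1-\alpha )p^{2}}$ on $\left\Vert u\right\Vert _{L^{2}}^{2}$ together with $\tfrac{a}{4}\left\Vert u\right\Vert _{D^{1,2}}^{4}-C\left\Vert u\right\Vert _{D^{1,2}}^{2^{\ast }}\rightarrow \infty $ since $2^{\ast }<4$, so $J_{a}$ is in fact coercive there --- but this misstatement is only motivational and plays no role in your argument.)
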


\begin{proof}
Using Lemma \ref{g11} gives%
\begin{equation*}
-\infty <\inf_{u\in H^{1}(\mathbb{R}^{N})\backslash \{0\}}J_{a}(u)<0.
\end{equation*}%
Let $\left\{ u_{n}\right\} $ be a minimizing sequence for $J_{a}$ in $H^{1}(%
\mathbb{R}^{N}).$ Then by Lemma \ref{g11} and the fact of $\inf_{u\in H^{1}(%
\mathbb{R}^{N})\backslash \{0\}}J_{a}(u)<0$, there exists a number $\widehat{%
R}_{a}>0$ such that
\begin{equation*}
\left\Vert u_{n}\right\Vert _{H^{1}}\leq \widehat{R}_{a}\text{ for }n\text{
large enough.}
\end{equation*}%
Consequently, we complete the proof.
\end{proof}

\textbf{At the end of this section, we begin to prove Theorem \ref{t0-3}: }$%
(i)$ By Lemma \ref{g12} and the Ekeland variational principle, for each $0<a<%
\overline{a}_{\ast }$ there exists a bounded minimizing sequence $%
\{u_{n}\}\subset H^{1}(\mathbb{R}^{N})$ such that
\begin{equation*}
J_{a}^{\infty }(u_{n})=\inf_{u\in H^{1}(\mathbb{R}^{N})\backslash
\{0\}}J_{a}^{\infty }(u)+o(1)\text{ and }(J_{a}^{\infty })^{\prime
}(u_{n})=o(1)\text{ in }H^{-1}(\mathbb{R}^{N}).
\end{equation*}%
Similar to the argument of Theorem \ref{g10} in Appendix, we can prove that
the compactness for the sequence $\{u_{n}\}$ holds. Then for each $\theta >0$
there exist a number $R=R(\theta )>0$ and a sequence $\{z_{n}\}\subset
\mathbb{R}^{N}$ such that%
\begin{equation}
\int_{\lbrack B_{R}(z_{n})]^{c}}(|\nabla
u_{n}(x)|^{2}+u_{n}^{2}(x))dx<\theta \text{ uniformly for }n\geq 1.
\label{10-2}
\end{equation}%
Define a new sequence of functions $v_{n}:=u_{n}(\cdot +z_{n})\in H^{1}(%
\mathbb{R}^{N}).$ Clearly, $\left\langle (J_{a}^{\infty })^{\prime
}(v_{n}),v_{n}\right\rangle =o(1)$ and $J_{a}^{\infty }(v_{n})=\inf_{u\in
H^{1}(\mathbb{R}^{N})\backslash \{0\}}J_{a}^{\infty }(u)+o(1).$ By virtue of
$(\ref{10-2})$, for each $\theta >0$ there exists a number $R=R(\theta )>0$
such that%
\begin{equation}
\int_{\lbrack B_{R}(0)]^{c}}(|\nabla v_{n}(x)|^{2}+v_{n}^{2}(x))dx<\theta
\text{ uniformly for }n\geq 1.  \label{10-3}
\end{equation}%
Since $\{v_{n}\}$ is bounded in $H^{1}(\mathbb{R}^{N}),$ one can assume that
there exist a subsequence $\{v_{n}\}$ and $v_{a}^{+}\in H^{1}(\mathbb{R}%
^{N}) $ such that%
\begin{eqnarray}
v_{n} &\rightharpoonup &v_{a}^{+}\text{ weakly in }H^{1}(\mathbb{R}^{N}),
\label{10-4} \\
v_{n} &\rightarrow &v_{a}^{+}\text{ strongly in }L_{loc}^{r}(\mathbb{R}^{N})%
\text{ for }2\leq r<2^{\ast },  \label{10-5} \\
v_{n} &\rightarrow &v_{a}^{+}\text{ a.e. in }\mathbb{R}^{N}.  \notag
\end{eqnarray}%
By $(\ref{10-3})-(\ref{10-5})$ and Fatou's Lemma, for any $\theta >0$ and
sufficiently large $n$, there exists a number $R>0$ such that%
\begin{eqnarray*}
&&\int_{\mathbb{R}^{3}}|v_{n}-v_{a}^{+}|^{p}dx \\
&\leq
&\int_{B_{R}(0)}|v_{n}-v_{a}^{+}|^{p}dx+%
\int_{[B_{R}(0)]^{c}}|v_{n}-v_{a}^{+}|^{p}dx \\
&\leq &\theta +S_{p}^{-p}\left[ \int_{[B_{R}(0)]^{c}}(|\nabla
v_{n}|^{2}+v_{n}^{2})dx+\int_{[B_{R}(0)]^{c}}(|\nabla
v_{a}^{+}|^{2}+(v_{a}^{+})^{2})dx\right] ^{\frac{p}{2}} \\
&\leq &\theta +S_{p}^{-p}(2\theta )^{\frac{p}{2}},
\end{eqnarray*}%
which implies that for every $p\in (2,2^{\ast }),$%
\begin{equation}
v_{n}\rightarrow v_{a}^{+}\text{ strongly in }L^{p}(\mathbb{R}^{N}).
\label{10-6}
\end{equation}%
Since $\left\langle (J_{a}^{\infty })^{\prime }(v_{n}),v_{n}\right\rangle
=o\left( 1\right) $ and $\widehat{r}<\left\Vert v_{n}\right\Vert _{H^{1}}<%
\widehat{R}_{a},$ using $(\ref{10-6})$ gives
\begin{equation*}
\int_{\mathbb{R}^{N}}f_{\infty }|v_{a}^{+}|^{p}dx\geq \left( \frac{S_{p}^{p}%
}{f_{\infty }}\right) ^{2/(p-2)}>0,
\end{equation*}%
which indicates that $v_{a}^{+}\not\equiv 0.$

Next, we show that $v_{n}\rightarrow v_{a}^{+}$ strongly in $H^{1}(\mathbb{R}%
^{N}).$ Suppose on the contrary. Then we have
\begin{equation}
\left\Vert v_{a}^{+}\right\Vert _{H^{1}}<\liminf_{n\rightarrow \infty }\Vert
v_{n}\Vert _{H^{1}}.  \label{10-7}
\end{equation}%
Similar to the argument of Lemma \ref{g6}, there exists a unique $t_{a}>0$
such that
\begin{equation}
(h_{a,v_{a}^{+}}^{\infty })^{\prime }(t_{a})=0,  \label{10-8}
\end{equation}%
where $h_{a,u}^{\infty }(t)=h_{a,u}\left( t\right) $ with $f(x)\equiv
f_{\infty }$. Since $\left\langle (J_{a}^{\infty })^{\prime
}(v_{n}),v_{n}\right\rangle =o(1)$, it follows from $(\ref{10-6})-(\ref{10-7}%
)$ that
\begin{equation}
(h_{a,v_{a}^{+}}^{\infty })^{\prime }(1)<0.  \label{10-9}
\end{equation}%
Combining $(\ref{10-8})-(\ref{10-9})$ with the profile of $%
h_{a,v_{a}^{+}}^{\infty }(t)$ gives $t_{a}<1$. By $\left( \ref{10-6}\right)
-\left( \ref{10-7}\right) $ again, we see $(h_{a,v_{n}}^{\infty })^{\prime
}(t_{a})>0$ for sufficiently large $n$. Note that
\begin{equation*}
\left( h_{a,v_{n}}^{\infty }\right) ^{\prime }(1)=o(1),
\end{equation*}%
because of $\left\langle (J_{a}^{\infty })^{\prime
}(v_{n}),v_{n}\right\rangle =o(1)$. Similar to the proof of Lemma \ref{g6},
we obtain
\begin{equation*}
\left( h_{a,v_{n}}^{\infty }\right) ^{\prime }(t)=t^{3}\left( m^{\infty
}(t)+a\left( \int_{\mathbb{R}^{N}}|\nabla v_{n}|^{2}dx\right) ^{2}\right)
\text{ for }t>0,
\end{equation*}%
where
\begin{equation*}
m^{\infty }(t):=t^{-2}\left\Vert v_{n}\right\Vert _{H^{1}}^{2}-t^{p-4}\int_{%
\mathbb{R}^{N}}f_{\infty }|v_{n}|^{p}dx.
\end{equation*}%
One can see that $m^{\infty }(t)$ is decreasing for
\begin{equation*}
0<t<\left( \frac{2\left\Vert v_{n}\right\Vert _{H^{1}}^{2}}{(4-p)\int_{%
\mathbb{R}^{N}}f_{\infty }|v_{n}|^{p}dx}\right) ^{1/(p-2)},
\end{equation*}%
and
\begin{equation*}
\left( \frac{2\left\Vert v_{n}\right\Vert _{H^{1}}^{2}}{(4-p)\int_{\mathbb{R}%
^{N}}f_{\infty }|v_{n}|^{p}dx}\right) ^{1/(p-2)}>1.
\end{equation*}%
This indicates that $\left( h_{a,v_{n}}^{\infty }\right) ^{\prime }(t)>0$
for $0<t<1$, which implies that $h_{a,v_{n}}^{\infty }$ is increasing on $%
(t_{a},1)$ for sufficiently large $n$. Thus, $h_{a,v_{n}}^{\infty
}(t_{a})<h_{a,v_{n}}^{\infty }(1)$ holds for sufficiently large $n$. This
implies that
\begin{equation*}
J_{a}^{\infty }(t_{a}v_{n})<J_{a}^{\infty }(v_{n})\text{ for sufficiently
large }n,
\end{equation*}%
and so, we have
\begin{equation*}
J_{a}^{\infty }(t_{a}v_{a}^{+})<\liminf_{n\rightarrow \infty }J_{a}^{\infty
}(t_{a}v_{n})\leq \liminf_{n\rightarrow \infty }J_{a}^{\infty
}(v_{n})=\inf_{u\in H^{1}(\mathbb{R}^{N})\backslash \{0\}}J_{a}^{\infty }(u),
\end{equation*}%
which is a contradiction. Thus, we obtain that $v_{n}\rightarrow v_{a}^{+}$
strongly in $H^{1}(\mathbb{R}^{N})$ and
\begin{equation*}
J_{a}^{\infty }(v_{n})\rightarrow J_{a}^{\infty }(v_{a}^{+})=\inf_{u\in
H^{1}(\mathbb{R}^{N})\backslash \{0\}}J_{a}^{\infty }(u)\text{ as }%
n\rightarrow \infty .
\end{equation*}%
Hence, $v_{a}^{+}$ is a minimizer for $J_{a}^{\infty }$ on $H^{1}(\mathbb{R}%
^{N}).$ Since
\begin{equation*}
\left\vert v_{a}^{+}\right\vert \in H^{1}(\mathbb{R}^{N})\backslash \left\{
0\right\} \ \text{and}\ J_{a}^{\infty }(\left\vert v_{a}^{+}\right\vert
)=J_{a}^{\infty }(v_{a}^{+})=\inf_{u\in H^{1}(\mathbb{R}^{N})\backslash
\{0\}}J_{a}^{\infty }(u),
\end{equation*}%
one can see that $v_{a}^{+}$ is a positive solution of Eq. $(E_{a})$.

Before proving Theorem \ref{t0-3} $(ii)$, we need the following compactness
lemma which is regarded as a corollary of Proposition \ref{g10}.

\begin{lemma}
\label{lem1}Suppose that $N\geq 5$ and conditions $(D1)-(D2)$ hold. Let $%
\{u_{n}\}$ be a $(PS)_{\beta }$--sequence in $H^{1}(\mathbb{R}^{N})$ for $%
J_{a}$ with $\beta <\inf_{u\in H^{1}(\mathbb{R}^{N})\backslash
\{0\}}J_{a}^{\infty }(u)<0.$ Then there exist a subsequence $\{u_{n}\}$ and
a nonzero $u_{0}$ in $H^{1}(\mathbb{R}^{N})$ such that $u_{n}\rightarrow
u_{0}$ strongly in $H^{1}(\mathbb{R}^{N})$ and $J_{a}(u_{0})=\beta .$
Furthermore, $u_{0}$ is a nonzero solution of Eq. $(E_{a}).$
\end{lemma}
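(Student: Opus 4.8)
The plan is to combine boundedness of the sequence with the global compactness decomposition of Proposition \ref{l0}, and then to exploit the energy gap $\beta<\inf_{u\in H^{1}(\mathbb{R}^{N})\backslash\{0\}}J_{a}^{\infty}(u)$ to rule out loss of mass at infinity. First I would observe that $\{u_{n}\}$ is automatically bounded: by Lemma \ref{g11} there is $\widehat{R}_{a}>0$ with $J_{a}(u)>0$ whenever $\|u\|_{H^{1}}\geq\widehat{R}_{a}$, and since $J_{a}(u_{n})\to\beta<\inf_{u\neq0}J_{a}^{\infty}(u)<0$ we have $J_{a}(u_{n})<0$ for $n$ large, forcing $\|u_{n}\|_{H^{1}}<\widehat{R}_{a}$ eventually. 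Thus the hypotheses of Proposition \ref{l0} hold.

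\noindent\textbf{Decomposition.} Applying Proposition \ref{l0} yields $u_{0}\in H^{1}(\mathbb{R}^{N})$ and $A\in\mathbb{R}$ with $I_{a}^{\prime}(u_{0})=0$, and either $(i)$ $u_{n}\to u_{0}$ strongly in $H^{1}(\mathbb{R}^{N})$, or $(ii)$ there exist $m\geq1$ nontrivial solutions $w^{1},\dots,w^{m}$ of $-(1+aA^{2})\Delta w+w=f_{\infty}|w|^{p-2}w$ with
\begin{equation*}
\beta+\frac{aA^{4}}{4}=I_{a}(u_{0})+\sum_{i=1}^{m}I_{a}^{\infty}(w^{i}),\qquad A^{2}=\int_{\mathbb{R}^{N}}|\nabla u_{0}|^{2}dx+\sum_{i=1}^{m}\int_{\mathbb{R}^{N}}|\nabla w^{i}|^{2}dx.
\end{equation*}
The whole point is to show that $(ii)$ is incompatible with the hypothesis $\beta<\inf_{u\neq0}J_{a}^{\infty}(u)$.

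\noindent\textbf{Ruling out $(ii)$.} Set $\sigma=1+aA^{2}\geq1$. Since $u_{0}$ is a critical point of $I_{a}$ and each $w^{i}$ a nontrivial critical point of $I_{a}^{\infty}$, the Nehari identities give $I_{a}(u_{0})=(\tfrac12-\tfrac1p)[\sigma\int|\nabla u_{0}|^{2}+\int u_{0}^{2}]\geq0$ and $I_{a}^{\infty}(w^{i})=(\tfrac12-\tfrac1p)[\sigma\int|\nabla w^{i}|^{2}+\int(w^{i})^{2}]>0$. The transparent case is $m=1$ with $u_{0}=0$: then $A^{2}=\int|\nabla w^{1}|^{2}$, so the limiting equation for $w^{1}$ is exactly $-(1+a\int|\nabla w^{1}|^{2})\Delta w^{1}+w^{1}=f_{\infty}|w^{1}|^{p-2}w^{1}$, i.e. $w^{1}$ solves Eq. $(E_{a})$ with $f(x)\equiv f_{\infty}$. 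A direct computation gives $I_{a}^{\infty}(w^{1})=J_{a}^{\infty}(w^{1})+\tfrac{aA^{4}}{4}$, and the energy identity collapses to $\beta=J_{a}^{\infty}(w^{1})\geq\inf_{u\neq0}J_{a}^{\infty}(u)$, contradicting $\beta<\inf_{u\neq0}J_{a}^{\infty}(u)$. For the general configuration (several bubbles and/or $u_{0}\neq0$) I would prove that $I_{a}(u_{0})+\sum_{i}I_{a}^{\infty}(w^{i})-\tfrac{aA^{4}}{4}\geq\inf_{u\neq0}J_{a}^{\infty}(u)$, the mechanism being the strict superadditivity of the nonlocal term, $A^{4}=(\int|\nabla u_{0}|^{2}+\sum_{i}\int|\nabla w^{i}|^{2})^{2}\geq\sum_{i}(\int|\nabla w^{i}|^{2})^{2}+\cdots$, which makes a genuine splitting more expensive than retaining a single bubble solving Eq. $(E_{a})$ with $f(x)\equiv f_{\infty}$. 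Concretely, each bubble has profile $U(\cdot/\sqrt{\sigma})$ with $U$ the ground state of $-\Delta U+U=f_{\infty}|U|^{p-2}U$, and rescaling it to a true solution of Eq. $(E_{a})$ with $f(x)\equiv f_{\infty}$ reduces the comparison to the transparent case above. \emph{This comparison is the main obstacle of the proof}, since $A$ couples the bubble energies (through $\sigma$) to the subtracted term $\tfrac{aA^{4}}{4}$.

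\noindent\textbf{Conclusion.} Once $(ii)$ is excluded, case $(i)$ holds, so $u_{n}\to u_{0}$ strongly and hence $A^{2}=\int|\nabla u_{0}|^{2}$ and $J_{a}(u_{0})=\lim_{n}J_{a}(u_{n})=\beta<0$, which forces $u_{0}\neq0$. With $A^{2}=\int|\nabla u_{0}|^{2}$, the identity $I_{a}^{\prime}(u_{0})=0$ becomes $-(1+a\int|\nabla u_{0}|^{2})\Delta u_{0}+u_{0}=f(x)|u_{0}|^{p-2}u_{0}$, that is $J_{a}^{\prime}(u_{0})=0$. Therefore $u_{0}$ is a nonzero solution of Eq. $(E_{a})$ with $J_{a}(u_{0})=\beta$, as claimed.
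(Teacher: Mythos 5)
Your overall framework is the right one and, in fairness, it is exactly the paper's: the paper gives no detailed proof of this lemma, asserting it as a corollary of the global compactness result (Proposition \ref{l0}), and your preliminary steps are correct and even more careful than the paper's text. In particular, the boundedness argument via Lemma \ref{g11} (since $J_{a}(u_{n})\to\beta<0$, eventually $\|u_{n}\|_{H^{1}}<\widehat{R}_{a}$) is valid, the identity $I_{a}^{\infty}(w^{1})=J_{a}^{\infty}(w^{1})+\frac{aA^{4}}{4}$ when $A^{2}=\int|\nabla w^{1}|^{2}dx$ is correct, and so is the contradiction $\beta=J_{a}^{\infty}(w^{1})\geq\inf_{u\neq 0}J_{a}^{\infty}(u)$ in the configuration $m=1$, $u_{0}=0$.

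However, there is a genuine gap at precisely the point you flag: every configuration other than ($m=1$, $u_{0}=0$) is left unproved, and the "superadditivity" mechanism you invoke points the wrong way. Writing $B_{0}^{2}=\int|\nabla u_{0}|^{2}dx$, $B_{i}^{2}=\int|\nabla w^{i}|^{2}dx$, the energy identity rearranges to
\begin{equation*}
\beta=J_{a}(u_{0})+\sum_{i=1}^{m}J_{a}^{\infty}(w^{i})+\frac{a}{4}\Bigl[A^{4}-B_{0}^{4}-\sum_{i=1}^{m}B_{i}^{4}\Bigr]\geq J_{a}(u_{0})+\sum_{i=1}^{m}J_{a}^{\infty}(w^{i}),
\end{equation*}
so discarding the cross terms leaves you with a \emph{sum} of energies each of which is bounded below only by the negative number $c_{\infty}:=\inf_{u\neq0}J_{a}^{\infty}(u)$ (the $w^{i}$ are critical points of $I_{a}^{\infty}$, not of $J_{a}^{\infty}$, so $J_{a}^{\infty}(w^{i})$ can a priori be as negative as $c_{\infty}$). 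For $m\geq 2$ this gives only $\beta\geq m\,c_{\infty}<c_{\infty}$, and even for $m=1$ with $u_{0}\neq0$ one gets $\beta\geq J_{a}(u_{0})+c_{\infty}$ with $J_{a}(u_{0})$ possibly negative — in neither case does a contradiction with $\beta<c_{\infty}$ follow. To close the argument one must use the cross terms $\frac{a}{2}\sum_{j<k}B_{j}^{2}B_{k}^{2}$ quantitatively against the possible negativity of the component energies, e.g.\ via the Nehari and Pohozaev identities for nontrivial solutions of $-(1+aA^{2})\Delta w+w=f_{\infty}|w|^{p-2}w$ (which give uniform lower bounds such as $(1+aA^{2})B_{i}^{2}+\|w^{i}\|_{L^{2}}^{2}\geq(S_{p}^{p}/f_{\infty})^{2/(p-2)}$ and a fixed ratio $B_{i}^{2}=\frac{N(p-2)}{2p}\|w^{i}\|_{H^{1}_{\sigma}}^{2}$ after rescaling), and this comparison is genuinely delicate because, as you yourself note, the coefficient $\sigma=1+aA^{2}$ couples all components. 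Your proposal names this as "the main obstacle" but does not carry it out, so the proof is incomplete at its decisive step; the rescaling remark ($U(\cdot/\sqrt{\sigma})$ with $U$ a ground state) does not by itself reduce the general case to the transparent one, both because bubbles need not be ground states and because $\sigma$ depends on the total mass $A^{2}$ rather than on the individual bubble.
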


\textbf{The proof of Theorem \ref{t0-3} }$(ii):$ By condition $(D3),$ we have%
\begin{equation*}
\inf_{u\in H^{1}(\mathbb{R}^{N})\backslash \{0\}}J_{a}(u)\leq
J_{a}(v_{a}^{+})<J_{a}^{\infty }(v_{a}^{+})=\inf_{u\in H^{1}(\mathbb{R}%
^{N})\backslash \{0\}}J_{a}^{\infty }(u).
\end{equation*}%
Moreover, by Lemmas \ref{g11}, \ref{g12}$(i)$ and the Ekeland variational
principle, there exists a bounded minimizing sequence $\{u_{n}\}\subset
H^{1}(\mathbb{R}^{N})$ with $\widehat{r}<\left\Vert u_{n}\right\Vert
_{H^{1}}<\widehat{R}_{a}$ satisfying
\begin{equation*}
J_{a}(u_{n})=\inf_{u\in H^{1}(\mathbb{R}^{N})\backslash \{0\}}J_{a}(u)+o(1)%
\text{ and }J_{a}^{\prime }(u_{n})=o\left( 1\right) \text{ in }H^{-1}(%
\mathbb{R}^{N}).
\end{equation*}%
By virtue of Lemma \ref{lem1}, we know that for each $0<a<\overline{a}_{\ast
},$ Eq. $(E_{a})$ has a nontrivial solution $u_{a}^{+}$ such that $%
J_{a}(u_{a}^{+})=\inf_{u\in H^{1}(\mathbb{R}^{N})\backslash
\{0\}}J_{a}(u)<0. $ Since $J_{a}(|u_{a}^{+}|)=J_{a}(u_{a}^{+})=\inf_{u\in
H^{1}(\mathbb{R}^{N})\backslash \{0\}}J_{a}(u)$, we may assume that $%
u_{a}^{+}$ is a positive solution of Eq. $(E_{a}).$ Consequently, we
complete the proof.

\section{Proofs of Theorem \protect\ref{t1}}

In this section, we consider the following limit problem%
\begin{equation}
\left\{
\begin{array}{ll}
-\left( a\int_{\mathbb{R}^{N}}|\nabla u|^{2}dx+1\right) \Delta u+u=f_{\infty
}|u|^{p-2}u & \text{ in }\mathbb{R}^{N}, \\
u\in H^{1}(\mathbb{R}^{N}). &
\end{array}%
\right.  \tag{$E_{a}^{\infty }$}
\end{equation}%
It is clear that solutions of Eq. $(E_{a}^{\infty })$ are critical points of
the energy functional $J_{a}^{\infty }$ defined by
\begin{equation*}
J_{a}^{\infty }(u)=\frac{1}{2}\left\Vert u\right\Vert _{H^{1}}^{2}+\frac{a}{4%
}\left( \int_{\mathbb{R}^{N}}|\nabla u|^{2}dx\right) ^{2}-\frac{1}{p}\int_{%
\mathbb{R}^{N}}f_{\infty }|u|^{p}dx.
\end{equation*}%
Moreover, $u\in \mathbf{M}_{a}^{\infty }$ if and only if $\left\Vert
u\right\Vert _{H^{1}}^{2}+a\left( \int_{\mathbb{R}^{N}}\left\vert \nabla
u\right\vert ^{2}dx\right) ^{2}-\int_{\mathbb{R}^{N}}f_{\infty }|u|^{p}dx=0,$
where $\mathbf{M}_{a}^{\infty }=\mathbf{M}_{a}$ with $f(x)\equiv f_{\infty
}. $ We denote by
\begin{equation*}
\mathbf{M}_{a}^{\infty ,(j)}=\mathbf{M}_{a}^{(j)}\text{ with }f(x)\equiv
f_{\infty }\text{ for }j=1,2.
\end{equation*}%
Since $w_{0}$ is the unique positive solution of Eq. $(E_{0}^{\infty }),$ it
follows from $\left( \ref{1-8}\right) $ that
\begin{equation*}
\int_{\mathbb{R}^{N}}f_{\infty }|w_{0}|^{p}dx=f_{\infty
}S_{p}^{-p}\left\Vert w_{0}\right\Vert _{H^{1}}^{p}>\frac{p}{4-p}\left(
\frac{2a(4-p)}{p-2}\right) ^{(p-2)/2}\left\Vert w_{0}\right\Vert
_{H^{1}}^{p},
\end{equation*}%
for all $0<a<\frac{p-2}{2(4-p)}\left( \frac{4-p}{p}\right) ^{2/(p-2)}\Lambda
_{0}.$ Thus, by Lemma \ref{g6}, there exist two numbers $t_{a}^{\infty
,-},t_{a}^{\infty ,+}>0$ satisfying%
\begin{equation*}
1<t_{a}^{\infty ,-}<\sqrt{D(p)}\left( \frac{2}{4-p}\right)
^{1/(p-2)}<t_{a}^{\infty ,+},
\end{equation*}%
such that $t_{a}^{\infty ,\pm }w_{0}\in \mathbf{M}_{a}^{\infty ,\pm },$
where $\mathbf{M}_{a}^{\infty ,\pm }=\mathbf{M}_{a}^{\pm }$ with $f(x)\equiv
f_{\infty }$. Furthermore, there holds
\begin{equation*}
J_{a}^{\infty }(t_{a}^{\infty ,-}w_{0})=\sup_{0\leq t\leq t_{a}^{\infty
,+}}J_{a}^{\infty }(tw_{0})
\end{equation*}%
and
\begin{equation}
J_{a}^{\infty }(t_{a}^{\infty ,+}w_{0})=\inf_{t\geq t_{a}^{\infty
,-}}J_{a}^{\infty }(tw_{0})=\inf_{t\geq 0}J_{a}^{\infty }(tw_{0})<0.
\label{28}
\end{equation}%
A direct calculation shows that%
\begin{equation}
J_{a}^{\infty }(t_{a}^{\infty ,-}w_{0})<D(p)\frac{p-2}{2p}\left( \frac{%
2S_{p}^{p}}{f_{\infty }(4-p)}\right) ^{2/(p-2)}.  \label{27}
\end{equation}%
It follows from $(\ref{28})-(\ref{27})$ that $t_{a}^{\infty ,-}w_{0}\in
\mathbf{M}_{a}^{\infty ,(1)}$ and $t_{a}^{\infty ,+}w_{0}\in \mathbf{M}%
_{a}^{\infty ,(2)}.$ Namely, $\mathbf{M}_{a}^{\infty ,\left( j\right)
}(j=1,2)$ are nonempty.

Define
\begin{equation*}
\alpha _{a}^{\infty ,-}=\inf_{u\in \mathbf{M}_{a}^{\infty
,(1)}}J_{a}^{\infty }(u)=\inf_{u\in \mathbf{M}_{a}^{\infty ,-}}J_{a}^{\infty
}(u)\text{ for }N\geq 1.
\end{equation*}%
Then we have
\begin{equation*}
\frac{p-2}{4p}\left( \frac{S_{p}^{p}}{f_{\infty }}\right) ^{2/(p-2)}\leq
\alpha _{a}^{\infty ,-}<\frac{D(p)(p-2)}{2p}\left( \frac{2S_{p}^{p}}{%
f_{\infty }(4-p)}\right) ^{2/(p-2)}
\end{equation*}%
by Lemma \ref{g1} and $(\ref{27}).$

\textbf{We are now ready to prove Theorem \ref{t1}: }$\left( i\right) $ By
Lemma \ref{g7} and the Ekeland variational principle, there exists a
sequence $\{u_{n}\}\subset \mathbf{M}_{a}^{\infty ,(1)}$ satisfies
\begin{equation*}
J_{a}^{\infty }(u_{n})=\alpha _{a}^{\infty ,-}+o(1)\text{ and }%
(J_{a}^{\infty })^{\prime }(u_{n})=o(1)\text{ in }H^{-1}(\mathbb{R}^{N}).
\end{equation*}%
Applying Theorem \ref{g10} in Appendix, we obtain that for $0<a<\Lambda ,$
compactness holds for the sequence $\{u_{n}\}.$ Then for each $\theta >0$
there exist a positive constant $R=R\left( \theta \right) $ and a sequence $%
\{z_{n}\}\subset \mathbb{R}^{N}$ such that%
\begin{equation}
\int_{\left[ B_{R}(z_{n})\right] ^{c}}(|\nabla
u_{n}(x)|^{2}+u_{n}^{2}(x))dx<\theta \text{ uniformly for }n\geq 1.
\label{18-4}
\end{equation}%
Define a new sequence of functions $v_{n}:=u_{n}(\cdot +z_{n})\in H^{1}(%
\mathbb{R}^{N}).$ Then we have $\{v_{n}\}\subset \mathbf{M}_{a}^{\infty
,(1)} $ and $J_{a}^{\infty }(v_{n})=\alpha _{a}^{\infty ,-}+o(1).$ By virtue
of $(\ref{18-4})$, for each $\theta >0$ there exists a constant $R=R(\theta
)>0$ such that%
\begin{equation*}
\int_{\left[ B_{R}(z_{n})\right] ^{c}}(|\nabla
v_{n}(x)|^{2}+v_{n}^{2}(x))dx<\theta \text{ uniformly for }n\geq 1.
\end{equation*}%
Since $\{v_{n}\}$ is bounded in $H^{1}(\mathbb{R}^{N}),$ one can assume that
there exist a subsequence $\{v_{n}\}$ and $v_{a}^{-}\in H^{1}(\mathbb{R}%
^{N}) $ such that%
\begin{eqnarray*}
v_{n} &\rightharpoonup &v_{a}^{-}\text{ weakly in }H^{1}(\mathbb{R}^{N}); \\
v_{n} &\rightarrow &v_{a}^{-}\text{ strongly in }L_{loc}^{r}(\mathbb{R}^{N})%
\text{ for }2\leq r<2^{\ast }; \\
v_{n} &\rightarrow &v_{a}^{-}\text{ a.e. in }\mathbb{R}^{N}.
\end{eqnarray*}%
In the following, by adapting the argument of Theorem \ref{t0-3} $(i),$ we
obtain
\begin{equation*}
v_{n}\rightarrow v_{a}^{-}\text{ strongly in }H^{1}(\mathbb{R}^{N})
\end{equation*}%
and
\begin{equation*}
J_{a}^{\infty }(v_{n})\rightarrow J_{a}^{\infty }(v_{a}^{-})=\alpha
_{a}^{\infty ,-}\text{ as }n\rightarrow \infty .
\end{equation*}%
Thus, $v_{a}^{-}$ is a minimizer for $J_{a}^{\infty }$ on $\mathbf{M}%
_{a}^{\infty ,-}$ for each $0<a<\Lambda .$ By $(\ref{27})$ one has
\begin{equation*}
J_{a}^{\infty }(v_{a}^{-})=\alpha _{a}^{\infty ,-}\leq J_{a}^{\infty
}(t_{a}^{\infty ,-}w_{0})<\frac{p-2}{2p}D(p)\left( \frac{2S_{p}^{p}}{%
f_{\infty }(4-p)}\right) ^{\frac{2}{p-2}},
\end{equation*}%
which implies that $v_{a}^{-}\in \mathbf{M}_{a}^{\infty ,(1)}$. Since $%
|v_{a}^{-}|\in \mathbf{M}_{a}^{\infty ,-}\ $and$\ J_{a}^{\infty
}(|v_{a}^{-}|)=J_{a}^{\infty }(v_{a}^{-})=\alpha _{a}^{\infty ,-},$ one can
see that $v_{a}^{-}$ is a positive solution of Eq. $(E_{a})$ according to
Lemma \ref{g2}. Note that for $2<p<\min \{4,2^{\ast }\},$ there holds
\begin{equation*}
(4-p)\int_{\mathbb{R}^{N}}f_{\infty }|v_{a}^{-}|^{p}dx<2\left\Vert
v_{a}^{-}\right\Vert _{H^{1}}^{2}\text{ and }T_{f_{\infty
}}(v_{a}^{-})v_{a}^{-}\in \mathbf{M}_{0}^{\infty },
\end{equation*}%
where $\mathbf{M}_{0}^{\infty }=\mathbf{M}_{a}^{\infty }$ with $a=0$ and
\begin{equation*}
\left( \frac{4-p}{2}\right) ^{1/(p-2)}<T_{f_{\infty }}(v_{a}^{-}):=\left(
\frac{\left\Vert v_{a}^{-}\right\Vert _{H^{1}}^{2}}{\int_{\mathbb{R}%
^{N}}f_{\infty }|v_{a}^{-}|^{p}dx}\right) ^{1/(p-2)}<1.
\end{equation*}%
Then by Lemmas $\ref{g6}-\ref{g15}$, we have
\begin{equation*}
J_{a}^{\infty }(v_{a}^{-})=\sup_{0\leq t\leq t_{a}^{+}}J_{a}^{\infty
}(tv_{a}^{-}),
\end{equation*}%
where $1<\left( \frac{2}{4-p}\right) ^{1/(p-2)}T_{f_{\infty
}}(v_{a}^{-})<t_{a}^{+}$. Using the above equality, one get
\begin{equation*}
J_{a}^{\infty }(v_{a}^{-})>J_{a}^{\infty }(T_{f_{\infty
}}(v_{a}^{-})v_{a}^{-})>J_{0}^{\infty }(T_{f_{\infty
}}(v_{a}^{-})v_{a}^{-})\geq \alpha _{0}^{\infty }.
\end{equation*}

$(ii)$ Following the argument of Theorem 2.1 in \cite{SZ}. By $(i),$ we
obtain that Eq. $(E_{a})$ admits a positive solution $v_{a,1}^{-}\in H^{1}(%
\mathbb{R}^{N}).$ Applying Theorem 4.1 in \cite{HL1} gives $%
v_{a,1}^{-}\rightarrow 0$ as $|x|\rightarrow \infty .$ Then after
translation, we can make $v_{a,1}^{-}$ satisfy%
\begin{equation}
v_{a,1}^{-}>0,v_{a,1}^{-}(\infty )=0,v_{a,1}^{-}(0)=\max v_{a,1}^{-}(x).
\label{18-5}
\end{equation}

Now we show that $v_{a,1}^{-}$ is unique under $(\ref{18-5})$. Otherwise, we
assume that $v_{a,2}^{-}$ is another positive solution satisfying $(\ref%
{18-5})$. Let%
\begin{equation*}
K_{1}=b+a\int_{\mathbb{R}^{N}}|\nabla v_{a,1}^{-}|^{2}dx\text{ and }%
K_{2}=b+a\int_{\mathbb{R}^{N}}|\nabla v_{a,2}^{-}|^{2}dx.
\end{equation*}%
Then $v_{a,i}^{-}(i=1,2)$ is a solution of the problem%
\begin{equation*}
-\Delta u+\frac{1}{K_{i}}u=\frac{1}{K_{i}}|u|^{p-2}u\text{ in }\mathbb{R}%
^{N}.
\end{equation*}%
Let $w_{i}(x)=v_{a,i}^{-}(\sqrt{K_{i}}x).$ Then $w_{i}(x)$ is a solution of%
\begin{equation}
\left\{
\begin{array}{ll}
-\Delta w+w=|w|^{p-2}w\text{ } & \text{in }\mathbb{R}^{N}, \\
w>0,w(\infty )=0,w(0)=\max w(x). &
\end{array}%
\right.  \label{18-6}
\end{equation}%
It follows from \cite{K} that the solution of problem $(\ref{18-6})$ is
unique. So $w_{1}(x)\equiv w_{2}(x)$ i.e., $v_{a,1}^{-}(\sqrt{K_{1}}%
x)=v_{a,2}^{-}(\sqrt{K_{2}}x).$ Thus, we have
\begin{equation}
v_{a,2}^{-}(x)=v_{a,1}^{-}\left( \sqrt{\frac{K_{1}}{K_{2}}}x\right) .
\label{18-8}
\end{equation}%
Thus, we have%
\begin{eqnarray*}
K_{2} &=&b+a\int_{\mathbb{R}^{N}}|\nabla v_{a,2}^{-}|^{2}dx \\
&=&b+a\left( \frac{K_{2}}{K_{1}}\right) ^{\frac{N-2}{2}}\int_{\mathbb{R}%
^{N}}|\nabla v_{a,1}^{-}|^{2}dx \\
&=&b+\left( \frac{K_{2}}{K_{1}}\right) ^{\frac{N-2}{2}}(K_{1}-b),
\end{eqnarray*}%
which implies that%
\begin{equation}
\frac{(K_{2}-b)^{2}}{K_{2}^{N-2}}=\frac{(K_{1}-b)^{2}}{K_{1}^{N-2}}.
\label{18-7}
\end{equation}%
Define%
\begin{equation*}
y(x)=\frac{(x-b)^{2}}{x^{N-2}}\text{ for }x>b.
\end{equation*}%
A direct calculation shows that
\begin{equation*}
y^{\prime }(x)=\frac{\left[ (4-N)x+b(N-2)\right] (x-b)}{x^{N-1}}>0\text{ for
}x>b\text{ and }1\leq N\leq 4,
\end{equation*}
which implies that $y(x)$ is strictly increasing when $x>b$ and $1\leq N\leq
4.$ This indicates that $K_{1}=K_{2},$since $K_{1},K_{2}>b.$ So by $(\ref%
{18-8})$ one have $v_{a,1}^{-}=v_{a,2}^{-}.$ Since the unique solution $%
w_{1}(x)$ of problem $(\ref{18-6})$ is radially symmetric by \cite{K} and $%
w_{1}(x)=v_{a,1}^{-}(\sqrt{K_{1}}x)$, we obtain that $v_{a,1}^{-}$ is also
radially symmetric.

$\left( iii\right) $ Note that $\Lambda \leq \overline{a}_{\ast }$ for $%
N\geq 5.$ Then by virtue of Theorem \ref{t0-3} $(i),$ for each $0<a<\Lambda $
Eq. $(E_{a})$ admits a positive ground state solution $v_{a}^{+}\in H^{1}(%
\mathbb{R}^{N})$ such that $J_{a}^{\infty }(v_{a}^{+})<0$ when $N\geq 5.$
Clearly, $v_{a}^{+}\in \mathbf{M}_{a}^{\infty }\left( \frac{D(p)(p-2)}{2p}%
\left( \frac{2S_{p}^{p}}{f_{\infty }(4-p)}\right) ^{2/(p-2)}\right) ,$ since
$J_{a}^{\infty }(v_{a}^{+})<0.$ By Lemma \ref{g1}, we further get $%
v_{a}^{+}\in \mathbf{M}_{a}^{\infty ,(2)}.$ Thus, there holds%
\begin{equation*}
\left\Vert v_{a}^{+}\right\Vert _{H^{1}}>\sqrt{2}\left( \frac{2S_{p}^{p}}{%
f_{\infty }(4-p)}\right) ^{1/(p-2)}.
\end{equation*}%
Moreover, from $(i)$ it follows that for each $0<a<\Lambda ,$ Eq. $(E_{a})$
admits a positive solution $v_{a}^{-}\in H^{1}(\mathbb{R}^{N})$ satisfying%
\begin{equation*}
J_{a}^{\infty }(v_{a}^{-})>\frac{p-2}{4p}\left( \frac{S_{p}^{p}}{f_{\infty }}%
\right) ^{2/(p-2)}\text{ and }\left\Vert v_{a}^{-}\right\Vert
_{H^{1}}<\left( \frac{2S_{p}^{p}}{f_{\infty }(4-p)}\right) ^{1/(p-2)}.
\end{equation*}%
Consequently, we complete the proof of Theorem \ref{t1}.

\section{Proofs of Theorems \protect\ref{t2} and \protect\ref{t3}}

By virtue of Theorem \ref{t1} $(i)$, we know that Eq. $(E_{a}^{\infty })$
admits a positive solution $v_{a}^{-}\in \mathbf{M}_{a}^{\infty ,-}$ such
that
\begin{equation*}
J_{a}^{\infty }(v_{a}^{-})=\alpha _{a}^{\infty ,-}\text{ and }(4-p)\int_{%
\mathbb{R}^{N}}f_{\infty }|v_{a}^{-}|^{p}dx<2\Vert v_{a}^{-}\Vert
_{H^{1}}^{2}.
\end{equation*}%
According to $(\ref{2-0}),$ one has
\begin{equation}
T_{f_{\infty }}(v_{a}^{-})=\left( \frac{\left\Vert v_{a}^{-}\right\Vert
_{H^{1}}^{2}}{\int_{\mathbb{R}^{N}}f_{\infty }|v_{a}^{-}|^{p}dx}\right)
^{1/(p-2)}>\left( \frac{4-p}{2}\right) ^{1/(p-2)}.  \label{5-1}
\end{equation}%
Moreover, by Theorem \ref{t1} $(ii)$, we obtain that Eq. $(E_{a}^{\infty })$
admits a positive solution $v_{a}^{+}\in \mathbf{M}_{a}^{\infty ,+}$ such
that
\begin{equation*}
J_{a}^{\infty }(v_{a}^{+})=\alpha _{a}^{\infty ,+}\ \text{and }(4-p)\int_{%
\mathbb{R}^{N}}f_{\infty }|v_{a}^{+}|^{p}dx>2\Vert v_{a}^{+}\Vert
_{H^{1}}^{2}.
\end{equation*}%
Similar to $(\ref{5-1})$, we have
\begin{equation*}
T_{f_{\infty }}(v_{a}^{+})=\left( \frac{\left\Vert v_{a}^{+}\right\Vert
_{H^{1}}^{2}}{\int_{\mathbb{R}^{N}}f_{\infty }|v_{a}^{+}|^{p}dx}\right)
^{1/(p-2)}<\left( \frac{4-p}{2}\right) ^{1/(p-2)}.
\end{equation*}%
Then we have the following results.

\begin{lemma}
\label{m5}$(i)$ Suppose that $N\geq 1.$ Then for each $0<a<\Lambda ,$ there
exists $1<\left( \frac{2}{4-p}\right) ^{1/(p-2)}T_{f_{\infty
}}(v_{a}^{-})<t_{a}^{\infty ,-}$ such that
\begin{equation}
J_{a}^{\infty }(v_{a}^{-})=\sup_{0\leq t\leq t_{a}^{\infty ,-}}J_{a}^{\infty
}(tv_{a}^{-})=\alpha _{a}^{\infty ,-}.  \label{eqq36}
\end{equation}%
\newline
$(ii)$ Suppose that $N\geq 5.$ Then for each $0<a<\Lambda ,$ there exists $%
0<t_{a}^{\infty ,+}<1$ such that
\begin{equation*}
J_{a}^{\infty }(v_{a}^{+})=\inf_{t\geq t_{a}^{\infty ,+}}J_{a}^{\infty
}(tv_{a}^{+})=\alpha _{a}^{\infty ,+}.
\end{equation*}
\end{lemma}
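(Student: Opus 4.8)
The plan is to read both statements off the geometry of the fibering map $h^{\infty}_{a,u}(t)=J^{\infty}_{a}(tu)$ applied to $u=v_{a}^{\pm}$, exactly as in the proof of Lemma \ref{g6}. Recall that the behaviour of this map is controlled by
$m^{\infty}(t)=t^{-2}\left\Vert u\right\Vert _{H^{1}}^{2}-t^{p-4}\int_{\mathbb{R}^{N}}f_{\infty}|u|^{p}dx$, which is decreasing on $\left(0,\left(\tfrac{2}{4-p}\right)^{1/(p-2)}T_{f_{\infty}}(u)\right)$ and increasing afterwards, that $tu\in\mathbf{M}_{a}^{\infty}$ if and only if $m^{\infty}(t)+a\left\Vert u\right\Vert _{D^{1,2}}^{4}=0$, and that $(h^{\infty}_{a,u})^{\prime}(t)=t^{3}\left(m^{\infty}(t)+a\left\Vert u\right\Vert _{D^{1,2}}^{4}\right)$. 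Since $v_{a}^{\pm}\in\mathbf{M}_{a}^{\infty}$, the value $t=1$ already solves $m^{\infty}(1)+a\left\Vert v_{a}^{\pm}\right\Vert _{D^{1,2}}^{4}=0$, so $t=1$ is a critical point of $h^{\infty}_{a,v_{a}^{\pm}}$.

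First I would locate $t=1$ relative to the minimizer $\left(\tfrac{2}{4-p}\right)^{1/(p-2)}T_{f_{\infty}}(v_{a}^{\pm})$ of $m^{\infty}$. A direct differentiation gives $(m^{\infty})^{\prime}(1)=-2\left\Vert v_{a}^{\pm}\right\Vert _{H^{1}}^{2}+(4-p)\int_{\mathbb{R}^{N}}f_{\infty}|v_{a}^{\pm}|^{p}dx$, which by $(\ref{2-2})$ equals $(h^{\infty}_{a,v_{a}^{\pm}})^{\prime\prime}(1)$. For $v_{a}^{-}\in\mathbf{M}_{a}^{\infty,-}$ this is negative, so $1$ lies on the decreasing branch of $m^{\infty}$; for $v_{a}^{+}\in\mathbf{M}_{a}^{\infty,+}$ it is positive, so $1$ lies on the increasing branch. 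Moreover the two elementary inequalities in the statement come straight from the $T_{f_{\infty}}$-estimates already established: $(\ref{5-1})$ gives $T_{f_{\infty}}(v_{a}^{-})>\left(\tfrac{4-p}{2}\right)^{1/(p-2)}$, hence $\left(\tfrac{2}{4-p}\right)^{1/(p-2)}T_{f_{\infty}}(v_{a}^{-})>1$, while $T_{f_{\infty}}(v_{a}^{+})<\left(\tfrac{4-p}{2}\right)^{1/(p-2)}$ gives $\left(\tfrac{2}{4-p}\right)^{1/(p-2)}T_{f_{\infty}}(v_{a}^{+})<1$.

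Because $t=1$ is a nondegenerate critical point, the horizontal line at height $-a\left\Vert v_{a}^{\pm}\right\Vert _{D^{1,2}}^{4}$ meets the graph of $m^{\infty}$ transversally at two points, so the hypothesis of Lemma \ref{g6} (for $N=1,2,3$), respectively Lemma \ref{g15} (for $N\geq4$), is satisfied for $u=v_{a}^{\pm}$. These lemmas then supply exactly two critical points $t_{a}^{-}<\left(\tfrac{2}{4-p}\right)^{1/(p-2)}T_{f_{\infty}}(v_{a}^{\pm})<t_{a}^{+}$, with $t_{a}^{-}v_{a}^{\pm}\in\mathbf{M}_{a}^{\infty,-}$ a local maximum and $t_{a}^{+}v_{a}^{\pm}\in\mathbf{M}_{a}^{\infty,+}$ a local minimum, together with $J^{\infty}_{a}(t_{a}^{-}v_{a}^{\pm})=\sup_{0\leq t\leq t_{a}^{+}}J^{\infty}_{a}(tv_{a}^{\pm})$ and $J^{\infty}_{a}(t_{a}^{+}v_{a}^{\pm})=\inf_{t\geq t_{a}^{-}}J^{\infty}_{a}(tv_{a}^{\pm})=\inf_{t\geq0}J^{\infty}_{a}(tv_{a}^{\pm})$. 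For part $(i)$, the branch analysis identifies $t=1$ with the unique local maximum $t_{a}^{-}$; setting $t_{a}^{\infty,-}:=t_{a}^{+}$ yields $1<\left(\tfrac{2}{4-p}\right)^{1/(p-2)}T_{f_{\infty}}(v_{a}^{-})<t_{a}^{\infty,-}$ and $J^{\infty}_{a}(v_{a}^{-})=\sup_{0\leq t\leq t_{a}^{\infty,-}}J^{\infty}_{a}(tv_{a}^{-})$, the value being $\alpha_{a}^{\infty,-}$ since $v_{a}^{-}$ is a minimizer. For part $(ii)$ with $N\geq5$, $t=1$ is the unique local minimum $t_{a}^{+}$; setting $t_{a}^{\infty,+}:=t_{a}^{-}$ gives $0<t_{a}^{\infty,+}<\left(\tfrac{2}{4-p}\right)^{1/(p-2)}T_{f_{\infty}}(v_{a}^{+})<1$ and $J^{\infty}_{a}(v_{a}^{+})=\inf_{t\geq t_{a}^{\infty,+}}J^{\infty}_{a}(tv_{a}^{+})=\alpha_{a}^{\infty,+}$.

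The only step demanding real care is the passage from ``$v_{a}^{\pm}$ is a nondegenerate critical point of the fibering map'' to ``the hypothesis of Lemma \ref{g6}/\ref{g15} holds'': one must exclude the degenerate case $v_{a}^{\pm}\in\mathbf{M}_{a}^{\infty,0}$, in which the line is tangent to $m^{\infty}$ and a single inflection point arises. This is precisely ruled out by $(h^{\infty}_{a,v_{a}^{\pm}})^{\prime\prime}(1)\neq0$, i.e. by $v_{a}^{\pm}\notin\mathbf{M}_{a}^{\infty,0}$, so no genuine obstacle appears; the remainder is the bookkeeping of matching $t=1$ to the correct branch and reading off the two displayed inequalities.
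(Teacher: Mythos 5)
Your argument is correct and is essentially the paper's own proof: the paper likewise works with the auxiliary function $b_{a}^{\infty }(t)$ (your $m^{\infty }$), uses $v_{a}^{-}\in \mathbf{M}_{a}^{\infty }$ to get $b_{a}^{\infty }(1)=-a\left( \int_{\mathbb{R}^{N}}|\nabla v_{a}^{-}|^{2}dx\right) ^{2}$, places $t=1$ on the decreasing branch via $(\ref{5-1})$ and $(\ref{5-5})$ (equivalently your sign computation $(m^{\infty })^{\prime }(1)=h_{a,v_{a}^{-}}^{\prime \prime }(1)<0$), deduces $\inf_{t>0}b_{a}^{\infty }(t)<b_{a}^{\infty }(1)$ to produce the second root $t_{a}^{\infty ,-}>\left( \frac{2}{4-p}\right) ^{1/(p-2)}T_{f_{\infty }}(v_{a}^{-})>1$, and finishes with the Lemma \ref{g6}-type sign analysis of $(h_{a,v_{a}^{-}}^{\infty })^{\prime }$, part $(ii)$ being handled symmetrically. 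One pedantic caveat: transversality of the level line at height $-a\Vert v_{a}^{\pm }\Vert _{D^{1,2}}^{4}$ does not literally verify the quantitative hypothesis of Lemma \ref{g6} or \ref{g15} (that hypothesis forces $\inf m^{\infty }$ strictly below a smaller level and yields the $\sqrt{D(p)}$-refined location of $t_{a}^{-}$, which you neither have nor need), but this is harmless because the two-crossing picture you establish directly, together with the strict monotonicity of $m^{\infty }$ on each branch, is all that your conclusions and the paper's actually use.
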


\begin{proof}
$(i)$ Let
\begin{equation}
b_{a}^{\infty }(t)=t^{-2}\left\Vert v_{a}^{-}\right\Vert
_{H^{1}}^{2}-t^{p-4}\int_{\mathbb{R}^{N}}f_{\infty }|v_{a}^{-}|^{p}dx\text{
for }t>0.  \label{5-3}
\end{equation}%
Clearly, there holds
\begin{equation}
b_{a}^{\infty }(1)+a\left( \int_{\mathbb{R}^{N}}|\nabla
v_{a}^{-}|^{2}dx\right) ^{2}=0  \label{5-4}
\end{equation}%
for all $0<a<\Lambda .$ It is easy to verify that
\begin{equation*}
b_{a}^{\infty }(t_{f_{\infty }}(v_{a}^{-}))=0,\ \lim_{t\rightarrow
0^{+}}b_{a}^{\infty }(t)=\infty \text{ and }\lim_{t\rightarrow \infty
}b_{a}^{\infty }(t)=0.
\end{equation*}%
By calculating the derivative of $b_{a}^{\infty }(t)$ one has%
\begin{equation*}
(b_{a}^{\infty })^{\prime }(t)=t^{-3}\left( -2\left\Vert
v_{a}^{-}\right\Vert _{H^{1}}^{2}+(4-p)t^{p-2}\int_{\mathbb{R}^{N}}f_{\infty
}|v_{a}^{-}|^{p}dx\right) ,
\end{equation*}%
which implies that $b_{a}^{\infty }(t)$ is decreasing when $0<t<\left( \frac{%
2}{4-p}\right) ^{1/(p-2)}T_{f_{\infty }}(v_{a}^{-})$ and is increasing when $%
t>\left( \frac{2}{4-p}\right) ^{1/(p-2)}T_{f_{\infty }}(v_{a}^{-}).$ This
indicates that
\begin{equation}
\inf_{t>0}b_{a}^{\infty }(t)=b_{a}^{\infty }\left( \left( \frac{2}{4-p}%
\right) ^{1/(p-2)}T_{f_{\infty }}(v_{a}^{-})\right) .  \label{5-2}
\end{equation}%
Moreover, we notice that
\begin{equation}
\left( \frac{2}{4-p}\right) ^{1/(p-2)}T_{f_{\infty }}(v_{a}^{-})>1.
\label{5-5}
\end{equation}%
Thus, it follows from $(\ref{5-4})-(\ref{5-5})$ that
\begin{equation}
\inf_{t>0}b_{a}^{\infty }(t)<b_{a}^{\infty }(1)=-a\left( \int_{\mathbb{R}%
^{N}}|\nabla v_{a}^{-}|^{2}dx\right) ^{2},  \label{5-6}
\end{equation}%
which shows that there exists $1<\left( \frac{2}{4-p}\right)
^{1/(p-2)}T_{f_{\infty }}(v_{a}^{-})<t_{a}^{\infty ,-}$ such that
\begin{equation*}
b_{a}^{\infty }(t_{a}^{\infty ,-})+a\left( \int_{\mathbb{R}^{N}}|\nabla
v_{a}^{-}|^{2}dx\right) ^{2}=0.
\end{equation*}%
Using a similar argument as in Lemma \ref{g6}, we arrive at $(\ref{eqq36})$.%
\newline
$(ii)$ The proof is analogous to that of part $(i)$, and we omit it here.
\end{proof}

\begin{lemma}
\label{m3}Suppose that $N\geq 1$ and conditions ${(D1)}-{(D2),(D4)}-{(D5)}$
hold. Then for each $0<a<\Lambda ,$ there exist two constants $t_{a}^{(1),-}$
and $t_{a}^{(2),-}$ satisfying
\begin{equation*}
T_{f}(v_{a}^{-})<t_{a}^{(1),-}<\left( \frac{2}{4-p}\right)
^{1/(p-2)}T_{f}(v_{a}^{-})<t_{a}^{(2),-}
\end{equation*}%
such that $t_{a}^{(i),-}v_{a}^{-}\in \mathbf{M}_{a}^{(i)}\ (i=1,2),$ and
\begin{equation*}
J_{a}\left( t_{a}^{(1),-}v_{a}^{-}\right) =\sup_{0\leq t\leq
t_{a}^{(2),-}}J_{a}(tv_{a}^{-})<\alpha _{a}^{\infty ,-}\text{ and }%
J_{a}\left( t_{a}^{(2),-}v_{a}^{-}\right) =\inf_{t\geq
t_{a}^{(1),-}}J_{a}(tv_{a}^{-}),
\end{equation*}%
where $T_{f}(v_{a}^{-})$ is as $(\ref{2-0})$ with $u=v_{a}^{-}.$
\end{lemma}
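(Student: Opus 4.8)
The plan is to study the fibering map $h_{a,v_a^-}(t)=J_a(tv_a^-)$ through the auxiliary function
\[
\widetilde{m}(t)=t^{-2}\|v_a^-\|_{H^1}^2-t^{p-4}\int_{\mathbb{R}^N}f(x)|v_a^-|^p\,dx,
\]
exactly as $m(t)$ is used in Lemma \ref{g6}, since $h_{a,v_a^-}'(t)=t^3\big(\widetilde{m}(t)+a\|v_a^-\|_{D^{1,2}}^4\big)$ and $tv_a^-\in\mathbf{M}_a$ iff $\widetilde{m}(t)+a\|v_a^-\|_{D^{1,2}}^4=0$. First I would record the elementary shape of $\widetilde{m}$: it vanishes at $T_f(v_a^-)$, tends to $+\infty$ as $t\to0^+$ and to $0$ as $t\to\infty$, and is strictly decreasing on $(0,t^\ast)$ and increasing on $(t^\ast,\infty)$, where $t^\ast=\left(\tfrac{2}{4-p}\right)^{1/(p-2)}T_f(v_a^-)$ is its unique minimizer. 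Thus the number of points $tv_a^-\in\mathbf{M}_a$ is governed by whether $\inf_t\widetilde{m}(t)<-a\|v_a^-\|_{D^{1,2}}^4$.

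The decisive step is to compare $\widetilde{m}$ with the function $b_a^\infty$ from Lemma \ref{m5}. Because $2<p<4$ forces $t^{p-4}>0$, condition $(D5)$ gives $\widetilde{m}(t)-b_a^\infty(t)=-t^{p-4}\int_{\mathbb{R}^N}(f-f_\infty)|v_a^-|^p\,dx<0$ for every $t>0$. Evaluating at $t=1$ and using $v_a^-\in\mathbf{M}_a^{\infty,-}$, i.e. $b_a^\infty(1)=-a\|v_a^-\|_{D^{1,2}}^4$ by $(\ref{5-4})$, yields $\widetilde{m}(1)<-a\|v_a^-\|_{D^{1,2}}^4$. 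Hence $\inf_t\widetilde{m}<-a\|v_a^-\|_{D^{1,2}}^4$, the equation $\widetilde{m}(t)+a\|v_a^-\|_{D^{1,2}}^4=0$ has exactly two roots $t_a^{(1),-}<t^\ast<t_a^{(2),-}$, and since $\widetilde{m}(T_f(v_a^-))=0>-a\|v_a^-\|_{D^{1,2}}^4$ the decreasing branch forces the required ordering $T_f(v_a^-)<t_a^{(1),-}<t^\ast<t_a^{(2),-}$. Crucially, $\widetilde{m}(1)<-a\|v_a^-\|_{D^{1,2}}^4$ also places $1$ strictly inside the dip, so $t_a^{(1),-}<1<t_a^{(2),-}$. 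The sign of $h_{a,v_a^-}'$ then gives that $h_{a,v_a^-}$ increases, has a local maximum at $t_a^{(1),-}$, decreases, and has a local minimum at $t_a^{(2),-}$; together with the second–derivative test (as in Lemma \ref{g6}) this shows $t_a^{(1),-}v_a^-\in\mathbf{M}_a^-$, $t_a^{(2),-}v_a^-\in\mathbf{M}_a^+$, the sup/inf characterizations, and $J_a(t_a^{(2),-}v_a^-)=\inf_{t\ge0}J_a(tv_a^-)<0$.

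The hardest—and most important—point is the strict energy gap $J_a(t_a^{(1),-}v_a^-)<\alpha_a^{\infty,-}$, and this is where the localization $t_a^{(1),-}<1$ pays off. Since $v_a^-\in\mathbf{M}_a^{\infty,-}$ has its fibering maximum for $J_a^\infty$ at $t=1$, the map $t\mapsto J_a^\infty(tv_a^-)$ is increasing on $(0,1)$, so $t_a^{(1),-}<1$ forces $J_a^\infty(t_a^{(1),-}v_a^-)\le J_a^\infty(v_a^-)=\alpha_a^{\infty,-}$ (this is also immediate from Lemma \ref{m5}$(i)$, as $t_a^{(1),-}<1<t_a^{\infty,-}$). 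Writing $J_a(tv_a^-)=J_a^\infty(tv_a^-)-\tfrac{t^p}{p}\int_{\mathbb{R}^N}(f-f_\infty)|v_a^-|^p\,dx$ and invoking $(D5)$ once more, the correction term is strictly negative, whence $J_a(t_a^{(1),-}v_a^-)<J_a^\infty(t_a^{(1),-}v_a^-)\le\alpha_a^{\infty,-}$, which is the claimed inequality and, by the previous paragraph, equals the supremum of $J_a$ along the ray up to $t_a^{(2),-}$.

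Finally I would upgrade the memberships from $\mathbf{M}_a^\pm$ to the filtered sub-manifolds. Setting $c=\tfrac{D(p)(p-2)}{2p}\big(\tfrac{2S_p^p}{f_\infty(4-p)}\big)^{2/(p-2)}$, the previous paragraph gives $J_a(t_a^{(1),-}v_a^-)<\alpha_a^{\infty,-}<c$ and $J_a(t_a^{(2),-}v_a^-)<0<c$, so both points lie in $\mathbf{M}_a(c)=\mathbf{M}_a^{(1)}\cup\mathbf{M}_a^{(2)}$. Since $0<a<\Lambda$ we are in the regime of Lemma \ref{g7}, where $\mathbf{M}_a^{(1)}\subset\mathbf{M}_a^-$ and $\mathbf{M}_a^{(2)}\subset\mathbf{M}_a^+$; combining this with $t_a^{(1),-}v_a^-\in\mathbf{M}_a^-$ and $t_a^{(2),-}v_a^-\in\mathbf{M}_a^+$ forces $t_a^{(i),-}v_a^-\in\mathbf{M}_a^{(i)}$ for $i=1,2$. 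The only routine check remaining is that $0<a<\Lambda$ keeps us inside the range $a<\tfrac{p-2}{2(4-p)}(\tfrac{4-p}{p})^{2/(p-2)}\Lambda_0$ needed for the decomposition $(\ref{4-4})$, which is built into the definition of $\Lambda$.
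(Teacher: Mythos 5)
Your proof is correct and follows essentially the same route as the paper's: your $\widetilde m$ is exactly the paper's $b_a$ from $(\ref{eqq41})$, the comparison $\widetilde m(t)<b_a^\infty(t)$ via $(D5)$, the split $J_a(tv_a^-)=J_a^\infty(tv_a^-)-\frac{t^p}{p}\int_{\mathbb{R}^N}(f(x)-f_\infty)(v_a^-)^p\,dx$ together with Lemma \ref{m5}$(i)$ to get the strict gap below $\alpha_a^{\infty,-}$, and the dichotomy of Lemma \ref{g7} to upgrade $\mathbf{M}_a^{\pm}$ to $\mathbf{M}_a^{(i)}$ all mirror the paper. Your one genuine variation is pleasant: instead of comparing minima via $(\ref{5-6})$ as the paper does, you evaluate at $t=1$ and use $(\ref{5-4})$, which yields the sharper localization $t_a^{(1),-}<1<t_a^{(2),-}$ and makes the applicability of Lemma \ref{m5}$(i)$ immediate (the paper instead bounds $t_a^{(1),-}<\left(\frac{2}{4-p}\right)^{1/(p-2)}T_{f_\infty}(v_a^-)<t_a^{\infty,-}$ using $T_f(v_a^-)\le T_{f_\infty}(v_a^-)$).

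One caveat: your parenthetical assertion that $J_a\left(t_a^{(2),-}v_a^-\right)=\inf_{t\ge 0}J_a(tv_a^-)<0$ is not justified. The sign analysis of $h_{a,v_a^-}'$ only gives $J_a\left(t_a^{(2),-}v_a^-\right)=\inf_{t\ge t_a^{(1),-}}J_a(tv_a^-)$; strict negativity of the fibering minimum along the \emph{specific} ray $v_a^-$ would require verifying the hypothesis of Lemma \ref{g6} (or its $N\ge 4$ analogue, Lemma \ref{g15}) for $v_a^-$ with the weight $f$, which you have not done, and by Lemma \ref{g4}$(i)$ the dip below zero is guaranteed for $0<a<\overline a_\ast$ only along \emph{some} rays, not all. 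The paper makes no such claim. Fortunately the assertion is inessential: the lemma's statement does not contain it, and the membership $t_a^{(2),-}v_a^-\in\mathbf{M}_a^{(2)}$ that you want follows, exactly as in the paper, from $J_a\left(t_a^{(2),-}v_a^-\right)\le J_a\left(t_a^{(1),-}v_a^-\right)<\alpha_a^{\infty,-}<\frac{D(p)(p-2)}{2p}\left(\frac{2S_p^p}{f_\infty(4-p)}\right)^{2/(p-2)}$, so your argument survives with this one-line repair.
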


\begin{proof}
Let
\begin{equation}
b_{a}(t)=t^{-2}\left\Vert v_{a}^{-}\right\Vert _{H^{1}}^{2}-t^{p-4}\int_{%
\mathbb{R}^{N}}f(x)|v_{a}^{-}|^{p}dx\text{ for }t>0.  \label{eqq41}
\end{equation}%
Apparently, $tv_{a}^{-}\in \mathbf{M}_{a}$ if and only if
\begin{equation*}
b_{a}(t)+a\left( \int_{\mathbb{R}^{N}}|\nabla v_{a}^{-}|^{2}dx\right) ^{2}=0.
\end{equation*}%
By analyzing $(\ref{eqq41}),$ we obtain
\begin{equation*}
b_{a}(T_{f}(v_{a}^{-}))=0,\ \lim_{t\rightarrow 0^{+}}b_{a}(t)=\infty \text{
and }\lim_{t\rightarrow \infty }b_{a}(t)=0.
\end{equation*}%
A direct calculation shows that
\begin{equation*}
b_{a}^{\prime }(t)=t^{-3}\left( -2\left\Vert v_{a}^{-}\right\Vert
_{H^{1}}^{2}+(4-p)t^{p-2}\int_{\mathbb{R}^{N}}f(x)|v_{a}^{-}|^{p}dx\right) ,
\end{equation*}%
which implies that $b_{a}(t)$ is decreasing on $0<t<\left( \frac{2}{4-p}%
\right) ^{1/(p-2)}T_{f}(v_{a}^{-})$ and is increasing on $t>\left( \frac{2}{%
4-p}\right) ^{1/(p-2)}T_{f}(v_{a}^{-}).$ By virtue of condition $(D5)$ one
has
\begin{equation*}
T_{f}(v_{a}^{-})\leq T_{f_{\infty }}(v_{a}^{-})<1\text{ and }b_{a}(t)\leq
b_{a}^{\infty }(t),
\end{equation*}%
where $b_{a}^{\infty }(t)$ is given in $(\ref{5-3}).$ Using condition $(D5)$
and $(\ref{5-6})$ gives
\begin{eqnarray*}
\inf_{t>0}b_{a}(t) &=&b_{a}\left( \left( \frac{2}{4-p}\right)
^{1/(p-2)}T_{f}(v_{a}^{-})\right) \\
&\leq &-\frac{p-2}{4-p}\left( \frac{4-p}{2}\right) ^{2/(p-2)}\left\Vert
v_{a}^{-}\right\Vert _{H^{1}}^{2}\left( \frac{\left\Vert
v_{a}^{-}\right\Vert _{H^{1}}^{2}}{\int_{\mathbb{R}^{N}}f_{\infty
}|v_{a}^{-}|^{p}dx}\right) ^{-2/(p-2)} \\
&=&\inf_{t>0}b_{a}^{\infty }(t)<-a\left( \int_{\mathbb{R}^{N}}|\nabla
v_{a}^{-}|^{2}dx\right) ^{2}.
\end{eqnarray*}%
This explicitly tells us that there are two constants $t_{a}^{(1),-}$ and $%
t_{a}^{(2),-}$ satisfying%
\begin{equation*}
T_{f}(v_{a}^{-})<t_{a}^{(1),-}<\left( \frac{2}{4-p}\right)
^{1/(p-2)}T_{f}\left( v_{a}^{-}\right) <t_{a}^{(2),-}
\end{equation*}%
such that
\begin{equation*}
b_{a}\left( t_{a}^{(i),-}\right) +a\left( \int_{\mathbb{R}^{N}}|\nabla
v_{a}^{-}|^{2}dx\right) ^{2}=0\text{ for }i=1,2.
\end{equation*}%
That is, $t_{a}^{(i),-}v_{a}^{-}\in \mathbf{M}_{a}$ $(i=1,2).$

A direct calculation on the second order derivatives gives
\begin{eqnarray*}
h_{a,t_{a}^{(1),-}v_{a}^{-}}^{\prime \prime }(1) &=&-2\left\Vert
t_{a}^{(1),-}v_{a}^{-}\right\Vert _{H^{1}}^{2}+(4-p)\int_{\mathbb{R}%
^{N}}f(x)\left\vert t_{a}^{(1),-}v_{a}^{-}\right\vert ^{p}dx \\
&=&\left( t_{a}^{(1),-}\right) ^{5}b_{a}^{\prime }\left(
t_{a}^{(1),-}\right) <0
\end{eqnarray*}%
and
\begin{eqnarray*}
h_{a,t_{a}^{(2),-}v_{a}^{-}}^{\prime \prime }(1) &=&-2\left\Vert
t_{a}^{(2),-}v_{a}^{-}\right\Vert _{H^{1}}^{2}+(4-p)\int_{\mathbb{R}%
^{N}}f(x)\left\vert t_{a}^{(2),-}v_{a}^{-}\right\vert ^{p}dx \\
&=&\left( t_{a}^{(2),-}\right) ^{5}b_{a}^{\prime }\left(
t_{a}^{(2),-}\right) >0.
\end{eqnarray*}%
So, we get $t_{a}^{(1),-}v_{a}^{-}\in \mathbf{M}_{a}^{-}$ and $%
t_{a}^{(2),-}v_{a}^{-}\in \mathbf{M}_{a}^{+}.$

Note that
\begin{equation*}
t_{a}^{(1),-}<\left( \frac{2}{4-p}\right) ^{\frac{1}{p-2}}T_{f}(v_{a}^{-})%
\leq \left( \frac{2}{4-p}\right) ^{\frac{1}{p-2}}T_{f_{\infty
}}(v_{a}^{-})<t_{a}^{\infty },
\end{equation*}%
where $t_{a}^{\infty }$ is the same as described in Lemma \ref{m5}. It
follows from Lemma \ref{m5}$(i)$ and condition $\left( D5\right) $ that for
each $0<a<\Lambda ,$%
\begin{eqnarray*}
J_{a}\left( t_{a}^{(1),-}v_{a}^{-}\right) &=&J_{a}^{\infty }\left(
t_{a}^{(1),-}v_{a}^{-}\right) -\frac{\left( t_{a}^{\left( 1\right)
,-}\right) ^{p}}{p}\int_{\mathbb{R}^{N}}(f(x)-f_{\infty })(v_{a}^{-})^{p}dx
\\
&\leq &\sup_{0\leq t\leq t_{a}^{\infty }}J_{a}^{\infty }(tv_{a}^{-})-\frac{%
\left( t_{a}^{\left( 1\right) ,-}\right) ^{p}}{p}\int_{\mathbb{R}%
^{N}}(f(x)-f_{\infty })(v_{a}^{-})^{p}dx \\
&<&\alpha _{a}^{\infty ,-}<\frac{p-2}{2p}D(p)\left( \frac{2S_{p}^{p}}{%
f_{\infty }(4-p)}\right) ^{2/(p-2)},
\end{eqnarray*}%
which indicates that $t_{a}^{(1),-}v_{a}^{-}\in \mathbf{M}_{a}^{(1)}$ and $%
J_{a}\left( t_{a}^{(1),-}v_{a}^{-}\right) <\alpha _{a}^{\infty ,-}.$ Note
that%
\begin{equation*}
h_{a,v_{a}^{-}}^{\prime }\left( t\right) =t^{3}\left( b_{a}\left( t\right)
+a\left( \int_{\mathbb{R}^{N}}\left\vert \nabla v_{a}^{-}\right\vert
^{2}dx\right) ^{2}\right) .
\end{equation*}%
Then we have $h_{a,v_{a}^{-}}^{\prime }(t)>0$ for all $t\in \left(
0,t_{a}^{(1),-}\right) \cup \left( t_{a}^{(2),-},\infty \right) $ and $%
h_{a,v_{a}^{-}}^{\prime }(t)<0$ for all $t\in \left(
t_{a}^{(1),-},t_{a}^{(2),-}\right) $. Consequently, we arrive at
\begin{equation*}
J_{a}\left( t_{a}^{(1),-}v_{a}^{-}\right) =\sup_{0\leq t\leq
t_{a}^{(2),-}}J_{a}(tv_{a}^{-})\text{ and }J_{a}\left(
t_{a}^{(2),-}v_{a}^{-}\right) =\inf_{t\geq t_{a}^{\left( 1\right)
,-}}J_{a}(tv_{a}^{-}).
\end{equation*}%
That is, $J_{a}\left( t_{a}^{(2),-}v_{a}^{-}\right) \leq J_{a}\left(
t_{a}^{(1),-}v_{a}^{-}\right) <\alpha _{a}^{\infty ,-}$, and so $%
t_{a}^{(2),-}v_{a}^{-}\in \mathbf{M}_{a}^{(2),-}.$ Consequently, this
completes the proof.
\end{proof}

\begin{lemma}
\label{m3-2}Suppose that $N\geq 5$ and conditions ${(D1)}-{(D4)}$ hold. Then
for each $0<a<\Lambda ,$ there exist two constants $t_{a}^{(1),+}$ and $%
t_{a}^{(2),+}$ satisfying
\begin{equation*}
T_{f}(v_{a}^{+})<t_{a}^{(1),+}<\left( \frac{2}{4-p}\right)
^{1/(p-2)}T_{f}(v_{a}^{+})<t_{a}^{(2),+}
\end{equation*}%
such that $t_{a}^{(i),+}v_{a}^{+}\in \mathbf{M}_{a}^{(i)}\ (i=1,2),$ and
\begin{equation*}
J_{a}\left( t_{a}^{(2),+}v_{a}^{+}\right) =\inf_{t\geq
t_{a}^{(1)}}J_{a}(tv_{a}^{+})<\alpha _{a}^{\infty ,+}\text{ and }J_{a}\left(
t_{a}^{(2)}v_{a}^{+}\right) =\inf_{t\geq t_{a}^{(1)}}J_{a}(tv_{a}^{+}),
\end{equation*}%
where $T_{f}(v_{a}^{+})$ is as $(\ref{2-0})$ with $u=v_{a}^{+}.$
\end{lemma}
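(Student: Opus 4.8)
The plan is to mimic the fibering-map analysis of Lemmas \ref{m5}$(ii)$ and \ref{m3}, now carried out along the ray $\{tv_a^+:t>0\}$. Recall that $v_a^+\in\mathbf{M}_a^{\infty,+}$ is the autonomous ground state with $J_a^\infty(v_a^+)=\alpha_a^{\infty,+}<0$, and that membership in $\mathbf{M}_a^{\infty,+}$ forces $\left(\frac{2}{4-p}\right)^{1/(p-2)}T_{f_\infty}(v_a^+)<1$. First I would introduce, exactly as in $(\ref{5-3})$ and $(\ref{eqq41})$, the auxiliary function $b_a(t)=t^{-2}\|v_a^+\|_{H^1}^2-t^{p-4}\int_{\mathbb{R}^N}f(x)|v_a^+|^p\,dx$, so that $tv_a^+\in\mathbf{M}_a$ if and only if $b_a(t)+a\left(\int_{\mathbb{R}^N}|\nabla v_a^+|^2dx\right)^2=0$. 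Computing $b_a'$ shows that $b_a$ is strictly decreasing on $\left(0,t^\ast\right)$ and strictly increasing on $\left(t^\ast,\infty\right)$, where $t^\ast=\left(\frac{2}{4-p}\right)^{1/(p-2)}T_f(v_a^+)$ is its unique global minimizer, and $b_a(t)\to\infty$ as $t\to0^+$, $b_a(t)\to0$ as $t\to\infty$.

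Once I establish $\inf_{t>0}b_a(t)=b_a(t^\ast)<-a\left(\int_{\mathbb{R}^N}|\nabla v_a^+|^2dx\right)^2$, the equation has exactly two roots bracketing $t^\ast$, namely $T_f(v_a^+)<t_a^{(1),+}<t^\ast<t_a^{(2),+}$ with $t_a^{(i),+}v_a^+\in\mathbf{M}_a$. Evaluating the second derivative through the identity $h_{a,t_a^{(i),+}v_a^+}''(1)=\left(t_a^{(i),+}\right)^5 b_a'\left(t_a^{(i),+}\right)$, exactly as in the proof of Lemma \ref{m3}, gives $t_a^{(1),+}v_a^+\in\mathbf{M}_a^-$ and $t_a^{(2),+}v_a^+\in\mathbf{M}_a^+$, since $b_a'<0$ at the smaller root and $b_a'>0$ at the larger one. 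Because $h_{a,v_a^+}'(t)=t^3\left(b_a(t)+a\left(\int_{\mathbb{R}^N}|\nabla v_a^+|^2dx\right)^2\right)$, the map $t\mapsto J_a(tv_a^+)$ increases on $(0,t_a^{(1),+})$, decreases on $(t_a^{(1),+},t_a^{(2),+})$, and increases on $(t_a^{(2),+},\infty)$; hence $J_a(t_a^{(1),+}v_a^+)=\sup_{0\le t\le t_a^{(2),+}}J_a(tv_a^+)$ and $J_a(t_a^{(2),+}v_a^+)=\inf_{t\ge t_a^{(1),+}}J_a(tv_a^+)$. Combining these with the energy threshold and the splitting $(\ref{4-1})$–$(\ref{4-2})$ via Lemma \ref{g7} then places $t_a^{(1),+}v_a^+\in\mathbf{M}_a^{(1)}$ and $t_a^{(2),+}v_a^+\in\mathbf{M}_a^{(2)}$.

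For the remaining strict inequality I would write $J_a(tv_a^+)=J_a^\infty(tv_a^+)-\frac{t^p}{p}\int_{\mathbb{R}^N}(f(x)-f_\infty)|v_a^+|^p\,dx$ and invoke Lemma \ref{m5}$(ii)$, which identifies $\inf_{t\ge t_a^{\infty,+}}J_a^\infty(tv_a^+)=\alpha_a^{\infty,+}$, to estimate $J_a(t_a^{(2),+}v_a^+)\le J_a(v_a^+)=\alpha_a^{\infty,+}-\frac1p\int_{\mathbb{R}^N}(f(x)-f_\infty)|v_a^+|^p\,dx$, provided $t=1$ lies in the admissible range $[t_a^{(1),+},\infty)$.

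The hard part will be that all three delicate points—the threshold inequality $\inf_{t>0}b_a(t)<-a\left(\int_{\mathbb{R}^N}|\nabla v_a^+|^2dx\right)^2$, the bound $t_a^{(1),+}<1$, and the final comparison $J_a(t_a^{(2),+}v_a^+)<\alpha_a^{\infty,+}$—all rest on the single piece of sign information $\int_{\mathbb{R}^N}(f(x)-f_\infty)|v_a^+|^p\,dx>0$, equivalently the pointwise domination $b_a\le b_a^\infty$. Granting this, the autonomous side is immediate: since $v_a^+\in\mathbf{M}_a^{\infty,+}$ we have $b_a^\infty(1)=-a\left(\int_{\mathbb{R}^N}|\nabla v_a^+|^2dx\right)^2$ with $1$ lying strictly to the right of the minimizer of $b_a^\infty$, so $\inf_{t}b_a\le\inf_{t}b_a^\infty<b_a^\infty(1)$ yields the two roots; the same domination gives $t^\ast\le\left(\frac{2}{4-p}\right)^{1/(p-2)}T_{f_\infty}(v_a^+)<1$, hence $t_a^{(1),+}<1$; and it makes the last displayed quantity strictly below $\alpha_a^{\infty,+}$. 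I would therefore concentrate the genuine work on securing this domination from the hypotheses in force (the $v_a^+$ counterpart of condition $(D5)$, available in the setting where the lemma is applied), while condition $(D4)$ is what guarantees the decomposition $\mathbf{M}_a(\cdot)=\mathbf{M}_a^{(1)}\cup\mathbf{M}_a^{(2)}$ used above; everything else is the routine fibering-map bookkeeping already rehearsed in Lemmas \ref{g6}, \ref{m5} and \ref{m3}.
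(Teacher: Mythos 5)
The paper's own proof of Lemma \ref{m3-2} is a single line deferring to the analogy with Lemma \ref{m3}, and your proposal is essentially a faithful execution of that analogy, with the correct adaptations: the domination $b_{a}\leq b_{a}^{\infty }$, the observation that $t=1$ now lies to the \emph{right} of the minimizer of $b_{a}^{\infty }$ (since $v_{a}^{+}\in \mathbf{M}_{a}^{\infty ,+}$ forces $\left( \frac{2}{4-p}\right) ^{1/(p-2)}T_{f_{\infty }}(v_{a}^{+})<1$, so $b_{a}^{\infty }(1)=-a\left( \int_{\mathbb{R}^{N}}|\nabla v_{a}^{+}|^{2}dx\right) ^{2}$ is not the minimum and two roots bracket $t^{\ast }$), the classification via $h_{a,tv_{a}^{+}}^{\prime \prime }(1)=t^{5}b_{a}^{\prime }(t)$, and the key comparison $J_{a}(t_{a}^{(2),+}v_{a}^{+})\leq J_{a}(v_{a}^{+})=J_{a}^{\infty }(v_{a}^{+})-\frac{1}{p}\int_{\mathbb{R}^{N}}(f(x)-f_{\infty })(v_{a}^{+})^{p}dx<\alpha _{a}^{\infty ,+}$ at the admissible point $t=1>t_{a}^{(1),+}$. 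One remark: the sign information you propose to spend "the genuine work" securing is literally condition $(D3)$, which is among the hypotheses $(D1)$--$(D4)$ of the lemma (it is precisely the $v_{a}^{+}$ counterpart of $(D5)$), so nothing needs to be derived there.

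The one genuine gap is the placement $t_{a}^{(1),+}v_{a}^{+}\in \mathbf{M}_{a}^{(1)}$, which you dispatch with "combining these with the energy threshold and the splitting." By definition this membership requires $J_{a}(t_{a}^{(1),+}v_{a}^{+})<\frac{D(p)(p-2)}{2p}\left( \frac{2S_{p}^{p}}{f_{\infty }(4-p)}\right) ^{2/(p-2)}$, and this is exactly where the analogy with Lemma \ref{m3} breaks. There, the bound came from $J_{a}(t_{a}^{(1),-}v_{a}^{-})\leq \sup_{0\leq t\leq t_{a}^{\infty }}J_{a}^{\infty }(tv_{a}^{-})<\alpha _{a}^{\infty ,-}$, which works because $v_{a}^{-}$ itself sits at the top of the autonomous hump on its ray and minimizes $J_{a}^{\infty }$ over $\mathbf{M}_{a}^{\infty ,-}$, so the hump value equals $\alpha _{a}^{\infty ,-}$, known to be below the threshold by $(\ref{27})$. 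For $v_{a}^{+}$ the geometry is reversed: $t=1$ is the local \emph{minimum} of $h_{a,v_{a}^{+}}^{\infty }$, the hump occurs at the autonomous local maximum $t^{loc}<1$ with $t^{loc}v_{a}^{+}\in \mathbf{M}_{a}^{\infty ,-}$, and the only available information is $\sup_{0\leq t\leq 1}J_{a}^{\infty }(tv_{a}^{+})\geq \alpha _{a}^{\infty ,-}$ --- a lower bound; neither $(D3)$, nor Sobolev estimates, nor the norm information $\left\Vert v_{a}^{+}\right\Vert _{H^{1}}>D_{2}$ gives an upper bound by the threshold. So your argument establishes everything claimed about $t_{a}^{(2),+}$ (including $t_{a}^{(2),+}v_{a}^{+}\in \mathbf{M}_{a}^{(2)}$, since its energy is negative and it lies in $\mathbf{M}_{a}^{+}$) but does not justify the $i=1$ assertion; this defect is shared by the paper's one-line proof, and it is perhaps telling that the paper never actually uses this lemma --- in the proof of Theorem \ref{t3}, the solution $u_{a}^{+}$ is placed in $\mathbf{M}_{a}^{(2)}$ directly via Theorem \ref{t0-3}$(ii)$ and Lemma \ref{g1}, bypassing Lemma \ref{m3-2} altogether. (You also silently repaired the statement's garbled display, whose first identity should read $J_{a}(t_{a}^{(1),+}v_{a}^{+})=\sup_{0\leq t\leq t_{a}^{(2),+}}J_{a}(tv_{a}^{+})$, as your monotonicity analysis yields.)
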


\begin{proof}
The proof is analogous to that of Lemma \ref{m3}, and we omit it here.
\end{proof}

Following \cite{NT,T}, we have the following result.

\begin{lemma}
\label{g9}Suppose that $0<a<\Lambda .$ Then for each $u\in \mathbf{M}%
_{a}^{(j)}(j=1,2),$ there exist a constant $\sigma >0$ and a differentiable
function $t^{\ast }:B_{\sigma }(0)\subset H^{1}(\mathbb{R}^{N})\rightarrow
\mathbb{R}^{+}$ such that $t^{\ast }(0)=1\ $and$\ t^{\ast }(v)(u-v)\in
\mathbf{M}_{a}^{(j)}$ for all $v\in B_{\sigma }(0),$ and
\begin{eqnarray*}
&&\left\langle (t^{\ast })^{\prime }(0),\varphi \right\rangle \\
&=&\frac{2\int_{\mathbb{R}^{N}}(\nabla u\nabla \varphi +u\varphi
)dx+4a\left( \int_{\mathbb{R}^{N}}|\nabla u|dx\right) ^{2}\int_{\mathbb{R}%
^{N}}\nabla u\nabla \varphi dx-p\int_{\mathbb{R}^{N}}f(x)|u|^{p-2}u\varphi dx%
}{\left\Vert u\right\Vert _{H^{1}}^{2}-(p-1)\int_{\mathbb{R}%
^{N}}f(x)|u|^{p}dx}
\end{eqnarray*}%
for all $\varphi \in H^{1}(\mathbb{R}^{N}).$
\end{lemma}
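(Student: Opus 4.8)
The plan is to realise $\mathbf{M}_a^{(j)}$ locally as a zero level set and invoke the implicit function theorem, exactly in the spirit of the Nehari-manifold constructions of \cite{NT,T}. Fix $u\in\mathbf{M}_a^{(j)}$ and define the $C^1$ map $F:(0,\infty)\times H^1(\mathbb{R}^N)\to\mathbb{R}$ by
\[
F(t,v)=t\left\Vert u-v\right\Vert_{H^1}^2+at^3\left(\int_{\mathbb{R}^N}|\nabla(u-v)|^2dx\right)^2-t^{p-1}\int_{\mathbb{R}^N}f(x)|u-v|^pdx,
\]
so that $F(t,v)=\langle J_a^{\prime}(t(u-v)),u-v\rangle$ and hence $t(u-v)\in\mathbf{M}_a$ precisely when $F(t,v)=0$. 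Since $u\in\mathbf{M}_a$, the point $(t,v)=(1,0)$ satisfies $F(1,0)=\langle J_a^{\prime}(u),u\rangle=0$, which furnishes the base point for the theorem.

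The next step is to check the non-degeneracy hypothesis. Noting that $F(t,0)=h_{a,u}^{\prime}(t)$, differentiating in $t$ at $(t,v)=(1,0)$ gives $\partial_t F(1,0)=\left\Vert u\right\Vert_{H^1}^2+3a(\int_{\mathbb{R}^N}|\nabla u|^2dx)^2-(p-1)\int_{\mathbb{R}^N}f(x)|u|^pdx=h_{a,u}^{\prime\prime}(1)$, the quantity computed in $(\ref{2-2})$. The decisive observation is that this number cannot vanish on $\mathbf{M}_a^{(j)}$: by Lemma \ref{g7}, for $0<a<\Lambda$ we have $\mathbf{M}_a^{(1)}\subset\mathbf{M}_a^{-}$ and $\mathbf{M}_a^{(2)}\subset\mathbf{M}_a^{+}$, whence $h_{a,u}^{\prime\prime}(1)<0$ when $j=1$ and $h_{a,u}^{\prime\prime}(1)>0$ when $j=2$. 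The implicit function theorem therefore yields $\sigma>0$ and a $C^1$ map $t^{\ast}:B_\sigma(0)\to\mathbb{R}^{+}$ with $t^{\ast}(0)=1$ and $F(t^{\ast}(v),v)=0$, i.e. $t^{\ast}(v)(u-v)\in\mathbf{M}_a$, for all $v\in B_\sigma(0)$.

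It then remains to confine the image to the prescribed piece $\mathbf{M}_a^{(j)}$. Since $\mathbf{M}_a^{(j)}$ is cut out of $\mathbf{M}_a$ by the strict inequalities $J_a(\cdot)<\frac{D(p)(p-2)}{2p}(\frac{2S_p^p}{f_\infty(4-p)})^{2/(p-2)}$ together with $\Vert\cdot\Vert_{H^1}<D_1$ (for $j=1$) or $\Vert\cdot\Vert_{H^1}>D_2$ (for $j=2$), it is relatively open in $\mathbf{M}_a$. The map $v\mapsto t^{\ast}(v)(u-v)$ is continuous and sends $0$ to $u\in\mathbf{M}_a^{(j)}$, so after shrinking $\sigma$ its entire image stays in $\mathbf{M}_a^{(j)}$, which is the claimed membership.

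Finally, the formula for $(t^{\ast})^{\prime}(0)$ is obtained by differentiating the identity $F(t^{\ast}(v),v)\equiv0$. The chain rule gives $\partial_t F(1,0)\langle(t^{\ast})^{\prime}(0),\varphi\rangle+\langle\partial_v F(1,0),\varphi\rangle=0$, so that $\langle(t^{\ast})^{\prime}(0),\varphi\rangle=-\langle\partial_v F(1,0),\varphi\rangle/h_{a,u}^{\prime\prime}(1)$; computing $\partial_v F(1,0)$ (keeping track of the minus sign carried by each $v$-derivative, since $v$ enters through $u-v$) reproduces the displayed expression, with denominator $h_{a,u}^{\prime\prime}(1)$. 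I expect the only genuine obstacle to be this non-degeneracy step, and it is precisely what the splitting in Lemma \ref{g7} is designed to guarantee; everything else is the routine application of the implicit function theorem on the Nehari manifold.
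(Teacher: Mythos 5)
Your proof is correct and follows essentially the same route as the paper's: both realise $\mathbf{M}_{a}$ locally as the zero level set of $F(t,v)=\left\langle J_{a}^{\prime }(t(u-v)),u-v\right\rangle $ (the paper works with $F_{u}=tF$, which has the same derivatives at the base point since $F(1,0)=0$), both apply the implicit function theorem at $(1,0)$ with nondegeneracy supplied by the definite sign of $h_{a,u}^{\prime \prime }(1)$ on $\mathbf{M}_{a}^{(j)}$ coming from Lemma \ref{g7}, and both keep the image inside $\mathbf{M}_{a}^{(j)}$ by continuity (the paper checks the defining inequalities of $\mathbf{M}_{a}^{(j)}$ directly, you invoke its relative openness in $\mathbf{M}_{a}$ --- the same argument). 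One remark: the denominator your computation produces, $h_{a,u}^{\prime \prime }(1)=\left\Vert u\right\Vert _{H^{1}}^{2}+3a\left( \int_{\mathbb{R}^{N}}|\nabla u|^{2}dx\right) ^{2}-(p-1)\int_{\mathbb{R}^{N}}f(x)|u|^{p}dx$, is the correct one, whereas the lemma's displayed formula omits the $3a$-term in the denominator and misprints $\int_{\mathbb{R}^{N}}|\nabla u|^{2}dx$ as $\left( \int_{\mathbb{R}^{N}}|\nabla u|dx\right) ^{2}$ in the numerator, so your derivation in fact corrects two typos rather than literally reproducing the display.
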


\begin{proof}
For any $u\in \mathbf{M}_{a}^{(j)}$, we define the function $F_{u}:\mathbb{R}%
\times H^{1}(\mathbb{R}^{N})\rightarrow \mathbb{R}$ by%
\begin{eqnarray*}
F_{u}(t,v) &=&\left\langle J_{a}^{\prime }(t(u-v)),t(u-v)\right\rangle \\
&=&t^{2}\left\Vert u-v\right\Vert _{H^{1}}^{2}+at^{4}\left( \int_{\mathbb{R}%
^{N}}|\nabla u-\nabla v|^{2}dx\right) ^{2}-t^{p}\int_{\mathbb{R}%
^{N}}f(x)|u-v|^{p}dx.
\end{eqnarray*}%
Clearly, $F_{u}(1,0)=\left\langle J_{a}^{\prime }(u),u\right\rangle =0$ and%
\begin{eqnarray*}
\frac{d}{dt}F_{u}(1,0) &=&2\left\Vert u\right\Vert _{H^{1}}^{2}+4a\left(
\int_{\mathbb{R}^{N}}|\nabla u|^{2}dx\right) ^{2}-p\int_{\mathbb{R}%
^{N}}f(x)|u|^{p}dx \\
&=&-2\left\Vert u\right\Vert _{H^{1}}^{2}-\left( p-4\right) \int_{\mathbb{R}%
^{N}}f(x)|u|^{p}dx \\
&\neq &0.
\end{eqnarray*}%
Applying the implicit function theorem, there exist a constant $\sigma >0$
and a differentiable function $t^{\ast }:B_{\sigma }(0)\subset H^{1}(\mathbb{%
R}^{N})\rightarrow \mathbb{R}$ such that $t^{\ast }(0)=1$ and
\begin{eqnarray*}
&&\left\langle (t^{\ast })^{\prime }(0),\varphi \right\rangle \\
&=&\frac{2\int_{\mathbb{R}^{N}}(\nabla u\nabla \varphi +u\varphi
)dx+4a\left( \int_{\mathbb{R}^{N}}|\nabla u|dx\right) ^{2}\int_{\mathbb{R}%
^{N}}\nabla u\nabla \varphi dx-p\int_{\mathbb{R}^{N}}f(x)|u|^{p-2}u\varphi dx%
}{\left\Vert u\right\Vert _{H^{1}}^{2}-(p-1)\int_{\mathbb{R}%
^{N}}f(x)|u|^{p}dx}
\end{eqnarray*}%
for all $\varphi \in H^{1}(\mathbb{R}^{N}),$ and $F_{u}(t^{\ast }(v),v)=0$
for all $v\in B_{\sigma }(0),$ which is equivalent to%
\begin{equation*}
\left\langle J_{a}^{\prime }(t^{\ast }(v)(u-v)),t^{\ast
}(v)(u-v)\right\rangle =0\text{ for all }v\in B_{\sigma }(0).
\end{equation*}%
According to the continuity of the map $t^{\ast }$, for $\sigma $
sufficiently small we have
\begin{eqnarray*}
h_{a,t^{\ast }(v)(u-v)}^{\prime \prime }(1) &=&-2\left\Vert t^{\ast }\left(
v\right) \left( u-v\right) \right\Vert _{H^{1}}^{2}-(p-4)\int_{\mathbb{R}%
^{N}}f(x)|t^{\ast }(v)(u-v)|^{p}dx \\
&<&0,
\end{eqnarray*}%
and%
\begin{equation*}
J_{a}(t^{\ast }(v)(u-v))<\frac{(p-2)D(p)}{2p}\left( \frac{2S_{p}^{p}}{%
f_{\infty }(4-p)}\right) ^{2/(p-2)}.
\end{equation*}%
Hence, $t^{\ast }(v)(u-v)\in \mathbf{M}_{a}^{(j)}$ for all $v\in B_{\sigma
}(0).$ This completes the proof.
\end{proof}

By $(\ref{4-4})$ and Lemma $\ref{g7}$, we define
\begin{equation*}
\alpha _{a}^{-}=\inf_{u\in \mathbf{M}_{a}^{(1)}}J_{a}(u)=\inf_{u\in \mathbf{M%
}_{a}^{-}}J_{a}(u)\text{ for }N\geq 1.
\end{equation*}

\begin{proposition}
\label{g8}Suppose that $N\geq 1.$ Then for each $0<a<\Lambda ,$ there exists
a sequence $\{u_{n}\}\subset \mathbf{M}_{a}^{(1)}$ such that%
\begin{equation}
J_{a}(u_{n})=\alpha _{a}^{-}+o(1)\text{ and }J_{a}^{\prime }(u_{n})=o(1)%
\text{ in }H^{-1}(\mathbb{R}^{N}).  \label{eqq20}
\end{equation}
\end{proposition}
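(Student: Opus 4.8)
The plan is to construct the sequence by minimizing $J_a$ over the $C^1$ submanifold $\mathbf{M}_a^{(1)}$ with Ekeland's variational principle, and then to convert the resulting constrained almost-minimizing sequence into a genuine $(PS)$ sequence for the free functional $J_a$ by means of the implicit map $t^{\ast}$ furnished by Lemma \ref{g9}.

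First I would check that $\alpha_a^-$ is finite and lies strictly below the filtration level. By Lemma \ref{m3} the set $\mathbf{M}_a^{(1)}$ is nonempty and $\alpha_a^- \le J_a(t_a^{(1),-}v_a^-) < \alpha_a^{\infty,-}$, while Lemma \ref{g1} gives $J_a(u) \ge \frac{p-2}{4p}\left(\frac{S_p^p}{f_{\max}}\right)^{2/(p-2)} > 0$ for all $u \in \mathbf{M}_a^- \supset \mathbf{M}_a^{(1)}$; in particular $\alpha_a^-$ lies in the interval $\left(0,\; \frac{D(p)(p-2)}{2p}\left(\frac{2S_p^p}{f_\infty(4-p)}\right)^{2/(p-2)}\right)$. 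This strict upper bound is what keeps the construction honest: a minimizing sequence satisfies $J_a(u_n) < \frac{D(p)(p-2)}{2p}\left(\frac{2S_p^p}{f_\infty(4-p)}\right)^{2/(p-2)}$ for large $n$, so by the dichotomy $(\ref{4-4})$ and $u_n \in \mathbf{M}_a^{(1)}$ it remains in the open region $\{\|u\|_{H^1} < D_1\}$, away from the boundary $\|u\|_{H^1}=D_1$, and is therefore bounded in $H^1(\mathbb{R}^N)$.

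Granting this, Ekeland's variational principle produces $\{u_n\} \subset \mathbf{M}_a^{(1)}$ with $J_a(u_n) = \alpha_a^- + o(1)$ and $J_a(w) \ge J_a(u_n) - \frac{1}{n}\|w-u_n\|_{H^1}$ for every $w \in \mathbf{M}_a^{(1)}$. Fixing $\varphi \in H^1(\mathbb{R}^N)$ with $\|\varphi\|_{H^1}=1$ and $\rho>0$ small, I would insert the admissible competitor $w_\rho := t^{\ast}(\rho\varphi)(u_n - \rho\varphi) \in \mathbf{M}_a^{(1)}$ from Lemma \ref{g9} into this inequality. Differentiating $\rho \mapsto J_a(w_\rho)$ at $\rho=0$ and using $t^{\ast}(0)=1$ together with $\frac{d}{d\rho}\big|_{0}w_\rho = \langle (t^{\ast})'(0),\varphi\rangle u_n - \varphi$ gives
\begin{equation*}
\frac{d}{d\rho}\Big|_{0}J_a(w_\rho) = \langle J_a'(u_n),u_n\rangle\,\langle (t^{\ast})'(0),\varphi\rangle - \langle J_a'(u_n),\varphi\rangle = -\langle J_a'(u_n),\varphi\rangle,
\end{equation*}
since $\langle J_a'(u_n),u_n\rangle = 0$ on $\mathbf{M}_a$. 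Passing to the limit $\rho\to 0^+$ in the Ekeland inequality then yields $|\langle J_a'(u_n),\varphi\rangle| \le \frac{1}{n}\big(1 + |\langle (t^{\ast})'(0),\varphi\rangle|\,\|u_n\|_{H^1}\big)$.

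The main obstacle is the uniform control of the multiplier term $\langle (t^{\ast})'(0),\varphi\rangle$. Its numerator in Lemma \ref{g9} is bounded by $C\|\varphi\|_{H^1}$ uniformly, because $\|u_n\|_{H^1} < D_1$ is bounded and the Sobolev embedding controls the nonlinear term. Its denominator equals $\|u_n\|_{H^1}^2 - (p-1)\int_{\mathbb{R}^N}f(x)|u_n|^p dx$, and using the Nehari identity $\int_{\mathbb{R}^N}f(x)|u_n|^p dx = \|u_n\|_{H^1}^2 + a\|u_n\|_{D^{1,2}}^4$ valid on $\mathbf{M}_a$ I would rewrite it as $-(p-2)\|u_n\|_{H^1}^2 - (p-1)a\|u_n\|_{D^{1,2}}^4 \le -(p-2)\left(\frac{S_p^p}{f_{\max}}\right)^{2/(p-2)} < 0$ by $(\ref{2})$. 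Hence the denominator is bounded away from zero, so $|\langle (t^{\ast})'(0),\varphi\rangle| \le C$ uniformly in $n$ and in $\|\varphi\|_{H^1}=1$. Combined with the boundedness of $\|u_n\|_{H^1}$, the displayed estimate forces $\|J_a'(u_n)\|_{H^{-1}} = o(1)$, which together with $J_a(u_n)=\alpha_a^-+o(1)$ is exactly $(\ref{eqq20})$.
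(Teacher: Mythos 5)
Your proposal is correct and takes essentially the same route as the paper: Ekeland's variational principle on $\mathbf{M}_{a}^{(1)}$ combined with the perturbation map $t^{\ast }$ from Lemma \ref{g9}, with the constraint direction eliminated through $\left\langle J_{a}^{\prime }(u_{n}),u_{n}\right\rangle =0$. Your direct differentiation of $\rho \mapsto J_{a}(w_{\rho })$ is just a cleaner packaging of the paper's mean-value estimates $(\ref{23})$--$(\ref{24})$, and your lower bound for the denominator of $(t^{\ast })^{\prime }(0)$ via the Nehari identity and $(\ref{2})$ in fact justifies the uniform bound $\Vert (t_{n}^{\ast })^{\prime }(0)\Vert \leq C$ that the paper only asserts.
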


\begin{proof}
By the Ekeland variational principle \cite{E} and Lemma \ref{g1}, we obtain
that there exists a minimizing sequence $\{u_{n}\}\subset \mathbf{M}%
_{a}^{(1)}$ such that%
\begin{equation*}
J_{a}(u_{n})<\alpha _{a}^{-}+\frac{1}{n}
\end{equation*}%
and%
\begin{equation}
J_{a}(u_{n})\leq J_{a}(w)+\frac{1}{n}\left\Vert w-u_{n}\right\Vert _{H^{1}}%
\text{ for all }w\in \mathbf{M}_{a}^{(1)}.  \label{22}
\end{equation}%
Applying Lemma \ref{g9} with $u=u_{n}$, there exists a function $t_{n}^{\ast
}:B_{\epsilon _{n}}(0)\rightarrow \mathbb{R}$ for some $\epsilon _{n}>0$
such that $t_{n}^{\ast }(w)(u_{n}-w)\in \mathbf{M}_{a}^{(1)}.$ For $0<\delta
<\epsilon _{n}$ and $u\in H^{1}(\mathbb{R}^{N})$ with $u\not\equiv 0,$ we
set
\begin{equation*}
w_{\delta }=\frac{\delta u}{\left\Vert u\right\Vert _{H^{1}}}\text{ and }%
z_{\delta }=t_{n}^{\ast }(w_{\delta })(u_{n}-w_{\delta }).
\end{equation*}%
Since $z_{\delta }\in \mathbf{M}_{a}^{(1)},$ it follows from $(\ref{22})$
that%
\begin{equation*}
J_{a}(z_{\delta })-J_{a}(u_{n})\geq -\frac{1}{n}\Vert z_{\delta }-u_{n}\Vert
_{H^{1}}.
\end{equation*}%
Using the mean value theorem gives
\begin{equation*}
\left\langle J_{a}^{\prime }(u_{n}),z_{\delta }-u_{n}\right\rangle +o\left(
\left\Vert z_{\delta }-u_{n}\right\Vert _{H^{1}}\right) \geq -\frac{1}{n}%
\Vert z_{\delta }-u_{n}\Vert _{H^{1}}
\end{equation*}%
and
\begin{eqnarray}
&&\left\langle J_{a}^{\prime }(u_{n}),-w_{\delta }\right\rangle
+(t_{n}^{\ast }(w_{\delta })-1)\left\langle J_{a}^{\prime
}(u_{n}),u_{n}-w_{\delta }\right\rangle  \notag \\
&\geq &-\frac{1}{n}\Vert z_{\delta }-u_{n}\Vert _{H^{1}}+o(\Vert z_{\delta
}-u_{n}\Vert _{H^{1}}).  \label{23}
\end{eqnarray}%
Note that $t_{n}^{\ast }(w_{\delta })(u_{n}-w_{\delta })\in \mathbf{M}%
_{a}^{(1)}$. From $(\ref{23})$ it leads to
\begin{eqnarray*}
&&-\delta \left\langle J_{a}^{\prime }(u_{n}),\,\frac{u}{\left\Vert
u\right\Vert _{H^{1}}}\right\rangle +\frac{(t_{n}^{\ast }(w_{\delta })-1)}{%
t_{n}^{\ast }(w_{\delta })}\left\langle J_{a}^{\prime }(z_{\delta
}),t_{n}^{\ast }(w_{\delta })(u_{n}-w_{\delta })\right\rangle \\
&&+(t_{n}^{\ast }(w_{\delta })-1)\left\langle J_{a}^{\prime
}(u_{n})-J_{a}^{\prime }(z_{\delta }),u_{n}-w_{\delta }\right\rangle \\
&\geq &-\frac{1}{n}\left\Vert z_{\delta }-u_{n}\right\Vert
_{H^{1}}+o(\left\Vert z_{\delta }-u_{n}\right\Vert _{H^{1}}).
\end{eqnarray*}%
We rewrite the above inequality as
\begin{eqnarray}
\left\langle J_{a}^{\prime }(u_{n}),\frac{u}{\left\Vert u\right\Vert _{H^{1}}%
}\right\rangle &\leq &\frac{\left\Vert z_{\delta }-u_{n}\right\Vert _{H^{1}}%
}{\delta n}+\frac{o(\left\Vert z_{\delta }-u_{n}\right\Vert _{H^{1}})}{%
\delta }  \notag \\
&&+\frac{(t_{n}^{\ast }(w_{\delta })-1)}{\delta }\langle J_{a}^{\prime
}(u_{n})-J_{a}^{\prime }(z_{\delta }),u_{n}-w_{\delta }\rangle .  \label{24}
\end{eqnarray}%
There exists a constant $C>0$ independent of $\delta $ such that%
\begin{equation*}
\left\Vert z_{\delta }-u_{n}\right\Vert _{H^{1}}\leq \delta +C(\left\vert
t_{n}^{\ast }\left( w_{\delta }\right) -1\right\vert )
\end{equation*}%
and%
\begin{equation*}
\lim_{\delta \rightarrow 0}\frac{\left\vert t_{n}^{\ast }(w_{\delta
})-1\right\vert }{\delta }\leq \left\Vert (t_{n}^{\ast })^{\prime
}(0)\right\Vert \leq C.
\end{equation*}%
Letting $\delta \rightarrow 0$ in $(\ref{24})$ and using the fact that $%
\lim_{\delta \rightarrow 0}\left\Vert z_{\delta }-u_{n}\right\Vert
_{H^{1}}=0,$ we get%
\begin{equation*}
\left\langle J_{a}^{\prime }(u_{n}),\frac{u}{\left\Vert u\right\Vert _{H^{1}}%
}\right\rangle \leq \frac{C}{n},
\end{equation*}%
which leads to $(\ref{eqq20}).$
\end{proof}

Before proving Theorem \ref{t2}, we also need the following compactness
lemma which is an immediate conclusion of Proposition \ref{g10}.

\begin{lemma}
\label{lem2} Suppose that $N\geq 1$ and conditions ${(D1)}-{(D2),(D4)}-{(D5)}
$ hold. Let $\{u_{n}\}\subset \mathbf{M}_{a}^{(1)}$ be a $(PS)_{\beta }$%
--sequence in $H^{1}(\mathbb{R}^{N})$ for $J_{a}$ with $0<\beta <\alpha
_{a}^{\infty ,-}.$ Then there exist a subsequence $\{u_{n}\}$ and a nonzero $%
u_{0}$ in $H^{1}(\mathbb{R}^{N})$ such that $u_{n}\rightarrow u_{0}$
strongly in $H^{1}(\mathbb{R}^{N})$ and $J_{a}(u_{0})=\beta .$ Furthermore, $%
u_{0}$ is a nonzero solution of Eq. $(E_{a}).$
\end{lemma}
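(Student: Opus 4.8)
The plan is to obtain Lemma~\ref{lem2} from the global compactness result of Proposition~\ref{l0}, using the level restriction $0<\beta<\alpha_a^{\infty,-}$ to rule out the escape of mass to infinity. First I would note that the sequence is automatically bounded: since $\{u_n\}\subset\mathbf{M}_a^{(1)}$, estimate $(\ref{4-1})$ gives $\|u_n\|_{H^1}<D_1$, so $\{u_n\}$ is a bounded $(PS)_\beta$--sequence and Proposition~\ref{l0} yields $u_0\in H^1(\mathbb{R}^N)$ and $A\in\mathbb{R}$ with $I_a'(u_0)=0$, together with the alternative: either (i) $u_n\to u_0$ strongly in $H^1(\mathbb{R}^N)$, or (ii) there are $m\ge 1$ nontrivial solutions $w^1,\dots,w^m$ of the frozen equation $-(aA^2+1)\Delta w+w=f_\infty|w|^{p-2}w$ whose translates split off, with
\[
\beta+\tfrac{a}{4}A^4=I_a(u_0)+\sum_{i=1}^m I_a^\infty(w^i),\qquad A^2=\|u_0\|_{D^{1,2}}^2+\sum_{i=1}^m\|w^i\|_{D^{1,2}}^2 .
\]
Everything reduces to showing that (ii) is incompatible with $\beta<\alpha_a^{\infty,-}$.

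For the bookkeeping I would test $I_a'(u_0)=0$ against $u_0$ and each frozen equation against the corresponding $w^i$. Writing $\kappa=1+aA^2\ge 1$ and $\|v\|_\kappa^2:=\kappa\|v\|_{D^{1,2}}^2+\|v\|_{L^2}^2$, these give $I_a(u_0)=\frac{p-2}{2p}\|u_0\|_\kappa^2\ge 0$ and $I_a^\infty(w^i)=\frac{p-2}{2p}\|w^i\|_\kappa^2>0$, while a Sobolev estimate exactly as in $(\ref{2})$ bounds each bubble below by $\|w^i\|_\kappa^2\ge(S_p^p/f_\infty)^{2/(p-2)}$. Substituting the two identities and using $\kappa A^2=A^2+aA^4$ collapses the energy relation to
\[
\beta=\frac{p-2}{2p}\Big(A^2+\|u_0\|_{L^2}^2+\sum_{i=1}^m\|w^i\|_{L^2}^2\Big)+\frac{p-4}{4p}\,aA^4 .
\]
The model case $u_0=0,\ m=1$ is transparent and I would present it first: then $A^2=\|w^1\|_{D^{1,2}}^2$, so $w^1$ genuinely solves $(E_a^\infty)$, i.e.\ $w^1\in\mathbf{M}_a^\infty$, and the identity reduces to $\beta=J_a^\infty(w^1)$. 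By the fibering analysis of Lemmas~\ref{g6} and~\ref{g15}, the map $t\mapsto J_a^\infty(tw^1)$ has a local maximum on $\mathbf{M}_a^{\infty,-}$ and a global minimum on $\mathbf{M}_a^{\infty,+}$ of negative value; since $\beta=J_a^\infty(w^1)>0$, necessarily $w^1\in\mathbf{M}_a^{\infty,-}$, whence $\beta\ge\alpha_a^{\infty,-}$, contradicting the hypothesis.

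The main obstacle is the general case of (ii). Here the $w^i$ solve the frozen equation with the common coefficient $\kappa=1+aA^2>1+a\|w^i\|_{D^{1,2}}^2$, so they are not individually solutions of $(E_a^\infty)$, and the correction term $\frac{p-4}{4p}aA^4$ is negative for $p<4$; hence mere positivity of the bubble energies does not by itself yield the required bound. I would attack this by rescaling $w^i(x)=W^i(x/\sqrt\kappa)$, with $W^i$ solving $(E_0^\infty)$, which gives $I_a^\infty(w^i)=\kappa^{N/2}J_0^\infty(W^i)\ge\kappa^{N/2}\alpha_0^\infty$ with $\alpha_0^\infty=\frac{p-2}{2p}(S_p^p/f_\infty)^{2/(p-2)}$, and then controlling $\kappa$ through $A^2\le\lim_{n\to\infty}\|u_n\|_{H^1}^2\le D_1^2$ together with condition $(D4)$ and the smallness $a<\Lambda\le\overline{a}_\ast$. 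The aim of this estimate is to show that every escaping bubble contributes at least $\alpha_a^{\infty,-}$ to the right-hand side of the collapsed identity, so that $m\ge1$ forces $\beta\ge\alpha_a^{\infty,-}$ — the desired contradiction; this quantitative step, balancing the negative $aA^4$--term against the bubble masses, is where the real work lies.

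Once (ii) is excluded, alternative (i) holds. Then $A^2=\lim_{n\to\infty}\|u_n\|_{D^{1,2}}^2=\|u_0\|_{D^{1,2}}^2$, and with this value of $A$ the identity $I_a'(u_0)=0$ is precisely $J_a'(u_0)=0$, i.e.\ $u_0$ is a solution of Eq.~$(E_a)$. Finally, strong convergence and the continuity of $J_a$ give $J_a(u_0)=\lim_{n\to\infty}J_a(u_n)=\beta>0$, so $u_0\not\equiv 0$, which completes the proof.
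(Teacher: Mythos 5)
Your reduction via Proposition \ref{l0} is the right framework (the paper itself offers no more detail than declaring the lemma ``an immediate conclusion'' of its compactness machinery in Proposition \ref{g10}), your bookkeeping is correct — testing $I_a'(u_0)=0$ and the frozen equations does collapse the energy relation to $\beta=\frac{p-2}{2p}\bigl(A^{2}+\|u_{0}\|_{L^{2}}^{2}+\sum_{i}\|w^{i}\|_{L^{2}}^{2}\bigr)+\frac{p-4}{4p}aA^{4}$, and the rescaling identity $I_a^{\infty}(w^{i})=\kappa^{N/2}J_0^{\infty}(W^{i})$ is also correct. But the decisive step is missing, and you say so yourself: you never prove that in alternative $(ii)$ every escaping bubble forces $\beta\geq\alpha_a^{\infty,-}$. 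This is not a routine verification. The bound your outline delivers is $\beta\geq\kappa^{N/2}\alpha_0^{\infty}-\frac{a}{4}A^{4}$ with $\alpha_0^{\infty}=\frac{p-2}{2p}(S_p^p/f_\infty)^{2/(p-2)}$, and the paper itself shows (end of the proof of Theorem \ref{t1}$(i)$) that $\alpha_a^{\infty,-}>\alpha_0^{\infty}$ strictly; so one must genuinely win the competition between the gain $(\kappa^{N/2}-1)\alpha_0^{\infty}$, the loss $\frac{a}{4}A^{4}$, \emph{and} the gap $\alpha_a^{\infty,-}-\alpha_0^{\infty}$ (which is itself of order $a\|w_0\|_{D^{1,2}}^{4}$), with only $\kappa^{1/2}$-growth available when $N=1$. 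Without this quantitative comparison the contradiction is asserted, not obtained, so the proof is incomplete at its central point.

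A secondary flaw: in your model case $u_0=0$, $m=1$, the classification $w^{1}\in\mathbf{M}_a^{\infty,-}$ does not follow from Lemmas \ref{g6} and \ref{g15} as you invoke them. Those lemmas require a hypothesis on $u$ (that $\int f_\infty|u|^{p}dx$ is large relative to $a\|u\|_{H^{1}}^{p}$) which you have not verified for $w^{1}$; in general a critical point on $\mathbf{M}_a^{\infty,+}$ need not have negative energy, so $J_a^{\infty}(w^{1})=\beta>0$ alone does not place $w^{1}$ in $\mathbf{M}_a^{\infty,-}$. The available fix is different and simpler: since $u_n=w^{1}(\cdot-x_n^{1})+o(1)$ strongly and $\{u_n\}\subset\mathbf{M}_a^{(1)}$, you inherit $\|w^{1}\|_{H^{1}}\leq D_1<\bigl(\frac{2S_p^p}{f_\infty(4-p)}\bigr)^{1/(p-2)}$, whence by the Sobolev computation preceding Lemma \ref{g7},
\begin{equation*}
h_{a,w^{1}}^{\prime\prime}(1)\leq-2\|w^{1}\|_{H^{1}}^{2}+(4-p)S_p^{-p}f_\infty\|w^{1}\|_{H^{1}}^{p}<0,
\end{equation*}
so $w^{1}\in\mathbf{M}_a^{\infty,-}$ and $\beta\geq\alpha_a^{\infty,-}$ directly. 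Incorporating this norm control on the bubbles (all of $\|u_0\|$, $\|w^{i}\|$ are dominated by $D_1$ via the strong splitting) is also the natural lever for the missing general estimate, since it bounds $A^{2}$ and hence both $\kappa$ and the loss term $\frac{a}{4}A^{4}$; but as written, your proposal stops where the real work begins.
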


\textbf{We are now ready to prove Theorems \ref{t2} and \ref{t3}:} By
Proposition \ref{g8}, there exists a sequence $\{u_{n}\}\subset \mathbf{M}%
_{a}^{(1)}$ satisfying
\begin{equation*}
J_{a}(u_{n})=\alpha _{a}^{-}+o(1)\text{ and }J_{a}^{\prime }(u_{n})=o(1)%
\text{ in }H^{-1}(\mathbb{R}^{N}).
\end{equation*}%
It follows from Lemmas \ref{lem2} and \ref{m3} that Eq. $(E_{a})$ has a
nontrivial solution $u_{a}^{-}\in \mathbf{M}_{a}^{-}$ such that $%
J_{a}(u_{a}^{-})=\alpha _{a}^{-}.$ Thus, $u_{a}^{-}$ is a minimizer for $%
J_{a}$ on $\mathbf{M}_{a}^{-}.$ since
\begin{equation*}
\alpha _{a}^{-}<\alpha _{a}^{\infty ,-}<\frac{p-2}{2p}D(p)\left( \frac{%
2S_{p}^{p}}{f_{\infty }(4-p)}\right) ^{2/(p-2)},
\end{equation*}%
one has $u_{a}^{-}\in \mathbf{M}_{a}^{(1)}$. Similarly, we obtain $%
|u_{a}^{-}|\in \mathbf{M}_{a}^{-}$ and $J_{a}(|u_{a}^{-}|)=J_{a}(u_{a}^{-})=%
\alpha _{a}^{-}$. According to Lemma \ref{g2}, $u_{a}^{-}$ is a positive
solution of Eq. $(E_{a})$ when $N\geq 1.$ Consequently, the proof of Theorem %
\ref{t2} is complete.

Note that $\Lambda \leq \overline{a}_{\ast }$ for $N\geq 5.$ Then it follows
from Theorem \ref{t0-3}$(ii)$ that for each $0<a<\Lambda ,$ Eq. $(E_{a})$
admits a positive ground state solution $u_{a}^{+}\in H^{1}(\mathbb{R}^{N})$
such that $J_{a}(u_{a}^{+})<0.$ Clearly, $u_{a}^{+}\in \mathbf{M}_{a}\left(
\frac{D(p)(p-2)}{2p}\left( \frac{2S_{p}^{p}}{f_{\infty }(4-p)}\right)
^{2/(p-2)}\right) ,$ since $J_{a}(u_{a}^{+})<0.$ By virtue of Lemma \ref{g1}%
, we further have $u_{a}^{+}\in \mathbf{M}_{a}^{(2)},$ which implies that%
\begin{equation*}
\left\Vert u_{a}^{+}\right\Vert _{H^{1}}>\sqrt{2}\left( \frac{2S_{p}^{p}}{%
f_{\max }(4-p)}\right) ^{1/(p-2)}.
\end{equation*}%
Moreover, by Theorem \ref{t2} we obtain that for each $0<a<\Lambda ,$ Eq. $%
(E_{a})$ admits a positive solution $u_{a}^{-}\in H^{1}(\mathbb{R}^{N})$
satisfying%
\begin{equation*}
J_{a}(u_{a}^{-})>\frac{p-2}{4p}\left( \frac{S_{p}^{p}}{f_{\max }}\right)
^{2/(p-2)}\text{ and }\left\Vert u_{a}^{-}\right\Vert _{H^{1}}<\left( \frac{%
2S_{p}^{p}}{f_{\max }(4-p)}\right) ^{1/(p-2)}.
\end{equation*}%
Consequently, we complete the proof of Theorem \ref{t3}.

\section{Ground State Solutions}

\begin{lemma}
\label{g14}Suppose that $N=1$ and $f(x)\in C(\mathbb{R})$ is weakly
differentiable satisfying
\begin{equation*}
(p-1)(p-2)f\left( x\right) +2\langle \nabla f(x),x\rangle \geq 0.
\end{equation*}%
Let $u_{0}$ be a nontrivial solution of Eq. $(E_{a}).$ Then $u_{0}\in
\mathbf{M}_{a}^{-}.$
\end{lemma}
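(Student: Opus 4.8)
The plan is to show that $h_{a,u_0}''(1)<0$, which by definition puts $u_0\in\mathbf{M}_a^-$. Since $u_0$ is a nontrivial critical point of $J_a$, it automatically satisfies $\langle J_a'(u_0),u_0\rangle=0$, hence $u_0\in\mathbf{M}_a$; so by the identity $(\ref{2-2})$ it suffices to prove the strict inequality $a(4-p)\left(\int_{\mathbb{R}}|u_0'|^2dx\right)^2<(p-2)\Vert u_0\Vert_{H^1}^2$. I would combine three ingredients: the Nehari identity, a Pohozaev identity, and the pointwise hypothesis on $f$.

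First I would freeze the nonlocal coefficient by setting $K:=a\int_{\mathbb{R}}|u_0'|^2dx+1$, so that $u_0$ solves the semilinear equation $-Ku_0''+u_0=f(x)|u_0|^{p-2}u_0$ with $K$ a fixed scalar. Multiplying by $x\,u_0'$ and integrating over $\mathbb{R}$ produces, for $N=1$, the Pohozaev identity
\begin{equation*}
\int_{\mathbb{R}}u_0^2\,dx-\int_{\mathbb{R}}|u_0'|^2\,dx-a\left(\int_{\mathbb{R}}|u_0'|^2\,dx\right)^2=\frac{2}{p}\int_{\mathbb{R}}f(x)|u_0|^p\,dx+\frac{2}{p}\int_{\mathbb{R}}\langle\nabla f(x),x\rangle|u_0|^p\,dx.
\end{equation*}
Abbreviating $G=\int|u_0'|^2$, $M=\int u_0^2$, $C=aG^2$, $D=\int f|u_0|^p$ and $E=\int\langle\nabla f,x\rangle|u_0|^p$, this reads $M-G-C=\tfrac{2}{p}(D+E)$, while the Nehari identity $\langle J_a'(u_0),u_0\rangle=0$ reads $G+M+C=D$.

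Next I multiply the hypothesis $(p-1)(p-2)f(x)+2\langle\nabla f(x),x\rangle\ge0$ by $|u_0|^p\ge0$ and integrate to get $(p-1)(p-2)D+2E\ge0$. Solving the Pohozaev identity for $E=\tfrac{p}{2}(M-G-C)-D$, substituting, and then eliminating $D$ through the Nehari identity $D=G+M+C$ collapses the inequality (after dividing by $p>0$) to
\begin{equation*}
(p-2)M\ge(4-p)\left(G+C\right).
\end{equation*}
From here $(p-2)\Vert u_0\Vert_{H^1}^2=(p-2)(G+M)\ge(p-2)G+(4-p)(G+C)=2G+(4-p)C$. Because $u_0\not\equiv0$ in $H^1(\mathbb{R})$ forces $G=\int|u_0'|^2>0$, the extra term $2G$ is strictly positive, whence $(p-2)\Vert u_0\Vert_{H^1}^2>(4-p)C=a(4-p)\left(\int|u_0'|^2\right)^2$. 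By $(\ref{2-2})$ this is exactly $h_{a,u_0}''(1)<0$, so $u_0\in\mathbf{M}_a^-$.

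The step I expect to be the main obstacle is the rigorous justification of the Pohozaev identity, since multiplying by $x\,u_0'$ and discarding the boundary contributions at $\pm\infty$ requires sufficient regularity and decay of $u_0$. I would dispatch this by first using elliptic regularity together with $f\in C(\mathbb{R})\cap L^\infty(\mathbb{R})$ to obtain $u_0\in C^2$, and then invoking the decay $u_0(x)\to0$ as $|x|\to\infty$ (of the type already used in the paper via \cite{HL1}) so that a standard cutoff-and-limit argument annihilates all boundary terms. Everything after the identity is elementary algebra valid for $p\in(2,4)$, so the only delicate point is this decay/integration-by-parts justification.
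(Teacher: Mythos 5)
Your proposal is correct and follows essentially the same route as the paper: both combine the Nehari identity $\langle J_a'(u_0),u_0\rangle=0$ with the one-dimensional Pohozaev identity, integrate the pointwise hypothesis against $|u_0|^p$, and conclude $h_{a,u_0}''(1)<0$, with strictness coming from $\int_{\mathbb{R}}|u_0'|^2\,dx>0$ for nontrivial $u_0\in H^1(\mathbb{R})$ (indeed, your inequality $(p-2)M\geq(4-p)(G+C)$ is algebraically equivalent to the paper's identity $h_{a,u_0}''(1)=-2G-\frac{1}{p}\int_{\mathbb{R}}\left[(p-1)(p-2)f+2\langle\nabla f(x),x\rangle\right]|u_0|^p\,dx$). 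Your concern about justifying the Pohozaev identity is the same point the paper disposes of by citing the argument of \cite[Lemma 3.1]{DM1}.
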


\begin{proof}
Since $u_{0}$ is a nontrivial solution of Eq. $(E_{a})$, we have
\begin{equation}
\left\Vert u_{0}\right\Vert _{H^{1}}^{2}+a\left( \int_{\mathbb{R}}|\nabla
u_{0}|^{2}dx\right) ^{2}-\int_{\mathbb{R}}f(x)|u_{0}|^{p}dx=0.  \label{3-1}
\end{equation}%
Following the argument of \cite[Lemma 3.1]{DM1}, $u_{0}$ satisfies the
Pohozaev type identity corresponding to Eq. $(E_{a})$ as follows
\begin{equation}
\frac{1}{2}\int_{\mathbb{R}}(-|\nabla u_{0}|^{2}+u_{0}^{2})dx-\frac{a}{2}%
\left( \int_{\mathbb{R}}|\nabla u_{0}|^{2}dx\right) ^{2}=\frac{1}{p}\int_{%
\mathbb{R}}(f(x)+\langle \nabla f(x),x\rangle )|u_{0}|^{p}dx.  \label{3-4}
\end{equation}%
Then it follows from $(\ref{3-1})-(\ref{3-4})$ and the assumption of $f(x)$
that%
\begin{eqnarray*}
h_{a,u_{0}}^{\prime \prime }(1) &=&-2\int_{\mathbb{R}}|\nabla u_{0}|^{2}dx+%
\frac{(p+2)}{p}\int_{\mathbb{R}}f(x)|u_{0}|^{p}dx+\frac{2}{p}\int_{\mathbb{R}%
}\langle \nabla f(x),x\rangle |u_{0}|^{p}dx \\
&&-\frac{4}{p}\int_{\mathbb{R}}f(x)|u_{0}|^{p}dx-\frac{4}{p}\int_{\mathbb{R}%
}\langle \nabla f(x),x\rangle |u_{0}|^{p}dx-(p-2)\int_{\mathbb{R}%
}f(x)|u_{0}|^{p}dx \\
&=&-2\int_{\mathbb{R}}|\nabla u_{0}|^{2}dx-\frac{1}{p}\int_{\mathbb{R}^{N}}%
\left[ (p-1)(p-2)f(x)+2\langle \nabla f(x),x\rangle \right] |u_{0}|^{p}dx \\
&<&0,
\end{eqnarray*}%
which shows that $u_{0}\in \mathbf{M}_{a}^{-}.$ This completes the proof.
\end{proof}

\begin{lemma}
\label{g13}Suppose that $N=2$ and $f(x)\in C(\mathbb{R}^{2})$ is weakly
differentiable satisfying
\begin{equation*}
(p-2)f(x)+\langle \nabla f(x),x\rangle \geq 0.
\end{equation*}
Let $u_{0}$ be a nontrivial solution of Eq. $(E_{a}).$ Then $u_{0}\in
\mathbf{M}_{a}^{-}.$
\end{lemma}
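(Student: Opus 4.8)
Lemma \ref{g13} is the two-dimensional analogue of Lemma \ref{g14}, and the plan is to follow the same template: combine the Nehari identity with the Pohozaev identity to show that $h_{a,u_0}''(1)<0$, which by definition places $u_0$ in $\mathbf{M}_a^-$. First I would record the two structural identities for a nontrivial solution $u_0$ of Eq. $(E_a)$ in dimension $N=2$. The Nehari identity $\langle J_a'(u_0),u_0\rangle=0$ reads
\begin{equation*}
\left\Vert u_0\right\Vert_{H^1}^2+a\left(\int_{\mathbb{R}^2}|\nabla u_0|^2dx\right)^2-\int_{\mathbb{R}^2}f(x)|u_0|^pdx=0.
\end{equation*}
The key point specific to $N=2$ is that the Pohozaev-type identity takes a different shape than in $N=1$, because the dilation scaling of the Dirichlet energy $\int|\nabla u|^2dx$ is dimension-dependent. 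In the planar case the gradient term is conformally invariant, so the Pohozaev identity should contain \emph{no} $\left\Vert u_0\right\Vert_{D^{1,2}}^2$ contribution and no quartic Kirchhoff contribution; following the argument of \cite[Lemma 3.1]{DM1} adapted to $N=2$, I expect
\begin{equation*}
\int_{\mathbb{R}^2}u_0^2dx=\frac{2}{p}\int_{\mathbb{R}^2}\left(f(x)+\frac{1}{2}\langle\nabla f(x),x\rangle\right)|u_0|^pdx,
\end{equation*}
or an equivalent normalization. Deriving the correct $N=2$ Pohozaev identity is the main obstacle: I would obtain it by testing the equation against $x\cdot\nabla u_0$ (the generator of dilations), integrating by parts, and carefully tracking how each term scales under $u_0(\lambda x)$; the Kirchhoff coefficient $a\int|\nabla u_0|^2dx$ is a constant once $u_0$ is fixed, so it simply multiplies the $-\Delta u_0$ term and its dilation contribution vanishes for the same reason the plain Dirichlet term's does in $N=2$.

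With both identities in hand, I would compute $h_{a,u_0}''(1)$ using the formula already established in the excerpt (see the display preceding Lemma \ref{g1}),
\begin{equation*}
h_{a,u_0}''(1)=-2\left\Vert u_0\right\Vert_{H^1}^2+(4-p)\int_{\mathbb{R}^2}f(x)|u_0|^pdx.
\end{equation*}
The strategy is to eliminate $\left\Vert u_0\right\Vert_{D^{1,2}}^2$ and the quartic term between the Nehari and Pohozaev relations, reducing $h_{a,u_0}''(1)$ to a single integral against $|u_0|^p$ whose integrand is a fixed combination of $f(x)$ and $\langle\nabla f(x),x\rangle$. Mirroring the structure of Lemma \ref{g14}, I expect the algebra to collapse to
\begin{equation*}
h_{a,u_0}''(1)=-2\int_{\mathbb{R}^2}|\nabla u_0|^2dx-\frac{c}{p}\int_{\mathbb{R}^2}\left[(p-2)f(x)+\langle\nabla f(x),x\rangle\right]|u_0|^pdx
\end{equation*}
for some positive constant $c$. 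The hypothesis $(p-2)f(x)+\langle\nabla f(x),x\rangle\geq0$ then forces the second term to be nonpositive, while the first term $-2\int|\nabla u_0|^2dx$ is strictly negative because $u_0\not\equiv0$. Hence $h_{a,u_0}''(1)<0$, which is exactly the defining inequality for $\mathbf{M}_a^-$, completing the proof. The delicate bookkeeping will be matching the numerical coefficients so that precisely the combination $(p-2)f+\langle\nabla f,x\rangle$ appears — this is where the $N=2$ scaling exponents (the factor $N=2$ in the volume-scaling terms $(i\text{-}B)$ and $(i\text{-}C)$ of the Theorem \ref{t0-1} computation) must be substituted correctly, and any error there changes both the coefficient and the sign structure of the sufficient condition.
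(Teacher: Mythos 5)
Your setup coincides with the paper's (Nehari identity plus an $N=2$ Pohozaev identity fed into $h_{a,u_0}''(1)=-2\left\Vert u_0\right\Vert_{H^1}^2+(4-p)\int_{\mathbb{R}^2}f(x)|u_0|^p\,dx$), and your observation that the Dirichlet term and the Kirchhoff quartic term drop out of the $N=2$ identity because of the factor $\frac{N-2}{2}$ is correct. The genuine gap is the last step, which you only ``expect'' rather than carry out. With the identity you actually wrote down, namely $\int_{\mathbb{R}^2}u_0^2\,dx=\frac{2}{p}\int_{\mathbb{R}^2}f|u_0|^p\,dx+\frac{1}{p}\int_{\mathbb{R}^2}\langle\nabla f(x),x\rangle|u_0|^p\,dx$, the $\int u_0^2$ contributions cancel completely and the elimination gives
\begin{equation*}
h_{a,u_0}''(1)=-2\int_{\mathbb{R}^2}|\nabla u_0|^2\,dx-\frac{1}{p}\int_{\mathbb{R}^2}\left[(p-2)^2f(x)+2\langle\nabla f(x),x\rangle\right]|u_0|^p\,dx,
\end{equation*}
which is \emph{not} a positive multiple of $(p-2)f+\langle\nabla f,x\rangle$. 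Since $(p-2)^2f+2\langle\nabla f,x\rangle=2\left[(p-2)f+\langle\nabla f,x\rangle\right]-(p-2)(4-p)f$ and $2<p<4$ with $f$ positive, the stated hypothesis does not make this bracket nonnegative: your route, executed correctly, proves membership in $\mathbf{M}_a^-$ only under the stronger condition $(p-2)^2f+2\langle\nabla f,x\rangle\geq 0$. So the proposal is internally inconsistent — the conclusion you assert does not follow from the identity you assert.

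For comparison, the paper's proof rests on its identity $(3.6)$-type statement $\frac{2}{p}\int_{\mathbb{R}^2}u_0^2\,dx-\frac{1}{2}\int_{\mathbb{R}^2}\langle\nabla f(x),x\rangle|u_0|^p\,dx=\int_{\mathbb{R}^2}f(x)|u_0|^p\,dx$, under which the algebra does collapse to $h_{a,u_0}''(1)=-2\int|\nabla u_0|^2\,dx-\frac{2(p-2)}{p}\int u_0^2\,dx-\int\left[(p-2)f+\langle\nabla f,x\rangle\right]|u_0|^p\,dx<0$, exactly matching the hypothesis. But that identity and yours are not equivalent (yours rearranges to $\frac{p}{2}\int u_0^2\,dx-\frac{1}{2}\int\langle\nabla f,x\rangle|u_0|^p\,dx=\int f|u_0|^p\,dx$, and $\frac{p}{2}\neq\frac{2}{p}$ for $p>2$), and yours is the one the dilation computation $u_\lambda(x)=u(x/\lambda)$ actually produces — the same computation that reproduces the paper's Pohozaev identities in the cases $N=1,3,4$. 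The coefficient discrepancy you yourself flagged as ``the main obstacle'' is therefore precisely where the proof stands or falls, and you resolved it by assuming the favorable outcome instead of checking; had you checked, you would have found that the combination demanded by the lemma's hypothesis does not appear from your identity, so either the hypothesis must be strengthened to $(p-2)^2f+2\langle\nabla f,x\rangle\geq 0$ or the mass coefficient in the Pohozaev identity must be reconciled with the paper's.
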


\begin{proof}
Since $u_{0}$ is a nontrivial solution of Eq. $(E_{a})$, there holds
\begin{equation}
\left\Vert u_{0}\right\Vert _{H^{1}}^{2}+a\left( \int_{\mathbb{R}%
^{2}}|\nabla u_{0}|^{2}dx\right) ^{2}-\int_{\mathbb{R}%
^{2}}f(x)|u_{0}|^{p}dx=0.  \label{3-3}
\end{equation}%
Moreover, $u_{0}$ satisfies the Pohozaev type identity corresponding to Eq. $%
(E_{a})$ as follows
\begin{equation}
\frac{2}{p}\int_{\mathbb{R}^{2}}u_{0}^{2}dx-\frac{1}{2}\int_{\mathbb{R}%
^{2}}\langle \nabla f(x),x\rangle |u_{0}|^{p}dx=\int_{\mathbb{R}^{2}}f\left(
x\right) |u_{0}|^{p}dx.  \notag
\end{equation}%
Using the above two equalities gives%
\begin{equation}
\frac{2}{p}\int_{\mathbb{R}^{2}}u_{0}^{2}dx-\frac{1}{2}\int_{\mathbb{R}%
^{2}}\langle \nabla f(x),x\rangle |u_{0}|^{p}dx=\left\Vert u_{0}\right\Vert
_{H^{1}}^{2}+a\left( \int_{\mathbb{R}^{2}}|\nabla u_{0}|^{2}dx\right) ^{2}.
\label{3-6}
\end{equation}%
Then it follows from $(\ref{3-3})-(\ref{3-6})$ and the assumption of $f(x)$
that%
\begin{eqnarray*}
h_{a,u_{0}}^{\prime \prime }(1) &=&2a\left( \int_{\mathbb{R}^{2}}|\nabla
u_{0}|^{2}dx\right) ^{2}-(p-2)\int_{\mathbb{R}^{2}}f\left( x\right)
|u_{0}|^{p}dx \\
&=&-2\int_{\mathbb{R}^{2}}|\nabla u_{0}|^{2}dx-\frac{2(p-2)}{p}\int_{\mathbb{%
R}^{2}}u_{0}^{2}dx-\int_{\mathbb{R}^{2}}\langle \nabla f(x),x\rangle
|u_{0}|^{p}dx \\
&&-(p-2)\int_{\mathbb{R}^{2}}f\left( x\right) |u_{0}|^{p}dx \\
&=&-2\int_{\mathbb{R}^{2}}|\nabla u_{0}|^{2}dx-\frac{2(p-2)}{p}\int_{\mathbb{%
R}^{2}}u_{0}^{2}dx-\int_{\mathbb{R}^{2}}\left[ (p-2)f(x)+\langle \nabla
f(x),x\rangle \right] |u_{0}|^{p}dx \\
&<&0,
\end{eqnarray*}%
which shows that $u_{0}\in \mathbf{M}_{a}^{-}.$ This completes the proof.
\end{proof}

\textbf{We are now ready to prove Theorem \ref{t4}:} Let $u^{-}$ be the
positive solution of Eq. $(E_{a})$ as described in Theorem \ref{t1}$(i)$ or %
\ref{t2}$(i)$. Then there holds $u^{-}\in \mathbf{M}_{a}^{-}$ and $%
J_{a}(u^{-})=\inf_{u\in \mathbf{M}_{a}^{-}}J_{a}(u)=\alpha _{a}^{-}.$ By
Lemma \ref{g14} or \ref{g13}, $u^{-}$ is a positive ground state solution of
Eq. $(E_{a}).$

\begin{lemma}
\label{g3}Suppose that $N=3$ and $f(x)\equiv f_{\infty }.$ Let $u_{0}$ be a
nontrivial solution of Eq. $(E_{a}).$ Then for each $a>0$ with $\sqrt{a^{2}+4%
}+\frac{2}{a}\geq A_{0},$ there holds $u_{0}\in \mathbf{M}_{a}^{-},$ where $%
A_{0}>0$ is defined as $(\ref{1-3}).$
\end{lemma}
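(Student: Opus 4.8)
The plan is to follow the template of Lemmas \ref{g14} and \ref{g13}: exploit that a nontrivial solution $u_0$ satisfies both the Nehari identity and a Pohozaev-type identity, and feed these into the expression for $h_{a,u_0}''(1)$ from (\ref{2-2}). Since $u_0$ solves $(E_a)$ with $f\equiv f_\infty$, it lies on $\mathbf{M}_a$, so
$$\|u_0\|_{H^1}^2+a\Big(\int_{\mathbb{R}^3}|\nabla u_0|^2dx\Big)^2=\int_{\mathbb{R}^3}f_\infty|u_0|^pdx,$$
and by (\ref{2-2}) the goal $u_0\in\mathbf{M}_a^-$ is exactly the inequality
$$h_{a,u_0}''(1)=-(p-2)\|u_0\|_{H^1}^2+a(4-p)\Big(\int_{\mathbb{R}^3}|\nabla u_0|^2dx\Big)^2<0.$$
Thus it suffices to control $\int_{\mathbb{R}^3}u_0^2dx$ against $\int_{\mathbb{R}^3}|\nabla u_0|^2dx$, which is where the Pohozaev identity enters.

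First I would establish the Pohozaev identity for $N=3$, arguing as in \cite{DM1} (as quoted for $N=1,2$) or equivalently by differentiating $t\mapsto J_a(u_0(\cdot/t))$ at $t=1$; writing $B=\int_{\mathbb{R}^3}|\nabla u_0|^2dx$ and $D=\int_{\mathbb{R}^3}f_\infty|u_0|^pdx$, this reads
$$\frac{3}{p}D=\frac{a}{2}B^2+\frac12 B+\frac32\int_{\mathbb{R}^3}u_0^2dx.$$
Combining it with the Nehari identity to eliminate $D$ gives the clean linear relation $\int_{\mathbb{R}^3}u_0^2dx=\frac{(6-p)B(1+aB)}{3(p-2)}$, solely in terms of $B$ and $a$. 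Substituting this back into the formula for $h_{a,u_0}''(1)$ collapses everything to
$$h_{a,u_0}''(1)=\frac{2B}{3}\big[(3-p)aB-p\big],$$
so that $u_0\in\mathbf{M}_a^-$ is equivalent to $(3-p)aB<p$. For $3\le p<4$ this is automatic, so the genuine content lies in the regime $2<p<3$, where the requirement becomes the upper bound $aB<\frac{p}{3-p}$.

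To convert this into a condition on $a$ alone I would use the rescaling that removes the nonlocal coefficient: with $c=aB+1$, the function $v(y):=u_0(\sqrt{c}\,y)$ solves the limiting Schr\"odinger equation $-\Delta v+v=f_\infty|v|^{p-2}v$, and $B=\sqrt{c}\int_{\mathbb{R}^3}|\nabla v|^2dy$, so the self-consistency $c-1=a\sqrt{c}\int_{\mathbb{R}^3}|\nabla v|^2dy$ is a quadratic in $\sqrt{c}$ whose discriminant produces a square root of the form $\sqrt{a^2+4}$ once the gradient integral of $v$ is normalized. Here the sharp-constant identity (\ref{1-8}) fixes $\int_{\mathbb{R}^3}|\nabla v|^2dy$ in terms of $S_p$ and $f_\infty$, while the Pohozaev relation for $v$ fixes the ratio $\int|\nabla v|^2/\|v\|_{H^1}^2=\frac{3(p-2)}{2p}$; solving the quadratic and rewriting $aB<\frac{p}{3-p}$ (equivalently $c<\frac{3}{3-p}$) should reproduce precisely $\sqrt{a^2+4}+\frac{2}{a}\ge A_0$ with $A_0$ as in (\ref{1-3}).

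The main obstacle is this last algebraic reduction: one must carry the sharp-constant bookkeeping through the quadratic for $\sqrt{c}$ without sign slips, check that the discriminant really assembles into $\sqrt{a^2+4}$ rather than a weighted variant $\sqrt{a^2(\int|\nabla v|^2)^2+4}$, and verify that the threshold value reproduces the exact constant $\frac{3(p-1)(-p^2+2p+12)}{p^2(p-2)}$ appearing in $A_0$. The dichotomy $2<p<3$ versus $3\le p<4$ must also be tracked carefully, since only the former regime actually constrains $a$, and the monotonicity of $c$ in $a$ dictated by the quadratic is what decides which side of $A_0$ the admissible $a$ must fall on.
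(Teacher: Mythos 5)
Your reduction is correct, and it is in fact sharper than what the paper does. Combining the Nehari identity (\ref{6-9}) with the $N=3$ Pohozaev identity (\ref{6-10}) (your normalization $\frac{3}{p}D=\frac{a}{2}B^{2}+\frac{1}{2}B+\frac{3}{2}\int u_{0}^{2}dx$ is the same identity) to eliminate $\int_{\mathbb{R}^{3}}u_{0}^{2}dx$ does yield
\begin{equation*}
h_{a,u_{0}}^{\prime \prime }(1)=\frac{2B}{3}\left[ (3-p)aB-p\right] ,\qquad B=\int_{\mathbb{R}^{3}}|\nabla u_{0}|^{2}dx,
\end{equation*}
which checks out at $a=0$ (it returns $-(p-2)\left\Vert w_{0}\right\Vert _{H^{1}}^{2}$). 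The paper follows the same skeleton --- Nehari plus Pohozaev plus Azzollini's rescaling $u_{0}=w_{0}(t_{a}\cdot )$ --- but stops short of your elimination: it substitutes (\ref{6-10}) only into $h_{a,u_{0}}^{\prime \prime }(1)=2aB^{2}-(p-2)f_{\infty }\int |u_{0}|^{p}dx$, keeps $-\frac{p(p-2)}{2}\int u_{0}^{2}dx$ as surplus negativity, and then bounds $aB$ via the rescaling to obtain the hypothesis on $\sqrt{a^{2}+4}+\frac{2}{a}$ as a sufficient condition for the remaining bracket to be nonpositive.

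However, your final step --- expecting the bookkeeping to ``reproduce precisely'' $\sqrt{a^{2}+4}+\frac{2}{a}\geq A_{0}$ --- cannot be carried out, and the obstacle you flagged is exactly where it breaks. The consistency relation is $t^{2}+aGt-1=0$ with $G=\int_{\mathbb{R}^{3}}|\nabla w_{0}|^{2}dx$, so the discriminant is the weighted $\sqrt{a^{2}G^{2}+4}$, and $G\neq 1$ in general ($G=\frac{3(p-2)}{2p}(S_{p}^{p}/f_{\infty })^{2/(p-2)}$ by (\ref{1-8}) and the Pohozaev ratio you quote); the paper's $t_{a}=\frac{-a+\sqrt{a^{2}+4}}{2}$ silently sets $G=1$, and its substitution moreover uses $B=t_{a}^{-2}G$ although the three-dimensional scaling gives $B=t_{a}^{-1}G$, besides a $(p-1)$ where the algebra produces $(p-2)$. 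More decisively, your closed formula settles the dichotomy: for $3\leq p<4$ one has $u_{0}\in \mathbf{M}_{a}^{-}$ for every $a>0$, while for $2<p<3$ one has $u_{0}\in \mathbf{M}_{a}^{-}$ if and only if $aB<\frac{p}{3-p}$; since $aB=\frac{aG\left( aG+\sqrt{a^{2}G^{2}+4}\right) }{2}$ increases from $0$ to $\infty $ in $a$, this is a one-sided upper bound on $a$ (explicitly $aG<\frac{p}{\sqrt{3(3-p)}}$, i.e. $c<\frac{3}{3-p}$). The lemma's hypothesis, by contrast, is satisfied for all sufficiently large $a$, so for $2<p<3$ and $a$ large the hypothesis holds while your identity forces $u_{0}\in \mathbf{M}_{a}^{+}$. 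Thus the matching step you deferred is not a computation to be finished but an impossibility: your (correct) sharp condition is monotone in $a$ and cannot coincide with the non-monotone threshold built from (\ref{1-3}); the discrepancy traces back to the slips in the paper's own proof listed above, and the honest completion of your argument is the clean statement for $3\leq p<4$ together with the explicit upper bound on $a$ for $2<p<3$.
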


\begin{proof}
Let $u_{0}$ be a nontrivial solution of Eq. $(E_{a})$. Then there holds
\begin{equation}
\left\Vert u_{0}\right\Vert _{H^{1}}^{2}+a\left( \int_{\mathbb{R}%
^{3}}|\nabla u_{0}|^{2}dx\right) ^{2}=f_{\infty }\int_{\mathbb{R}%
^{3}}|u_{0}|^{p}dx.  \label{6-9}
\end{equation}%
Moreover, $u_{0}$ satisfies the Pohozaev type identity corresponding to Eq. $%
(E_{a})$ as follows%
\begin{equation}
\frac{p}{6}\int_{\mathbb{R}^{3}}(|\nabla u_{0}|^{2}+3u_{0}^{2})dx+\frac{ap}{6%
}\left( \int_{\mathbb{R}^{3}}|\nabla u_{0}|^{2}dx\right) ^{2}=f_{\infty
}\int_{\mathbb{R}^{3}}|u_{0}|^{p}dx.  \label{6-10}
\end{equation}%
By Azzollini \cite[Theorem 1.1]{Az2}, for each $a>0$ there exists a constant%
\begin{equation*}
t_{a}=\frac{-a+\sqrt{a^{2}+4}}{2}>0
\end{equation*}%
such that $u_{0}(\cdot )=w_{0}(t_{a}\cdot ).$ Then it follows from $\left( %
\ref{1-8}\right) ,(\ref{6-9})-(\ref{6-10})$ that%
\begin{eqnarray*}
h_{a,u_{0}}^{\prime \prime }(1) &=&2a\left( \int_{\mathbb{R}^{3}}|\nabla
u_{0}|^{2}dx\right) ^{2}-\left( p-2\right) f_{\infty }\int_{\mathbb{R}%
^{3}}|u_{0}|^{p}dx \\
&=&\frac{\int_{\mathbb{R}^{3}}|\nabla u_{0}|^{2}dx}{6}\left[
a(-p^{2}+2p+12)\int_{\mathbb{R}^{3}}|\nabla u_{0}|^{2}dx-p(p-2)\right] -%
\frac{p(p-2)}{2}\int_{\mathbb{R}^{3}}u_{0}^{2}dx \\
&=&\frac{\int_{\mathbb{R}^{3}}|\nabla u_{0}|^{2}dx}{6}\left[
a(-p^{2}+2p+12)t_{a}^{-2}\int_{\mathbb{R}^{3}}|\nabla w_{0}|^{2}dx-p(p-2)%
\right] -\frac{p(p-2)}{2}\int_{\mathbb{R}^{3}}u_{0}^{2}dx \\
&=&\frac{\int_{\mathbb{R}^{3}}|\nabla u_{0}|^{2}dx}{6}\left[ \frac{%
2a(-p^{2}+2p+12)\int_{\mathbb{R}^{3}}|\nabla w_{0}|^{2}dx}{2+a\sqrt{a^{2}+4}}%
-p(p-2)\right] -\frac{p(p-2)}{2}\int_{\mathbb{R}^{3}}u_{0}^{2}dx. \\
&=&\frac{\int_{\mathbb{R}^{3}}|\nabla u_{0}|^{2}dx}{6}\left[ \frac{3\left(
p-1\right) (-p^{2}+2p+12)}{p\left( \frac{2}{a}+\sqrt{a^{2}+4}\right) }\left(
\frac{S_{p}^{p}}{f_{\infty }}\right) ^{\frac{2}{p-2}}-p(p-2)\right] -\frac{%
p(p-2)}{2}\int_{\mathbb{R}^{3}}u_{0}^{2}dx
\end{eqnarray*}%
This implies that%
\begin{equation*}
f_{\infty }\geq \left[ \frac{3\left( p-1\right) (-p^{2}+2p+12)}{%
p^{2}(p-2)\inf_{a>0}\left( \sqrt{a^{2}+4}+\frac{2}{a}\right) }\right]
^{2/\left( p-2\right) }S_{p}^{p}
\end{equation*}%
\begin{equation*}
h_{a,u_{0}}^{\prime \prime }(1)<0\text{ if }\sqrt{a^{2}+4}+\frac{2}{a}\geq
\inf_{a>0}\sqrt{a^{2}+4}+\frac{2}{a}\geq A_{0}=\frac{3\left( p-1\right)
(-p^{2}+2p+12)}{p^{2}(p-2)}\left( \frac{S_{p}^{p}}{f_{\infty }}\right) ^{%
\frac{2}{p-2}},
\end{equation*}%
and so for each $a>0$ with $\sqrt{a^{2}+4}+\frac{2}{a}\geq A_{0},$ there
holds $u_{0}\in \mathbf{M}_{a}^{-}$. This completes the proof.
\end{proof}

\begin{lemma}
\label{g5}Suppose that $N=4$ and $f(x)\equiv f_{\infty }$. Let $u_{0}$ be a
nontrivial solution of Eq. $(E_{a}).$ Then for each $0<a\leq \overline{A}%
_{0},$ there holds $u_{0}\in \mathbf{M}_{a}^{-},$ where $\overline{A}_{0}>0$
is defined as $(\ref{1-4}).$
\end{lemma}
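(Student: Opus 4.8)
The plan is to prove $u_{0}\in\mathbf{M}_{a}^{-}$ by showing directly that $h_{a,u_{0}}^{\prime\prime}(1)<0$, following the pattern of the case $N=3$ in Lemma \ref{g3}. Since $u_{0}$ is a nontrivial solution of $(E_{a})$ with $f\equiv f_{\infty}$, it lies on $\mathbf{M}_{a}$, so it satisfies the Nehari identity
\[
\left\Vert u_{0}\right\Vert_{H^{1}}^{2}+a\left(\int_{\mathbb{R}^{4}}|\nabla u_{0}|^{2}dx\right)^{2}=f_{\infty}\int_{\mathbb{R}^{4}}|u_{0}|^{p}dx .
\]
The first step is to pair this with the Pohozaev type identity attached to $(E_{a})$ in dimension $N=4$. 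Computing $\frac{d}{d\lambda}J_{a}^{\infty}\!\left(u_{0}(\cdot/\lambda)\right)\big|_{\lambda=1}=0$ and keeping careful track of the quartic nonlocal term yields
\[
a\left(\int_{\mathbb{R}^{4}}|\nabla u_{0}|^{2}dx\right)^{2}+\int_{\mathbb{R}^{4}}|\nabla u_{0}|^{2}dx+2\int_{\mathbb{R}^{4}}u_{0}^{2}dx=\frac{4}{p}f_{\infty}\int_{\mathbb{R}^{4}}|u_{0}|^{p}dx .
\]

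Next I would eliminate $\int_{\mathbb{R}^{4}}u_{0}^{2}dx$ and $f_{\infty}\int_{\mathbb{R}^{4}}|u_{0}|^{p}dx$ between these two relations and substitute into the expression $h_{a,u_{0}}^{\prime\prime}(1)=-(p-2)\left\Vert u_{0}\right\Vert_{H^{1}}^{2}+(4-p)a\left(\int_{\mathbb{R}^{4}}|\nabla u_{0}|^{2}dx\right)^{2}$ provided by $(\ref{2-2})$. Writing $D=\int_{\mathbb{R}^{4}}|\nabla u_{0}|^{2}dx$, a short algebraic collapse gives
\[
h_{a,u_{0}}^{\prime\prime}(1)=\frac{D}{2}\left[(4-p)aD-p\right],
\]
so that $u_{0}\in\mathbf{M}_{a}^{-}$ becomes equivalent to the single scalar inequality $aD<\frac{p}{4-p}$.

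To control $D$ I would use the scaling structure of the autonomous problem, exactly as in the $N=3$ argument via Azzollini \cite{Az2}. Setting $K=aD+1$, the solution $u_{0}$ solves the frozen equation $-K\Delta u_{0}+u_{0}=f_{\infty}|u_{0}|^{p-2}u_{0}$, so $u_{0}(x)=w_{0}(x/\sqrt{K})$ with $w_{0}$ the ground state of $\left(E_{0}^{\infty}\right)$. Because $N=4$ is the dimension at which the Dirichlet integral scales like the nonlocal term, this gives $D=K\,W$ with $W:=\int_{\mathbb{R}^{4}}|\nabla w_{0}|^{2}dx$, whence $K=(1-aW)^{-1}$ and $D=W/(1-aW)$ (and in particular $aW<1$). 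The value of $W$ is then read off from $(\ref{1-8})$ together with the Pohozaev identity for $w_{0}$: one finds $W=\frac{2(p-2)}{p}\left(S_{p}^{p}/f_{\infty}\right)^{2/(p-2)}$, so that $W<\frac{4}{p}\left(S_{p}^{p}/f_{\infty}\right)^{2/(p-2)}$.

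Finally I would combine these. A direct rearrangement turns $aD<\frac{p}{4-p}$ into the equivalent form $aW<\frac{p}{4}$. Using $a\le\overline{A}_{0}=\frac{p^{2}}{16}\left(f_{\infty}/S_{p}^{p}\right)^{2/(p-2)}$ together with the bound on $W$ above gives $aW<\frac{p^{2}}{16}\cdot\frac{4}{p}=\frac{p}{4}$, hence $h_{a,u_{0}}^{\prime\prime}(1)<0$ and $u_{0}\in\mathbf{M}_{a}^{-}$, which is the claim. The step I expect to be the main obstacle is deriving the $N=4$ Pohozaev identity correctly in the presence of the nonlocal term—this is the critical dimension, where the quartic term and the squared Dirichlet energy scale identically, so the coefficient bookkeeping must be exact—and, coupled with it, evaluating $W$ from $(\ref{1-8})$ precisely enough that the final threshold sits at or below $\overline{A}_{0}$.
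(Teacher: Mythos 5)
Your proposal is correct and follows essentially the same route as the paper's proof: combine the Nehari identity with the $N=4$ Pohozaev identity, use Azzollini's scaling $u_{0}=w_{0}(t_{a}\,\cdot)$ with $t_{a}=\sqrt{1-aW}$, $W=\int_{\mathbb{R}^{4}}|\nabla w_{0}|^{2}dx$, and evaluate $W=\frac{2(p-2)}{p}\left(S_{p}^{p}/f_{\infty}\right)^{2/(p-2)}$ from $(\ref{1-8})$. The only divergence is algebraic bookkeeping: where the paper retains the term $-\frac{p(p-2)}{2}\int_{\mathbb{R}^{4}}u_{0}^{2}dx$ and picks $\overline{A}_{0}$ exactly so that the remaining bracket is nonpositive (equivalent to $aW\leq p(p-2)/8$), you eliminate $\int_{\mathbb{R}^{4}}u_{0}^{2}dx$ to get the exact identity $h_{a,u_{0}}^{\prime\prime}(1)=\frac{D}{2}\left[(4-p)aD-p\right]$, whose criterion $aW<p/4$ is strictly weaker for $2<p<4$ and hence proves the conclusion even on the slightly larger range $0<a<\frac{p^{2}}{8(p-2)}\left(f_{\infty}/S_{p}^{p}\right)^{2/(p-2)}$, which contains $(0,\overline{A}_{0}]$.
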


\begin{proof}
Let $u_{0}$ be a nontrivial solution of Eq. $(E_{a})$. Then we have
\begin{equation}
\left\Vert u_{0}\right\Vert _{H^{1}}^{2}+a\left( \int_{\mathbb{R}%
^{4}}|\nabla u_{0}|^{2}dx\right) ^{2}=f_{\infty }\int_{\mathbb{R}%
^{4}}|u_{0}|^{p}dx.  \label{6-1}
\end{equation}%
Moreover, $u_{0}$ satisfies the Pohozaev type identity corresponding to Eq. $%
(E_{a})$ as follows%
\begin{equation}
\int_{\mathbb{R}^{4}}|\nabla u_{0}|^{2}dx+a\left( \int_{\mathbb{R}%
^{4}}|\nabla u_{0}|^{2}dx\right) ^{2}+2\int_{\mathbb{R}^{4}}u_{0}^{2}dx=%
\frac{4}{p}f_{\infty }\int_{\mathbb{R}^{4}}|u_{0}|^{p}dx  \label{6-2}
\end{equation}%
By Azzollini \cite[Theorem 1.1]{Az2}, for each $0<a<\left( \int_{\mathbb{R}%
^{4}}|\nabla w_{0}|^{2}dx\right) ^{-1}$ there exists a constant%
\begin{equation*}
t_{a}=\sqrt{1-a\int_{\mathbb{R}^{4}}|\nabla w_{0}|^{2}dx}>0
\end{equation*}%
such that $u_{0}(\cdot )=w_{0}(t_{a}\cdot ).$ Then it follows from $(\ref%
{6-1})-(\ref{6-2})$ that%
\begin{eqnarray*}
h_{a,u_{0}}^{\prime \prime }(1) &=&2a\left( \int_{\mathbb{R}^{4}}|\nabla
u_{0}|^{2}dx\right) ^{2}-\left( p-2\right) f_{\infty }\int_{\mathbb{R}%
^{4}}|u_{0}|^{p}dx \\
&=&\frac{\int_{\mathbb{R}^{4}}|\nabla u_{0}|^{2}dx}{4}\left[
a(4-p)(p+2)\int_{\mathbb{R}^{4}}|\nabla u_{0}|^{2}dx-p(p-2)\right] -\frac{%
p(p-2)}{4}\int_{\mathbb{R}^{4}}u_{0}^{2}dx \\
&=&\frac{\int_{\mathbb{R}^{4}}|\nabla u_{0}|^{2}dx}{4}\left[
a(4-p)(p+2)t_{a}^{-2}\int_{\mathbb{R}^{4}}|\nabla w_{0}|^{2}dx-p(p-2)\right]
-\frac{p(p-2)}{2}\int_{\mathbb{R}^{4}}u_{0}^{2}dx \\
&=&\frac{\int_{\mathbb{R}^{4}}|\nabla u_{0}|^{2}dx}{4}\left[ \frac{%
a(4-p)(p+2)\int_{\mathbb{R}^{4}}|\nabla w_{0}|^{2}dx}{1-a\int_{\mathbb{R}%
^{4}}|\nabla w_{0}|^{2}dx}-p(p-2)\right] -\frac{p(p-2)}{2}\int_{\mathbb{R}%
^{4}}u_{0}^{2}dx \\
&<&0\text{ for all }0<a\leq \overline{A}_{0},
\end{eqnarray*}%
which indicates that $u_{0}\in \mathbf{M}_{a}^{-}$ for all $0<a\leq
\overline{A}_{0}.$ This completes the proof.
\end{proof}

\begin{lemma}
\label{g16}Suppose that $N=4$ and $f(x)\equiv f_{\infty }$. Let $u_{0}$ be a
nontrivial solution of Eq. $(E_{a}).$ Then for each $a>A_{0}^{\ast },$ there
holds $u_{0}\in \mathbf{M}_{a}^{+},$ where $A_{0}^{\ast }>0$ is defined as $(%
\ref{1-5}).$
\end{lemma}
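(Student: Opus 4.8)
The plan is to verify directly that $h_{a,u_0}''(1)>0$ whenever $a>A_0^{\ast}$. Since $u_0$ is a nontrivial solution of Eq. $(E_a)$ it satisfies $\langle J_a'(u_0),u_0\rangle=0$, hence $u_0\in\mathbf{M}_a$, and the strict inequality $h_{a,u_0}''(1)>0$ is exactly the defining condition for $u_0\in\mathbf{M}_a^{+}$. Starting as in the proof of Lemma \ref{g5}, I would combine the Nehari identity (\ref{6-1}) with (\ref{2-2}) to rewrite
\[
h_{a,u_0}''(1)=2a\left(\int_{\mathbb{R}^{4}}|\nabla u_0|^{2}dx\right)^{2}-(p-2)f_\infty\int_{\mathbb{R}^{4}}|u_0|^{p}dx .
\]
Thus the whole lemma reduces to controlling $f_\infty\int_{\mathbb{R}^{4}}|u_0|^{p}dx$ from above by a suitable multiple of $\left(\int_{\mathbb{R}^{4}}|\nabla u_0|^{2}dx\right)^{2}$.

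In contrast to Lemmas \ref{g3} and \ref{g5}, I would not use Azzollini's scaling $u_0=w_0(t_a\cdot)$; the presence of the Gagliardo--Nirenberg constant $C_p$ in the definition (\ref{1-5}) of $A_0^{\ast}$ indicates that the required bound should instead come from the Gagliardo--Nirenberg inequality. The decisive preliminary step is to remove the $L^2$--norm: subtracting the Nehari identity (\ref{6-1}) from the Pohozaev identity (\ref{6-2}) cancels both the gradient term and the nonlocal term and leaves the clean relation
\[
\int_{\mathbb{R}^{4}}u_0^{2}dx=\frac{4-p}{p}f_\infty\int_{\mathbb{R}^{4}}|u_0|^{p}dx .
\]
I would then apply Gagliardo--Nirenberg in the form $\int_{\mathbb{R}^{4}}|u_0|^{p}dx\le C_p^{p}\left(\int_{\mathbb{R}^{4}}|\nabla u_0|^{2}dx\right)^{p-2}\left(\int_{\mathbb{R}^{4}}u_0^{2}dx\right)^{(4-p)/2}$, whose exponents are dictated by the $N=4$ scaling, and substitute the displayed $L^2$--relation into its right-hand side.

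The resulting inequality is self-referential, since $\int_{\mathbb{R}^{4}}|u_0|^{p}dx$ now occurs on both sides with different powers; the one genuine manipulation is to solve it by raising to the power $2/(p-2)$, which yields
\[
f_\infty\int_{\mathbb{R}^{4}}|u_0|^{p}dx\le (f_\infty C_p^{p})^{2/(p-2)}\left(\frac{4-p}{p}\right)^{(4-p)/(p-2)}\left(\int_{\mathbb{R}^{4}}|\nabla u_0|^{2}dx\right)^{2}.
\]
By the definition (\ref{1-5}) the coefficient on the right equals $\tfrac{2}{p-2}A_0^{\ast}$, so $(p-2)f_\infty\int_{\mathbb{R}^{4}}|u_0|^{p}dx\le 2A_0^{\ast}\left(\int_{\mathbb{R}^{4}}|\nabla u_0|^{2}dx\right)^{2}$. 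Inserting this into the expression for $h_{a,u_0}''(1)$ above gives $h_{a,u_0}''(1)\ge 2(a-A_0^{\ast})\left(\int_{\mathbb{R}^{4}}|\nabla u_0|^{2}dx\right)^{2}>0$ for $a>A_0^{\ast}$, using that $\int_{\mathbb{R}^{4}}|\nabla u_0|^{2}dx>0$ since $u_0\not\equiv0$; hence $u_0\in\mathbf{M}_a^{+}$. I expect the main conceptual point, rather than any single calculation, to be the realization that the Pohozaev identity is precisely what converts $\|u_0\|_{L^2}^2$ into a multiple of $\int|u_0|^{p}$, so that the Gagliardo--Nirenberg estimate closes up and reproduces the exact constant $A_0^{\ast}$; the $N=4$ exponent bookkeeping is what makes this threshold scale-invariant.
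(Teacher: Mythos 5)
Your proof is correct, and it differs from the paper's in the middle step, though both rest on the same three ingredients: the Nehari identity (\ref{6-1}), the $N=4$ Pohozaev identity (\ref{6-2}), and the Gagliardo--Nirenberg inequality with the constant $C_{p}$. The paper first combines (\ref{6-1}) and (\ref{6-2}) into $h_{a,u_{0}}^{\prime \prime }(1)=p\int_{\mathbb{R}^{4}}u_{0}^{2}dx-2\left\Vert u_{0}\right\Vert _{H^{1}}^{2}$ (its identity (\ref{D-1})), keeping the $L^{2}$ term, and then estimates $f_{\infty }\int_{\mathbb{R}^{4}}|u_{0}|^{p}dx=\left\Vert u_{0}\right\Vert _{H^{1}}^{2}+a\left( \int_{\mathbb{R}^{4}}|\nabla u_{0}|^{2}dx\right) ^{2}$ by Gagliardo--Nirenberg followed by a weighted Young inequality, which splits the mixed term $f_{\infty }C_{p}^{p}\left( \int |\nabla u_{0}|^{2}dx\right) ^{p-2}\left( \int u_{0}^{2}dx\right) ^{(4-p)/2}$ into $\frac{p-2}{2}\left( \frac{4-p}{p}\right) ^{(4-p)/(p-2)}(f_{\infty }C_{p}^{p})^{2/(p-2)}\left( \int |\nabla u_{0}|^{2}dx\right) ^{2}+\frac{p}{2}\int u_{0}^{2}dx$; this yields (\ref{D-0}) and hence the conclusion. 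You instead eliminate the $L^{2}$ norm exactly, via the Pohozaev-minus-Nehari relation $\int u_{0}^{2}dx=\frac{4-p}{p}f_{\infty }\int |u_{0}|^{p}dx$, and then solve the resulting self-referential inequality for $\int |u_{0}|^{p}dx$; the division by $\left( \int |u_{0}|^{p}dx\right) ^{(4-p)/2}>0$ and the raising to the power $2/(p-2)>0$ are both legitimate since $u_{0}\not\equiv 0$ and $2<p<4$. The two computations are essentially dual: your exact substitution realizes the equality case of the paper's Young step with those particular weights, which is why both land on the identical final bound $h_{a,u_{0}}^{\prime \prime }(1)\geq 2\left( a-A_{0}^{\ast }\right) \left( \int_{\mathbb{R}^{4}}|\nabla u_{0}|^{2}dx\right) ^{2}>0$. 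What your route buys is a slightly cleaner structure --- no choice of Young weights is needed --- together with the standalone estimate $(p-2)f_{\infty }\int |u_{0}|^{p}dx\leq 2A_{0}^{\ast }\left( \int |\nabla u_{0}|^{2}dx\right) ^{2}$, valid for every nontrivial solution; the paper's version stays closer in form to the computations in the neighboring lemmas. Your remark that Azzollini's scaling is unnecessary here is also accurate: unlike in Lemmas \ref{g3} and \ref{g5}, the paper's proof of this lemma never invokes $u_{0}=w_{0}(t_{a}\cdot )$.
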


\begin{proof}
Let $u_{0}$ be a nontrivial solution of Eq. $(E_{a}).$ By $(\ref{6-1})-(\ref%
{6-2}),$ we have%
\begin{equation}
-(p-2)\left\Vert u_{0}\right\Vert _{H^{1}}^{2}+a(4-p)\left( \int_{\mathbb{R}%
^{4}}|\nabla u_{0}|^{2}dx\right) ^{2}=p\int_{\mathbb{R}^{4}}u_{0}^{2}dx-2%
\left\Vert u_{0}\right\Vert _{H^{1}}^{2}.  \label{D-1}
\end{equation}%
Applying the Gagliardo-Nirenberg and Young inequalities leads to%
\begin{eqnarray*}
\left\Vert u_{0}\right\Vert _{H^{1}}^{2}+a\left( \int_{\mathbb{R}%
^{4}}|\nabla u_{0}|^{2}dx\right) ^{2} &\leq &f_{\infty }C^{p}\left( \int_{%
\mathbb{R}^{4}}|\nabla u_{0}|^{2}dx\right) ^{p-2}\left( \int_{\mathbb{R}%
^{4}}u_{0}^{2}dx\right) ^{\left( 4-p\right) /2} \\
&\leq &\frac{p-2}{2}\left( \frac{4-p}{p}\right) ^{\left( 4-p\right) /\left(
p-2\right) }\left( f_{\infty }C^{p}\right) ^{\frac{2}{p-2}}\left( \int_{%
\mathbb{R}^{4}}|\nabla u_{0}|^{2}dx\right) ^{2} \\
&&+\frac{p}{2}\int_{\mathbb{R}^{4}}u_{0}^{2}dx,
\end{eqnarray*}%
which implies that%
\begin{equation}
p\int_{\mathbb{R}^{4}}u_{0}^{2}dx-2\left\Vert u_{0}\right\Vert
_{H^{1}}^{2}\geq 2\left( a-\frac{p-2}{2}\left( \frac{4-p}{p}\right) ^{\left(
4-p\right) /\left( p-2\right) }(f_{\infty }C^{p})^{2/\left( p-2\right)
}\right) \left( \int_{\mathbb{R}^{4}}|\nabla u_{0}|^{2}dx\right) ^{2}.
\label{D-0}
\end{equation}%
Thus, by $(\ref{2-2})$ and $(\ref{D-1})-(\ref{D-0}),$ for each $%
a>A_{0}^{\ast }$ one has%
\begin{eqnarray*}
h_{a,u_{0}}^{\prime \prime }(1) &=&-(p-2)\left\Vert u_{0}\right\Vert
_{H^{1}}^{2}+a(4-p)\left( \int_{\mathbb{R}^{4}}|\nabla u_{0}|^{2}dx\right)
^{2} \\
&=&p\int_{\mathbb{R}^{4}}u_{0}^{2}dx-2\left\Vert u_{0}\right\Vert
_{H^{1}}^{2} \\
&\geq &2\left( a-\frac{p-2}{2}\left( \frac{4-p}{p}\right) ^{\frac{4-p}{p-2}%
}(f_{\infty }C^{p})^{\frac{2}{p-2}}\right) \left( \int_{\mathbb{R}%
^{4}}|\nabla u_{0}|^{2}dx\right) ^{2} \\
&>&0,
\end{eqnarray*}%
which indicates that $u_{0}\in \mathbf{M}_{a}^{+}.$ This complete the proof.
\end{proof}

\textbf{We are now ready to prove Theorem \ref{t5}:} By Lemmas \ref{g3}--\ref%
{g16}, we can arrive at the conclusions directly.

\section{Appendix}

In order to verify $(\ref{18-4}),$ we use the concentration-compactness
lemma \cite{Li1,Li2}. First of all, we discuss the three possibilities on
the measures defined by a functional related to $J_{a}^{\infty }.$

For $2<p<\min \{4,2^{\ast }\},$ let $\{u_{n}\}\subset \mathbf{M}_{a}^{\infty
,(1)}$ be a sequence such that
\begin{equation}
\lim_{n\rightarrow \infty }J_{a}^{\infty }(u_{n})=\alpha _{a}^{\infty ,-}>0.
\label{7-1}
\end{equation}%
Define the functional $\Phi _{a}^{\infty }:H^{1}(\mathbb{R}^{N})\rightarrow
\mathbb{R}$ by%
\begin{equation*}
\Phi _{a}^{\infty }(u)=\frac{1}{4}\left\Vert u\right\Vert _{H^{1}}^{2}-\frac{%
4-p}{4p}\int_{\mathbb{R}^{N}}f_{\infty }|u|^{p}dx.
\end{equation*}%
By Lemma \ref{g1}, for any $u\in \mathbf{M}_{a}^{\infty ,(1)}$ one has%
\begin{equation*}
J_{a}^{\infty }(u)=\Phi _{a}^{\infty }(u)>0.
\end{equation*}%
Note that $\{u_{n}\}$ is bounded in $H^{1}(\mathbb{R}^{N}),$ since $%
\{u_{n}\}\subset \mathbf{M}_{a}^{\infty ,(1)}.$ Then there exist a
subsequence $\{u_{n}\}$ and $u_{\infty }\in H^{1}(\mathbb{R}^{N})$ such that%
\begin{eqnarray*}
u_{n} &\rightharpoonup &u_{\infty }\text{ weakly in }H^{1}(\mathbb{R}^{N}),
\\
u_{n} &\rightarrow &u_{\infty }\text{ strongly in }L_{loc}^{s}(\mathbb{R}%
^{N})\text{ for }2\leq s<2^{\ast }.
\end{eqnarray*}%
For any $u_{n}\in \mathbf{M}_{a}^{\infty ,(1)}$, we define the measure $%
y_{n}(\Omega )$ by%
\begin{equation}
y_{n}(\Omega )=\frac{1}{4}\int_{\Omega }(|\nabla u_{n}|^{2}+u_{n}^{2})dx-%
\frac{4-p}{4p}\int_{\Omega }f_{\infty }|u_{n}|^{p}dx.  \label{7-4}
\end{equation}%
Since $u_{n}\in \mathbf{M}_{a}^{\infty ,\left( 1\right) },$ we have $%
\left\Vert u_{n}\right\Vert _{H^{1}}<D_{1}.$ This implies that $y_{n}(\Omega
)$ are positive measures. Furthermore, using $(\ref{7-1})$ gives%
\begin{equation*}
y_{n}(\mathbb{R}^{N})=\Phi _{a}^{\infty }(u_{n})=\alpha _{a}^{\infty
,-}+o\left( 1\right)
\end{equation*}%
and then, by P.L. Lions \cite{Li1}, there are three possibilities:

\textbf{vanishing:} for all $r>0,$%
\begin{equation}
\lim_{n\rightarrow \infty }\sup_{\xi \in \mathbb{R}^{N}}\int_{B_{r}(\xi
)}dy_{n}=0,  \label{7-5}
\end{equation}%
where $B_{r}(\xi )=\{x\in \mathbb{R}^{N}:|x-\xi |<r\};$

\textbf{dichotomy:} there exist a constant $\alpha \in (0,\alpha
_{a}^{\infty ,-})$, two sequences $\{\xi _{n}\}$ and $\{r_{n}\},$ with $%
r_{n}\rightarrow +\infty $ and two nonnegative measures $y_{n}^{1}$ and $%
y_{n}^{2}$ such that%
\begin{equation}
y_{n}-(y_{n}^{1}+y_{n}^{2})\rightarrow 0,\text{ }y_{n}^{1}(\mathbb{R}%
^{N})\rightarrow \alpha ,\text{ }y_{n}^{2}(\mathbb{R}^{N})\rightarrow \alpha
_{a}^{\infty ,-}-\alpha ,  \label{7-6}
\end{equation}%
\begin{equation}
\text{\textrm{supp }}(y_{n}^{1})\subset B_{r_{n}}(\xi _{n}),\text{ \textrm{%
supp }}(y_{n}^{2})\subset \mathbb{R}^{N}\backslash B_{2r_{n}}(\xi _{n});
\label{7-7}
\end{equation}

\textbf{compactness:} there exists a sequence $\{\xi _{n}\}\subset \mathbb{R}%
^{N}$ with the following property: for any $\delta >0,$ there exists $%
r=r(\delta )>0$ such that%
\begin{equation}
\int_{B_{r}(\xi _{n})}dy_{n}\geq \alpha _{a}^{\infty ,-}-\delta ,\text{ for
large }n.  \label{7-8}
\end{equation}

\begin{theorem}
\label{g10} For $0<a<\Lambda ,$ compactness holds for the sequence of
measures $\{u_{n}\}$ defined in $(\ref{7-4}).$
\end{theorem}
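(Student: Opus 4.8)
The plan is to apply the concentration--compactness lemma of Lions to the sequence of nonnegative measures $\{y_{n}\}$ defined in $(\ref{7-4})$, whose total masses satisfy $y_{n}(\mathbb{R}^{N})=\Phi_{a}^{\infty}(u_{n})=\alpha_{a}^{\infty,-}+o(1)$ with $\alpha_{a}^{\infty,-}>0$. Since vanishing $(\ref{7-5})$, dichotomy $(\ref{7-6})$--$(\ref{7-7})$ and compactness $(\ref{7-8})$ are the only three mutually exclusive alternatives, it suffices to exclude the first two. Throughout I would use that $\{u_{n}\}\subset \mathbf{M}_{a}^{\infty,(1)}$ is bounded in $H^{1}(\mathbb{R}^{N})$ with $\Vert u_{n}\Vert_{H^{1}}<D_{1}$, and the pointwise identity $\Phi_{a}^{\infty}(u)=J_{a}^{\infty}(u)-\tfrac14\langle (J_{a}^{\infty})^{\prime}(u),u\rangle$, valid for every $u$, in which the nonlocal Kirchhoff term cancels; this is why $y_{n}$ records only the local densities $|\nabla u_{n}|^{2}+u_{n}^{2}$ and $f_{\infty}|u_{n}|^{p}$, a fact that makes the measure split additively under cutoffs.

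First I would exclude vanishing. If $(\ref{7-5})$ held, then the uniform bound $\Vert u_{n}\Vert_{H^{1}}<D_{1}$ lets one pass from the vanishing of the nonnegative measures $y_{n}$ to $\sup_{\xi}\int_{B_{r}(\xi)}u_{n}^{2}\,dx\to 0$, and the classical vanishing lemma of Lions then forces $u_{n}\to 0$ strongly in $L^{p}(\mathbb{R}^{N})$, so that $\int_{\mathbb{R}^{N}}f_{\infty}|u_{n}|^{p}dx\to 0$. But $u_{n}\in \mathbf{M}_{a}^{\infty}$ gives, by $(\ref{2})$ with $f\equiv f_{\infty}$, $\int_{\mathbb{R}^{N}}f_{\infty}|u_{n}|^{p}dx=\Vert u_{n}\Vert_{H^{1}}^{2}+a(\int_{\mathbb{R}^{N}}|\nabla u_{n}|^{2}dx)^{2}\geq (S_{p}^{p}/f_{\infty})^{2/(p-2)}>0$, a contradiction. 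Hence vanishing cannot occur.

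Next, and this is the crux, I would exclude dichotomy. Assuming $(\ref{7-6})$--$(\ref{7-7})$, I would choose a smooth cutoff $\rho_{n}$ equal to $1$ on $B_{r_{n}}(\xi_{n})$, vanishing outside $B_{2r_{n}}(\xi_{n})$ and with $|\nabla \rho_{n}|\leq 2/r_{n}$, and set $v_{n}=\rho_{n}u_{n}$, $w_{n}=(1-\rho_{n})u_{n}$. Since $y_{n}-(y_{n}^{1}+y_{n}^{2})\to 0$ and neither $y_{n}^{1}$ nor $y_{n}^{2}$ charges the annulus $B_{2r_{n}}(\xi_{n})\setminus B_{r_{n}}(\xi_{n})$, the nonnegativity of $y_{n}$ and $r_{n}\to\infty$ force the $H^{1}$ energy and the $L^{p}$ mass over the annulus to vanish, whence
\begin{equation*}
\Vert u_{n}\Vert_{H^{1}}^{2}=\Vert v_{n}\Vert_{H^{1}}^{2}+\Vert w_{n}\Vert_{H^{1}}^{2}+o(1),\quad \int_{\mathbb{R}^{N}}f_{\infty}|u_{n}|^{p}dx=\int_{\mathbb{R}^{N}}f_{\infty}|v_{n}|^{p}dx+\int_{\mathbb{R}^{N}}f_{\infty}|w_{n}|^{p}dx+o(1),
\end{equation*}
so that $\Phi_{a}^{\infty}(v_{n})\to \alpha$ and $\Phi_{a}^{\infty}(w_{n})\to \alpha_{a}^{\infty,-}-\alpha$ with both limits in $(0,\alpha_{a}^{\infty,-})$. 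The plan is then to rescale each nontrivial piece back onto the constraint: invoking Lemma $\ref{g6}$/Lemma $\ref{g15}$ and $0<a<\Lambda$, I would find $s_{n},t_{n}>0$ with $s_{n}v_{n},t_{n}w_{n}\in \mathbf{M}_{a}^{\infty,(1)}$; the monotonicity of the fibering maps on the $\mathbf{M}_{a}^{-}$ branch then lets me compare the unscaled energies $\Phi_{a}^{\infty}(v_{n}),\Phi_{a}^{\infty}(w_{n})$ with the constrained values $J_{a}^{\infty}(s_{n}v_{n}),J_{a}^{\infty}(t_{n}w_{n})\geq \alpha_{a}^{\infty,-}$, yielding $\alpha\geq \alpha_{a}^{\infty,-}$ and $\alpha_{a}^{\infty,-}-\alpha\geq \alpha_{a}^{\infty,-}$. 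Summing gives $\alpha_{a}^{\infty,-}\geq 2\alpha_{a}^{\infty,-}$, i.e. $\alpha_{a}^{\infty,-}\leq 0$, contradicting $\alpha_{a}^{\infty,-}>0$.

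The hard part is exactly this dichotomy step, and two technical points need care. One must verify that the rescaled pieces $s_{n}v_{n},t_{n}w_{n}$ land in the correct filtration component $\mathbf{M}_{a}^{\infty,(1)}$ rather than $\mathbf{M}_{a}^{\infty,(2)}$, which is where the restriction $0<a<\Lambda$ and the a priori bound $\Vert u_{n}\Vert_{H^{1}}<D_{1}$ enter through Lemma $\ref{g7}$ and the strict gap separating the two components. Moreover, one must account for the nonlocal Kirchhoff term, whose cross-contribution $2a(\int_{\mathbb{R}^{N}}|\nabla v_{n}|^{2}dx)(\int_{\mathbb{R}^{N}}|\nabla w_{n}|^{2}dx)\geq 0$ appears when comparing $(\int_{\mathbb{R}^{N}}|\nabla u_{n}|^{2}dx)^{2}$ with the squared gradients of the two pieces. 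This cross term is nonnegative and in fact reinforces the strict subadditivity driving the contradiction, but ensuring that the rescalings do not depress $\Phi_{a}^{\infty}$ below the masses $\alpha$ and $\alpha_{a}^{\infty,-}-\alpha$ is the delicate estimate. Once vanishing and dichotomy are ruled out, compactness $(\ref{7-8})$ must hold, which is the assertion of the theorem.
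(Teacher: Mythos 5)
Your overall skeleton coincides with the paper's: apply Lions' concentration--compactness to the nonnegative local measures $y_{n}$ (made possible, as you correctly observe, by the identity $\Phi_{a}^{\infty}(u)=J_{a}^{\infty}(u)-\tfrac{1}{4}\langle (J_{a}^{\infty})^{\prime}(u),u\rangle$ in which the Kirchhoff term cancels), rule out vanishing via the uniform Nehari lower bound, and rule out dichotomy by a cutoff decomposition followed by projection back onto the Nehari branch. Your vanishing argument is correct and essentially identical to the paper's. Your observation that the quartic term is superadditive across the two pieces, so that $0=\langle (J_{a}^{\infty})^{\prime}(u_{n}),u_{n}\rangle \geq \langle (J_{a}^{\infty})^{\prime}(h_{n}),h_{n}\rangle+\langle (J_{a}^{\infty})^{\prime}(w_{n}),w_{n}\rangle+o(1)$, also matches the paper's inequality $(\ref{7-18})$.

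The gap is in the dichotomy step, exactly at the point you yourself flag as ``delicate'' but do not close. You claim to project \emph{both} pieces onto $\mathbf{M}_{a}^{\infty,(1)}$ and to deduce \emph{both} $\alpha\geq\alpha_{a}^{\infty,-}$ and $\alpha_{a}^{\infty,-}-\alpha\geq\alpha_{a}^{\infty,-}$, then sum. But the comparison $\Phi_{a}^{\infty}(s_{n}v_{n})\leq\Phi_{a}^{\infty}(v_{n})$ is not automatic: $\Phi_{a}^{\infty}(tv_{n})$ is \emph{increasing} on $(0,t_{a}^{\infty}(v_{n}))$ with $t_{a}^{\infty}(v_{n})>1$, so the comparison goes the right way only when the projection parameter satisfies $s_{n}\leq 1$, and that bound is derived from the sign condition $\langle (J_{a}^{\infty})^{\prime}(v_{n}),v_{n}\rangle\leq 0$ via the algebraic inequality $(\ref{7-21})$ and the positivity of $t^{2}-\tfrac{2}{4-p}t^{4-p}+\tfrac{p-2}{4-p}$ for $t>1$. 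The superadditivity relation above guarantees this sign (up to $o(1)$, after passing to a subsequence) for \emph{at least one} piece, never for both; if a piece has strictly positive pairing, its projection parameter may exceed $1$ and the rescaling can \emph{raise} $\Phi_{a}^{\infty}$, destroying your inequality for that piece. This is why the paper splits into two cases: when one pairing is $\leq 0$, the single inequality $\alpha\geq\alpha_{a}^{\infty,-}$ (or its analogue for $w_{n}$) already contradicts $\alpha\in(0,\alpha_{a}^{\infty,-})$ --- no summation is needed; when both pairings are positive they are both $o(1)$, and then either $t_{a,n}^{-}\leq 1+o(1)$ (reducing to the first case) or $t_{a,n}^{-}\rightarrow t_{a,\infty}^{-}>1$, in which case the same algebraic inequality forces $\Vert h_{n}\Vert_{H^{1}}\rightarrow 0$ and hence $\Phi_{a}^{\infty}(h_{n})\rightarrow 0$, contradicting $\Phi_{a}^{\infty}(h_{n})\rightarrow\alpha>0$. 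Your proposal is missing both this case analysis and the verification that the hypothesis of Lemma \ref{g6}/\ref{g15} (the lower bound on $\int f_{\infty}|h_{n}|^{p}dx$ relative to $\Vert h_{n}\Vert_{H^{1}}^{p}$) holds for the pieces, which in the paper is itself extracted from the sign condition together with $\Vert h_{n}\Vert_{H^{1}}<D_{1}$ and $a\leq\Lambda$; without these ingredients the contradiction does not go through.
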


\begin{proof}
$(I)$ Vanishing does not occur. Suppose the contrary. Then for all $r>0,$ $(%
\ref{7-5})$ holds. In particular, we deduce that there exists $\bar{r}>0$
such that%
\begin{equation*}
\lim_{n\rightarrow \infty }\sup_{\xi \in \mathbb{R}^{N}}\int_{B_{\bar{r}%
}(\xi )}u_{n}^{2}dx=0,
\end{equation*}%
which implies that $u_{n}\rightarrow 0$ strongly in $L^{s}(\mathbb{R}^{N})$
for $2<s<2^{\ast }$ by using \cite[Lemma I.1]{Li2}. Thus, for $%
\{u_{n}\}\subset \mathbf{M}_{a}^{\infty ,(1)},$ it follows from Lemma \ref%
{g1} that%
\begin{eqnarray*}
\frac{p-2}{4p}\left( \frac{S_{p}^{p}}{f_{\infty }}\right) ^{\frac{2}{p-2}}
&<&J_{a}^{\infty }(u_{n})=-\frac{a}{4}\left( \int_{\mathbb{R}^{N}}|\nabla
u_{n}|^{2}dx\right) ^{2}+\frac{f_{\infty }(p-2)}{2p}\int_{\mathbb{R}%
^{N}}|u_{n}|^{p}dx \\
&\leq &\frac{f_{\infty }(p-2)}{2p}\int_{\mathbb{R}^{N}}|u_{n}|^{p}dx%
\rightarrow 0,
\end{eqnarray*}%
which is a contradiction.\newline
$(II)$ Dichotomy does not occur. Suppose by contradiction that there exist a
constant $\alpha \in (0,\alpha _{a}^{\infty ,-})$, two sequences $\{\xi
_{n}\}$ and $\{r_{n}\},$ with $r_{n}\rightarrow +\infty $ and two
nonnegative measures $y_{n}^{1}$ and $y_{n}^{2}$ such that $(\ref{7-6})-(\ref%
{7-7})$ holds. Let $\rho _{n}\in C^{1}(\mathbb{R}^{N})$ be such that $\rho
_{n}\equiv 1$ in $B_{r_{n}}(\xi _{n}),$ $\rho _{n}\equiv 0$ in $\mathbb{R}%
^{N}\backslash B_{2r_{n}}(\xi _{n}),$ $0\leq \rho _{n}\leq 1$ and $|\nabla
\rho _{n}|\leq 2/r_{n}.$ We set%
\begin{equation*}
h_{n}:=\rho _{n}u_{n},\text{ }w_{n}:=(1-\rho _{n})u_{n}.
\end{equation*}%
It is not difficult to verify that%
\begin{equation}
\liminf_{n\rightarrow \infty }\Phi _{a}^{\infty }(h_{n})\geq \alpha \text{
and }\liminf_{n\rightarrow \infty }\Phi _{a}^{\infty }(w_{n})\geq \alpha
_{a}^{\infty ,-}-\alpha .  \label{7-10}
\end{equation}%
Moreover, let us denote $\Omega _{n}:=B_{2r_{n}}(\xi _{n})\backslash
B_{r_{n}}(\xi _{n}).$ Then there holds%
\begin{equation*}
y_{n}(\Omega _{n})\rightarrow 0\text{ as }n\rightarrow \infty ,
\end{equation*}%
namely%
\begin{equation*}
\int_{\Omega _{n}}(|\nabla u_{n}|^{2}+u_{n}^{2})dx\rightarrow 0\text{ as }%
n\rightarrow \infty
\end{equation*}%
and%
\begin{equation*}
\int_{\Omega _{n}}f_{\infty }|u_{n}|^{p}dx\rightarrow 0\text{ as }%
n\rightarrow \infty .
\end{equation*}%
A direct calculation shows that%
\begin{equation*}
\int_{\Omega _{n}}(|\nabla h_{n}|^{2}+h_{n}^{2})dx\rightarrow 0\text{ as }%
n\rightarrow \infty
\end{equation*}%
and
\begin{equation*}
\int_{\Omega _{n}}(|\nabla w_{n}|^{2}+w_{n}^{2})dx\rightarrow 0\text{ as }%
n\rightarrow \infty .
\end{equation*}%
Thus, we obtain that%
\begin{equation}
\left\Vert u_{n}\right\Vert _{H^{1}}^{2}=\left\Vert h_{n}\right\Vert
_{H^{1}}^{2}+\left\Vert w_{n}\right\Vert _{H^{1}}^{2}+o_{n}(1)  \label{7-9}
\end{equation}%
and%
\begin{equation}
\int_{\mathbb{R}^{N}}|u_{n}|^{p}dx=\int_{\mathbb{R}^{N}}|h_{n}|^{p}dx+\int_{%
\mathbb{R}^{N}}|w_{n}|^{p}dx+o_{n}(1).  \label{7-13}
\end{equation}%
Moreover, using $(\ref{7-9})$ leads to%
\begin{eqnarray}
\left( \int_{\mathbb{R}^{N}}|\nabla u_{n}|^{2}dx\right) ^{2} &=&\left( \int_{%
\mathbb{R}^{N}}|\nabla h_{n}|^{2}dx+\int_{\mathbb{R}^{N}}|\nabla
w_{n}|^{2}dx+o_{n}(1)\right) ^{2}  \notag \\
&\geq &\left( \int_{\mathbb{R}^{N}}|\nabla h_{n}|^{2}dx\right) ^{2}+\left(
\int_{\mathbb{R}^{N}}|\nabla w_{n}|^{2}dx\right) ^{2}+o_{n}(1).  \label{7-23}
\end{eqnarray}%
It follows from $(\ref{7-9})-\ref{7-13})$ that%
\begin{equation*}
\Phi _{a}^{\infty }(u_{n})=\Phi _{a}^{\infty }(h_{n})+\Phi _{a}^{\infty
}(w_{n})+o_{n}(1).
\end{equation*}%
Thus, there holds%
\begin{equation*}
\alpha _{a}^{\infty ,-}=\lim_{n\rightarrow \infty }\Phi _{a}^{\infty
}(u_{n})=\lim_{n\rightarrow \infty }\Phi _{a}^{\infty
}(h_{n})+\lim_{n\rightarrow \infty }\Phi _{a}^{\infty }(w_{n}).
\end{equation*}%
Using the above equality, together with $(\ref{7-10})$ yields%
\begin{equation}
\lim_{n\rightarrow \infty }\Phi _{a}^{\infty }(h_{n})=\alpha \text{ and }%
\lim_{n\rightarrow \infty }\Phi _{a}^{\infty }(w_{n})=\alpha _{a}^{\infty
,-}-\alpha .  \label{7-24}
\end{equation}%
Furthermore, by $(\ref{7-9})-(\ref{7-23})$ we get%
\begin{eqnarray}
0 &=&\left\langle (J_{a}^{\infty })^{\prime }(u_{n}),u_{n}\right\rangle
\notag \\
&\geq &\left\langle J_{a}^{\infty })^{\prime }(h_{n}),h_{n}\right\rangle
+\left\langle J_{a}^{\infty })^{\prime }(w_{n}),w_{n}\right\rangle +o_{n}(1).
\label{7-18}
\end{eqnarray}%
We have to distinguish two cases as follows:

Case $(i):$ Up to a subsequence, $\left\langle J_{a}^{\infty })^{\prime
}(h_{n}),h_{n}\right\rangle \leq 0$ or $\left\langle J_{a}^{\infty
})^{\prime }(w_{n}),w_{n}\right\rangle \leq 0.$

Without loss of generality, we only consider the case of $\left\langle
J_{a}^{\infty })^{\prime }(h_{n}),h_{n}\right\rangle \leq 0$, since the case
of $\left\langle \left( J_{a}^{\infty }\right) ^{\prime }\left( w_{n}\right)
,w_{n}\right\rangle \leq 0$ is similar. Note that for all $n\geq 1,$ there
holds%
\begin{equation}
\max \left\{ \left\Vert h_{n}\right\Vert _{H^{1}},\left\Vert
w_{n}\right\Vert _{H^{1}}\right\} <D_{1}<\left( \frac{2S_{p}^{p}}{f_{\infty
}(4-p)}\right) ^{1/\left( p-2\right) }  \label{7-17}
\end{equation}%
Then for each $a\leq \Lambda ,$ we have%
\begin{eqnarray*}
\frac{p}{4-p}\left( \frac{2a(4-p)}{p-2}\right) ^{\left( p-2\right)
/2}\left\Vert h_{n}\right\Vert _{H^{1}}^{p} &=&\frac{p}{4-p}\left( \frac{%
2a(4-p)}{p-2}\right) ^{\left( p-2\right) /2}\left\Vert h_{n}\right\Vert
_{H^{1}}^{p-2}\left\Vert h_{n}\right\Vert _{H^{1}}^{2} \\
&<&\frac{2pS_{p}^{p}}{f_{\infty }(4-p)^{2}}\left( \frac{2a(4-p)}{p-2}\right)
^{\left( p-2\right) /2}\left\Vert h_{n}\right\Vert _{H^{1}}^{2} \\
&\leq &\left\Vert h_{n}\right\Vert _{H^{1}}^{2}+a\left( \int_{\mathbb{R}%
^{N}}\left\vert \nabla h_{n}\right\vert ^{2}dx\right) ^{2} \\
&\leq &\int_{\mathbb{R}^{N}}f_{\infty }|h_{n}|^{p}dx\text{ for }N=1,2,3.
\end{eqnarray*}%
By Lemmas $\ref{g6}-\ref{g15},$ for any $n\geq 1$, there exists%
\begin{equation*}
T_{f_{\infty }}(h_{n})<t_{a,n}^{-}<\sqrt{D(p)}\left( \frac{2}{4-p}\right)
^{1/(p-2)}T_{f_{\infty }}(h_{n})
\end{equation*}%
such that $t_{a,n}^{-}h_{n}\in \mathbf{M}_{a}^{\infty ,-}$, where
\begin{equation*}
T_{f_{\infty }}(h_{n})=\left( \frac{\left\Vert h_{n}\right\Vert _{H^{1}}^{2}%
}{\int_{\mathbb{R}^{N}}f_{\infty }|h_{n}|^{p}dx}\right) ^{1/(p-2)}>0.
\end{equation*}%
We now prove that $t_{a,n}^{-}\leq 1.$ Suppose the contrary. Then $%
t_{a,n}^{-}>1.$ Since $t_{a,n}^{-}h_{n}\in \mathbf{M}_{a}^{\infty ,-},$ we
have%
\begin{equation*}
a\left( \int_{\mathbb{R}^{N}}|\nabla h_{n}|^{2}dx\right)
^{2}=-(t_{a,n}^{-})^{-2}\left\Vert h_{n}\right\Vert
_{H^{1}}^{2}+(t_{a,n}^{-})^{p-4}\int_{\mathbb{R}^{N}}f_{\infty
}|h_{n}|^{p}dx.
\end{equation*}%
Using the above equality gives%
\begin{eqnarray}
0 &\geq &\left\langle (J_{a}^{\infty })^{\prime }(h_{n}),h_{n}\right\rangle
=\left\Vert h_{n}\right\Vert _{H^{1}}^{2}+a\left( \int_{\mathbb{R}%
^{N}}|\nabla h_{n}|^{2}dx\right) ^{2}-\int_{\mathbb{R}^{N}}f_{\infty
}|h_{n}|^{p}dx  \notag \\
&=&\left\Vert h_{n}\right\Vert _{H^{1}}^{2}-(t_{a,n}^{-})^{-2}\left\Vert
h_{n}\right\Vert _{H^{1}}^{2}+(t_{a,n}^{-})^{p-4}\int_{\mathbb{R}%
^{N}}f_{\infty }|h_{n}|^{p}dx  \notag \\
&&-\int_{\mathbb{R}^{N}}f_{\infty }|h_{n}|^{p}dx  \notag \\
&=&[1-(t_{a,n}^{-})^{-2}]\left\Vert h_{n}\right\Vert
_{H^{1}}^{2}+[(t_{a,n}^{-})^{p-4}-1]\int_{\mathbb{R}^{N}}f_{\infty
}|h_{n}|^{p}dx.  \label{7-16}
\end{eqnarray}%
Note that
\begin{equation}
\int_{\mathbb{R}^{N}}f_{\infty }|h_{n}|^{p}dx<\frac{2}{4-p}%
(t_{a,n}^{-})^{2-p}\left\Vert h_{n}\right\Vert _{H^{1}}^{2}  \label{7-15}
\end{equation}%
by $(\ref{2-2}).$ It follows from $(\ref{7-16})-(\ref{7-15})$ that%
\begin{eqnarray}
0 &\geq &[1-(t_{a,n}^{-})^{-2}]\left\Vert h_{n}\right\Vert _{H^{1}}^{2}+%
\frac{2}{4-p}[(t_{a,n}^{-})^{-2}-(t_{a,n}^{-})^{2-p}]\left\Vert
h_{n}\right\Vert _{H^{1}}^{2}  \notag \\
&=&\left[ 1+\frac{p-2}{4-p}(t_{a,n}^{-})^{-2}-\frac{2}{4-p}%
(t_{a,n}^{-})^{2-p}\right] \left\Vert h_{n}\right\Vert _{H^{1}}^{2}  \notag
\\
&=&(t_{a,n}^{-})^{-2}\left[ (t_{a,n}^{-})^{2}-\frac{2}{4-p}%
(t_{a,n}^{-})^{4-p}+\frac{p-2}{4-p}\right] \left\Vert h_{n}\right\Vert
_{H^{1}}^{2},  \label{7-21}
\end{eqnarray}%
which implies that%
\begin{equation*}
(t_{a,n}^{-})^{2}-\frac{2}{4-p}(t_{a,n}^{-})^{4-p}+\frac{p-2}{4-p}\leq 0.
\end{equation*}%
However, we observe that for $2<p<\min \{4,2^{\ast }\},$%
\begin{equation*}
t^{2}-\frac{2}{4-p}t^{4-p}+\frac{p-2}{4-p}>0\text{ for }t>1.
\end{equation*}%
This is a contradiction. Thus, $t_{a,n}^{-}\leq 1.$

Next, let us consider the functional $\Phi _{a}^{\infty }(th_{n})$ defined by%
\begin{equation*}
\Phi _{a}^{\infty }(th_{n})=\frac{t^{2}}{4}\left\Vert h_{n}\right\Vert
_{H^{1}}^{2}-\frac{(4-p)t^{p}}{4p}\int_{\mathbb{R}^{N}}f_{\infty
}|h_{n}|^{p}dx\text{ for }t>0.
\end{equation*}%
A direct calculation shows that there exists a constant%
\begin{equation*}
t_{a}^{\infty }(h_{n})=\left( \frac{2\left\Vert h_{n}\right\Vert _{H^{1}}^{2}%
}{(4-p)f_{\infty }\int_{\mathbb{R}^{N}}|h_{n}|^{p}dx}\right) ^{1/(p-2)}>0
\end{equation*}%
such that $\Phi _{a}^{\infty }(th_{n})$ is increasing on $(0,t_{a}^{\infty
}(h_{n}))$ and is decreasing on $(t_{a}^{\infty }(h_{n}),\infty ).$
Furthermore, by the Sobolev inequality and $(\ref{7-17})$ we have%
\begin{equation*}
t_{a}^{\infty }(h_{n})\geq \left( \frac{2}{(4-p)f_{\infty
}S_{p}^{-p}\left\Vert h_{n}\right\Vert _{H^{1}}^{p-2}}\right) ^{1/(p-2)}>1.
\end{equation*}%
This indicates that $\Phi _{a}^{\infty }(t_{a,n}^{-}h_{n})\leq \Phi
_{a}^{\infty }(h_{n}).$ Hence, for all $n\geq 1,$ there holds%
\begin{equation*}
\alpha _{a}^{\infty ,-}\leq J_{a}^{\infty }(t_{a,n}^{-}h_{n})=\Phi
_{a}^{\infty }(t_{a,n}^{-}h_{n})\leq \Phi _{a}^{\infty }(h_{n})\rightarrow
\alpha <\alpha _{a}^{\infty ,-},
\end{equation*}%
which is a contradiction.\newline
Case $(ii):$ Up to a subsequence, $\left\langle (J_{a}^{\infty })^{\prime
}(h_{n}),h_{n}\right\rangle >0$ and $\left\langle (J_{a}^{\infty })^{\prime
}(w_{n}),w_{n}\right\rangle >0.$

By $(\ref{7-18})$ one has $\left\langle (J_{a}^{\infty })^{\prime
}(h_{n}),h_{n}\right\rangle =o_{n}(1)$ and $\left\langle (J_{a}^{\infty
})^{\prime }(w_{n}),w_{n}\right\rangle =o_{n}(1).$ If $t_{a,n}^{-}\leq
1+o_{n}(1),$ then we can repeat the argument of Case $(i)$ and arrive at the
contradiction. Suppose that
\begin{equation*}
\lim_{n\rightarrow \infty }t_{a,n}^{-}=t_{a,\infty }^{-}>1.
\end{equation*}%
Similar to the argument of $(\ref{7-16})$, we have%
\begin{eqnarray*}
o_{n}(1) &=&\left\langle (J_{a}^{\infty })^{\prime
}(h_{n}),h_{n}\right\rangle \\
&=&[1-(t_{a,n}^{-})^{-2}]\left\Vert h_{n}\right\Vert
_{H^{1}}^{2}+[(t_{a,n}^{-})^{p-4}-1]\int_{\mathbb{R}^{N}}f_{\infty
}|h_{n}|^{p}dx.
\end{eqnarray*}%
Similar to the argument of $(\ref{7-21}),$ one get%
\begin{equation}
o_{n}(1)\geq (t_{a,n}^{-})^{-2}\left[ (t_{a,n}^{-})^{2}-\frac{2}{4-p}%
(t_{a,n}^{-})^{4-p}+\frac{p-2}{4-p}\right] \left\Vert h_{n}\right\Vert
_{H^{1}}^{2},  \notag
\end{equation}%
which shows that
\begin{equation*}
\left\Vert h_{n}\right\Vert _{H^{1}}^{2}\rightarrow 0\text{ as }n\rightarrow
\infty ,
\end{equation*}%
where we have used the fact of $(t_{a,n}^{-})^{2}-\frac{2}{4-p}%
(t_{a,n}^{-})^{4-p}+\frac{p-2}{4-p}>0.$ Then%
\begin{equation*}
\int_{\mathbb{R}^{N}}|h_{n}|^{p}dx\rightarrow 0\text{ as }n\rightarrow
\infty .
\end{equation*}%
Hence, $\Phi _{a}^{\infty }(h_{n})\rightarrow 0$ as $n\rightarrow \infty ,$
which contradicts $(\ref{7-24})$. Therefore, the dichotomy cannot occur.
\end{proof}

\section*{Acknowledgments}

J. Sun is supported by the National Natural Science Foundation of China
(Grant No. 11671236). T.F. Wu is supported in part by the Ministry of
Science and Technology, Taiwan (Grant 108-2115-M-390-007-MY2), the
Mathematics Research Promotion Center, Taiwan and the National Center for
Theoretical Sciences, Taiwan.

\end{document}